\documentclass[a4paper,11pt,reqno]{amsart}

\usepackage[headings]{fullpage}

\usepackage{mathtools}
\usepackage{dsfont,amssymb}
\usepackage{tensor}
\usepackage{tikz}
\usetikzlibrary{cd}
\usepackage[all]{xy}
\usepackage{xcolor}
\usepackage{epstopdf}
\usepackage{caption}
\usepackage{subcaption}

\usepackage{hyperref}

\usepackage[nobysame,alphabetic,initials,msc-links]{amsrefs}

\DefineSimpleKey{bib}{how}

\renewcommand{\eprint}[1]{#1}
\BibSpec{misc}{%
  +{}{\PrintAuthors}  {author}
  +{,}{ \textit}      {title}
  +{,}{ }             {how}
  +{}{ \parenthesize} {date}
  +{,} { available at \eprint}        {eprint}
  +{,}{ available at \url}{url}
  +{,}{ }             {note}
  +{.}{}              {transition}
}

\mathchardef\mhyph="2D

\hfuzz1pc 

\numberwithin{equation}{section}


\newtheorem{theorem}{Theorem}[section]
\newtheorem{corollary}[theorem]{Corollary}
\newtheorem{lemma}[theorem]{Lemma}
\newtheorem{proposition}[theorem]{Proposition}
\theoremstyle{remark}
\newtheorem{remark}[theorem]{Remark}
\newtheorem{example}[theorem]{Example}
\theoremstyle{definition}
\newtheorem{definition}[theorem]{Definition}
\newtheorem{theoremAlph}{Theorem}

\newcommand\bp{\begin{proof}}
\newcommand\ep{\end{proof}}

\def\hyphen{{\hbox{-}}}
\def\-{{\hbox{-}}}


\newcommand{\C}{\mathbb C}
\newcommand{\N}{\mathbb N}
\newcommand{\R}{\mathbb R}
\newcommand{\Z}{\mathbb Z}
\newcommand{\bE}{\mathbb E}

\newcommand{\cA}{\mathcal A}
\newcommand{\cB}{\mathcal B}
\newcommand{\bB}{\mathbb B}
\newcommand{\bG}{\mathbb G}
\newcommand{\cC}{\mathcal C}
\newcommand{\cD}{\mathcal D}

\newcommand{\cF}{\mathcal F}

\newcommand{\cH}{\mathcal H}
\newcommand{\cK}{\mathcal K}

\newcommand{\cL}{\mathcal L}
\newcommand{\cM}{\mathcal M}
\newcommand{\cN}{\mathcal N}
\newcommand{\cO}{\mathcal O}

\newcommand{\cS}{\mathcal S}

\newcommand{\cZ}{\mathcal Z}
\newcommand{\into}{\hookrightarrow}
\newcommand{\colim}{\text{colim}}

\DeclareMathOperator{\Ad}{Ad}
\DeclareMathOperator{\Alg}{Alg}

\DeclareMathOperator{\End}{End}

\DeclareMathOperator{\Hom}{Hom}

\DeclareMathOperator{\Irr}{Irr}

\DeclareMathOperator{\Mod}{Mod}

\DeclareMathOperator{\Rep}{Rep}

\DeclareMathOperator{\Vect}{Vec}
\DeclareMathOperator{\Cat}{Cat}
\DeclareMathOperator{\Bal}{Bal}

\DeclareMathOperator{\tr}{tr}
\DeclareMathOperator{\Clin}{C*lin}
\DeclareMathOperator{\CAlg}{C*Alg}

\DeclareMathOperator{\Obj}{Obj}
\DeclareMathOperator{\Fun}{Fun}

\DeclareMathOperator{\Bim}{Bim}
\DeclareMathOperator{\II}{II}
\DeclareMathOperator{\III}{III}

\DeclareMathOperator{\Emb}{Emb}
\DeclareMathOperator{\Mfld}{Mfld}
\DeclareMathOperator{\Disk}{Disk}
\DeclareMathOperator{\Or}{or}

\DeclareMathOperator{\Rex}{Rex}

\DeclareMathOperator{\DisInc}{DisInc}

\DeclareMathOperator{\fd}{fd}

\newcommand\op{\text{op}}
\newcommand\un{{\mathds 1}}

\newcommand{\id}{\mathrm{id}}

\newcommand{\Hilb}{\mathrm{Hilb}}



%



\hyphenation{Wo-ro-no-wicz}
\hyphenation{homo-mor-phism}

\begin{document}

\title{C*-algebraic factorization homology and Realization of Cyclic Representations}

\date{}

\author{Lucas Hataishi}
\address{Universitetet i Oslo}
\email{lucasyh@math.uio.no}

\thanks{Supported by the NFR project 300837 ``Quantum Symmetry''.}

\begin{abstract}
We prove cocontinuity of the $\max$-tensor product of C*-categories and develop a framework to perform factorization homology in a C*-setting. In such context, we specialize some results of D. Ben-Zvi, A. Brochier and D. Jordan. As a consequence of our constructions, we realize quantum Hamiltonian reduction in terms of bimodules over a factor $N$. We also provide a GNS-type reconstruction theorem for C*-algebra objects  in categories of bimodules over a $\II_1$-factor, enhancing a realization theorem due to C. Jones and D. Penneys.
\end{abstract}

\maketitle

\tableofcontents

\section{Introduction}
\label{sec:intro}

This paper is an attempt of bringing ideas and techniques from geometric representation theory and topological quantum field theory (TQFT) to operator algebras. The guiding principle is that many interesting algebraic structures arise as linear representations of topological categories. An unital algebra, for instance, determines and is completely determined by a functor from the tensor category of oriented intervals to the category of vector spaces. Frobenius algebras are in bijection with tensor functors from the category of 2-dimensional cobordisms to the category of vector spaces. Homology theories can also be placed into this picture by reading off the Eilenberg-Steenrod axioms. It says that a homology theory is a tensor functor from the category of CW-complexes to the category of chain complexes, satisfying the excision formula. Every homology theory is determined by its value on a point, the simplest of all CW-complexes. In particular, every chain complex defines a homology theory for which its value on the point is the given chain complex.

Our goal in this work is to, following \cite{LurieClassification, AyalaFrancis,BenZviBrochierJordan1,BenZviBrochierJordan2}, produce homology like theories for surfaces. We shall consider the tensor category $\Mfld_2$ whose objects are compact oriented surfaces, morphisms are smooth orientation preserving embeddings, and the tensor structure is given by disjoint union. A homology theory for surfaces will then be a tensor functor $F: \Mfld_2 \to \cS$ taking values in a symmetric tensor category $\cS$, satisfying an excision formula. In $\Mfld_2$, the simplest object is the disk $D$. It is proven in \cite{AyalaFrancis} that, if $\cS$ is sifted cocomplete and the tensor structure is sifted cocontinuous, a tensor functor $F:\Mfld_2 \to \cS$ is determined by $F(D)$. The difference, comparing with classical homology theories, is that $F(D)$ is not a merely object in $\cS$, but comes with the structure of an $\bE_2$-algebra in $\cS$: operations parameterized by the spaces of embeddings $D^{\sqcup k} \into D$, where $\sqcup$ denotes disjoint union. Ayala and Francis prove in \cite{AyalaFrancis} the converse, that every $\bE_2$-algebra in $\cS$ determines a homology theory $F: \Mfld_2 \to \cS$.
The procedure that takes an $\bE_2$-algebra and builds the tensor functor $F$ is called {\em factorization homology}. It was first sketched in \cite{LurieClassification}, but the rigorous formulation is due to Ayala and Francis, and it works in any dimension. We shall however consider just the 2-dimensional case. The value $F(\Sigma)$ of factorization homology can be computed as a colimit in terms of the $\bE_2$-algebra $\cC:= F(D)$. For this reason, it is usually written
$$F(\Sigma) = \int_{\Sigma} \cC \ , $$
a convention that we will follow in this paper.

In \cite{BenZviBrochierJordan1,BenZviBrochierJordan2} it was considered the case when $\cS = \Rex$, the category of cocomplete linear categories with right exact functors as morphisms. The authors prove that it is cocomplete and that the Deligne-Kelly tensor product is cocontinuos. Cocompleteness and cocontinuity imply, respectively, sifted cocompleteness and sifted cocontinuity. They also establish several technical tools allowing for concrete computations. In $\Rex$, $\bE_2$-algebras are braided tensor categories. The main examples in their work are the representation categories of quantum groups. By performing factorization homology in this circumstance, they recover several objects related to the geometric representation theory of quantum groups.

We develop here factorization homology with target in a category of C$^*$-categories. More precisely, we take $\cS = \Clin$, the category of Karoubi complete C$^*$-categories admitting finite direct sums. This category is cocomplete (\cite{AntounVoigt}). We prove

\begin{theoremAlph}[Theorem \ref{thm:cocontinuitymaxproduct}]
The category $\Clin$ has a cocontinuous tensor structure.
\end{theoremAlph}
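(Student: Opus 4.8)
The plan is to prove the stronger statement that $(\Clin,\otimes_{\max})$ is closed monoidal: for every $\cA\in\Clin$ the functor $\cA\otimes_{\max}-\colon\Clin\to\Clin$ admits a right adjoint, hence preserves all small colimits (which exist by \cite{AntounVoigt}); since $\otimes_{\max}$ is commutative, cocontinuity in the other variable follows at once, and this is what a cocontinuous tensor structure means. The right adjoint should be an internal hom, so the first task is to construct, for $\cA,\cC\in\Clin$, a C$^*$-category $[\cA,\cC]$ whose objects are the linear $*$-functors $\cA\to\cC$, whose morphisms $F\to G$ are the uniformly bounded natural transformations $\eta=(\eta_a)_a$ with $\|\eta\|:=\sup_a\|\eta_a\|<\infty$, and whose involution is $(\eta^*)_a:=(\eta_a)^*$. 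One verifies that $\eta^*$ is again natural — take adjoints of the naturality squares and use that the objects of $[\cA,\cC]$ are $*$-functors — that $\Nat(F,F)$ is a C$^*$-algebra and each $\Nat(F,G)$ a Banach space (completeness: a uniformly Cauchy net of natural transformations converges componentwise to a natural transformation with uniformly bounded components), and that finite direct sums and splittings of idempotents in $[\cA,\cC]$ are computed pointwise in $\cC$, using that $\cC$ has them. This places $[\cA,\cC]$ in $\Clin$. A set-theoretic remark is needed when $\cA$ is not essentially small, but the categories relevant to factorization homology are, so this causes no trouble.

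The second task is to make the universal property of the maximal tensor product precise and curry it. Recall that $\cA\otimes_{\max}\cB$ carries the largest C$^*$-norm on the algebraic morphism spaces $\Hom_\cA(a,a')\odot\Hom_\cB(b,b')$, with Karoubi and additive completion taken afterwards; consequently linear $*$-functors $\cA\otimes_{\max}\cB\to\cC$ are in natural bijection with bilinear $*$-functors $\cA\times\cB\to\cC$, that is, functors out of the product category that are $\C$-bilinear and $*$-preserving (and hence automatically contractive in each variable, since a $*$-functor satisfies $\|F(f)\|^2=\|F(f^*f)\|\le\|f^*f\|=\|f\|^2$). Currying such a bilinear $*$-functor in the first variable yields exactly a $*$-functor $\cA\to[\cB,\cC]$, naturally in $\cB$ and $\cC$, whence
\[\Hom_{\Clin}(\cA\otimes_{\max}\cB,\cC)\ \cong\ \Hom_{\Clin}(\cA,[\cB,\cC]).\]
Thus $-\otimes_{\max}\cB$ is left adjoint to $[\cB,-]$, and the theorem follows as indicated above.

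The step I expect to demand the most care is the norm estimate hidden in the universal property: one must check that a bilinear $*$-functor really factors, on each morphism space, through the maximal C$^*$-completion. On endomorphism spaces $\End_\cA(a)\odot\End_\cB(b)$ this is the statement that two commuting $*$-representations combine to a contraction for $\|\cdot\|_{\max}$ — exactly as for C$^*$-algebras — and general morphism spaces reduce to this by the linking-algebra trick (work inside $\End(a\oplus a')$, $\End(b\oplus b')$, and $\End(F(a,b)\oplus F(a',b'))$, where $\Hom$-spaces sit as corners). The construction of $[\cA,\cC]$ and the verification that it lies in $\Clin$ are routine but lengthy for the same reason: morphism spaces of a C$^*$-category are operator spaces rather than $*$-algebras, so C$^*$-algebraic facts must be imported via corners of linking algebras. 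As an alternative to the closed-structure argument one could instead prove cocontinuity directly: present both $\cA\otimes_{\max}(\colim_i\cB_i)$ and $\colim_i(\cA\otimes_{\max}\cB_i)$ through the three-step recipe ``free linear $*$-category, then maximal C$^*$-completion, then Karoubi and additive completion'', observe that the algebraic step and the two completion reflections are each cocontinuous, and check that the maximal C$^*$-seminorm is compatible with the colimit — the analogue of the fact that $A\otimes_{\max}-$ commutes with inductive limits and with quotients of C$^*$-algebras. The adjoint route is cleaner and has the bonus of recording that $(\Clin,\otimes_{\max})$ is closed.
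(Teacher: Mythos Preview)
Your approach is essentially the same as the paper's: both establish the closed monoidal structure $\Clin(A\boxtimes_{\max}B,C)\simeq\Clin(A,\Clin(B,C))$ (the paper cites this as Proposition~3.8 of \cite{AntounVoigt} and gives the same currying argument you sketch) and deduce cocontinuity from it. The only difference is cosmetic: where you invoke the general principle that left adjoints preserve colimits, the paper unwinds that principle explicitly in the $(2,1)$-categorical setting, proving by hand that $\Clin(A,[I,\Clin]((V_i)_i,\Delta B))\simeq[I,\Clin]((A\boxtimes_{\max}V_i)_i,\Delta B)$ before chaining the equivalences together.
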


As a consequence, we can perform factorization homology with coefficients $\bE_2$-algebras in $\Clin$. Following standard arguments, we can also conclude

\begin{theoremAlph}[Proposition \ref{prop:EnalgebrasinClin}]
$\bE_2$-algebras in $\Clin$ are unitarily braided C$^*$-tensor categories.
\end{theoremAlph}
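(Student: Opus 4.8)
The plan is to transport the classical recognition principles for $\bE_n$-algebras into the C$^*$-setting, the only genuinely new ingredient being that the resulting coherence data is compatible with the $*$-operation. By the preceding cocontinuity theorem, $(\Clin,\otimes_{\max})$ is a symmetric monoidal bicategory whose tensor product is cocontinuous, so that the operadic and bicategorical machinery of \cite{AyalaFrancis,LurieClassification} and the arguments used for $\Rex$ in \cite{BenZviBrochierJordan1,BenZviBrochierJordan2} apply essentially verbatim. Concretely, unwinding an $\bE_2$-algebra structure on an object $\cC\in\Clin$ will proceed in two stages: first extract the underlying monoidal C$^*$-category, then extract and analyse the braiding.

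For the monoidal structure I would use the defining universal property of the maximal tensor product: a $1$-morphism $m\colon\cC\otimes_{\max}\cC\to\cC$ in $\Clin$ is the same datum as a C$^*$-bifunctor $\cC\times\cC\to\cC$ (a functor that is $\C$-bilinear and $*$-preserving on morphisms), so that, in contrast with a purely algebraic tensor product, the tensor functor produced this way automatically completes to a morphism of C$^*$-categories. Unitality against the monoidal unit of $\Clin$ supplies the tensor unit of $\cC$, and the associativity and unit $2$-isomorphisms carried by an $\bE_1$-algebra are precisely the associator and unitors, the pentagon and triangle identities being the coherence relations of the associative (little $1$-disks) operad; hence $\bE_1$-algebras in $\Clin$ are monoidal C$^*$-categories. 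For $\bE_2$ I would then invoke the classical identification of braided monoidal categories with $\bE_2$-algebras (Joyal--Street), realised bicategorically via Dunn additivity $\bE_2\simeq\bE_1\otimes\bE_1$ exactly as in \cite{BenZviBrochierJordan1,BenZviBrochierJordan2}: an $\bE_2$-algebra in $\Clin$ is an $\bE_1$-algebra internal to the bicategory of $\bE_1$-algebras in $\Clin$, and the resulting interchange $2$-cell is a natural isomorphism $c_{X,Y}\colon X\otimes Y\to Y\otimes X$ whose hexagon identities are forced by the relations in $\bE_2(2)$; the analogous collapse of the higher cells for $n\ge 3$ yields a symmetry. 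None of this sees the C$^*$-structure.

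The crux, and the step I expect to be the main obstacle, is unitarity of the coherence isomorphisms, in particular of the braiding. Here the key observation is that $\Clin$ is naturally a symmetric monoidal $*$-bicategory: a natural transformation between $*$-functors of C$^*$-categories admits a componentwise adjoint which is again a natural transformation, composition of $*$-functors is compatible with adjoints, and the symmetry of $\otimes_{\max}$ is implemented by a $*$-isomorphism, hence a unitary. Working with the $*$-compatible (dagger) notion of $\bE_n$-algebra, the coherence $2$-isomorphisms are unitary by construction; the content to be verified is equivalently that $c_{X,Y}$ arises from a path in $\bE_2(2)\simeq S^1$ while $c_{X,Y}^{-1}$ arises from the reversed path, and that $*$-compatibility of the $\bE_2$-action identifies the reversed path's $2$-cell with $c_{X,Y}^{*}$, whence $c_{X,Y}^{*}=c_{X,Y}^{-1}$. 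The delicate point is precisely to run the recognition theorems inside the $*$-enriched bicategory rather than merely in its underlying bicategory, so that all coherence cells land among unitaries; once this is arranged, the remaining checks (naturality, the hexagons, and $c_{\un,X}=c_{X,\un}=\id$) are routine.
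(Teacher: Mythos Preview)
Your proposal is correct but substantially more elaborate than the paper's argument, and in particular you misidentify where the difficulty lies. The paper's proof is essentially one line beyond the classical recognition principle: the braiding on $\cC$ is the $2$-morphism in $\Clin$ assigned by the $\bE_2$-algebra structure to the isotopy swapping two disks, and since $\Clin$ is \emph{defined} (Definition~\ref{defi:C*-lin}) as a $(2,1)$-category whose $2$-morphisms are unitary natural isomorphisms, this braiding is unitary automatically. There is no ``underlying bicategory'' with possibly non-unitary $2$-cells to worry about, no need to introduce a dagger-enriched notion of $\bE_n$-algebra, and no need for the reversed-path argument identifying $c_{X,Y}^*$ with $c_{X,Y}^{-1}$; unitarity is built into the target from the outset.

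Your route via Dunn additivity and the Eckmann--Hilton interchange is a perfectly valid way to extract the braiding and is standard in the $\Rex$ literature you cite, whereas the paper takes the more geometric shortcut of reading the braiding directly off the swap isotopy. Both yield the same structure; the paper's version is simply shorter. What your presentation gains is an explicit reminder of why the associator, unitors, and hexagons hold, which the paper sweeps under ``identical to the classical case''. What it loses is clarity about the unitarity step: you flag it as ``the crux'' and ``the main obstacle'', but once one remembers how $\Clin$ is set up there is nothing to prove.
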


Many constructions in operator algebras can by now be interpreted in terms of {\em unitary tensor categories}, also known as {\em rigid C$^*$-tensor categories}. Examples include actions of certain locally compact quantum groups (\cite{Neshveyev,NeshveyevYamashitaYetterDrinfeld,HataishiYamashita}), subfactors and C$^*$-algebraic extensions (\cite{BischoffKawahigashiLongoRehrenbook,JonesPenneys1,JonesPenneys2,NeshveyevYamashitaDrinfeldCenter}). We provide evidence that C$^*$-algebraic factorization homology bridges between geometry/topology and those constructions.

Given an unitary tensor category $\cC$, denote by $\Vect(\cC)$ the category of functors $\cC^{\op} \to \Vect$. In $\Vect(\cC)$, it makes sense speaking of C$^*$-algebra objects (see Section \ref{sec:categoricalframework} and \cite{JonesPenneys1}). By establishing unitary versions of technical results in \cite{BenZviBrochierJordan1} and \cite{BenZviBrochierJordan2}, we prove

\begin{theoremAlph}[Theorem \ref{thm:factorizationhomologofpuncturedsurfaces}, Theorem \ref{thm:facthomclosedsurfaces}]
Let $\cC$ be an unitary tensor category with a unitary braiding. If $\Sigma$ is a punctured surface, there is a C$^*$-algebra object $A_\Sigma$ in $\Vect(\cC)$ such that
$$\int_{\Sigma} \cC \simeq A_\Sigma \- \Mod_\cC \ . $$
If $\Sigma$ is a closed surface, writing $\Sigma^\circ := \Sigma \setminus D$, we have
$$\int_{\Sigma} \cC \simeq \left( A_{\Sigma^\circ} \- \un_\cC \right) \- \Mod_{\int_{Ann} \cC} \ . $$
\end{theoremAlph}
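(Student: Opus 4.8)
The plan is to follow the strategy of Ben-Zvi--Brochier--Jordan, but carry it out in the unitary/C$^*$-setting, using the cocontinuity of the $\max$-tensor product (Theorem~\ref{thm:cocontinuitymaxproduct}) as the engine that makes all the relevant colimits and module-category constructions behave well. First, for the punctured case: every punctured surface $\Sigma$ has a handle decomposition, equivalently it is homotopy equivalent to a wedge of circles, so by the excision/gluing property of factorization homology (which holds because $\Clin$ is cocomplete and $\vnotimes_{\max}$ is cocontinuous) one computes $\int_\Sigma \cC$ as an iterated relative tensor product of copies of $\int_{Ann}\cC$ and $\int_{D}\cC = \cC$ over the $\bE_2$-algebra $\cC$. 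The key local input is the identification $\int_{Ann}\cC \simeq \cC\text{-}\Bimod_\cC$ together with the description of the ``collar'' actions; one then recognizes the resulting colimit as the category of modules in $\Vect(\cC)$ over a canonical algebra object, namely the image of the regular object under the excision maps. Concretely, $A_\Sigma$ is built as a (co)limit of tensor powers of the ``canonical end'' $\int^{X\in\cC} \underline{X}\otimes X$ type object, assembled according to the gluing pattern of $\Sigma$; the C$^*$-structure on $A_\Sigma$ comes from the fact that every functor in sight is a C$^*$-functor and the relevant colimits in $\Clin$ inherit $*$-structures (this is where one needs the unitary refinements of the BZBJ technical lemmas).

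The main steps, in order, are: (1) establish the unitary analogue of the BZBJ ``$1$-form'' / quantum-moment-map formalism, i.e.\ show that for a braided unitary tensor category the annular category $\int_{Ann}\cC$ acts on $\cC$ and that $\int_{Ann}\cC \simeq \cC\text{-}\Mod_{\cC}$ as a module category, producing the algebra object $A_{Ann}\in\Vect(\cC)$; (2) prove a ``$\Mod$ commutes with the relevant colimits'' statement in $\Clin$ — that relative tensor products of module categories $A\text{-}\Mod \boxtimes_{\cC} B\text{-}\Mod$ are again of the form $(A\otimes B)\text{-}\Mod$ for a suitable algebra $A\otimes B$ — which is the C$^*$-version of the Eilenberg--Watts / Barr--Beck type argument and relies crucially on Karoubi completeness and existence of finite direct sums; (3) induct on a handle decomposition of the punctured surface $\Sigma$, at each stage gluing in an annulus or a disk and using (1)+(2) to keep the answer in the form $A_\Sigma\text{-}\Mod_\cC$, thereby defining $A_\Sigma$ recursively and verifying it is a C$^*$-algebra object. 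For the closed case, write $\Sigma = \Sigma^\circ \cup_{Ann} D$ (fill in the puncture): excision gives $\int_\Sigma\cC \simeq \int_{\Sigma^\circ}\cC \boxtimes_{\int_{Ann}\cC} \cC$, and the punctured result rewrites $\int_{\Sigma^\circ}\cC \simeq A_{\Sigma^\circ}\text{-}\Mod_\cC$; tensoring down the puncture corresponds to taking $A_{\Sigma^\circ}\text{-}\un_\cC$, the ``value at the unit'', yielding $\int_\Sigma\cC \simeq (A_{\Sigma^\circ}\text{-}\un_\cC)\text{-}\Mod_{\int_{Ann}\cC}$ as claimed.

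The hard part will be step (2): transporting the module-category / relative-tensor-product bookkeeping of BZBJ into $\Clin$ requires that all the universal constructions (free modules, bar resolutions, Karoubi envelopes) are compatible with the $\max$-completed tensor product and with taking $*$-closures, and one must check that the natural comparison functors are not merely equivalences of linear categories but unitary equivalences of C$^*$-categories. In particular one needs a C$^*$-version of the statement that $\Vect(\cC)$ is the free cocompletion and that $A\text{-}\Mod_\cC$ for a C$^*$-algebra object $A$ is again (the C$^*$-analogue of) a nice module category, together with an Eilenberg--Watts theorem recognizing $\Clin$-enriched colimit-preserving functors between such categories as given by bimodules. Once that categorical infrastructure is in place — most of it being the ``unitary versions of technical results in \cite{BenZviBrochierJordan1,BenZviBrochierJordan2}'' advertised above — the surface induction and the closed-surface gluing are formal consequences of excision.
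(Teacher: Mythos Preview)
Your proposal is in the right spirit and lands on essentially the same tools (excision plus monadic reconstruction in the C$^*$-setting), but it differs from the paper's argument in one structural way and contains one factual slip.

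\textbf{Decomposition.} You propose an induction on a handle decomposition, gluing one handle (or annulus) at a time and invoking your step~(2) repeatedly. The paper instead attaches all $n$ handles to the disk \emph{simultaneously} via a gluing pattern $P$: the disk with $2n$ marked boundary intervals gives $\cC$ a $\cC^{\boxtimes 2n}$--$\cC$ bimodule structure, the disjoint handles give $\cC^{\boxtimes n}$ a right $\cC^{\boxtimes 2n}$-module structure twisted by the permutation $\tau_P$ encoding $P$, and a single application of excision yields
\[
\int_{\Sigma_P}\cC \ \simeq\ \cC^P \underset{\cC^{\boxtimes 2n}}{\boxtimes} \cC .
\]
Then one application of the reconstruction Lemma~\ref{reconstructionforrelativetensorproduct} (your step~(2)) identifies this with $a_P\text{-}\Mod_\cC$, where $a_P = T^{2n}(\tau_P(\underline{\End}_{\cC\boxtimes\cC}(\un)^{\boxtimes n}))$. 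The payoff of the one-shot approach is that the cross-relations in the multiplication on $a_P\simeq\cF^{\otimes n}$ --- the crossing morphisms $L^{\pm 1}, N^{\pm 1}, U$ according to whether handles are linked, nested, or unlinked --- fall out directly from $\tau_P$ and the unitary tensor structure of $T$. Your inductive scheme can be made to work, but you would have to recover these commutation relations handle by handle, and your sketch does not indicate how.

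\textbf{The annular category.} Your identifications $\int_{Ann}\cC \simeq \cC\text{-}\Bimod_\cC$ and $\int_{Ann}\cC \simeq \cC\text{-}\Mod_\cC$ are not correct as stated. The paper shows $\int_{Ann}\cC \simeq \cF\text{-}\Mod_\cC$, where $\cF = T(\underline{\End}_{\cC\boxtimes\cC}(\un_\cC))$ is the reflection equation algebra (Definition~\ref{reflectionequationalgebra}, Theorem~\ref{thm:facthomannulus}), obtained by viewing the annulus as $\cC \underset{\cC\boxtimes\cC^{\op}}{\boxtimes} \cC$ and applying base change. This is not used as an inductive building block for punctured surfaces --- the paper bypasses it via gluing patterns --- but it is the essential input for the closed case, where excision along the boundary circle is combined with Corollary~\ref{cor:balancedtensorproductovertheannuluscategory} rather than with the disk-level step~(2) you invoke.

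Your closed-surface paragraph is essentially the paper's argument.
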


Unitary tensor categories can typically be represented as categories of finite index bimodules over $\II_1$-factors, i.e. von Neumann algebras with trivial center and with a faithful tracial state. By generalizing a realization procedure in subfactor theory and combining it with factorization homology, we discuss how to produce extensions of $\II_1$-factors endowed with actions of mapping class groups. Given a C$^*$-algebra $A$ in $\Vect(\cC)$, the vector space $A(\un_\cC)$ has a structure of a C$^*$-algebra. We prove the following generalization of \cite{JonesPenneys2}. 
\begin{theoremAlph}[Theorem \ref{thm:realizationofcyclicrepresentations}]
Let $\cC \subset \Bim(N)$ the a unitary tensor category of finite index bimodules over the $\II_1$-factor $N$. There is a functorial construction taking a C$^*$-algebra object $A \in \Vect(\cC)$ and a state $\omega$ on $A(\un)$ and producing a von Neumann algebraic extension
$$N \overset{E}{\subset} \Xi(A,\omega) \ , $$
where $E: \Xi(A,\omega) \to N$ is a conditional expectation. $E$ is faithful if and only if $\omega$ is a faithful state.
\end{theoremAlph}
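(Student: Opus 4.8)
The plan is to construct $\Xi(A,\omega)$ by a categorified GNS procedure which, when $A(\un)\cong\C$, specializes to the realization procedure of Jones and Penneys \cite{JonesPenneys2} for connected algebra objects in $\Bim(N)$; in general the state $\omega$ supplies the Hilbert-space datum that is canonical in that case.

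First recall that a C*-algebra object $A$ makes each $A(X)$, $X\in\cC$, a right pre-Hilbert $A(\un)$-module: the $A(\un)$-valued form $\langle a,b\rangle$ is obtained by composing the $*$-structure $A(X)\to A(\bar X)$, the multiplication $A(\bar X)\otimes A(X)\to A(\bar X\otimes X)$ and the evaluation $\bar X\otimes X\to\un$, positivity being part of the C*-structure. Composing this form with $\omega$ and passing to the separation-completion yields a Hilbert space $\cK_X$, with $\cK_{\un}=H_\omega$ the GNS space of $(A(\un),\omega)$. As $\cC$ is a full C*-tensor subcategory of $\Bim(N)$, each $X$ has an underlying $N$-$N$ correspondence $H_X$, with bimodule structure coming from the trace of $N$, and I would set
$$\cH:=\bigoplus_{X\in\Irr(\cC)}\cK_X\otimes H_X,$$
an $N$-$N$ correspondence; independence of the chosen representatives of $\Irr(\cC)$ and of the chosen isomorphisms is routine via the multiplicity-space description of bimodule maps over a $\II_1$-factor. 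The multiplications of $A$, combined with the bimodule fusion $H_X\otimes_N H_Y\cong H_{X\otimes Y}$ and a choice of isometries implementing the fusion rules of $\cC$, assemble into an associative, $N$-$N$-bimodular multiplication on a dense graded subspace $\cH^0\subseteq\cH$ with two-sided unit $\Omega:=\xi_\omega\otimes\widehat{1}_N\in\cK_{\un}\otimes H_{\un}=H_\omega\otimes L^2(N)$ (using the unit $1_A\in A(\un)$ and $\un\cong L^2(N)$). One then verifies that left multiplication $\lambda(a)$ by a fixed $a\in\cH^0$ extends to a bounded operator on $\cH$ and that $\lambda(a)^*=\lambda(a^*)$, and sets $\Xi(A,\omega):=\lambda(\cH^0)''\subseteq B(\cH)$. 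This is a von Neumann algebra for which $\Omega$ is cyclic, which commutes with the right $N$-action on $\cH$, and which contains $N$ via the left $N$-action $n=\lambda(1_A\otimes\widehat{n})$. With $V\colon L^2(N)\to\cH$ the $N$-$N$-bimodular isometry $\widehat{n}\mapsto\Omega\cdot n=\xi_\omega\otimes\widehat{n}$, the compression $E(x):=V^*xV$ takes values in $N$ — because $V^*xV$ commutes with the left and the right $N$-actions on $L^2(N)$, whose joint commutant is $N$ — and one checks it is a normal conditional expectation onto $N$. Functoriality is built in: a morphism $(A,\omega)\to(A',\omega')$, that is, a morphism $\phi$ of C*-algebra objects with $\omega'\circ\phi_{\un}=\omega$, induces contractions $\cK_X\to\cK'_X$, hence a unit-preserving, multiplicative, $N$-$N$-bimodular contraction $\cH\to\cH'$ implementing a normal $*$-homomorphism $\Xi(A,\omega)\to\Xi(A',\omega')$ over $N$ that intertwines the expectations, compatibly with composition.

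It remains to show $E$ is faithful if and only if $\omega$ is faithful. Since $\Omega$ is cyclic for $\Xi(A,\omega)$ and $\langle x\Omega,\Omega\rangle=\tau_N(E(x))$ with $\tau_N$ faithful, $E$ is faithful if and only if the vector state of $\Omega$ is faithful on $\Xi(A,\omega)$, i.e. if and only if $\Omega$ is separating, i.e. if and only if $\Omega$ is cyclic for $\Xi(A,\omega)'$. The commutant contains the right $N$-action and the right multiplications $\rho(a)$, $a\in\cH^0$; I would identify $\Xi(A,\omega)'$ with the von Neumann algebra they generate (the ``opposite'' realization, built from the conjugates in $\cC$) and show that $\Omega$ is cyclic for it precisely when, for every $X$, the canonical map $A(X)\to\cK_X$ annihilates no vector of nonzero $A(\un)$-valued norm. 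Reducing along the $A(\un)$-valued forms to the unit object — using positivity in the C*-algebra object and faithfulness of $\tau_N$ — this holds for all $X$ if and only if it holds for $X=\un$, i.e. if and only if $\{y\in A(\un):\omega(y^*y)=0\}=0$, which is the faithfulness of $\omega$. For the converse, if $\omega$ is not faithful, a nonzero $y\in A(\un)$ with $\omega(y^*y)=0$ yields through the $\un$-component a nonzero element of $\Xi(A,\omega)$ that annihilates $\Omega$, so $E$ is not faithful.

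I expect the main obstacle to be the boundedness assertion in the second paragraph: that the purely formal algebra-object structure converges to an honest bounded, associative multiplication on $\cH$. In the connected case this relies on the canonical trace normalization; here one must establish the relevant norm estimate for a possibly non-faithful state $\omega$, controlling the interplay between the $A(\un)$-valued inner products, $\omega$ and the $N$-trace — presumably via Frobenius reciprocity and the conjugate equations in $\cC$. The identification of the commutant $\Xi(A,\omega)'$ used in the faithfulness step requires comparable care.
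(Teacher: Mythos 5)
Your construction follows essentially the same route as the paper: form the categorified GNS object $\cL^2_\omega(A)$, realize it against the bimodule functor $H$ to get the Hilbert space $\cH \cong |H\otimes\cL^2_\omega(A)|$, let the realized algebra $|A|=\bigoplus_X H^\circ(X)\otimes A(X)$ act by left multiplication, take the double commutant, and obtain $E$ by compressing to the $\un$-summand $L^2(N)\otimes\C\Delta_\omega$. However, the step you defer as ``the main obstacle'' --- boundedness and $*$-compatibility of the left multiplication operators --- is not an estimate to be redone by hand for each $\omega$; it is precisely Theorem 4.11 of \cite{JonesPenneys2} (quoted as Theorem \ref{thm:JonesPenneys2thm4.11}), and the order of operations matters. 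One first builds the representation $\pi_\omega\colon A\to\bB(\cL^2_\omega(A))$ \emph{internally} in $\Vect(\cC)$, where boundedness is part of the data of a representation of a C*-algebra object, and only then realizes; the realization $|\pi_\omega|$ of an internal representation is automatically a bounded unital $*$-homomorphism. Trying instead to define the multiplication directly on the graded dense subspace of $\cH$ and then prove boundedness is exactly the work that the internal formalism is designed to avoid, and your proposal leaves it unsupplied.

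The second genuine gap is functoriality. In the category $\CAlg_\Omega(\cC)$ of the paper the morphisms are ucp maps $\phi\colon A\to B$ intertwining the states, not $*$-homomorphisms, so the induced map $\cH\to\cH'$ is \emph{not} multiplicative as you assert; establishing that $\phi$ induces a normal ucp map $\Xi(A,\omega_A)\to\Xi(B,\omega_B)$ over $N$ is the bulk of the paper's proof and requires the Stinespring dilation theorem internal to $\cC$, the realized Kadison--Schwarz inequality, the resulting $L^2$-contraction $\cL^2|\theta|$, and a normality argument via density of $(M_{B,\omega_B})'\Omega_B$. Your one-line treatment does not cover this. On faithfulness, your reduction to ``$\Omega$ separating, i.e.\ cyclic for the commutant'' is a reasonable (indeed more explicit) framing of what the paper does via the identity $\tau\circ E=\langle(\cdot)\Omega\otimes\Delta_\omega,\Omega\otimes\Delta_\omega\rangle$ and injectivity of $\Lambda_\omega$ on $|A|$; but your converse direction is not justified as written: a nonzero $y\in A(\un)$ with $\omega(y^*y)=0$ satisfies $\lambda(y)\Omega=0$, yet $\lambda(y)$ may be the zero operator on all of $\cH$ (for instance when $\omega$ is supported on a direct summand of $A(\un)$), so you have not exhibited a nonzero element of $\Xi(A,\omega)$ killed by $E$.
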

Given a punctured surface $\Sigma$, $A_{\Sigma}(\un_\cC)$ comes with an action of the mapping class group $\Gamma (\Sigma)$ of $\Sigma$ by $*$-automorphisms. 
 
\begin{theoremAlph}[Theorem \ref{cor:actionofmappingclassgrouponextensions}]
Let $\cC \subset \Bim(N)$ be a unitary tensor category equipped with an unitary braiding. Let $\Sigma$ be a punctured surface. Let $A_{\Sigma} \in \Vect(\cC)$ be the C$^*$-algebra object in $\Vect(\cC)$ corresponding to $\Sigma$ via factorization homology. If $\omega$ is a state on $A_{\Sigma}(\un_\cC)$ invariant under the induced action of $\Gamma(\Sigma)$, then there is an induced action of $\Gamma(\Sigma)$ on $\Xi(A_\sigma,\omega)$ such that
$$N \subset \left( \Xi(A_\sigma,\omega) \right)^{\Gamma(\Sigma)} \ . $$
That is, $N$ is globally invariant under the action of $\Gamma(\Sigma)$.
\end{theoremAlph}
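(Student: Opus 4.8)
The plan is to produce the $\Gamma(\Sigma)$-action on $\Xi(A_\Sigma,\omega)$ by pure functoriality, feeding the mapping class group action into the functorial construction of Theorem~\ref{thm:realizationofcyclicrepresentations}.

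The first step is to upgrade the stated action of $\Gamma(\Sigma)$ on the C$^*$-algebra $A_\Sigma(\un_\cC)$ to an action on the C$^*$-algebra \emph{object} $A_\Sigma\in\Vect(\cC)$. Here factorization homology enters: $\int_{(-)}\cC$ is functorial for orientation-preserving diffeomorphisms, so an isotopy class $\gamma\in\Gamma(\Sigma)$ induces a unitary $\cC$-linear autoequivalence $\hat\gamma$ of $\int_\Sigma\cC\simeq A_\Sigma\-\Mod_\cC$ (Theorem~\ref{thm:factorizationhomologofpuncturedsurfaces}). Since a representative of $\gamma$ fixes a collar of the puncture pointwise, $\hat\gamma$ respects the $\cC$-module structure coming from the corresponding embedded disk and preserves the distinguished generating module; therefore $\hat\gamma$ is implemented by a $*$-automorphism $\alpha_\gamma$ of $A_\Sigma$ as a C$^*$-algebra object, and $\gamma\mapsto\alpha_\gamma$ is a homomorphism $\Gamma(\Sigma)\to\Aut(A_\Sigma)$ refining the given action on $A_\Sigma(\un_\cC)=\Hom(\un_\cC,A_\Sigma)$. (As with the action already exhibited, one fixes a set-theoretic section of $\Dif(\Sigma)\to\Gamma(\Sigma)$ and checks that changing the representative alters $\alpha_\gamma$ only by an inner automorphism that acts trivially after applying $\Xi$.)

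Next, since $\omega$ is $\Gamma(\Sigma)$-invariant, each $\alpha_\gamma$ is an endomorphism of the pair $(A_\Sigma,\omega)$ in the category on which $\Xi$ is defined as a functor: it is a $*$-automorphism of $A_\Sigma$ carrying $\omega$ to itself. Applying Theorem~\ref{thm:realizationofcyclicrepresentations} gives a normal $*$-automorphism $\Xi(\alpha_\gamma)$ of $\Xi(A_\Sigma,\omega)$, and functoriality yields $\Xi(\alpha_\gamma)\Xi(\alpha_{\gamma'})=\Xi(\alpha_{\gamma\gamma'})$ and $\Xi(\alpha_{\mathrm{id}})=\id$; that is, an action of the discrete group $\Gamma(\Sigma)$ on $\Xi(A_\Sigma,\omega)$ by normal $*$-automorphisms.

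Finally, for the global invariance of $N$: the canonical copy of $N$ inside $\Xi(B,\varphi)$ and the conditional expectation $E$ depend only on $\cC\subset\Bim(N)$ and not on the pair $(B,\varphi)$ — $N$ sits inside $\Xi(B,\varphi)$ through the unit object of $\cC$ — so they are natural in $(B,\varphi)$; this is part of, or an immediate consequence of, the functoriality asserted in Theorem~\ref{thm:realizationofcyclicrepresentations}. Concretely, for a state-preserving homomorphism $f\colon(B,\varphi)\to(B',\varphi')$ the induced map $\Xi(f)$ restricts to the identity on $N$ and satisfies $E'\circ\Xi(f)=E$. Taking $f=\alpha_\gamma$ shows $\Xi(\alpha_\gamma)|_N=\id_N$ for every $\gamma$, hence $N\subset\Xi(A_\Sigma,\omega)^{\Gamma(\Sigma)}$, and moreover $E$ is $\Gamma(\Sigma)$-equivariant. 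The main obstacle is the first step: showing that the mapping class group action genuinely lifts from $A_\Sigma(\un_\cC)$ to an action on the algebra object $A_\Sigma$ — not merely to an action on its category of modules up to Morita equivalence — and that this lift is compatible with the action already constructed; once the functoriality of $\Xi$ and the naturality of the inclusion $N\hookrightarrow\Xi(-,-)$ are in place, the remaining verifications are formal.
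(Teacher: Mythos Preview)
Your proposal is correct and follows essentially the same route as the paper. The paper states the result as a direct corollary of Theorem~\ref{thm:realizationofcyclicrepresentations} once the action of $\Gamma(\Sigma)$ on the C$^*$-algebra \emph{object} $A_\Sigma$ is in hand; that action is recorded separately as Proposition~\ref{prop:actionofmappingclassgrouponalgebraobjects} (the unitary analogue of \cite[Prop.~5.19]{BenZviBrochierJordan1}), so what you flag as the ``main obstacle'' is in fact already established in the paper before the corollary is stated, and your sketch of how to obtain it via functoriality of factorization homology is exactly the intended argument. One small point: your aside about choosing a set-theoretic section and inner automorphisms is unnecessary---isotopic diffeomorphisms induce unitarily isomorphic autoequivalences of $\int_\Sigma\cC$ fixing the generator, hence the \emph{same} $*$-automorphism of $A_\Sigma$, so the action of $\Gamma(\Sigma)$ on $A_\Sigma$ is strict with no ambiguity to resolve.
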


For the annulus, there is a canonical invariant state: the embedding $Ann \into D$ induces, via factorization homology, a morphism of C$^*$-algebra objects $\epsilon: A_{Ann} \to \un_\cC$. In particular, $\epsilon: A_{Ann}(\un) \to \C$ is a character of the C$^*$-algebra $A_{Ann}(\un)$. The mapping class group $\Gamma(Ann)$ is isomorphic to $\Z$.

\begin{theoremAlph}[Theorem \ref{thm:invarianceofthecounitundermappingclassgroup}, Corollary \ref{cor:extensionsbyreflectioneqalgebra}]
The character $\epsilon: A_{Ann}(\un) \to \C$ is $\Gamma(Ann)$-invariant. If $\cC \subset \Bim(N)$ is an unitary tensor category with an unitary braiding, there is a von Neumann algebraic extension $N \subset \Xi(A_{Ann},\epsilon)$ such that $\Z \curvearrowright \Xi (A_{Ann},\epsilon)$ and $N$ is globally invariant under the $\Z$-action.
\end{theoremAlph}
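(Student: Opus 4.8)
The plan is to first establish that the character $\epsilon$ is $\Gamma(Ann)$-invariant by a direct topological argument, and then to obtain the extension by feeding $\epsilon$ into Theorem~\ref{cor:actionofmappingclassgrouponextensions}.

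Recall that $\epsilon$ arises by applying factorization homology to a fixed embedding $\iota\colon Ann\into D$: this produces a $\cC$-module functor $\iota_*\colon\int_{Ann}\cC\to\int_{D}\cC=\cC$ which, under the dictionary of Theorem~\ref{thm:factorizationhomologofpuncturedsurfaces}, corresponds to the morphism of C$^*$-algebra objects $\epsilon\colon A_{Ann}\to\un_\cC$, and evaluation at $\un_\cC$ gives the character $\epsilon\colon A_{Ann}(\un_\cC)\to\C$. The $\Gamma(Ann)$-action on $A_{Ann}(\un_\cC)$ is, by construction, transported from the action of $\Gamma(Ann)=\pi_0\Dif^+(Ann)$ (rel a neighbourhood of $\partial(Ann)$) on $\int_{Ann}\cC$ by self-embeddings of $Ann$. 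Since $\Gamma(Ann)\cong\Z$ is generated by the Dehn twist $\tau$ along the core circle, it suffices to show that $\iota\circ\tau\colon Ann\into D$ is isotopic to $\iota$: functoriality of factorization homology then gives $\iota_*\circ\tau_*=(\iota\circ\tau)_*=\iota_*$, and translating through the equivalence $\int_{Ann}\cC\simeq A_{Ann}\-\Mod_\cC$ and the algebra-object correspondence, $\epsilon\circ\alpha=\epsilon$ for the induced automorphism $\alpha$ of $A_{Ann}$; hence $\epsilon\circ\alpha=\epsilon$ on $A_{Ann}(\un_\cC)$, and invariance under the generator is invariance under all of $\Gamma(Ann)$.

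The isotopy $\iota\circ\tau\simeq\iota$ is elementary and is where the topology enters. Take $\tau$ supported in the interior of $Ann$, equal to the identity on a collar of $\partial(Ann)$; extending it by the identity over $D\setminus\iota(\mathrm{int}\,Ann)$ yields a diffeomorphism $\tilde\tau$ of $D$ that is the identity near $\partial D$ --- an element of $\Dif^+(D,\partial D)$ --- and that satisfies $\tilde\tau\circ\iota=\iota\circ\tau$. By Smale's theorem, $\Dif^+(D,\partial D)$ is contractible, so there is an isotopy $(\tilde\tau_s)_{s\in[0,1]}$ in $\Dif^+(D,\partial D)$ from $\id_D$ to $\tilde\tau$; restricting to $\iota(Ann)$, the path $s\mapsto\tilde\tau_s\circ\iota$ lies in $\Emb(Ann,D)$ and runs from $\iota$ to $\tilde\tau\circ\iota=\iota\circ\tau$, which is what we want. (The same works for $\tau^n$, $n\in\Z$, though we do not need it.)

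Granting the invariance, the extension is immediate: $\epsilon$ is a state on the C$^*$-algebra $A_{Ann}(\un_\cC)$, invariant under $\Gamma(Ann)\cong\Z$, so Theorem~\ref{cor:actionofmappingclassgrouponextensions} applied to $\Sigma=Ann$, to $\cC\subset\Bim(N)$ unitary braided, and to $\omega=\epsilon$ yields the extension $N\subset\Xi(A_{Ann},\epsilon)$ with an action of $\Z=\Gamma(Ann)$ under which $N$ is globally invariant. The only delicate point in the whole argument is the middle step: one must check that the $\Gamma(Ann)$-action appearing in Theorem~\ref{cor:actionofmappingclassgrouponextensions} is exactly the one induced by the self-embeddings $\tau$ via the equivalence $\int_{Ann}\cC\simeq A_{Ann}\-\Mod_\cC$, and that this equivalence together with the algebra-object dictionary is functorial enough for ``$\iota_*\circ\tau_*=\iota_*$'' to pass to ``$\epsilon\circ\alpha=\epsilon$''. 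This amounts to bookkeeping with coherences already present in the proofs of Theorems~\ref{thm:factorizationhomologofpuncturedsurfaces} and~\ref{cor:actionofmappingclassgrouponextensions}, and no new mathematics is needed beyond the elementary fact that a Dehn twist of a subsurface becomes trivial after including it into a disk.
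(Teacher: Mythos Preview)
Your proof is correct and follows essentially the same route as the paper. Both arguments identify $\epsilon$ with the functor induced by an embedding $Ann\hookrightarrow D$ and observe that the Dehn twist generating $\Gamma(Ann)\cong\Z$ becomes isotopically trivial once pushed into the disk; you make this explicit by extending $\tau$ by the identity and invoking Smale's contractibility of $\Dif^+(D,\partial D)$, while the paper phrases the same fact as the triviality of the induced homomorphism $\Gamma(Ann)\to\Gamma(D)$, and both then feed the resulting invariant state into the realization machinery (your Theorem~\ref{cor:actionofmappingclassgrouponextensions}).
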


We also analyze factorization homology with coefficients in braided fusion categories, i.e. finite unitary tensor categories. The compatibility of factorization homology with unitary structures seems to yield interesting conclusions even in this case. We remark that if $\cC$ is a fusion category of finite index bimodules over a a factor $N$ either of type $\II_1$ or properly infinite, the realization procedure is simpler: any internal C$^*$-algebra object $A \in \Vect(\cC)$ gives rise to a von Neumann algebraic inclusion $N \subset \Xi(A)$. That is, we can omit the dependency on the state, and this construction is still functorial in the appropriate sense. Moreover, it will be clear that, for any punctured surface $\Sigma$, the corresponding internal C$^*$-algebra object $A_\Sigma$ is actually an object of $\cC$

In the fusion case, we can give an interesting characterization of {\em quantum Hamiltonian reduction}, a procedure that is used to compute factorization homology on closed surfaces. For every surface $\Sigma$ there is a unique orientation preserving smooth embedding $\emptyset \into \Sigma$. This gives a canonical functor in factorization homology
$$\Hilb_{\fd} \simeq \int_{\emptyset} \cC \to \int_{\Sigma} \cC \ ,$$
where $\Hilb_{\fd}$ is the category of finite dimensional Hilbert spaces, the unit in $\Clin$. The image of $\C \in \Hilb_{\fd}$ under this functor, denoted by $\cO_\Sigma$, is called the {\em quantum structure sheaf} of $\Sigma$.

\begin{theoremAlph}[Theorem \ref{thm:vonNeumannalgebraicquantumHamiltonianreduction}]
Let $\cC \subset \Bim(N)$ be a unitarily braided fusion category of finite index bimodules over a factor $N$ either of type $\II_1$ or properly infinite. Let $\cF$ be the reflection equation algebra of $\cC$, and let $N \subset M_\cF := \Xi(\cF)$ be the corresponding von Neumann algebraic extension. Given a closed surface $\Sigma$, we have the following characterization of the endomorphism of the quantum structure sheaf of $\Sigma$:
$$ \End_{\int_\Sigma \cC} (\cO_\Sigma) \simeq \Hom_{\Bim(N)}(\cL^2(N), A_{\Sigma^\circ} \underset{M_{\cF}}{\otimes} \cL^2(N)) \ , $$
where $\Sigma^\circ$ denotes $\Sigma \setminus D$, the surface $\Sigma$ once punctured at its interior.
\end{theoremAlph}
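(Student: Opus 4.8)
The plan is to combine the excision presentation of $\int_{\Sigma}\cC$ from Theorem~\ref{thm:facthomclosedsurfaces} with the realization functor $\Xi$ of Theorem~\ref{thm:realizationofcyclicrepresentations}, turning the quantum Hamiltonian reduction carried out inside $\Vect(\cC)$ into a relative tensor product of Hilbert bimodules over $M_\cF$. First I would unwind the structure sheaf: every embedding $\emptyset\into\Sigma$ factors through a disk $\emptyset\into D\into\Sigma$, and $\int_{\emptyset}\cC\to\int_{D}\cC\simeq\cC$ sends $\C$ to $\un_\cC$, so $\cO_\Sigma$ is the image of $\un_\cC$ under the gluing functor $\int_{D}\cC\to\int_{\Sigma}\cC$. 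Under the equivalence $\int_{\Sigma}\cC\simeq(A_{\Sigma^\circ}\-\un_\cC)\-\Mod_{\int_{Ann}\cC}$ this gluing functor is the free-module functor for the algebra $\mathbf{A}:=A_{\Sigma^\circ}\-\un_\cC$, so $\cO_\Sigma$ is $\mathbf{A}$ regarded as a module over itself; hence, by the free--forgetful adjunction, $\End_{\int_{\Sigma}\cC}(\cO_\Sigma)\cong\mathbf{A}(\un_\cC)$, the C$^*$-algebra underlying the reduced algebra object $\mathbf{A}$.

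Next I would give a model for $\mathbf{A}$ and realize it. The collar inclusion $Ann\into\Sigma^\circ$ induces a morphism of C$^*$-algebra objects $\cF=A_{Ann}\to A_{\Sigma^\circ}$, while $Ann\into D$ induces the counit $\epsilon\colon\cF\to\un_\cC$; unravelling the excision equivalences, the operation $(-)\-\un_\cC$ is the relative tensor product over $\cF$ against the $\cF$-module $(\un_\cC,\epsilon)$, so $\mathbf{A}\simeq A_{\Sigma^\circ}\otimes_{\cF}\un_\cC$ in $\Vect(\cC)$, presented by the evident bar/coequalizer diagram. Applying $\Xi$ to $\cF\to A_{\Sigma^\circ}$ yields the tower $N\subset M_\cF\subset\Xi(A_{\Sigma^\circ})$, and applying $\Xi$ to $\epsilon$ realizes the $\cF$-module $(\un_\cC,\epsilon)$ as $\cL^2(N)$ with left $M_\cF$-action through $\Xi(\epsilon)\colon M_\cF\to N$. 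Now one checks that $\Xi$ intertwines $\otimes_\cF$ with Connes fusion over $M_\cF$: since $\Xi$ is recovered from the $\un$-isotypic part and, by Theorem~\ref{thm:cocontinuitymaxproduct}, is compatible with the colimits occurring in the bar presentation, it carries the coequalizer computing $A_{\Sigma^\circ}\otimes_\cF\un_\cC$ to the one computing the fusion bimodule; thus, as $N$--$N$ bimodules, $\cL^2\bigl(\Xi(\mathbf{A})\bigr)\cong\cL^2\bigl(\Xi(A_{\Sigma^\circ})\bigr)\underset{M_\cF}{\otimes}\cL^2(N)$, which is the bimodule abbreviated $A_{\Sigma^\circ}\otimes_{M_\cF}\cL^2(N)$ in the statement. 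Feeding this into the realization dictionary identifying $B(\un_\cC)$ with the $\un$-isotypic part $\Hom_{\Bim(N)}\bigl(\cL^2(N),\cL^2(\Xi(B))\bigr)$ of $\cL^2\Xi(B)$, applied to $B=\mathbf{A}$, together with the first paragraph, gives the asserted isomorphism.

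The main obstacle is the claim that realization sends the algebraic relative tensor product over $\cF$ (i.e.\ quantum Hamiltonian reduction) to Connes fusion over $M_\cF$: one must verify that $\Xi$ preserves the relevant bar-type coequalizer and that the induced von Neumann algebraic pushout is computed, at the level of standard forms, by $\cL^2(\Xi(A_{\Sigma^\circ}))\otimes_{M_\cF}\cL^2(N)$. The hypothesis that $\cC$ is fusion and $N$ is of type $\II_1$ or properly infinite enters essentially here: it makes the bar complex degreewise a finite direct sum of objects of $\cC$, so that the cocontinuity of the $\max$-tensor product (Theorem~\ref{thm:cocontinuitymaxproduct}) applies, and it allows the state-free realization $\Xi(-)$ of Theorem~\ref{thm:realizationofcyclicrepresentations} to be used throughout, keeping the extension $N\subset M_\cF$ with its conditional expectation available and all the relative tensor products well behaved. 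A secondary point requiring care is the precise identification of $\cO_\Sigma$ with the free module in the first paragraph, which rests on the compatibility of the excision equivalence of Theorem~\ref{thm:facthomclosedsurfaces} with the canonical functors out of $\int_{\emptyset}\cC$ and $\int_{D}\cC$ supplied by factorization homology.
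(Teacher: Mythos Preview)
Your overall strategy is sound and arrives at the same conclusion, but the paper takes a considerably more direct route. The paper's proof is two lines: it invokes Proposition~\ref{prop:hamiltonianreduction} (which you essentially rederive in your first paragraph) to obtain $\End_{\int_\Sigma\cC}(\cO_\Sigma)\simeq\Hom_\cC(\un_\cC,A_{\Sigma^\circ}\otimes_\cF\un_\cC)$, then applies the fully faithful inclusion $H\colon\cC\hookrightarrow\Bim(N)$ to this Hom-space and uses Proposition~\ref{prop:relativetensorproductHilbertbimodules}, which establishes $H(X\otimes_A Y)\simeq H(X)\otimes_{H^\circ(A)}H(Y)$ directly by a universal-property argument in $\Bim(N)$ (relying on the surjectivity of $H^\circ(\epsilon)\colon H^\circ(\cF)\to N$). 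The paper never invokes the realization functor $\Xi$ of Section~\ref{sec:realization}, bar resolutions, or categorical cocontinuity: since $\cC$ is fusion, both $A_{\Sigma^\circ}$ and $\cF$ already lie in $\cC$, and the fully faithful bimodule functor $H$ does all the work.

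Your route would also work, but the justification for the key step---that realization carries $\otimes_\cF$ to Connes fusion over $M_\cF$---is misattributed. Theorem~\ref{thm:cocontinuitymaxproduct} concerns cocontinuity of the monoidal product $\underset{\max}{\boxtimes}$ on the $(2,1)$-category $\Clin$ and says nothing about $\Xi$ or $H^\circ$ preserving coequalizers of \emph{objects} inside a fixed C*-category. What you actually need is precisely the content of Proposition~\ref{prop:relativetensorproductHilbertbimodules}, whose proof is a short direct verification of the coequalizer universal property in $\Bim(N)$, using that the left $\cF$-action on $\un_\cC$ factors through $\epsilon$ and hence the $H^\circ(\cF)$-bounded vectors in $H(\un_\cC)=\cL^2(N)$ coincide with the $N$-bounded ones. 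Routing through $\Xi$, standard forms $\cL^2(\Xi(-))$, and bar complexes is unnecessary overhead here; the paper's argument buys simplicity by staying entirely on the bimodule side and never leaving $\cC$ until the final application of $H$.
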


Another perspective is the construction of topological field theories. It was shown in \cite{ScheimbauerThesis} that factorization homology can be used to construct fully extended TQFTs, making no use of the cobordism hypothesis. The TQFTs obtained from unitarily braided C$^*$-tensor categories are manifestly unitary. The present paper is a first step in the investigation of a class of C$^*$-categorical valued TQFTs, with much left to be discussed in future works.

Let us discuss the organization of the paper. In Section \ref{sec:perliminariesfachom} we describe factorization homology as developed in \cite{AyalaFrancis}, formulated in a lighter language due to the fact that our target category will be a (2,1)-category. 

Section \ref{sec:categoricalframework} is devoted to the categorical framework we are interested in. We recall the (2,1)-category of additive C*-tensor categories introduced in \cite{AntounVoigt} and study the $\underset{\max}{\boxtimes}$-symmetric monoidal product in this category, proving that it is cocontinuous. This proves that factorization homology with coefficient in an $\bE_n$-algebra in $\Clin$ is well defined. We focus later on unitary tensor categories, which are in particular rigid. We spend the remaining of Section 4 specializing the results in \cite{BenZviBrochierJordan1,BenZviBrochierJordan2} to the unitary setting. The reader familiar with their work will certainly see repetition of their arguments. However, in our case extra care must be taken since we will be dealing with categories and functors with further structure, namely $*$-structures. Most of the potential problems are solved by showing that adjoints of $*$-functors are $*$-functors, but the proof of this is to some extent non-trivial. 

In Section \ref{sec:computations} we streamline the arguments of \cite{BenZviBrochierJordan1,BenZviBrochierJordan2}, fitting them in our C*-setting. 

In Section \ref{sec:realization} we develop a theory of realization of cyclic representations for C*-algebras internal to unitary tensor categories in terms of extensions $N \subset M$ of von Neumann algebras admitting a conditional expectation, and where $N$ is a $\II_1$-factor. The strategy there is inspired by \cite{JonesPenneys2}, and our results generalize theirs. Combining the results in Section 6 with results in Section 5, we produce von Neumann algebraic extensions from geometric and topological data. 

In Section \ref{sec:quantumhamiltoninareduction}, we study quantum Hamiltonian reduction in the case of fusion categories using our formalism.  

This paper will consider the field $\C$ of complex numbers only, and hence linearity should always be understood as $\C$-linearity.

\subsection{Acknowledgements} I would like to thank A. Brochier, C. Jones, D. Penneys and M. Yamashita for their patience and kindness at discussing topics of this project with me and answering my questions.

\section{Preliminaries on Factorization Homology}
\label{sec:perliminariesfachom}

Factorization homology, as developed in \cite{AyalaFrancis,AyalaFrancisTanaka,LurieClassification}, is a topological incarnation of the notion of factorization algebras introduced in \cite{BeilinsonDrinfeldChiralAlgebras}, and elaborated in the context of perturbative QFT in \cite{CostelloOwen1,CostelloOwen2}.

Generally, factorization homology is formulated in terms of functors between higher categories. In this paper, the target of those functors will be (2,1)-categories, allowing us to simplify the exposition.

Let $M,N$ be smooth oriented surfaces. Define $\Emb(M,N)$ to be the space of oriented smooth embeddings of $M$ into $N$. Consider it as a topological space, endowed with the compact open topology. The fundamental groupoid $\Pi(\Emb(M,N))$ is then the groupoid of isotopies between smooth oriented embeddings $M \hookrightarrow N$.

\begin{definition}
Define $\Mfld^2_{\Or}$ as the (2,1)-category such that
\begin{itemize}
\item objects are smooth oriented surfaces;
\item 1-morphisms are the spaces of smooth oriented embeddings between smooth oriented surfaces;
\item 2-morphisms are isotopies between smooth oriented embeddings.
\end{itemize}
In other words, $\Mfld^2_{\Or}$ is the topological enriched category which associates to each pair $(M,N)$ of smooth oriented surfaces the topological groupoid $\Pi(\Emb(M,N))$. Together with the operation $\sqcup$ of disjoint union, $\Mfld_{\Or}^2$ becomes a symmetric monoidal (2,1)-category having the empty set $\emptyset$ as the monoidal unit.
\label{defi:Mfld}
\end{definition}

Generally, functors out of $\Mfld^2_{\Or}$ can be thought of as invariants of oriented surfaces, but one usually wishes for a compatibility with respect to disjoint unions, i.e., compatibility with respect to the monoidal structure, and also some formula allowing the computation of the invariant of a manifold, when it is obtained by gluing other two, in terms of the invariants associated to each manifold in the gluing, i.e., an excision formula. Factorization homology will give a recipe to construct such functors.

Suppose $(\cS,\boxtimes)$ is a symmetric monoidal (2,1)-category whose objects are certain linear categories, and the 2-morphisms are natural isomorphisms, and suppose that $F: \Mfld^2_{\Or} \to (\cS, \boxtimes)$ is a monoidal functor. Let $D \subset \R^2$ be the disk. Observe that the value $\cC := F(D)\in \cS$ comes equipped with certain operations $ \cC^{\boxtimes n} \to \cC$ which are images of the oriented smooth embeddings $D^{\sqcup n} \into D$ under $F$. Isotypic embeddings are sent to natural isomorphic functors. Such a structure is known as an $\bE_2$-algebra in $\cS$. In the following, $\Disk^2_{\Or}$ is the full monoidal subcategory of $\Mfld^2_{\Or}$ generated by the disk $D$. Being singly generated as a monoidal category, any strict monoidal functor out of $\Disk^2_{\Or}$ is determined by its value at $D$.

\begin{definition}
Let $\cS$ be a symmetric monoidal (2,1)-category. An $\bE_2$-algebra in $\cS$ is a monoidal functor $F: \Disk^2_{\Or} \to \cS$. Generally, an $\bE_n$-algebra in a symmetric monoidal category $\cS$ is a monoidal functor $\Disk_n^{\Or} \to \cS$.
\label{defi:E2-algebra}
\end{definition}

With some abuse of language, $F(D)$ itself will be referred to as an $\bE_2$-algebra. 

Let $F: \Mfld^2_{\Or} \to \cS$ be a monoidal functor. Then $\cC = F(D)$ is an $\bE_2$-algebra. Given an oriented surface $M$, $F(M)$ has the following property: in the figure below, any commutative diagram as in the left side induces through $F$ a commutative diagram as in the right side:

\begin{center}
\begin{tikzcd}
(D)^{\sqcup n} \arrow[rr, hook] \arrow[rd, hook] & {} \arrow[loop, distance=2em, in=305, out=235] & (D)^{\sqcup m} \arrow[ld, hook] &  & \cC^{\boxtimes n} \arrow[rr] \arrow[rd] & {} \arrow[loop, distance=2em, in=305, out=235] & \cC^{\boxtimes m} \arrow[ld] \\
                                                    & M                                              &                                    &  &                                         & F(M)                                           &                               
\end{tikzcd}
\end{center}
If $\cS$ is cocomplete, then $F(M)$ is the universal object having this property. In this case, one usually writes
$$F(M) = \underset{(D)^{\sqcup n} \into M}{\colim} \cC^{\boxtimes n} =: \int_M \cC \ , $$
the integral notation at the right-hand-side indicating that $F$ can be computed via a ``passage from local-to-global" procedure. Thus, when $\cC$ is cocomplete, $F$ is completely determined by its value at $D$. 

\begin{definition}
In the above, $\int_{(-)} \cC = F$ is called the factorization homology with coefficients in $\cC$ and with values in $\cS$.
\label{defi:factorizationhomology}
\end{definition}

In what follows we shall write $I$ for the oriented unit interval. Let $M$ be a surface obtained by a collar gluing of the surfaces $M_1$ and $M_2$ along a 1-dim. manifold $M_0$: 
$$M \simeq M_1 \underset{M_0 \times I}{\sqcup} M_2$$
There is an embedding $M_0 \times I \sqcup M_0 \times I \into M_0 \times I$ by stacking cylinders. By functoriality, there is an induced functor
$$\int_{M_0 \times I} \cC \ \boxtimes \int_{M_0 \times I} \cC \to \int_{M_0 \times I} \cC \ . $$
This gives a monoidal structure to the factorization homology over $M_0 \times I$.

Also, there are embeddings 
$$M_1 \sqcup M_0 \times I \into M_1 \ \text{and}\  M_0 \times I \sqcup M_2 \into M_2 \  ,$$
providing factorization homology over $M_1$ and $M_2$ with a right module structure and a left module structure over $\int_{M_0 \times I} \cC$, respectively. A monoidal functor $F = \int_{(-)} \cC$ as above is said to have the excision property with respect to collar gluing if
$$\int_M \cC \simeq \int_{M_1} \cC \underset{\int_{\small M_0 \times I} \cC}{\boxtimes} \int_{M_2} \cC \ . $$

\begin{theorem}[\cite{AyalaFrancis}]
If $(\cS,\boxtimes)$ is a cocomplete symmetric monoidal (2,1)-category and $\boxtimes$ is cocontinuous, then for any $\bE_2$-algebra $\cC$ in $\cS$, factorization homology with coefficient $\cC$ is well defined and has the excision property with respect to collar gluing.
\label{thm:excision}
\end{theorem}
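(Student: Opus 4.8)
The plan is to realize $\int_{(-)}\cC$ as a left Kan extension and then to verify the two asserted properties — well-definedness (existence together with functoriality and monoidality) and excision — via cofinality arguments about spaces of embeddings of disjoint disks. First I would construct the functor: the $\bE_2$-algebra $\cC$ is by definition a monoidal functor $\Disk^2_{\Or}\to\cS$, and factorization homology is defined to be its left Kan extension along the inclusion $\Disk^2_{\Or}\into\Mfld^2_{\Or}$. For a fixed oriented surface $M$, the value of this Kan extension is the colimit
$$\int_M\cC=\underset{(D)^{\sqcup n}\into M}{\colim}\cC^{\boxtimes n}$$
indexed by the slice $(2,1)$-category $\left(\Disk^2_{\Or}\right)_{/M}$, whose objects are embeddings of finite disjoint unions of disks into $M$ and whose $2$-morphisms record isotopies. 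Since $\cS$ is cocomplete this colimit exists, and functoriality of colimits upgrades $M\mapsto\int_M\cC$ to a functor $\Mfld^2_{\Or}\to\cS$; this is what ``well defined'' means.

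Next I would establish monoidality, which is the formal part once $\boxtimes$ is cocontinuous. One shows that the natural functor $\left(\Disk^2_{\Or}\right)_{/M}\times\left(\Disk^2_{\Or}\right)_{/N}\to\left(\Disk^2_{\Or}\right)_{/M\sqcup N}$ is cofinal — every configuration of disks in $M\sqcup N$ is isotopic to one respecting the partition, and the space of such separations is connected and simply connected — and then one commutes $\boxtimes$ past the two colimits using cocontinuity, obtaining $\int_{M\sqcup N}\cC\simeq\int_M\cC\boxtimes\int_N\cC$ together with the coherence isomorphisms. The pushforward of a monoidal structure along a left Kan extension then makes $\int_{(-)}\cC$ a monoidal functor.

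For excision, suppose $M\simeq M_1\underset{M_0\times I}{\sqcup}M_2$ is a collar gluing. The relative tensor product on the right-hand side of the excision formula is itself a colimit — the geometric realization of the two-sided bar construction whose $k$-simplices involve $\int_{M_1}\cC\boxtimes\left(\int_{M_0\times I}\cC\right)^{\boxtimes k}\boxtimes\int_{M_2}\cC$. Using cocontinuity of $\boxtimes$ once more, this bar construction rewrites as a single colimit over a category of configurations of disks distributed among $M_1$, finitely many copies of the collar $M_0\times I$, and $M_2$, arranged compatibly with the stacking maps. The key step is then to show that this diagram category maps cofinally into $\left(\Disk^2_{\Or}\right)_{/M}$: any finite collection of disjoint disks in $M$ can be isotoped so that each disk lies either in $M_1$, in $M_2$, or in a product neighbourhood $M_0\times I$ of the gluing locus, and the choices involved form a space that is connected and simply connected. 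Granting this cofinality, the two colimits agree and excision follows; the statement itself is due to \cite{AyalaFrancis,AyalaFrancisTanaka}.

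The main obstacle is precisely this last cofinality statement for collar gluings. Although intuitively clear — disks can be pushed off the gluing hypersurface — making it precise requires a genuine isotopy-theoretic analysis of the embedding spaces $\Emb\left((D)^{\sqcup n},M\right)$ and careful bookkeeping matching the combinatorics of the bar construction with the combinatorics of ``adapted'' disk configurations. In the general $\infty$-categorical setting this is the technical heart of \cite{AyalaFrancis}; in our $(2,1)$-truncated situation it reduces to verifying connectedness and simple-connectedness of certain configuration-type spaces, but it remains the step carrying all the content, the existence and monoidality statements being comparatively formal once cocontinuity of $\boxtimes$ is available (Theorem \ref{thm:cocontinuitymaxproduct}).
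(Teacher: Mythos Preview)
The paper does not prove this theorem at all: it is stated as a result of Ayala--Francis \cite{AyalaFrancis} and used as a black box, with no argument supplied. So there is nothing in the paper to compare your sketch against.

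That said, your sketch is a faithful outline of the Ayala--Francis strategy: realize factorization homology as the left Kan extension of the $\bE_2$-algebra along $\Disk^2_{\Or}\hookrightarrow\Mfld^2_{\Or}$, obtain monoidality from cocontinuity of $\boxtimes$ and a cofinality statement for disjoint unions, and deduce excision by identifying the two-sided bar construction with the disk colimit via a cofinality argument for collar-adapted disk configurations. Your identification of the cofinality step for collar gluings as the genuine content is accurate; in the original references this is handled via the ``pushforward'' or ``$\otimes$-excision'' argument, and you are right that in the $(2,1)$-truncated setting the relevant homotopical checks reduce to $\pi_0$ and $\pi_1$ of configuration-type spaces. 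One small caveat: strictly speaking Ayala--Francis work under the weaker hypothesis that $\cS$ is $\otimes$-sifted-cocomplete, not fully cocomplete and cocontinuous, but under the stronger hypotheses stated here your use of arbitrary colimits and full cocontinuity is of course permitted.
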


\section{C*-categorical Framework}
\label{sec:categoricalframework}

In this Section we establish the categorical framework for C$^*$-algebraic factorization homology. We shall write $\Vect$ for the category of vector spaces and linear maps, $\Vect_{\fd}$ for its full subcategory of finite dimensional vector spaces, $\Hilb$ for the category of Hilbert spaces and bounded linear maps and $\Hilb_{\fd}$ for its full subcategory of finite dimensional Hilbert spaces.  

\subsection{C*-categories}

A linear category is a small category $\cC$ enriched over $\Vect$: for any pair $(X,Y)$ of objects in $\cC$, $\Hom_{\cC}(X,Y)$ is a vector space, and the composition of morphisms is bilinear. A $*$-structure on a linear category $\cC$ is a contravariant endofunctor $*: \cC \to \cC$ which at the level of objects is the identity map $\Obj(\cC) \to \Obj(\cC)$ and which is involutive at the level of morphisms: $* \circ *: \Hom_\cC(X,Y) \to \Hom_\cC (Y,X) \to \Hom_\cC(X,Y)$ is the identity for all $X,Y \in \Obj(\cC)$. $*$-structures are also known as {\em dagger-structures}. Write $X \in \cC$ if $X \in \Obj(\cC)$. The definition below follows \cite{AntounVoigt} and \cite{JonesPenneys1}.

\begin{definition}
A $*$-category $\cC$ is a C*-category if
\begin{itemize}
\item[(1)] for every $X,Y \in \cC$, every $f \in \Hom_\cC(X,Y)$, there exists $g \in \End_\cC(X)$ such that $f^* \circ f = g^* \circ g$;
\item[(2)] for every $X,Y \in \cC$, the function $\| \cdot \|: \Hom_\cC(X,Y) \to [0,+\infty)$ given by
$$\| f \|^2 := \sup \{ |\lambda| \geq 0 \ | \nexists (\ f^* \circ f - \lambda \id_X)^{-1} \} $$  
makes $\Hom_{\cC}(X,Y)$ into a Banach space, with the additional properties $\| g \circ f \| \leq \|g\| \|f\|$ for every pair $(f,g)$ of composable morphisms, and $\| f^* \circ f \| = \|f\|^2$ for all morphisms $f$.
\end{itemize}
\label{defi:C*-categories}
\end{definition}

In particular, a C*-category is a category enriched over the category of Banach spaces, where the morphisms in the latter category are the contractive linear maps, such that every endomorphism algebra $\End_\cC(X)$ is a C*-algebra. We shall consider only categories with direct sums. In this case, condition (1) in the above definition is redundant, since it can be derived from the C*-identity together with an amplification trick.

In a C*-category $\cC$, a morphism $f$ is said to be unitary if $f^* = f^{-1}$. If $f$ is an endomorphism, it is said to be self-adjoint if $f^* = f$ and a projection if $f = f^* = f^2$. A morphism $f$ such that $ff^*$ is a projection is called a partial isometry, and in this case it also follows that $f^*f$ is a projection. Given two C*-categories $\cC$ and $\cC'$, a $*$-functor from $\cC$ to $\cC'$ is a linear functor $F: \cC \to \cC'$ which commutes with the respective $*$-structures: $F(f)^* = F(f^*)$ for every morphism $f$ in $\cC$. If $F$ and $F'$ are $*$-functor between C*-categories, a natural transformation $\eta: F \to F'$ is called a unitary natural isomorphism if every component $\eta_X \in \Hom_{\cC'}(F(X),F'(X))$ is unitary.

\begin{definition}
Define $\Clin$ to be the (2,1) category consisting of
\begin{itemize}
\item objects: C*-categories with direct sum;
\item 1-morphisms: $*$-functors;
\item 2-morphisms: unitary natural isomorphisms between functors.
\end{itemize}
\label{defi:C*-lin}
\end{definition}

\begin{remark}
In \cite{AntounVoigt}, the authors make a distinction between categories with or without units, but wee shall deal only with unital categories.
\end{remark}

\subsection{The monoidal structure of $\Clin$}

In \cite{AntounVoigt} it is shown that  $\Clin$ is cocomplete, i.e., it has small colimits. It has at least two monoidal products $\underset{\min}{\boxtimes}$ and $\underset{\max}{\boxtimes}$, induced by the min and max tensor products of C*-algebras, respectively. The main result of this section is that $\underset{\max}{\boxtimes}$ is cocontinuous. This monoidal structure is an operator algebraic version of the Deligne-Kelly monoidal product.

Let $\cC$ and $\cD$ be C*-categories with direct sums. We first introduce an intermediate category $\cC \boxtimes \cD$ and then define $\cC \underset{\max}{\boxtimes} \cD$ to be its Karoubi completion. Objects in $\cC \boxtimes \cD$ are pairs $(X,Y) \in \Obj(\cC) \times \Obj(\cD)$; they will be denoted by $X \boxtimes Y$. Define
$$\End_{\cC \boxtimes \cD} (X \boxtimes Y) := \End_\cC(X) \underset{\max}{\otimes} \End_\cD(Y) \ , $$
the $\max$ tensor product of the C*-algebras $\End_\cC(X)$ and $\End_\cD(Y)$. Given objects $X \boxtimes Y$ and $X' \boxtimes Y'$, there is an inclusion
$$\Hom_\cC(X,X') \underset{\C}{\otimes} \Hom_\cD(Y,Y') \into \End_{\cC \boxtimes \cD}((X \boxtimes Y) \oplus (X' \boxtimes Y')) \ . $$
Then $\Hom_{\cC \boxtimes \cD}(X \boxtimes Y, X' \boxtimes Y')$ is defined as the closure of the image of this inclusion. Direct sums of objects are defined coordinatewisely and extended to morphisms in the obvious way.

\begin{definition}
Define $\cC \underset{\max}{\boxtimes} \cD$ to be the Karoubi completion of $\cC \boxtimes \cD$.
\end{definition}

Now we move towards showing that $\underset{\max}{\boxtimes}$ is cocontinuous. We follow the terminology in \cite{AntounVoigt}.

Let $I$ be a small category, and let $V = (V_i)_{i \in I}$ be an $I$-shaped diagram in $\Clin$. That is, $V: I \to \Clin$ is a functor. The colimit $\varinjlim V_i$ is characterized by the equivalence
\begin{align*}
[I,\Clin] ( (V_i)_{i \in I}, \Delta W )  \simeq \Clin (\varinjlim V_i, W) 
\end{align*}
for all $W \in \Clin$, where $[I, \Clin]$ denotes the category of $I$-shaped diagrams in $\Clin$ and $\Delta W$ is the constant $I$-shaped diagram with essential value  $W$.

\begin{lemma}
For $A,B \in \Clin$, let $A \boxdot B: \Clin \to \Cat$ be given by 
$$ (A \boxdot B) (C) := \Clin (A, \Clin(B,C)) \ .  $$
Then $A \boxdot B$ is represented by $A \underset{\max}{\boxtimes} B$, i.e., 
$$\Clin (A \underset{\max}{\boxtimes}B,C) \simeq \Clin ( A, \Clin(B,C)) \ \forall \ C \ . $$
\end{lemma}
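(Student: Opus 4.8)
The plan is to establish the adjunction by producing an explicit unit and counit, or equivalently by exhibiting a natural bijection between the two Hom-categories that is implemented by a universal bilinear-type functor. Concretely, I would first construct a canonical \emph{bilinear} $*$-functor $\beta\colon A \times B \to A \underset{\max}{\boxtimes} B$ sending a pair $(X,Y)$ to $X \boxtimes Y$ and a pair of morphisms $(f,g)$ to the image of $f \otimes g$ under the defining inclusion $\Hom_A(X,X') \otimes \Hom_B(Y,Y') \hookrightarrow \End_{A \boxtimes B}((X\boxtimes Y)\oplus(X'\boxtimes Y'))$. One checks $\beta$ is a $*$-functor in each variable separately and that it is jointly ``$\max$-continuous'' in the sense that on endomorphism algebras it realizes the universal $*$-homomorphism $\End_A(X) \times \End_B(Y) \to \End_A(X)\underset{\max}{\otimes}\End_B(Y)$ into the maximal tensor product. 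The key universal property to invoke is precisely that of the maximal C*-tensor product: any pair of commuting $*$-homomorphisms from $\End_A(X)$ and $\End_B(Y)$ into a common C*-algebra factors uniquely through $\End_A(X)\underset{\max}{\otimes}\End_B(Y)$.

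Next I would define the comparison functor in both directions. Given a $*$-functor $F\colon A \underset{\max}{\boxtimes} B \to C$, restrict along $\beta$ to get a bilinear $*$-functor $A \times B \to C$, which by currying yields a $*$-functor $A \to \Clin(B,C)$; this uses that for fixed $X \in A$ the assignment $Y \mapsto F(X\boxtimes Y)$ is a $*$-functor $B \to C$ and that the $X$-dependence is again $*$-functorial, with unitary naturality of the relevant isomorphisms automatic since all 2-cells in $\Clin$ are unitary. Conversely, given $G\colon A \to \Clin(B,C)$, I would first build a bilinear $*$-functor $A \times B \to C$ by $(X,Y)\mapsto G(X)(Y)$, extend it to $A \boxtimes B$ using the universal property of the maximal tensor product on endomorphism algebras (to define it on the dense subspaces $\Hom_A(X,X')\otimes\Hom_B(Y,Y')$ and then observe it is contractive, hence extends to the norm closures), and finally extend to the Karoubi completion $A \underset{\max}{\boxtimes} B$ by the standard fact that $*$-functors extend uniquely (up to unitary isomorphism) to idempotent-completions. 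One then verifies these two assignments are mutually inverse up to unitary natural isomorphism, and that the bijection is natural in $C$ (and indeed in $A$ and $B$), giving the claimed equivalence of categories $\Clin(A\underset{\max}{\boxtimes}B,C)\simeq\Clin(A,\Clin(B,C))$; pedantically one should also note $\Clin(B,C)$ is again an object of $\Clin$, i.e.\ a C*-category with direct sums, which follows from the pointwise C*-structure on natural transformations.

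The main obstacle I expect is the \emph{well-definedness and contractivity} of the extension of a bilinear $*$-functor $A \times B \to C$ to the intermediate category $A \boxtimes B$: one must show that the linear map on $\Hom_A(X,X')\otimes\Hom_B(Y,Y')$ induced by $f\otimes g \mapsto G(f)_?\cdot\beta$-type composites is bounded for the norm coming from $\End_{A\boxtimes B}((X\boxtimes Y)\oplus(X'\boxtimes Y'))$, so that it descends to the norm closure. The natural strategy is the amplification/$2\times 2$-matrix trick already alluded to after Definition~\ref{defi:C*-categories}: realize $\Hom(X\boxtimes Y, X'\boxtimes Y')$ inside an endomorphism C*-algebra of a direct sum, note that a $*$-functor restricted to endomorphism algebras is a $*$-homomorphism hence contractive, and that the commuting pair of $*$-homomorphisms into $\End_C(\,\cdot\,)$ factors through the \emph{maximal} tensor product precisely because we took $\underset{\max}{\otimes}$ in the definition of $\End_{A\boxtimes B}$ — this is exactly where maximality (as opposed to minimality) is essential, since there is no universal property forcing a factorization through the minimal tensor product. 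Once contractivity is in hand, everything else is bookkeeping: associativity of the constructions, compatibility with Karoubi completion, and unitarity of the coherence isomorphisms, all of which are routine given the earlier results in the paper.
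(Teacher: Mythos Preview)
Your proposal is correct and follows essentially the same approach as the paper: the paper's proof simply cites \cite{AntounVoigt}, Proposition~3.8, and adds the ``complementary remark'' that given $F\colon A\to\Clin(B,C)$, the induced $*$-homomorphisms $\pi_A\colon\End_A(a)\to\End_C(F_a(b))$ and $\pi_B\colon\End_B(b)\to\End_C(F_a(b))$ commute by naturality and hence factor uniquely through the maximal tensor product---which is exactly the key step you identify. Your outline is more detailed (you spell out both directions, the Karoubi extension, and the contractivity argument via the $2\times 2$ amplification trick), but the core mechanism is identical.
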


\begin{proof}
This is Proposition 3.8 in \cite{AntounVoigt}; we will give just a complementary remark. Given $F: A \to \Clin(B,C)$ and objects $(a,b) \in A \times B$, $F$ induces  $*$-homomorphisms 
$$\End_A(a) \overset{\pi_A}{\rightarrow} \End_C(F_a(b)) \overset{\pi_B}{\leftarrow} \End_B(b) \ ,$$
given respectively by 
$$\pi_A(\phi) = F_\phi(b) \ , \ \pi_B(\psi) = F_a(\psi) \ .$$ 
By definition, $F_a(\phi)$ is a natural endomorphism of the functor $F_a: B \to C$. Naturality then implies 
$$F_\phi(b) \circ F_a(\psi) = F_a(\psi) \circ F_\phi(b) \ , $$
which in turn implies, together with the universality of the max tensor product, the existence and uniqueness of a $*$-homomorphism
$$ \End_A(a) \underset{\max}{\boxtimes} \End_B(b) \to \End_C(F_a(b)) \ ,$$
which restricts to $\pi_A$ and $\pi_B$. From this construction, we obtain a functor $f: A \underset{\max}{\boxtimes} B \to C$.
\end{proof}

\begin{corollary}
There are equivalences of C$^*$-categories
\begin{align*}
\Clin (A \underset{\max}{\boxtimes} \varinjlim V_i, B) \simeq \Clin(A, \Clin(\varinjlim V_i,B))) \simeq \Clin( A, [I,\Clin]((V_i)_{i \in I}, \Delta B)
\end{align*}
\end{corollary}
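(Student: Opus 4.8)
The plan is to derive both equivalences formally: the first is a direct instance of the preceding Lemma, and the second is obtained by propagating the defining universal property of $\varinjlim V_i$ through the $2$-functor $\Clin(A,-)$.

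For the first equivalence I would invoke the Lemma with its second argument instantiated at $\varinjlim V_i$ and its target instantiated at $B$. Since the Lemma yields $\Clin(A\underset{\max}{\boxtimes}B',C)\simeq\Clin(A,\Clin(B',C))$ for all $B'$ and $C$, taking $B'=\varinjlim V_i$ and $C=B$ gives
\[
\Clin\bigl(A\underset{\max}{\boxtimes}\varinjlim V_i,\,B\bigr)\simeq\Clin\bigl(A,\Clin(\varinjlim V_i,B)\bigr).
\]
No computation is needed; this is substitution into a result already proved.

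For the second equivalence I would start from the characterization of the colimit recalled just before the Lemma: for every $W\in\Clin$ there is an equivalence $\Clin(\varinjlim V_i,W)\simeq[I,\Clin]\bigl((V_i)_{i\in I},\Delta W\bigr)$. Specializing $W=B$ gives an equivalence of C$^*$-categories $\Clin(\varinjlim V_i,B)\simeq[I,\Clin]\bigl((V_i)_{i\in I},\Delta B\bigr)$, implemented by a pair of mutually quasi-inverse $*$-functors together with unitary natural isomorphisms witnessing the two composites. Post-composition with these $*$-functors produces functors between $\Clin\bigl(A,\Clin(\varinjlim V_i,B)\bigr)$ and $\Clin\bigl(A,[I,\Clin]((V_i)_{i\in I},\Delta B)\bigr)$, and whiskering the two witnessing unitary isomorphisms produces unitary natural isomorphisms exhibiting these functors as mutual quasi-inverses; this is the standard fact that a hom-$2$-functor of a $(2,1)$-category sends equivalences to equivalences. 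Chaining with the first equivalence yields the statement.

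There is no genuine obstacle here, since this is a formal corollary; the only points deserving attention are bookkeeping ones. One should confirm that $[I,\Clin]\bigl((V_i)_{i\in I},\Delta B\bigr)$ — the category of morphisms from the diagram $(V_i)_{i\in I}$ to the constant diagram $\Delta B$ in the $(2,1)$-category of $I$-shaped diagrams — genuinely carries the C$^*$-structure for which $\Clin(A,-)$ is the construction appearing in the Lemma, and that the equivalence supplied by the colimit characterization is realized by honest $*$-functors (not merely linear ones), so that ``equivalence of C$^*$-categories'' has the same meaning at every link of the chain. Both are immediate from the setup of Section \ref{sec:categoricalframework} and from the description of colimits in $\Clin$ in \cite{AntounVoigt}.
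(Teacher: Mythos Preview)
Your proposal is correct and matches the paper's intent: the corollary is stated there without proof precisely because it follows formally from the preceding Lemma (for the first equivalence) together with the defining universal property of $\varinjlim V_i$ applied inside $\Clin(A,-)$ (for the second). Your bookkeeping remarks about $*$-structures are appropriate caution but, as you note, pose no difficulty in this setup.
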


\begin{lemma}
There is an equivalence of C*-categories
$$ \Clin (A,[I,\Clin]((V_i)_i, \Delta B)) \simeq [I,\Clin ] ((A \underset{\max}{\boxtimes} V_i)_i, \Delta B) \ . $$
\end{lemma}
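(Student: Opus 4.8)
The plan is to unwind both sides of the claimed equivalence into explicit descriptions and exhibit mutually inverse $*$-functors, using the adjunction from the preceding Lemma as the only nontrivial input. On the left-hand side, an object is a $*$-functor $G : A \to [I,\Clin]((V_i)_i,\Delta B)$. By definition of the functor category, such a $G$ assigns to each $a \in A$ a natural transformation $G(a) = (G(a)_i : V_i \to B)_{i \in I}$ compatible with the structure maps of the diagram $V$, and to each morphism in $A$ a modification of these. On the right-hand side, an object is a natural transformation $(H_i : A \underset{\max}{\boxtimes} V_i \to B)_{i \in I}$ compatible with the maps $A \underset{\max}{\boxtimes} V_i \to A \underset{\max}{\boxtimes} V_j$ induced by $V$.

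First I would note that for each fixed $i$, the Lemma above gives an equivalence $\Clin(A \underset{\max}{\boxtimes} V_i, B) \simeq \Clin(A,\Clin(V_i,B))$, natural in all arguments; call the two directions $\Phi_i$ and $\Psi_i$. The point is that these equivalences are compatible with the structure maps of the diagram $V$: precomposition by $A \underset{\max}{\boxtimes} V_i \to A \underset{\max}{\boxtimes} V_j$ on the left corresponds, under $\Phi$, to postcomposition by $\Clin(V_j,B) \to \Clin(V_i,B)$ (which is itself precomposition by $V_i \to V_j$). This naturality is what promotes the pointwise equivalences $\Phi_i$ to an equivalence of the two functor categories. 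Concretely, I would define the functor from right to left by sending $(H_i)_i$ to the assignment $a \mapsto (\Phi_i(H_i)_a : V_i \to B)_i$, check that compatibility of the $H_i$ translates exactly into the condition making this a morphism in $[I,\Clin]$, and do the analogous thing on morphisms of $A$; then define the inverse functor using the $\Psi_i$. Finally I would verify that the two composites are (unitarily) naturally isomorphic to the identities, which follows formally from the corresponding statement for each $\Phi_i,\Psi_i$.

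A small amount of care is needed because we are in a $(2,1)$-setting: the ``equivalence of C*-categories'' in the Lemma is an equivalence, not an isomorphism, so strictly speaking one should track the unit and counit $2$-cells and confirm that the induced $2$-cells on the functor categories are again unitary natural isomorphisms. I would handle this by observing that everything in sight is defined by the universal property of $\underset{\max}{\boxtimes}$ and Karoubi completion, so the coherence isomorphisms are canonical and automatically unitary; no choices beyond those already made in the Lemma are required.

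The main obstacle I anticipate is purely bookkeeping: making the naturality of the adjunction $\Phi_i \dashv \Psi_i$ in the variable $i$ precise, i.e.\ checking that the square relating $\Phi_i$, $\Phi_j$, and the structure maps of $V$ commutes (up to the appropriate unitary $2$-cell). This amounts to tracing through the explicit formula for $\Phi$ given in the proof of the Lemma — the $*$-homomorphism $\End_A(a) \underset{\max}{\otimes} \End_{V_i}(b) \to \End_B$ that it produces — and confirming it is preserved under the functoriality in $i$. Once that compatibility is in hand, the assembly into an equivalence of functor categories is formal.
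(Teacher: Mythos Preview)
Your proposal is correct and follows essentially the same route as the paper: both arguments unwind an object on the left into a family $(F(a)_i : V_i \to B)_i$ indexed by $a \in A$ and $i \in I$, use the universal property of $\underset{\max}{\boxtimes}$ from the preceding Lemma to curry each $F(-)_i$ into a $*$-functor $\tilde F_i : A \underset{\max}{\boxtimes} V_i \to B$, and then verify that the compatibility conditions defining a cone on the left correspond exactly to those defining a cone on the right. The paper carries out this verification explicitly (writing out the commuting squares for $i \to j$ and for modifications), whereas you phrase the same check as ``naturality of the adjunction $\Phi_i$ in $i$''; these are the same computation.
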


\begin{proof}
Let $F \in  \Clin (A,[I,\Clin]((V_i)_i, \Delta B))$. Then for every $a \in A$ there is a family $\{F(a)_i: V_i \to B\}_i$ of linear $*$-functors satisfying relations dictated by the category $I$. Linearity of $F$ means that, for all $i$,
$$F( - )_i: A \times V_i \to B$$
is bilinear, and hence induces a linear $*$-functor $\tilde{F}_i: A \underset{\max}{\boxtimes} V_i \to B$, and it must be shown that $\{\tilde{F}_i\}$ is a transformation from $(A \underset{\max}{\boxtimes} V_i)_i$ to $\Delta B$. Write $(V_i)_{i \in I} = \iota: I \to \Clin$, i.e., $V_i = \iota(i) \ \forall \ i \in  I$. The $I$-shaped diagram $(A \underset{\max}{\boxtimes} V_i)_{i \in I}$ is then $\id_A \boxtimes \iota$. $\{\tilde{F}_i \}_{i \in I}$ is a transformation of $I$-shaped diagrams iff for all morphisms $i \to j$ in $I$ it holds
$$ \tilde{F}_i = \tilde{F}_j \circ (\id_A \boxtimes \iota (i \to j)) \ .$$ 
Observe that
\begin{align*}
\tilde{F}_i &= \tilde{F}_j \circ( \id_A \boxtimes \iota (i \to j)) \\
& \Updownarrow \\ 
\tilde{F}_i( a \boxtimes -) 
& = \tilde{F}_j (a \boxtimes - ) \circ (\id_A \boxtimes \iota (i \to j) ) \\
& \Updownarrow \\
F(a)_i & =  F(a)_j \circ \iota (i \to j)
\end{align*}
for all $a \in A$, the last condition being one of the defining conditions for $F$.

The correspondence $F \mapsto \tilde{F}$ gives a map $\theta$ from the objects of  $ \Clin (A,[I,\Clin]((V_i)_i, \Delta B))$ to the objects of $[I,\Clin ] ((A \underset{\max}{\boxtimes} V_i)_i, \Delta B)$. Given two objects $F$ and $G$ in the former category, assume that $\Gamma: F \to G$ is a natural transformation. By definition, $\Gamma(a): F(a) \to G(a)$ is a modification of transformations for all $a \in A$. Define $\tilde{\Gamma}: \tilde{F} \to \tilde{G}$ by 
$$\tilde{\Gamma}_i ( a \boxtimes v) := \Gamma(a)_i (v) \ . $$
The commutativity of
\begin{center}
\begin{tikzcd}
\tilde{F}_i \arrow[rr, "\tilde{\Gamma}_i"] \arrow[dd, "\tilde{F}(i \to j)"']     &                                                & \tilde{G}_i \arrow[dd, "\tilde{G} (i \to j)"]       \\
                                                                                 & {} \arrow[loop, distance=2em, in=305, out=235] &                                                     \\
\tilde{F}_j \circ (\id_A \boxtimes \iota(i \to j)) \arrow[rr, "\Gamma_j * \id"'] &                                                & \tilde{G}_j \circ (\id_A \boxtimes \iota (i \to j))
\end{tikzcd}
\end{center}
is equivalent to the commutativity of
\begin{center}
\begin{tikzcd}
F(a)_i \arrow[rr, "\Gamma(a)_i"] \arrow[dd, "F(a)(i \to j)"'] &                                                & G(a)_i \arrow[dd, "G(a) (i \to j)"] \\
                                                              & {} \arrow[loop, distance=2em, in=305, out=235] &                                     \\
F(a)_j \circ \iota (i \to j) \arrow[rr, "\Gamma(a)_j * \id"'] &                                                & G(a)_j \circ \iota(i \to j)        
\end{tikzcd}
\end{center}
for all $a \in A$, which is a defining condition for $\Gamma$. This finishes the construction of the functor $\Theta: \Clin (A, [I,\Clin]((V_i)_{i \in I}, \Delta B))\to [I,\Clin]((A \underset{\max}{\boxtimes} V_i)_{i \in I}, \Delta B)$. Reversing the arguments, given $\tilde{F} \in [I, \Clin]((A \underset{\max}{\boxtimes} V_i)_{i \in I}, \Delta B)$, one sees that, for each $a \in A$,
$$\{ F(a)_i:= \tilde{F}_i( a \boxtimes -): V_i \to B \}_{i \in I} $$
defines an element of $[I,\Clin]((V_i)_{i \in I}, \Delta B)$. Naturality of $\tilde{F}$ implies that $F$ extends to a functor
$$F: A \to [I,\Clin]((V_i)_{i \in I}, \Delta B) \ . $$
The construction of a natural transformation $\Gamma: F \to G$ from a modification $\tilde{\Gamma}: \tilde{F} \to \tilde{G}$ follows the same reasoning.
\end{proof}

\begin{theorem}
The monoidal structure $\underset{\max}{\boxtimes}$ on $\Clin$ is cocontinuous.
\label{thm:cocontinuitymaxproduct}
\end{theorem}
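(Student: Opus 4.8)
The plan is to splice the previous Lemma, its Corollary, and the last Lemma into a single chain of equivalences that is natural in a test object, and then to read off the conclusion via the $(2,1)$-categorical Yoneda lemma. Concretely, it suffices to show that for each fixed $A \in \Clin$ the $2$-functor $A \underset{\max}{\boxtimes} (-)\colon \Clin \to \Clin$ preserves small colimits; cocontinuity of the symmetric monoidal product in both variables then follows from the symmetry of $\underset{\max}{\boxtimes}$.

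Fix $A \in \Clin$ and an $I$-shaped diagram $V = (V_i)_{i \in I}$ in $\Clin$. For an arbitrary $B \in \Clin$, the Corollary provides an equivalence
$$\Clin\left(A \underset{\max}{\boxtimes} \varinjlim V_i,\ B\right) \simeq \Clin\left(A,\ [I,\Clin]((V_i)_{i}, \Delta B)\right),$$
the last Lemma provides
$$\Clin\left(A,\ [I,\Clin]((V_i)_{i}, \Delta B)\right) \simeq [I,\Clin]\left((A \underset{\max}{\boxtimes} V_i)_{i},\ \Delta B\right),$$
and the universal property characterizing the colimit $\varinjlim (A \underset{\max}{\boxtimes} V_i)$ in $\Clin$ (which exists by cocompleteness, \cite{AntounVoigt}) reads
$$[I,\Clin]\left((A \underset{\max}{\boxtimes} V_i)_{i},\ \Delta B\right) \simeq \Clin\left(\varinjlim (A \underset{\max}{\boxtimes} V_i),\ B\right).$$
Composing the three equivalences gives, for every $B$,
$$\Clin\left(A \underset{\max}{\boxtimes} \varinjlim V_i,\ B\right) \simeq \Clin\left(\varinjlim (A \underset{\max}{\boxtimes} V_i),\ B\right).$$

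The step I would spend the most care on is checking that this composite equivalence is pseudonatural in $B$: the Corollary's equivalence is natural because it is induced from the representing-object property of the first Lemma (and $B \mapsto \Clin(B',-)$ is $2$-functorial), the colimit equivalence is natural by definition of $\varinjlim$, and the equivalence $\Theta$ built in the proof of the last Lemma must be shown to commute, up to coherent $2$-cells, with postcomposition by $*$-functors $B \to B'$ — this is where one unwinds the explicit formulas $\tilde F_i(a \boxtimes v) = F(a)_i(v)$ and $\tilde\Gamma_i(a \boxtimes v) = \Gamma(a)_i(v)$ and verifies the squares remain commutative. Having this, the bicategorical Yoneda lemma applies: a pseudonatural equivalence between the represented $2$-functors $\Clin(X,-)$ and $\Clin(Y,-)$ forces $X \simeq Y$, and here the fact that $\Clin$ is a $(2,1)$-category (all $2$-cells invertible) is precisely what lets this run without higher coherence data. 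We conclude $A \underset{\max}{\boxtimes} \varinjlim V_i \simeq \varinjlim (A \underset{\max}{\boxtimes} V_i)$, naturally in the diagram, so that $A \underset{\max}{\boxtimes}(-)$ preserves small colimits; by symmetry of $\underset{\max}{\boxtimes}$ the same holds for $(-)\underset{\max}{\boxtimes} B$, and hence $\underset{\max}{\boxtimes}$ is cocontinuous.

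The main obstacle, then, is not any one estimate but the bookkeeping of pseudonaturality across the chain — in particular the compatibility of $\Theta$ with the $2$-cells of $\Clin$ — together with the precise invocation of the $(2,1)$-categorical Yoneda lemma; once these are in place the theorem is immediate from the already-established representability and colimit computations.
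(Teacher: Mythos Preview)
Your proof is correct and follows essentially the same approach as the paper: both chain together the representability lemma, its corollary, the last lemma, and the universal property of the colimit to obtain a natural equivalence $\Clin(A \underset{\max}{\boxtimes} \varinjlim V_i, B) \simeq \Clin(\varinjlim(A \underset{\max}{\boxtimes} V_i), B)$, then conclude by Yoneda. You are more explicit than the paper about the pseudonaturality in $B$ and the invocation of the $(2,1)$-categorical Yoneda lemma, which the paper leaves implicit; this is a virtue, not a deviation.
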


\begin{proof}
The $\varinjlim (A \underset{\max}{\boxtimes} V_i)$ is determined up to equivalence by
$$\Clin (\varinjlim A \boxtimes V_i, B) \simeq [I, \Clin] ((A \boxtimes V_i)_{i \in I}, \Delta B ) $$
for all $B \in \Clin$. Then, using the previous lemmas, one concludes
\begin{align*}
\Clin (\varinjlim(A \underset{\max}{\boxtimes} V_i), B) & \simeq [I, \Clin] ((A \underset{\max}{\boxtimes} V_i)_{i \in I}, \Delta B))  \\
& \simeq \Clin( A, [I,\Clin]((V_i)_{i \in I}, \Delta B)) \\
& \simeq \Clin ( A, \Clin( \varinjlim V_i, B)) \\
& \simeq \Clin ( A \boxtimes \varinjlim V_i, B)
\end{align*}
\end{proof}

By \cite{AyalaFrancis}, factorization homology is well defined in $\Clin$. See Definition \ref{defi:C*-tensor categories} for a definition of C$^*$-tensor categories.

\begin{proposition}
In $\Clin$, $\bE_1$-algebras are C*-tensor categories, $\bE_2$-algebras are unitarily braided C*-tensor categories, and for $n \geq 3$ $\bE_n$-algebras are unitarily symmetric C*-tensor categories.
\label{prop:EnalgebrasinClin}
\end{proposition}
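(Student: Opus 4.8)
The plan is to identify, for each $n$, the combinatorial structure of the monoidal category $\Disk_n^{\Or}$ and unwind what a monoidal functor $\Disk_n^{\Or} \to \Clin$ amounts to, using Theorem \ref{thm:excision} (so that factorization homology is available) and the cocontinuity of $\underset{\max}{\boxtimes}$ from Theorem \ref{thm:cocontinuitymaxproduct}. The key point is that $\Clin$ is a $(2,1)$-category whose $2$-morphisms are \emph{unitary} natural isomorphisms, so every coherence isomorphism produced by a monoidal functor is automatically unitary; this is what upgrades ``braided/symmetric tensor category'' to ``unitarily braided/symmetric C$^*$-tensor category'' in the statement. First I would treat $n=1$: a monoidal functor $F \colon \Disk^1_{\Or} \to \Clin$ consists of the object $\cC := F(D^1)$ together with the image of the unique (up to isotopy) orientation-preserving embedding $D^1 \sqcup D^1 \into D^1$, which is a $*$-functor $m \colon \cC \underset{\max}{\boxtimes} \cC \to \cC$, and the image of $\emptyset \into D^1$, giving the unit $\un_\cC \in \cC$. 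Associativity and unit coherences come from the contractibility of the relevant embedding spaces $\Emb((D^1)^{\sqcup k}, D^1)$, which for $1$-disks are homotopy equivalent to the (ordered) configuration spaces of $k$ points in an interval --- these are contractible --- so the associator and unitors are determined up to a unique $2$-morphism in $\Clin$, hence unitary. This exhibits $\cC$ as a C$^*$-tensor category (Definition \ref{defi:C*-tensor categories}); conversely any C$^*$-tensor category yields such an $F$.

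Next I would handle $n=2$. Here $\Emb(D^{\sqcup 2}, D)$ is homotopy equivalent to the configuration space $\mathrm{Conf}_2(\R^2) \simeq S^1$, which is connected but not simply connected; its fundamental groupoid, transported through $F$, produces a unitary natural isomorphism $\beta \colon m \Rightarrow m \circ \tau$ (the braiding), where $\tau$ is the flip on $\cC \underset{\max}{\boxtimes} \cC$. The hexagon identities and naturality follow from the standard relations in $\mathrm{Conf}_k(\R^2)$ (the higher configuration spaces controlling the coherence data), exactly as in the classical recognition of $\bE_2$-algebras in a symmetric monoidal $(2,1)$-category as braided monoidal objects --- I would cite the relevant statement in \cite{AyalaFrancis} (or \cite{LurieClassification}) and note that the only new ingredient is that all $2$-morphisms in $\Clin$ are unitary, so $\beta$ is a \emph{unitary} braiding. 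For $n \geq 3$, the embedding spaces $\Emb(D^{\sqcup k}, D)$ for $n$-disks are homotopy equivalent to $\mathrm{Conf}_k(\R^n)$, which are simply connected for $n \geq 3$; by the Eckmann--Hilton/stabilization argument the braiding becomes symmetric ($\beta^2 = \id$), yielding a unitarily symmetric C$^*$-tensor category.

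Throughout, the technical glue is that $*$-functors and unitary natural isomorphisms compose correctly and that $\underset{\max}{\boxtimes}$ is the symmetric monoidal product making $\Clin$ a symmetric monoidal $(2,1)$-category with cocontinuous tensor product (Theorems \ref{thm:cocontinuitymaxproduct} and \ref{thm:excision}); with these in hand, the translation between ``$\bE_n$-algebra in $\Clin$'' and ``($n$-fold iterated braided) C$^*$-tensor category'' is the same purely formal unwinding as in the $\Rex$ setting of \cite{BenZviBrochierJordan1, BenZviBrochierJordan2}. The main obstacle I anticipate is purely bookkeeping: checking that the coherence isomorphisms extracted from the topology of the little $n$-disks operad land among the \emph{unitary} natural isomorphisms (rather than merely invertible ones) and are compatible with the $*$-structure on $\cC$ --- i.e. that the associator, unitors and braiding assembled from $F$ satisfy $*$-naturality. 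This reduces to the observation that $F$ being a functor into $\Clin$ forces $F$ of every isotopy to be unitary by definition of the $2$-morphisms in $\Clin$, so in fact there is nothing to check beyond transporting the classical coherence diagrams; I would state this explicitly as the crux of the argument rather than reproving the operadic coherences.
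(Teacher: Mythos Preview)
Your proposal is correct and follows essentially the same approach as the paper: identify the monoidal structure from an embedding $D^{\sqcup 2}\into D$, read off the braiding from the isotopy exchanging the two small disks, observe that $2$-morphisms in $\Clin$ are unitary so the braiding is unitary, and use simple-connectedness of the configuration spaces for $n\geq 3$ to force symmetry. The paper's proof is briefer and simply appeals to the well-known statement in $\Cat(\C)$, but the content is the same.

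One small remark: your invocations of Theorem~\ref{thm:cocontinuitymaxproduct} and Theorem~\ref{thm:excision} are unnecessary here. The proposition only concerns what a monoidal functor $\Disk^n_{\Or}\to\Clin$ \emph{is}; it does not require factorization homology to exist or $\underset{\max}{\boxtimes}$ to be cocontinuous. Those results are needed to \emph{compute} factorization homology, not to identify $\bE_n$-algebras. You can safely drop those references and the argument still goes through.
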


\begin{proof}
The analogous statement in $\Cat(\C)$, the symmetric category of linear categories, is well known. The proof of the Proposition is identical to that case. For every $n \geq 1$, an embedding of two disjoint $n$-dimensional disks into a third one gives to any $\bE_n$-algebra $\cC$ a monoidal structure. For $n = 2$, there is a nontrivial orientation preserving isotopy that exchanges the positions of the two small disks inside the bigger one. This corresponds to a 2-morphisms in $\Clin$ between $\cC \underset{\max}{\boxtimes} \cC \overset{\otimes}{\longrightarrow} \cC$ and $\cC \underset{\max}{\boxtimes} \cC \overset{\text{flip}}{\longrightarrow} \cC \underset{\max}{\boxtimes} \cC \overset{\otimes}{\longrightarrow} \cC$, which is exactly a braiding. Since 2-morphisms in $\Clin$ are unitary natural isomorphisms, the braiding must be unitary. For $n \geq 3$, the isotopy exchanging the positions of the two small disks is trivial, meaning that the braiding must be symmetric.
\end{proof}

\begin{definition}
Given a linear category $\cA$, an object $X \in \cA$ is said to be compact iff the functor $\Hom_\cA(X, -): \cC \to \Vect$ preserves filtered colimits. The category $\cA$ is compactly generated iff any one of its objects is a filtered colimit of compact objects. 
\end{definition} 

The terminology in the above definition agrees with \cite{BenZviBrochierJordan1}. In \cite{AntounVoigt}, compact objects are called finitely presentable.

\begin{definition}
The free cocompletion of a category $\cA$ is the category $\Vect(\cA)$ of linear functors $\cA^{\op} \to \Vect$ and natural transformations.
\label{defi:algebraicindcompletion}
\end{definition}

\begin{remark}
When $\cA$ is semisimple, the free cocompletion is equivalent to the ind-completion, which is the cocompletion of $\cA$ with respect to filtered colimits. We shall soon restrict our attention to unitary tensor categories, and those are automatically semisimple. See Definition \ref{defi:C*-tensor categories} and Remark \ref{rem:UTCaresemisimple}. 
\end{remark}

The Yoneda Lemma says that $\cA$ is identified with a full subcategory of $\Vect(\cA)$ by means of $X \mapsto \Hom_\cA( - , X)$. A linear category $\cA$ is compactly generated iff it is equivalent to the algebraic ind-completion of its full subcategory of compact objects.

\begin{definition}
Let $\Clin^K$ be the (2,1)-category of Karoubi complete finitely additive C*-categories, 1-morphisms being linear $*$-functors and 2-morphisms being unitary natural isomorphisms.
\label{defi:KaroubicompleteC*categories}
\end{definition}

Notice that $\Clin^K$ is a cocomplete full subcategory of $\Clin$. By construction, the max tensor product $\underset{\max}{\boxtimes}$ restricts to a tensor product on $\Clin^K$. Therefore, factorization homology with coefficients in $\bE_2$-algebras in $\Clin^K$ is well defined and has the excision property. For simplicity, we shall often write $\boxtimes$ instead of $\underset{\max}{\boxtimes}$.

A distinguished class of objects in $\Clin$ will be of importance in this work: the C*-tensor categories;

\begin{definition}
A C*-tensor category in $\Clin^K$ is a C*-category $\cC$ in $\Clin^K$ with a monoidal structure $(\otimes, \un, \alpha, \lambda, \rho)$, tensor product, unit object, associator, left and right unit structure morphisms, respectively, such that $\otimes: \cC \times \cC \to \cC$ is a $*$-functor and the structure morphisms are all unitary. An {\em unitary tensor category} (UTC) is a rigid C*-tensor category (see \cite{NeshveyevTuset}).
\label{defi:C*-tensor categories}
\end{definition}

\begin{remark}
If $\cC$ is an UTC, it can be shown that $\End_\cC(X)$ is a finite dimensional C$^*$-algebra for all $X \in \cC$. Every finite dimensional C$^*$-algebra is a direct sum of full matrix algebras. From this one can deduce that $\cC$ is semisimple.
\label{rem:UTCaresemisimple}
\end{remark}

\begin{example}
Let $N$ be a factor either of type $\II_1$ or type $\III$. By an $N \- N$ bimodule we shall mean a Hilbert space $\cH$ together with two commuting normal actions $N \curvearrowright \cH \curvearrowleft N$. Given two $N \- N$-bimodules $\cH_1$ and $\cH_2$, let $\Hom_{N \- N}(\cH_1,\cH_2)$ be the space of bounded $N \- N$-bimodular linear maps from $\cH_1$ to $\cH_2$. The category $\Bim(N)$ of $N\-N$-bimodules with morphism spaces the spaces $\Hom_{N \- N}(\cH_1,\cH_2)$ can be endowed with a monoidal structure, the Connes fusion of bimodules $(\cH_1, \cH_2) \mapsto \cH_1 \underset{N}{\otimes} \cH_2$, making $\Bim(N)$ a C*-tensor category. There is a full subcategory $\Bim_0(N)$ of $\Bim(N)$, consisting of the bimodules having finite left and right coupling constant, also known as Jones index. Then $\Bim_0(N)$ is an unitary tensor category. 
\end{example} 

\begin{example}
Let $N$ be a type $\III$ factor. Denote by $\End(N)$ the monoid of normal $*$-endomorphisms of $N$. Defining, for $\theta,\rho \in \End(N)$, 
$$\Hom(\theta,\rho) = \{x \in N \ | \ x \theta(n) = \rho(n) x  \ \forall \ n \in N \} \ , $$
$\End(N)$ becomes a C*-tensor category: the tensor product of endomorphisms is given by composition, while for morphisms $x \in \Hom(\theta,\rho)$ and $y \in \Hom(\theta',\rho')$,
$$x \otimes y := x \theta(y) \in \Hom(\theta \circ \theta', \rho \circ \rho') \ . $$

It follows from the spatial theory of A. Connes that, as C*-tensor categories, $\End(N) \simeq \Bim(N)$ (see \cite{LongoII}). The full subcategory $\End_0(N)$ corresponding to the spherical $N \- N$-bimodules is then an unitary tensor category.

\end{example}

For more details on the above example, we refer the reader to \cite{AnantharamanPopa, BischoffKawahigashiLongoRehrenbook,LongoII}.

\subsection{Module categories}

If $\cM$ is a linear category, the endofunctor category $\End(\cM)$, objects being endofunctors and morphims natural transformations, has a canonical structure of monoidal category: the tensor product is given by composition, the unit is the identity functor $\id_\cM$, and all the structure morphisms are strict, i.e., they are all identity morphisms. When $\cM$ is a C*-category, we denote by $\End(\cM)$ the C*-category of $*$-endofunctors of $\cM$, with completely bounded natural transformations as morphisms. More precisely, a natural transformation $\eta: F \to F'$ between two $*$-functors is determined by a natural collection $( \eta_X \in \Hom_\cM(F(X),F'(X)))_{X \in \cM}$, and we ask for
$$\sup_X \| \eta_X \| < \infty \ . $$

\begin{definition}
Let $\cC$ be a C*-tensor category. A right $\cC$-module category is a category $\cM$ with a tensor functor $\cC^{\op} \to \End(\cM)$. If $\cM$ is a C*-category and the functor $\cC \to \End(\cM)$ is a $*$-tensor functor, we say that $\cM$ is a $\cC$-module C*-category.	
\label{defi:modulecategories}
\end{definition}

Let $(\cM, F:\cC^{\op} \to \End(\cM))$,  be a $\cC$-module C*-category as in the above definition. We shall write
$$F(X)(m) =: m \triangleleft X \ , \ \forall m \in \cM, \ \forall \ X \in \cC \ , $$
and will often omit $F$. Then the unitary tensor structure of $F$ says that there are natural unitary isomorphisms
\begin{align*}
(m \triangleleft X) \triangleleft Y & \simeq m \triangleleft (X \otimes Y) \ , \\
m \triangleleft \un_\cC & \simeq m \ ,
\end{align*}
with further compatibility conditions.

Here is a way of constructing module categories. Given an algebra object $A \in \Vect(\cC)$, let $\tilde{\cM}_A$ be the category of free left $A$-modules: it has $\{ A \otimes X \ | \ X \in \cC \} \subset \Vect(\cC)$ as objects, and the morphisms are $A$-linear morphisms. Let $\cM_A$ be the Karoubi completion of $\tilde{\cM}_A$. Then
$$(A \otimes X) \triangleleft Y := A \otimes X \otimes Y $$
extends to a right $\cC$-module structure on $\cM_A$.

There is a converse construction: an object $m \in \cM$ determines a functor $\Hom_\cM(m \triangleleft (-),m): \cC^{\op} \to \Vect$, i.e., an element in the algebraic ind-completion $\Vect(\cC)$ of $\cC$.

\begin{definition}
The internal endomorphism algebra of $m \in \cM$ in $\cC$, defined uniquely up to isomorphism when it exists, is an object $\underline{\End}_\cC(m)$ in $\Vect(\cC)$ representing the functor $\Hom_\cM (m \triangleleft (-), m)$. That is, there is a family of natural isomorphisms
$$\Hom_\cM (m \triangleleft X, m) \simeq \Hom_{\Vect(\cC)}(X, \underline{\End}_\cC(m) )$$
indexed by $X \in \cC$. Internal endomorphism algebras are always algebra objects in $\Vect(\cC)$.
\label{defi:internalend}
\end{definition}

\begin{proposition}[\cite{Ostrik}]
If $\cC$ and $\cM$ are semisimple, internal endomorphism algebras always exist.
\end{proposition}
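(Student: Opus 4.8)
### Proof proposal

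The plan is to construct $\underline{\End}_\cC(m)$ explicitly using semisimplicity and then check the representing property. First I would fix a set $\{X_i\}_{i \in \Irr(\cC)}$ of representatives of the isomorphism classes of simple objects of $\cC$; by Remark \ref{rem:UTCaresemisimple} (and the hypothesis that $\cM$ is semisimple) every object of $\cC$ is a finite direct sum of the $X_i$, and similarly every object of $\cM$ is a finite direct sum of simples. The candidate for the internal endomorphism algebra is the functor
$$\underline{\End}_\cC(m) := \bigoplus_{i \in \Irr(\cC)} \Hom_\cM(m \triangleleft X_i, m) \otimes \Hom_\cC(-, X_i) : \cC^{\op} \to \Vect,$$
i.e. the linear functor whose value on a simple $Y$ is $\Hom_\cM(m \triangleleft Y, m)$ and which is extended to all of $\cC$ by additivity. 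Equivalently one can describe it as $Y \mapsto \Hom_\cM(m \triangleleft Y, m)$ directly, using that this assignment is already a linear functor $\cC^{\op} \to \Vect$ (functoriality in $Y$ comes from the action of $\cC^{\op}$ on $\cM$). The point of the explicit $\Irr$-decomposition is to certify that this functor lies in $\Vect(\cC)$, the free cocompletion, rather than needing to be checked by hand to be a filtered colimit of representables — semisimplicity makes it a direct sum of representables $\Hom_\cC(-, X_i)$, hence visibly an object of $\Vect(\cC)$.

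Next I would verify the representing isomorphism
$$\Hom_\cM(m \triangleleft X, m) \simeq \Hom_{\Vect(\cC)}\bigl(X, \underline{\End}_\cC(m)\bigr),$$
natural in $X \in \cC$. Since both sides are additive in $X$, it suffices to check this for $X = X_j$ simple. By the Yoneda lemma in $\Vect(\cC)$, $\Hom_{\Vect(\cC)}(X_j, \underline{\End}_\cC(m))$ is computed by evaluating the functor $\underline{\End}_\cC(m)$ at $X_j$, which by construction is $\Hom_\mathcal{M}(m \triangleleft X_j, m)$. Naturality in $X$ is then a direct diagram chase using naturality of the module action and of the Yoneda embedding. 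Finally, to see that $\underline{\End}_\cC(m)$ carries an algebra structure: the multiplication $\underline{\End}_\cC(m) \otimes \underline{\End}_\cC(m) \to \underline{\End}_\cC(m)$ is the morphism in $\Vect(\cC)$ corresponding, under the representing isomorphism (and the universal property of Day convolution / the internal hom structure of $\Vect(\cC)$), to the composition map
$$\Hom_\mathcal{M}(m \triangleleft X, m) \otimes \Hom_\mathcal{M}(m \triangleleft Y, m) \longrightarrow \Hom_\mathcal{M}(m \triangleleft (X \otimes Y), m),$$
$(f, g) \mapsto f \circ (g \triangleleft X) \circ (\text{associator})$; the unit is induced by $\id_m$ via $m \triangleleft \un_\cC \simeq m$. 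Associativity and unitality of this multiplication follow from associativity of composition in $\cM$ together with the pentagon and triangle axioms for the module structure.

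The main obstacle I anticipate is bookkeeping rather than a conceptual difficulty: one must be careful that the assignment $X \mapsto \Hom_\mathcal{M}(m \triangleleft X, m)$ really is a well-defined object of $\Vect(\cC)$, i.e. a genuine linear functor $\cC^{\op} \to \Vect$ — this uses that $\cM$ has finite-dimensional-enough Hom spaces (which in the semisimple setting of interest follows, and in general is exactly where the semisimplicity hypothesis on $\cC$ and $\cM$ is used, since without it $\Hom_\mathcal{M}(m \triangleleft X, m)$ need not decompose as a direct sum indexed by $\Irr(\cC)$ and the functor may fail to lie in the free cocompletion). Once the functor is known to live in $\Vect(\cC)$, the representability is essentially the Yoneda lemma, and I would cite \cite{Ostrik} for the original argument, presenting the above as the adaptation to the present (unitary, but that plays no role here) setting. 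Since this is a known result, I would keep the write-up brief and refer to \cite{Ostrik} for the details of the algebra-object structure.
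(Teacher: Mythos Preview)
Your proposal is correct and takes essentially the same approach as the paper, which dispatches the proposition in a single sentence: ``This is so because the functor $X \mapsto \Hom_\cM (m \triangleleft X, m)$ is exact.'' You are simply unpacking that sentence---in the semisimple setting exactness means the functor decomposes as a direct sum over $\Irr(\cC)$, hence manifestly lies in $\Vect(\cC)$, and then Yoneda does the rest. One small point: given the paper's Definition~\ref{defi:algebraicindcompletion} of $\Vect(\cC)$ as \emph{all} linear functors $\cC^{\op}\to\Vect$, your worry about whether the functor ``really lies in $\Vect(\cC)$'' is moot---it does so tautologically, and the semisimplicity is only needed (via the remark following that definition) to identify $\Vect(\cC)$ with the ind-completion, which is closer to Ostrik's original formulation.
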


This is so because the functor $X \mapsto \Hom_\cM (m \triangleleft X, m)$ is exact.

In the semisimple case the two constructions presented above, passing from algebras to module categories and back, are not equivalences. The module categories obtained from algebra objects are generated, as module categories, by one element. One says that a module category $\cM$ with a generating object is cyclic, and when restricted to such, there is an equivalence:

\begin{proposition}[\cite{Ostrik}]
Suppose $\cC$ is semisimple. Then the category $\Alg(\Vect(\cC))$ of algebra objects in $\Vect(\cC)$ is equivalent to the category $\{(\cM,m)\}$ of cyclic $\cC$-module categories with a distinguished generating object. For $A \in \Alg(\Vect(\cC))$, the corresponding cyclic module category is the Karoubi completion $(\cM_A,A)$ of the category of free $A$-modules. Starting with a cyclic module category $(\cM,m)$, the associated algebra object is the internal endomorphism $\underline{\End}(m)$. 
\label{prop:equivalencebetweenalgebrasandmodules}
\end{proposition}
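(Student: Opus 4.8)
The statement is a version of Ostrik's theorem (originally for fusion categories) adapted to the semisimple, $\C$-linear setting with the free cocompletion $\Vect(\cC)$ playing the role of the ambient cocomplete category. The plan is to exhibit the two assignments $A \mapsto (\cM_A, A)$ and $(\cM, m) \mapsto \underline{\End}_\cC(m)$ as mutually inverse equivalences, by checking that each composite is naturally isomorphic to the identity. The key input is Definition \ref{defi:internalend}: the universal property of $\underline{\End}_\cC(m)$ gives natural isomorphisms $\Hom_\cM(m \triangleleft X, m) \simeq \Hom_{\Vect(\cC)}(X, \underline{\End}_\cC(m))$ for all $X \in \cC$, and (by the remark after the proposition) the representing functor is exact, so internal endomorphism algebras exist in this setting.

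\emph{Step 1: $A$-side round trip.} Start with $A \in \Alg(\Vect(\cC))$, form the Karoubi completion $\cM_A$ of the free left $A$-modules, take $m = A$ as the distinguished generator, and compute $\underline{\End}_\cC(A)$. One has $\Hom_{\cM_A}(A \triangleleft X, A) = \Hom_{\cM_A}(A \otimes X, A)$, the $A$-linear morphisms, and by the free–forgetful adjunction for modules over an algebra object this is naturally isomorphic to $\Hom_{\Vect(\cC)}(X, A)$. Hence $A$ itself represents the functor $X \mapsto \Hom_{\cM_A}(A \triangleleft X, A)$, and by uniqueness of the representing object $\underline{\End}_\cC(A) \simeq A$ as algebra objects — one must check the isomorphism is compatible with the algebra structure, which again follows from the universal property since the multiplication on $\underline{\End}_\cC(m)$ is induced by composition in $\cM$, matching composition of free-module morphisms. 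Naturality in $A$ is routine.

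\emph{Step 2: module-side round trip.} Start with a cyclic $\cC$-module C*-category $(\cM, m)$, set $A := \underline{\End}_\cC(m)$, and produce an equivalence $\cM_A \simeq \cM$ sending the generator $A$ to $m$. There is a canonical functor $\tilde\cM_A \to \cM$ sending $A \otimes X \mapsto m \triangleleft X$ on objects; on morphisms it is defined using the universal isomorphism $\Hom_{\Vect(\cC)}(X, \underline{\End}_\cC(m) \otimes Y) \simeq \Hom_\cM(m \triangleleft X, m \triangleleft Y)$ (obtained by combining Definition \ref{defi:internalend} with the module structure, as in the construction preceding the internal-end definition). This functor is fully faithful essentially by definition of $A$, and it is a $\cC$-module functor. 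Since $\cM$ is semisimple and $m$ is a generating object, every object of $\cM$ is a retract of some $m \triangleleft X$, so the functor is essentially surjective onto $\cM$ after Karoubi completion; thus it extends to an equivalence $\cM_A \xrightarrow{\sim} \cM$. Finally one checks this equivalence, together with the one from Step 1, assembles into an equivalence of categories $\Alg(\Vect(\cC)) \simeq \{(\cM, m)\}$ — i.e. it is natural with respect to algebra morphisms on one side and module-functor-plus-generator-comparison morphisms on the other.

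\emph{Main obstacle.} The routine parts are the adjunction computation in Step 1 and the fully-faithfulness in Step 2. The genuine work is twofold: first, verifying that the various comparison isomorphisms respect \emph{all} the structure in sight — the algebra structure in Step 1, and the $\cC$-module structure (the associativity-type coherences for $\triangleleft$) in Step 2 — rather than being mere isomorphisms of underlying objects; and second, pinning down precisely what a morphism in the category $\{(\cM, m)\}$ is (a module $*$-functor carrying generator to generator, up to the appropriate $2$-isomorphism) so that the two constructions are genuinely functorial and inverse \emph{as functors}, not just on objects. I expect the bookkeeping around the module coherence data in the C*-setting — ensuring everything stays within $\Clin^K$ and that unitarity of structure morphisms is preserved — to be the most delicate point, though morally it is identical to Ostrik's original argument.
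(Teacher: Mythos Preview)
The paper does not give its own proof of this proposition: it is stated with a citation to \cite{Ostrik} and no argument is supplied in the text. Your plan is the standard proof (essentially Ostrik's), and the two steps you outline --- the free--forgetful adjunction giving $\underline{\End}_\cC(A)\simeq A$, and the comparison functor $A\otimes X\mapsto m\triangleleft X$ being fully faithful by definition of $\underline{\End}_\cC(m)$ and essentially surjective after Karoubi completion because $m$ generates --- are correct and complete in outline.

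One small remark: in Step~2 you write ``cyclic $\cC$-module C*-category'', but the proposition as stated is purely algebraic (semisimple $\cC$, linear module categories); the C*-refinement is the content of the subsequent Definition~\ref{defi:Cstaralgebraobjects} and of \cite{JonesPenneys1}, not of this proposition. Your concern about unitarity and staying inside $\Clin^K$ is therefore not part of what needs to be checked here. The genuine bookkeeping you flag --- that the comparison isomorphisms respect the algebra and module structures, and that one must specify morphisms in $\{(\cM,m)\}$ to make the equivalence functorial --- is real but routine, and is exactly what Ostrik does.
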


\begin{definition}
Let $\cC$ be an UTC. An algebra $A \in \Alg(\Vect(\cC)$ is said to be a C*-algebra object if the corresponding $\cC$-module category is a $\cC$-module C*-category. Write $\CAlg(\Vect(\cC))$ for the category of C*-algebra objects in $\Vect(\cC)$, with $*$-algebra maps as morphisms.
\label{defi:Cstaralgebraobjects}
\end{definition}

C*-algebras objects are discussed more thoroughly in \cite{JonesPenneys1}. See also \cite{HataishiYamashita} for an intrinsic definition, without reference to module categories.

The next theorem might be known among experts in C*-categories, but I could not find a proof of it anywhere.

\begin{theorem}
Suppose that $F: \cC \to \cD$ is a $*$-functor between C*-categories admitting a right adjoint $F^R$. Then $F^R$ is automatically a $*$-functor.	
\label{prop:adjointofastarfunctor}
\end{theorem}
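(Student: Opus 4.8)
The plan is to reduce everything to a statement about a single adjunction isomorphism and the behaviour of $*$ on Hilbert-space-like inner products coming from the C*-structure. Write $\eta: \id_\cC \Rightarrow F^R F$ and $\varepsilon: F F^R \Rightarrow \id_\cD$ for the unit and counit, and recall the natural isomorphism
$$\Phi_{c,d}: \Hom_\cD(F c, d) \xrightarrow{\ \sim\ } \Hom_\cC(c, F^R d), \qquad \Phi_{c,d}(g) = F^R(g)\circ \eta_c.$$
I want to show $F^R(t)^* = F^R(t^*)$ for every morphism $t: d \to d'$ in $\cD$. The first step is to observe that it suffices to treat the case $c = c'$ and work with endomorphism C*-algebras, or better, to exploit that a $*$-functor preserves the ``same range of $f^*f$'' relation; concretely, I will use that in any C*-category a morphism $g$ is determined up to the relevant data by the positive element $g^*g$, and that $\Phi$ intertwines composition on the right. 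The cleanest route: show directly that $\eta_c$ and $\varepsilon_d$ can be chosen to interact correctly with $*$, but since we are not free to rechoose them, instead I will verify an identity characterizing $F^R(t)^*$.

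The key step is the following. For $t : d \to d'$ and arbitrary $c$, consider $g \in \Hom_\cD(Fc, d)$ and $g' \in \Hom_\cD(Fc', d')$. Because $F$ is a $*$-functor, $\Hom$-spaces in $\cD$ carry inner-product-type pairings $\langle g_1, g_2\rangle := g_1^* g_2 \in \End_\cD(\text{target})$ and similarly in $\cC$, and $\Phi$ should be shown to be ``isometric'' in the sense that $\Phi(g_1)^*\Phi(g_2)$ is computed from $g_1^* g_2$ in a way that does not see $*$ twice. Precisely, I would first prove the auxiliary fact that for any $*$-functor $F$ with right adjoint, and for $g_1, g_2 \in \Hom_\cD(Fc,d)$, one has $\Phi(g_1)^* \Phi(g_2) = \eta_c^* \, F^R(g_1^* g_2)\, \eta_c$ — wait, this already presupposes what we want, so instead I argue the other way: express $F^R$ on morphisms via $\Phi$ and Yoneda. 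For fixed $t: d\to d'$, the morphism $F^R(t): F^R d \to F^R d'$ is the unique morphism such that post-composition with it corresponds, under $\Phi^{-1}$, to post-composition with $t$ in $\cD$; that is, for all $c$ and all $h \in \Hom_\cC(c, F^R d)$, $\Phi^{-1}(F^R(t)\circ h) = t \circ \Phi^{-1}(h)$. Dually, $F^R(t)^*$ is characterized by pre-composition: $F^R(t)^* \circ k$ for $k \in \Hom_\cC(c, F^R d')$ is the unique morphism into $F^R d$ whose image under $\Phi^{-1}$ relates to $t^*$. Matching these two characterizations, using that $\Phi$ is natural in both variables and that $F$ commutes with $*$, forces $F^R(t)^* = F^R(t^*)$.

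I expect the main obstacle to be making precise the ``adjoint of a natural transformation'' bookkeeping: the adjunction isomorphism $\Phi$ is natural in $c$ with respect to arbitrary morphisms, not $*$-morphisms, so I cannot directly say ``$\Phi$ is a unitary''. The honest fix is to pass to endomorphism algebras: fix $d$, set $B := \End_\cD(d)$ and $A := \End_\cC(F^R d)$; the adjunction gives a faithful unital $*$-representation-like map relating these, and the statement $F^R(t)^* = F^R(t^*)$ for $t \in B$ becomes the assertion that a certain algebra map $\End_\cD(d) \to \End_\cC(F^R d)$ built from $\Phi$ is a $*$-homomorphism. Then I can invoke the standard fact that a bounded unital algebra homomorphism between C*-algebras that is positive (which follows here because $\Phi$ sends $g \mapsto F^R(g)\eta_c$ and $F^R$ of an ``automatically positive'' element $h^*h$ is positive once we know $F^R$ is bounded, which it is as a functor between C*-categories) is automatically a $*$-homomorphism. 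Thus the crux is: (i) set up the endomorphism-algebra map, (ii) check it is unital, bounded, and multiplicative from the adjunction axioms, (iii) check positivity using that $F$ is a $*$-functor together with the characterization of $\|\cdot\|$ in Definition \ref{defi:C*-categories}, and (iv) conclude it is a $*$-map, hence $F^R$ is a $*$-functor on endomorphisms, and then extend to general $\Hom$-spaces by the usual $2\times 2$ matrix amplification over direct sums.
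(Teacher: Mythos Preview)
Your second approach---reducing to the unital algebra homomorphism $F^R:\End_\cD(d)\to\End_\cC(F^R d)$ and arguing it is positive, hence $*$-preserving---has a genuine gap at step (iii). A unital algebra homomorphism between C*-algebras need not be positive or even bounded: conjugation $a\mapsto sas^{-1}$ by a non-unitary invertible is the standard obstruction. The spectral-radius inequality you get from multiplicativity only controls $r(F^R(b))$, not $\|F^R(b)\|$, because you do not yet know $F^R(b)$ is normal when $b$ is. Your appeal to ``$F$ is a $*$-functor together with the norm characterisation'' does not supply the missing self-adjointness of $F^R(b)$; every route I can see back to positivity passes through $F^R(t^*)=F^R(t)^*$, which is the goal. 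So the reduction to endomorphism algebras plus Russo--Dye is circular as stated.

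Your first approach is much closer to what the paper actually does, and the obstacle you flagged is exactly the right one: naturality of the adjunction isomorphism $\Psi$ in the $\cD$-variable tells you how $F^R(t)$ interacts with post-composition, but says nothing about $F^R(t)^*$. The paper's key move is to manufacture a \emph{second} adjunction witnessing $F^R\dashv F$ by conjugating $\Psi$ with the $*$'s on both sides: define $\Phi_{Y,X}:\Hom_\cC(F^R Y,X)\to\Hom_\cD(Y,FX)$ by $\Phi(h)=\bigl(\Psi^{-1}(h^*)\bigr)^*$. Naturality of this $\Phi$ in the $\cD$-variable now encodes exactly the interaction of $F^R$ with $t^*$. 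The paper then packages the comparison as a Hilbert C*-module argument: transport the $\End_\cC(X)$-valued inner product $\langle f,g\rangle=f^*g$ on $\Hom_\cC(X,F^R Y)$ to $\Hom_\cD(FX,Y)$ via $\Psi$, and compute that both $F^R(T)^*\circ(-)$ and $F^R(T^*)\circ(-)$ are Hilbert-module adjoints of $F^R(T)\circ(-)$, the second identification using naturality of $\Phi$. Uniqueness of adjoints finishes it. If you want to salvage your outline, the missing ingredient is precisely this $*$-twisted left adjunction $\Phi$; once you have it, either the Hilbert-module phrasing or a direct Yoneda comparison goes through.
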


\begin{proof}
In any $*$-category $\cA$, the involution provides anti-linear isomorphisms $\Hom_\cA(X,X') \simeq \Hom_\cA(X',X)$, for all $X,X' \in \cA$. This is quite a strong feature about $*$-categories, which implies for instance that if a $*$-functor $F: \cC \to \cD$ admits a right adjoint $F^R$, then $F^R$ is also a left adjoint for $F$. Indeed, if the the adjunction $F \leftrightarrow F^R$ is witnessed by a natural transformation $\Psi = \{ \Psi_{X,Y}: \Hom_\cD(F(X),Y) \to \Hom_\cC(X,F^R(Y)) \} $, then $\Phi := \{ \Phi_{Y,X} :  \Hom_\cC(F^R(Y),X) \to \Hom_\cD(Y,F(X))\}$ defined via the commutativity of the diagram
\begin{center}
\begin{tikzcd}
{\Hom_\cC(F^R(Y),X)} \arrow[rr, "{\Phi_{Y,X}}"] \arrow[d, "*"'] &  & {\Hom_\cD(Y,F(X))}                 \\
{\Hom_\cC(X,F^R(Y))} \arrow[rr, "{\Psi_{X,Y}^{-1}}"']           &  & {\Hom_\cD(F(X),Y)} \arrow[u, "*"']
\end{tikzcd}
\end{center}
witnesses an adjunction $F^R \leftrightarrow F$. From this fact we shall deduce the Theorem.

The strategy to prove that $F^R$ is a $*$-functor goes as follows. Since $\cC$ is a C*-category, the space $\Hom_\cC(X,F^R(Y))$ has a canonical structure of a right Hilbert C*-module over $\End_\cC(X)$, where the action is given by $f \triangleleft a := f \circ a$, and the $\cC(X)$-valued inner product is given by $\left\langle f,g \right\rangle := f^* \circ g$. We define a Hilbert C*-module, over $\End_\cC(X)$, on $\Hom_\cD(F(X),Y)$ sucht that $\Psi_{X,Y}$ is a isomorphism of Hilbert C*-modules. Then a morphism $T \in \Hom_\cD(Y,Y')$ defines an adjointable linear map between the Hilbert C*-modules $\Hom_\cD(F(X),Y)$ and $\Hom_\cD(F(X),Y')$, which corresponds to the adjointable linear map $F^R(T) \circ (-): \Hom_\cC(X,F^R(Y)) \to \Hom_\cC(X,F^R(Y'))$. The Theorem is then obtained by observing that both $F^R(T)^* \circ (-)$ and $F^R(T^*) \circ (-)$ are adjoints of this map, and therefore must coincide. Let us check this.

The $\End_\cC(X)$-action of $\Hom_\cD(F(X),Y)$ is given by
$$f \triangleleft a := \Psi_{X,Y}^{-1}(\Psi_{X,Y}(f) \circ a ) \ , $$
and the $\End_\cC(X)$-valued inner product is given by
$$\left\langle f, g \right\rangle := \Psi_{X,Y}(f)^* \circ \Psi_{X,Y}(g) \ . $$ 
By construction, $\Psi_{X,Y}$ is an isomorphism of Hilbert C*-modules. In particular there is an identification
$$\cL_{\cC(X)}(\cD(F(X),Y),\cD(F(X),Y')) = \Psi_{X,Y'}^{-1} \cL_{\cC(X)} (\cC(X,F^R(Y)),\cC(X,F^R(Y')) \Psi_{X,Y} \ . $$

Given $T \in \cD(Y,Y')$ the map $F^R(T) \circ (-): \cC(X,F^R(Y)) \to \cC(X,F^R(Y'))$ is adjointable, the adjoint given by $F^R(T)^* \circ (-)$. From the above equality, $T \circ (-): \cD(F(X),Y) \to \cD(F(X),Y')$ is adjointable. One has
\begin{align*}
\left\langle T \circ f, g \right\rangle &= \Psi_{X,Y'} (T \circ f)^* \circ \Psi_{X,Y'}(g) \\
&= \Psi_{X,Y'}((T \circ f)^{**})^* \Psi_{X,Y'}(g) \\
&= \Phi_{Y',X}(f^* \circ T^*) \Psi_{X,Y'}(g) \\
& = \Phi_{Y,X}(f^*) F^R(T^*) \Psi_{X,Y'}(g) \\
& = \Psi_{X,Y}(f)^* \Psi_{X,Y}(T^* \circ g) \ ,
\end{align*}
by naturality of $\Phi$. The last expression is $\left\langle f, T^* \circ g \right\rangle$, showing that $F^R(T^*) \circ (-)$ is the adjoint of $F^R(T) \circ (-)$.
\end{proof}

Putting together Proposition \ref{prop:adjointofastarfunctor} and Proposition 3.11 in \cite{BenZviBrochierJordan1}, we obtain the following.

\begin{corollary}
Let $\cC$ be an unitary tensor category, $\cM$ and $\cN$ be $\cC$-module C$^*$-categories. If $F: \cM \to \cN$ is a $\cC$-module $*$-functor admitting a right adjoint $F^R$, then $F^R: \cN \to \cM$ has a canonical structure of a $\cC$-module $*$-functor.
\end{corollary}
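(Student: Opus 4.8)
The strategy is to lay the unitarity on top of the purely linear statement. First I would apply Theorem \ref{prop:adjointofastarfunctor} to conclude that $F^R$ is a $*$-functor, and then Proposition 3.11 of \cite{BenZviBrochierJordan1} to equip $F^R$ with its canonical $\cC$-module functor structure: natural isomorphisms $\sigma_{n,X} \colon F^R(n \triangleleft X) \to F^R(n) \triangleleft X$ obtained as the mate of the module structure $\mu$ of $F$ along the adjunction $F \dashv F^R$ and along $(-\triangleleft X) \dashv (-\triangleleft \bar X)$ — the latter available because $\cC$ is rigid — and satisfying the module-functor coherence axioms. Everything \cite{BenZviBrochierJordan1} proves then carries over verbatim, so the only thing left is to show that each $\sigma_{n,X}$ is \emph{unitary}.

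For that it suffices to prove $\sigma_{n,X}^* = \sigma_{n,X}^{-1}$. On one side, $\sigma_{n,X}^{-1}$ is the mate of $\mu^{-1}$ along $F \dashv F^R$, since the mate construction preserves vertical composition of $2$-cells and hence inverts isomorphisms. On the other side, I would compute $\sigma_{n,X}^*$ by applying the involution componentwise: because $F^R$ and $-\triangleleft X$ are $*$-functors and $\mu$ is a unitary natural isomorphism, $\sigma_{n,X}^*$ is assembled out of $\mu^* = \mu^{-1}$, out of $\epsilon^*$ and $\eta^*$ (where $\eta,\epsilon$ are the unit and counit of $F \dashv F^R$), and out of the analogously involuted unit and counit of $(-\triangleleft X)\dashv(-\triangleleft\bar X)$. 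By the proof of Theorem \ref{prop:adjointofastarfunctor}, $\epsilon^*$ and $\eta^*$ are precisely the unit and counit of the $*$-conjugate adjunction $F^R \dashv F$; and, choosing a standard solution of the conjugate equations for $X$ — possible since $\cC$ is a UTC — the involuted data of $(-\triangleleft X)\dashv(-\triangleleft\bar X)$ are the unit and counit of the $*$-conjugate adjunction $(-\triangleleft\bar X)\dashv(-\triangleleft X)$. Thus $\sigma_{n,X}^*$ is exactly the mate of $\mu^{-1}$ along the \emph{$*$-conjugate} adjunctions, and the remaining point is that this mate coincides with the mate of $\mu^{-1}$ along the original adjunctions. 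Here I would use that all the adjunction isomorphisms in play are isometries of Hilbert C$^*$-modules — for $F \dashv F^R$ this is exactly what is built in the proof of Theorem \ref{prop:adjointofastarfunctor}, and for the action functors it follows from standardness of the conjugate-equation solution together with the unitarity of the module associativity constraints — so that the two mate computations produce the same morphism. Combining $\sigma_{n,X}^* = \sigma_{n,X}^{-1}$ with the coherence from \cite{BenZviBrochierJordan1} exhibits $F^R$ as a $\cC$-module $*$-functor, and canonicity is inherited since $\sigma$ was built from canonical data.

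The main obstacle is that last comparison: a priori the mate of a $2$-cell depends on which adjunction one uses, so identifying the mate of $\mu^{-1}$ along $F\dashv F^R$ (resp. $(-\triangleleft X)\dashv(-\triangleleft\bar X)$) with its mate along the $*$-conjugate adjunction $F^R\dashv F$ (resp. $(-\triangleleft\bar X)\dashv(-\triangleleft X)$) is not formal. This is exactly where the hypotheses must be used: the units and counits are not unitary, so the identification cannot come from the triangle identities alone, and one genuinely needs that for $*$-functors the adjunctions can be realized isometrically (Theorem \ref{prop:adjointofastarfunctor}) and that rigidity of $\cC$ supplies standard, i.e. $*$-compatible, dualities. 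A more computational alternative that avoids the mate formalism is to run the Hilbert C$^*$-module and Yoneda argument of the proof of Theorem \ref{prop:adjointofastarfunctor} directly on the chain of adjunction isomorphisms that defines $\sigma_{n,X}$, but it meets the same essential difficulty in the form of keeping track of the several endomorphism algebras over which the successive $\Hom$-spaces carry their Hilbert-module structures.
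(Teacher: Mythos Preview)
Your approach is exactly the paper's: the paper's entire proof is the single sentence preceding the statement, ``Putting together Proposition \ref{prop:adjointofastarfunctor} and Proposition 3.11 in \cite{BenZviBrochierJordan1}, we obtain the following.'' So at the level the paper operates, you are done after your first paragraph.

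Where you go beyond the paper is in worrying about the \emph{unitarity} of the canonical module structure $\sigma_{n,X}$ on $F^R$. The paper simply does not address this; it treats the combination of the two cited results as sufficient. Your instinct that something needs to be checked here is reasonable --- the mate of a unitary $2$-cell along non-unitary adjunction data has no a priori reason to be unitary --- and your sketch (comparing the mate along $F\dashv F^R$ with the mate along the $*$-conjugate adjunction $F^R\dashv F$, using that Theorem \ref{prop:adjointofastarfunctor} realizes the adjunction isomorphisms as Hilbert-module isometries, and using standard solutions of the conjugate equations for the $\cC$-action side) is a sensible line of attack. You are also honest that the last identification is the genuine difficulty and not purely formal. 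So: same approach as the paper, with additional scrupulousness the paper omits; your extra argument is plausible but, as you yourself flag, not fully closed.
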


\subsection{Reconstruction Theorems}

In \cite{BenZviBrochierJordan1,BenZviBrochierJordan2}, the authors explicitly computed factorization homology for punctured, closed and marked surfaces, through monadic reconstruction theorems. Those theorems provided them with canonical identifications between linear categories, allowing the computation of the topological invariants. In this subsection we will obtain analogous results in $(\Clin^K, \underset{\max}{\boxtimes})$. Many of the arguments here follow the same reasoning as in \cite{BenZviBrochierJordan1,BenZviBrochierJordan2}. Some, however, are approached differently.

Let $\cC$ be an unitary tensor category. As stated in Proposition \ref{prop:equivalencebetweenalgebrasandmodules}, cyclic $\cC$-module categories can be described internally to $\cC$. Recall in particular that if $\cM$ is a cyclic $\cC$-module C*-category, with distinguished generator $m$, then $\underline{\End}_\cC(m)$ is a C*-algebra in $\Vect(\cC)$, and there is an equivalence of $\cC$-module C*-categories
$$\cM \simeq \underline{\End}_\cC \- \Mod_\cC \ . $$
The following is the C*-version of Theorem 4.6 in \cite{BenZviBrochierJordan1}.

\begin{lemma}
Let $\cM$ and $\cN$ be $\cC$ -module C*-categories in $\Clin^K$, and let $F: \cM \to \cN$ be a $\cC$-module $*$-functor. Then, for every $m \in \cM$, there is a canonical $*$-homomorphism of C*-algebras
$$\rho_F: \underline{\End}_\cC (m) \to \underline{\End}_\cC(F(m)) \ . $$
\label{lemma:functorialityformonads}
\end{lemma}

\begin{proof}
Let $A := \underline{\End}(m)$, and let $B:= \underline{\End}(F(m))$. Consider the $\cC$-module C*-subcategories $\cM_A$ and $\cN_B$ of $\cM$ and $\cN$, respectively, generated as modules by $A$ and $B$, respectively. $F$ restricts to a linear $\cC$-module $*$-functor $F: \cM_A \to \cN_B$, which corresponds to a $*$-natural transformation $\rho_F$ between the $C^*$-algebra objects $A$, and $B$, see \cite{JonesPenneys1}. Concretely, $\rho_F$ is constructed as follows: let $\Irr(\cC)$ be a choice of maximal family of pairwise non-isomorphic irreducible objects of $\cC$. Writing $A(X) := \cM_A(A \triangleleft X, A)$ and $B(X) := \cN_B (B \triangleleft X,B)$, one has
$$A \simeq \bigoplus_{X \in \Irr(\cC)} A(X) \otimes X \ \ \text{and} \ \ B \simeq \bigoplus_{X \in \Irr(\cC)} B(X) \otimes X \ ,$$
and $\rho_F$ is induced by the maps $F_{X,\un} \otimes \id_X: \cM_A(A \triangleleft X, A) \otimes X \to \cN_B( B \triangleleft X, B) \otimes X$.
\end{proof}

In case $\cM$ and $\cN$ are cyclic with respective generating objects $m$ and $n$, then $F^R: \cN \to \cM$ is naturally isomorphic to the pull-back functor along $\rho_F$.

\begin{lemma}
Right cyclic $\cC$ -module C*-categories are left-dualizable, while left cyclic $\cC$-module C*-categories are right dualizable.
\label{dualizabilityofmodulecategories}
\end{lemma}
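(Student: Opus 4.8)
The plan is to reduce to the internal description of cyclic module categories and then exhibit explicit duality data. By Proposition~\ref{prop:equivalencebetweenalgebrasandmodules}, a right cyclic $\cC$-module C*-category $\cM$ with distinguished generator $m$ is equivalent, as a $\cC$-module C*-category, to $\cM_A := A\-\Mod_\cC$ for $A := \underline{\End}_\cC(m) \in \CAlg(\Vect(\cC))$; by the analogous statement for left modules, a left cyclic module is equivalent to $\Mod_\cC\-A$, the Karoubi completion of the category of free right $A$-modules $\{X \otimes A \mid X \in \cC\}$, which is cyclic as a left $\cC$-module with generator the regular module $A$. I would take the left dual of $\cM_A$ to be ${}^{\vee}\cM_A := \Mod_\cC\-A$. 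Since this passes from a right module structure to a left one --- and, with left and right interchanged, from a left module to a right one --- the left/right asymmetry of the statement is built in, and it suffices to treat ``right cyclic $\Rightarrow$ left-dualizable''.

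The next step is to write down the evaluation and coevaluation $1$-morphisms and the $2$-isomorphisms witnessing the triangle identities, adapting the usual Morita data. The coevaluation is, in essence, the algebra $A$ regarded as the regular $(A,A)$-bimodule: after identifying the relative tensor product $\cM_A \boxtimes_\cC {}^{\vee}\cM_A$ --- formed by the usual bar construction, which exists since $\Clin^K$ is cocomplete --- with the C*-category generated by that bimodule, $\mathrm{coev}$ is the $*$-functor sending the generator of the unit $\un = \Hilb_{\fd}$ to $A$. The evaluation ${}^{\vee}\cM_A \boxtimes \cM_A \to \cC$ (external product) is the relative-tensor-over-$A$ pairing $N \boxtimes M \mapsto N \otimes_A M$, defined on modules in $\Vect(\cC)$ and descending to the Karoubi completions; cyclicity of $\cM_A$ is exactly what makes these two functors assemble into an adjunction. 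That $\mathrm{ev}$, $\mathrm{coev}$, and the bar construction computing $\boxtimes_\cC$ behave well $*$-categorically relies on the $*$-functoriality of the relevant adjoints, i.e.\ on Theorem~\ref{prop:adjointofastarfunctor} and its corollary.

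It then remains to verify the triangle identities and to check that nothing leaves the $*$-world. Evaluating the two triangle composites on the generators $A \in \cM_A$ and $A \in {}^{\vee}\cM_A$ and unwinding the identifications above, the composites reduce to the unit and associativity axioms of the algebra object $A$ (the multiplication $A \otimes A \to A$ and the unit $\un_\cC \to A$), so the triangle identities hold; since, by Proposition~\ref{prop:equivalencebetweenalgebrasandmodules} and Lemma~\ref{lemma:functorialityformonads}, a $\cC$-module $*$-functor between cyclic modules is controlled by the induced $*$-homomorphism of internal endomorphism algebras, checking the identities on generators suffices. The formal Morita bookkeeping is as in the purely linear case of \cite{BenZviBrochierJordan1}; what needs genuine care --- and is, in my view, the main obstacle --- is keeping the $*$-structure under control: one must know that $\boxtimes_\cC$ of $\cC$-module C*-categories again lands in $\Clin^K$ (using cocompleteness of $\Clin^K$ and that the Karoubi and finite-direct-sum completions preserve C*-categories) and that all the duality data and the witnessing $2$-isomorphisms are realized by $*$-functors and unitary natural isomorphisms. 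The enabling results are Theorem~\ref{thm:cocontinuitymaxproduct}, which makes $\boxtimes$ --- hence the bar construction --- compatible with the colimits involved, and Theorem~\ref{prop:adjointofastarfunctor} together with its corollary, which place the adjunctions entering the bar construction and the triangle identities inside the $*$-world. I expect the triangle-identity verification on generators to be the only step requiring real, if routine, computation.
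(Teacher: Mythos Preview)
Your overall strategy --- pass to $\cM_A$ via Proposition~\ref{prop:equivalencebetweenalgebrasandmodules}, take the left dual to be the Karoubi completion ${}_A\cM$ of free right $A$-modules, and write down Morita-type duality data --- is exactly the paper's. The paper, however, executes it far more tersely than you propose: it sets $c(\C)=(A\otimes\un)\boxtimes_\cC(\un\otimes A)$ and $e\bigl((U\otimes A)\boxtimes(A\otimes V)\bigr)=U\otimes V$, checks one zig-zag on the free generator $A\otimes U$ in four lines, and then just records that $e$ and $c$ are $*$-linear functors. It does not appeal to the bar construction for $\boxtimes_\cC$, to Theorem~\ref{prop:adjointofastarfunctor}, or to Lemma~\ref{lemma:functorialityformonads}; that machinery is not needed for this lemma and you can drop it.

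The one substantive discrepancy is your evaluation. You take $e(N\boxtimes M)=N\otimes_A M$, but on free modules this gives $(U\otimes A)\otimes_A(A\otimes V)\simeq U\otimes A\otimes V$, which for non-compact $A\in\Vect(\cC)$ is not an object of $\cC$. Thus your $e$ does not land in the stated target, and the duality data cannot close up in the bicategory of $\cC$-modules. This is the Morita pairing appropriate to dualizability over $A$, not over $\cC$. The paper's evaluation instead sends $(U\otimes A)\boxtimes(A\otimes V)$ to $U\otimes V\in\cC$; with that choice the zig-zag collapses to the identity on $A\otimes U$ via the balancing isomorphism of $\boxtimes_\cC$, exactly as in the paper's short calculation. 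Once you swap in this $e$, the remainder of your plan --- verify the snake on generators and observe that everything is $*$-linear --- is precisely what the paper does.
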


\begin{proof}
Let $A$ be a $C^*$-algebra object in $\cC$, and consider its associated cyclic $\cC$-module C*-category $\cM_A$. Associated to $A$ is also a cyclic left $\cC$-module C*-category $\tensor[_A]{\cM}{}$, generated under idempotent completion by the category of free right $A$-modules in $\cC$. We shall show that $\tensor[_A]{\cM}{}$ dualizes $\cM_A$. Let the coevaluation $c: \Vect \to \cM_A \underset{\cC}{\boxtimes}  \tensor[_A]{\cM}{}$ be defined by
$$c(\C) := (A \otimes \un) \underset{\cC}{\boxtimes} (\un \otimes A) \ , $$
and the evaluation $e: \tensor[_A]{\cM}{} \boxtimes \cM_A \to  \cC$ be given by
$$ e( (U \otimes A) \boxtimes (A \otimes V)) := U \otimes V \ . $$
Then the compostion
\begin{center}
\begin{tikzcd}
\cM_A \simeq \Vect \boxtimes \cM_A \arrow[rr, "c \boxtimes \id_{\cM_A}"] &  & (\cM_A \underset{\cC}{\boxtimes}  \tensor[_A]{\cM}{}) \boxtimes \cM_A \simeq \cM \underset{\cC}{\boxtimes} \left( \tensor[_A]{\cM}{} \boxtimes \cM_A \right) \arrow[rr, "\id_{\cM_A} \boxtimes e"] &  & \cM_A \underset{\cC}{\boxtimes} \cC \simeq \cM_A
\end{tikzcd}
\end{center}
is equal to the identity functor, as witnessed by the following computation:
\begin{align*}
A \otimes U &\mapsto \left( (A \otimes \un) \boxtimes_\cC (\un \otimes A)  \right) \boxtimes (A \otimes U) \\
& \simeq (A \otimes \un) \underset{\cC}{\boxtimes} \left( (\un \otimes A) \boxtimes (A \otimes U) \right) \\
&\mapsto (A \otimes \un) \underset{\cC}{\boxtimes} U \\
&\simeq A \otimes U \ .
\end{align*}
Both $e$ and $c$ are $*$-linear functors. Therefore, $\tensor[_A]{\cM}{}$ is the dual of $\cM_A$ as a $\cC$-module C*-category.
\end{proof}

\begin{lemma}
Let $A$ be a C*-algebra object in $\Vect(\cC)$ and let $(\cM_A,A)$ be the right cyclic $\cC$-module C*-category of left $A$-modules in $\cC$. If $\cN$ is a left $\cC$-module C*-category, not necessarily cyclic, then
$$\cM_A \underset{\cC}{\boxtimes} \cN \simeq A \hyphen \Mod_\cN \ .$$
\label{reconstructionforrelativetensorproduct}
\end{lemma}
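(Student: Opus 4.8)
The plan is to deduce this from the dualizability of $\cM_A$ proved in Lemma~\ref{dualizabilityofmodulecategories}, together with the internal-endomorphism dictionary of Proposition~\ref{prop:equivalencebetweenalgebrasandmodules} and Lemma~\ref{lemma:functorialityformonads}. Recall from Lemma~\ref{dualizabilityofmodulecategories} that $\cM_A$ (a right cyclic $\cC$-module C*-category) is dualizable, with dual the cyclic left $\cC$-module C*-category $\tensor[_A]{\cM}{}$ of free right $A$-modules in $\cC$, and that the evaluation $e\colon\tensor[_A]{\cM}{}\boxtimes\cM_A\to\cC$ and coevaluation $c\colon\Vect\to\cM_A\underset{\cC}{\boxtimes}\tensor[_A]{\cM}{}$ are $*$-functors satisfying the triangle identities.

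First I would run the standard argument for a dualizable $1$-morphism in the symmetric monoidal bicategory of $\cC$-module C*-categories: the triangle identities yield a canonical equivalence
$$\cM_A\underset{\cC}{\boxtimes}\cN \;\simeq\; \Fun_\cC(\tensor[_A]{\cM}{},\cN),$$
where $\Fun_\cC(\tensor[_A]{\cM}{},\cN)$ is the C*-category of $\cC$-module $*$-functors and bounded module natural transformations, with mutually inverse equivalences $P\mapsto\bigl[M'\mapsto(e\underset{\cC}{\boxtimes}\id_\cN)(M'\boxtimes P)\bigr]$ and $G\mapsto(\id_{\cM_A}\underset{\cC}{\boxtimes}G)(c(\C))$. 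I would then check that these are $*$-functors: $e$, $c$ and the coherence isomorphisms of $\underset{\cC}{\boxtimes}$ are unitary, and Theorem~\ref{prop:adjointofastarfunctor} (together with its corollary on module functors) ensures that every adjoint functor occurring in the dualizability data is again a $*$-functor, so the equivalence is a morphism in $\Clin^K$ and not merely in the underlying linear $2$-category.

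Second I would identify $\Fun_\cC(\tensor[_A]{\cM}{},\cN)$ with $A\hyphen\Mod_\cN$. The category $\tensor[_A]{\cM}{}$ is cyclic with generating object $A=\un_\cC\otimes A$, the free right $A$-module on $\un_\cC$, and $\underline{\End}_\cC(A)\simeq A$ as a C*-algebra object, since $X\mapsto\Hom_{\tensor[_A]{\cM}{}}(X\otimes A,A)\simeq\Hom_{\Vect(\cC)}(X,A)$ by the free--forgetful adjunction for right $A$-modules. Hence, by Lemma~\ref{lemma:functorialityformonads}, a $\cC$-module $*$-functor $G$ is determined by the object $n:=G(A)\in\cN$ together with the $*$-homomorphism $\rho_G\colon A\simeq\underline{\End}_\cC(A)\to\underline{\End}_\cC(n)$; by Definition~\ref{defi:internalend} (in its left-module form), such a $*$-homomorphism is precisely an $A$-action $A\otimes n\to n$, i.e.\ an object of $A\hyphen\Mod_\cN$, and conversely any such pair extends uniquely up to unitary isomorphism to a $\cC$-module $*$-functor on the cyclic category $\tensor[_A]{\cM}{}$ (sending $X\otimes A\mapsto X\otimes n$ and acting on morphisms through $\rho_G$), and then to its Karoubi completion. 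Composing this $*$-equivalence with the one from the first step gives $\cM_A\underset{\cC}{\boxtimes}\cN\simeq A\hyphen\Mod_\cN$.

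The hard part will be the $*$-bookkeeping in the first step: one must verify that the triangle-identity equivalence can be realized by \emph{unitary} natural isomorphisms and that the associativity and unit constraints of $\underset{\cC}{\boxtimes}$ over $\cC$ are unitary, so that the whole equivalence lives in $\Clin^K$. This is exactly where Theorem~\ref{prop:adjointofastarfunctor} is essential; once it is in place, the remainder is the diagram chase of \cite{BenZviBrochierJordan1}. As an alternative to the dualizability argument one can proceed monadically: $\cM_A$ is the Eilenberg--Moore category of the $\cC$-module monad $A\otimes(-)$ on $\cC$, cocontinuity of $\underset{\max}{\boxtimes}$ (Theorem~\ref{thm:cocontinuitymaxproduct}) makes $(-)\underset{\cC}{\boxtimes}\cN$ preserve the split bar resolutions computing such categories, and the monad transported to $\cC\underset{\cC}{\boxtimes}\cN\simeq\cN$ is $A\otimes(-)$ acting through the module structure, whose algebras form $A\hyphen\Mod_\cN$.
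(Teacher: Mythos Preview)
Your proof is correct and shares its first step with the paper: both establish the equivalence $\cM_A \underset{\cC}{\boxtimes} \cN \simeq \Fun_\cC(\tensor[_A]{\cM}{},\cN)$ via the dualizability data of Lemma~\ref{dualizabilityofmodulecategories}, with the same explicit mutually inverse functors built from $e$ and $c$.

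The second step differs. The paper does not identify $\Fun_\cC(\tensor[_A]{\cM}{},\cN)$ with $A\hyphen\Mod_\cN$ directly via the internal-endomorphism dictionary; instead it composes the equivalence with the evaluation-at-$A$ functor $e_A\colon \Fun_\cC(\tensor[_A]{\cM}{},\cN)\to\cN$ to obtain $G\colon \cM_A\underset{\cC}{\boxtimes}\cN\to\cN$, exhibits the left adjoint $G^L(n)=A\underset{\cC}{\boxtimes}n$, observes that $e_A$ (and hence $G$) is conservative because $A$ generates $\tensor[_A]{\cM}{}$, and then invokes the Barr--Beck monadicity theorem to conclude $\cM_A\underset{\cC}{\boxtimes}\cN\simeq A\hyphen\Mod_\cN$. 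The unitarity of the equivalence is then checked on the explicit functor $A\underset{\cC}{\boxtimes}n\mapsto A\triangleright n$. Your route, by contrast, unpacks what a $\cC$-module $*$-functor out of a cyclic category must look like and matches it with an $A$-action on $n=G(A)$; this is more elementary and avoids Barr--Beck, but requires you to check by hand that the assignment $(n,\rho)\mapsto(X\otimes A\mapsto X\otimes n)$ really lands in $*$-functors and is essentially surjective after Karoubi completion. The paper's Barr--Beck argument packages exactly this verification into one abstract step, at the cost of importing a theorem you do not cite. Your closing ``alternative'' monadic approach is in spirit the closest to what the paper actually does, though the paper runs the monadicity argument through the $\Fun_\cC$ equivalence rather than by transporting the monad along $(-)\underset{\cC}{\boxtimes}\cN$ directly.
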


\begin{proof}
Let $\Fun_\cC(\tensor[_A]{\cM}{}, \cN)$ be the category of linear $\cC$-module $*$-functors from $\tensor[_A]{\cM}{}$ to $\cN$. Observe first that
$$\cM_A \underset{\cC}{\boxtimes} \cN \simeq \Fun_\cC(\tensor[_A]{\cM}{}, \cN) \ .$$
Indeed, the equivalence is implemented by the functors $F_{(\cdot)}: \cM_A \underset{\cC}{\boxtimes} \cN \to \Fun_\cC(\tensor[_A]{\cM}{}, \cN)$, wich sends $x$ to the functor
\begin{center}
\begin{tikzcd}
F_x: _A\cM \arrow[rr, "\id \boxtimes x"] &  & \tensor[_A]{\cM}{} \boxtimes \cM_A \underset{\cC}{\boxtimes} \cN \arrow[rr, "e \boxtimes \id"] &  & \cN
\end{tikzcd},
\end{center} 
and its quasi-inverse $x_{( \cdot)}: \Fun_\cC(\tensor[_A]{\cM}{}, \cN) \to \cM_A \underset{\cC}{\boxtimes} \cN$, which sends the functor $F$ to the image $x_F$ of $\C$ under the composition
\begin{center}
\begin{tikzcd}
\Vect \arrow[rr, "e"] &  & \cM_A \underset{\cC}{\boxtimes} \tensor[_A]{\cM}{} \arrow[rr, "\id \boxtimes F"] &  & \cM_A \underset{\cC}{\boxtimes} \cN
\end{tikzcd}.
\end{center}
The last morphism in the above composition only makes sense for $\cC$-module functors $F$.

Now, let $G: \cM_A \underset{\cC}{\boxtimes} \cN \to \Fun_\cC(\tensor[_A]{\cM}{},\cN) \to \cN$, where the first arrow is the equivalence just described and the second is the evaluation $e_A$ at $A \in \ \tensor[_A]{\cM}{}$. $G$ has a left adjoint $G^L$, given by $G^L(n) = A \underset{\cC}{\boxtimes} n$. Both $G$ and $G^L$ are $*$-linear functors. Since $A$ generates $\tensor[_A]{\cM}{}$, the evaluation map is conservative, i.e., $e_A(F) = F(A) = 0 \iff F = 0$. Consequently, $G$ is also conservative. The Barr-Beck monadicity theorem then implies
$$\cM_A \underset{\cC}{\boxtimes} \cN \simeq A \hyphen \Mod_\cN \ , $$
linearly. The equivalence is implemented by the functor
$$\cM_A \underset{\cC}{\boxtimes} \cN \ni A \underset{\cC}{\boxtimes} n \mapsto A \triangleright n \ , $$
which is a unitary functor, and thus the linear equivalence is also an unitary equivalence.
\end{proof}

\begin{corollary}
Suppose $\cD$ is another $C^*$-tensor category and that $T: \cC \to \cD$ is a unitary tensor functor. If $\cM_A$ is a cyclic right $\cC$-module C*-category, then
$$ \cM_A \underset{\cC}{\boxtimes} \cD \simeq T(A)\hyphen \Mod_\cD \ . $$
\label{monadicityforbasechange}
\end{corollary}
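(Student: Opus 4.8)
The plan is to deduce the Corollary from Lemma~\ref{reconstructionforrelativetensorproduct} after endowing $\cD$ with a left $\cC$-module C*-category structure coming from $T$. First I would use $T\colon\cC\to\cD$ to set $X\triangleright d:=T(X)\otimes_\cD d$ for $X\in\cC$ and $d\in\cD$; the module associativity and unit constraints are assembled from the monoidal constraints of $T$ together with the associator and unitors of $\cD$. Since $T$ is a \emph{unitary} tensor functor and $\otimes_\cD$ is a $*$-functor with unitary structure morphisms, these constraints are unitary, so $(\cD,\triangleright)$ is a left $\cC$-module C*-category in $\Clin^K$. In particular the relative tensor product $\cM_A\underset{\cC}{\boxtimes}\cD$ is defined, using cocompleteness of $\Clin^K$ and cocontinuity of $\underset{\max}{\boxtimes}$ (Theorem~\ref{thm:cocontinuitymaxproduct}).

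Applying Lemma~\ref{reconstructionforrelativetensorproduct} with $\cN=\cD$ yields at once an equivalence $\cM_A\underset{\cC}{\boxtimes}\cD\simeq A\hyphen\Mod_\cD$, implemented by the unitary functor $A\underset{\cC}{\boxtimes}d\mapsto A\triangleright d$, where the right-hand side is the category of modules in $\cD$ over the monad $d\mapsto A\triangleright d$. The remaining point is to identify this monad. Writing $A\simeq\bigoplus_{X\in\Irr(\cC)}A(X)\otimes X$ as in Lemma~\ref{lemma:functorialityformonads} and using that the action is cocontinuous, one gets $A\triangleright d\simeq\bigoplus_{X\in\Irr(\cC)}A(X)\otimes(T(X)\otimes d)\simeq\bigl(\bigoplus_{X\in\Irr(\cC)}A(X)\otimes T(X)\bigr)\otimes d$, so with $T(A):=\bigoplus_{X\in\Irr(\cC)}A(X)\otimes T(X)$ --- the image of $A$ under the cocontinuous $*$-extension $\Vect(\cC)\to\Vect(\cD)$ of $T$, well defined by semisimplicity of $\cC$ and $\cD$ --- the monad $A\triangleright(-)$ on $\cD$ is precisely $T(A)\otimes(-)$. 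The monoidal constraints of $T$ equip $T(A)$ with an algebra structure (multiplication $T(A)\otimes T(A)\simeq T(A\otimes A)\to T(A)$, unit from the unit constraint), and since its module category is the relative tensor product $\cM_A\underset{\cC}{\boxtimes}\cD$, which is a $\cD$-module C*-category, $T(A)$ is a C*-algebra object in the sense of Definition~\ref{defi:Cstaralgebraobjects}. Hence $A\hyphen\Mod_\cD=T(A)\hyphen\Mod_\cD$ with matching module structures, and composing with the previous equivalence gives the asserted unitary equivalence $\cM_A\underset{\cC}{\boxtimes}\cD\simeq T(A)\hyphen\Mod_\cD$.

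The main obstacle I anticipate is not the abstract skeleton but the bookkeeping around the pushforward algebra object $T(A)$: one must verify that the cocontinuous extension of $T$ to free cocompletions is monoidal in a way compatible with the internal multiplications, so that $T(A\otimes A)\simeq T(A)\otimes T(A)$ genuinely transports the multiplication of $A$ to that of $T(A)$, and that the comparison functors at each stage are $*$-functors with unitary structure isomorphisms, so that the final equivalence is unitary and not merely linear. Granted the earlier results --- Lemma~\ref{reconstructionforrelativetensorproduct}, Theorem~\ref{prop:adjointofastarfunctor}, and the fact that a relative tensor product of C*-module categories in $\Clin^K$ is again a $\cD$-module C*-category --- none of these verifications present a genuine difficulty.
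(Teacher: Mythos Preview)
Your proposal is correct and follows essentially the same approach as the paper: endow $\cD$ with a left $\cC$-module C*-structure via $T$ and invoke Lemma~\ref{reconstructionforrelativetensorproduct}. The paper's proof is a single sentence to this effect; your additional paragraph unpacking why the monad $A\triangleright(-)$ on $\cD$ is $T(A)\otimes(-)$ is a welcome elaboration of what the paper leaves implicit, but it is not a different route.
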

\begin{proof}
In the above expression, $\cD$ is regarded as a right $\cC$-module, the action being induced by $T$. The claim is then a direct consequence of the previous lemma.
\end{proof}

It follows from Corollary \ref{monadicityforbasechange} that $\cM_A \underset{\cC}{\boxtimes} \cD$ can be endowed with a canonical left $\cD$-module structure, which will have $A \underset{\cC}{\boxtimes} \un_\cD \simeq T(A)$ as cyclic generator. Specializing to the case when $T: \cC \to \cD$ is dominant, we get the following Lemma.

\begin{lemma}
Let $\cM$ be a $\cC$-module C*-category with a generator $m$. If $F: \cC \to \cD$ is a dominant $*$-tensor functor, then $m \underset{\cC}{\boxtimes} \un_\cD$ is a generator for the $\cD$-module category $\cM \underset{\cC}{\boxtimes} \cD$, and it holds
$$\cM \underset{\cC}{\boxtimes} \cD \simeq F( \underline{\End}(m)) \hyphen \Mod_\cD \ . $$
\label{lemma:monadicityforbasechange}
\end{lemma}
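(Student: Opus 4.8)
The plan is to deduce Lemma \ref{lemma:monadicityforbasechange} directly from Corollary \ref{monadicityforbasechange} together with the equivalence from Proposition \ref{prop:equivalencebetweenalgebrasandmodules} identifying cyclic module categories with their internal endomorphism algebras. Since $\cM$ is a $\cC$-module C*-category with generating object $m$, Proposition \ref{prop:equivalencebetweenalgebrasandmodules} gives us a C*-algebra object $A := \underline{\End}_\cC(m) \in \Vect(\cC)$ and a unitary equivalence of $\cC$-module C*-categories $\cM \simeq \cM_A$ carrying $m$ to the canonical generator $A$. So it suffices to prove the statement for $\cM_A$ in place of $\cM$, which is exactly what Corollary \ref{monadicityforbasechange} provides once we observe that a dominant $*$-tensor functor is in particular a unitary tensor functor: $\cM_A \underset{\cC}{\boxtimes} \cD \simeq F(A) \- \Mod_\cD$, linearly and in fact unitarily, by the last line of the proof of Lemma \ref{reconstructionforrelativetensorproduct}.

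First I would record that, under this equivalence, the cyclic generator of $\cM_A \underset{\cC}{\boxtimes} \cD$ regarded as a left $\cD$-module is $A \underset{\cC}{\boxtimes} \un_\cD$, which corresponds under Corollary \ref{monadicityforbasechange} to $F(A) = F(\underline{\End}_\cC(m))$ as the free module of rank one; this is precisely the remark made just before the statement of the Lemma. Transporting back along $\cM \simeq \cM_A$, the object $m \underset{\cC}{\boxtimes} \un_\cD$ is a generator for $\cM \underset{\cC}{\boxtimes} \cD$, and $\underline{\End}_\cD(m \underset{\cC}{\boxtimes} \un_\cD) \simeq F(\underline{\End}_\cC(m))$ as C*-algebra objects in $\Vect(\cD)$; applying Proposition \ref{prop:equivalencebetweenalgebrasandmodules} once more (now over $\cD$) then identifies $\cM \underset{\cC}{\boxtimes} \cD$ with $F(\underline{\End}_\cC(m)) \- \Mod_\cD$, giving the claimed formula.

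The only points requiring genuine care are bookkeeping points rather than deep ones. One is checking that ``dominant'' (essentially surjective on a generating set, so that the image of $F$ generates $\cD$ under direct sums and subobjects) is enough to invoke Corollary \ref{monadicityforbasechange}, which was stated for an arbitrary unitary tensor functor $T$; here dominance is only used to pin down the generator, not the equivalence itself, so no extra work is needed. The other is verifying that the equivalences of Proposition \ref{prop:equivalencebetweenalgebrasandmodules} and of Lemma \ref{reconstructionforrelativetensorproduct} are compatible with the $*$-structures, i.e.\ that the resulting equivalence is unitary and the identification $\underline{\End}_\cD(m \underset{\cC}{\boxtimes} \un_\cD) \simeq F(A)$ is an isomorphism of C*-algebra objects; but this is immediate since every functor and natural transformation in sight is a $*$-functor or a unitary natural isomorphism, as was already established in the proofs of the preceding lemmas.

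**The main obstacle** I anticipate is purely expository: making sure the chain of identifications $\cM \simeq \cM_A$, then $\cM_A \underset{\cC}{\boxtimes} \cD \simeq F(A) \- \Mod_\cD$, then $F(A) \- \Mod_\cD \simeq \cM_{F(A)}$, is internally consistent about which object plays the role of the cyclic generator at each stage, so that the final ``$\underline{\End}$'' computation comes out to $F(\underline{\End}_\cC(m))$ on the nose and not merely up to an unnamed isomorphism. Beyond this, the lemma is a formal corollary of what precedes it.
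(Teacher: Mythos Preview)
Your proposal is correct and matches the paper's own approach: the paper gives no separate proof of this lemma, merely stating (in the paragraph immediately preceding it) that it follows by specializing Corollary~\ref{monadicityforbasechange} to the dominant case, after noting that $A \underset{\cC}{\boxtimes} \un_\cD \simeq T(A)$ is the cyclic generator. Your write-up simply makes explicit the identification $\cM \simeq \cM_A$ via Proposition~\ref{prop:equivalencebetweenalgebrasandmodules} that the paper leaves implicit, so the two arguments are essentially identical.
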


So far we have considered the categorical constructions which will be used in the computation of the factorization homology for punctured surfaces. Now we move on to the constructions needed in the computation of the factorization homology for closed surfaces. This is the content of \cite{BenZviBrochierJordan2}, which we adapt to our C*-framework. 

\subsection{Reconstruction for tensor functors}

Recall that a tensor functor $F:\cC \to \cB$ between tensor categories endows the target category $\cB$ with the structure of both a left and a right $\cC$-module category. When $\cC$ is an UTC an $\cB$ is a C$^*$-tensor category, the internal endomorphism algebra $\underline{\End}(\un_{\cB})$ comes with an additional structure, that of an Yetter-Drinfeld C$^*$-algebra object in $\cC$. The terminology is reminiscent from the theory of quantum group actions. In Appendix \ref{app:YetterDrinfeldC*algebras} we present a systematic discussion of such structures, elaborating on the idea that such Yetter-Drinfeld C$^*$-algebra objects are C$^*$-algebra objects in $\cZ(\Vect(\cC))$.

Now we deduce an unitary analogue of Theorem 4.1 in \cite{BenZviBrochierJordan2} by means of an unitary version of Proposition 5.1 in \cite{BruguieresNataleExactsequences}. 

\begin{theorem}
Let $\cC$ be an unitary tensor category, not necessarily braided. Suppose that $F: \cC \to \cB$ is an unitary tensor functor to a C*-tensor category $\cB$ such that $\un_\cB$ is a generator for the induced $\cC$-action. Then $M_B := \underline{\End}(\un_\cB)$ has a half-braiding which makes it a commutative C*-algebra object in $\cZ \left( \Vect(\cC) \right)$ (see Appendix \ref{app:YetterDrinfeldC*algebras} and Definition \ref{def:Yetter-DrinfeldC*algebra}). Moreover, there is an equivalence of C*-tensor categories
$$\cB \simeq \Bim_\cC(\underline{\End}(\un_\cB)) \ , $$
where $\Bim_\cC(\underline{\End}(\un_\cB))$ denotes the Karoubi completion of the C$^*$-category of $\underline{\End}(\un_{\cB})$-bimodules of the form $\underline{\End}(\un_{\cB}) \otimes U \otimes \underline{\End}(\un_{\cB})$, with $U \in \cC$.

\label{thm:reconstructionfortensorcategories}
\end{theorem}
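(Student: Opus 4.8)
The plan is to follow the strategy of Bruguières--Natale's Proposition 5.1, carefully upgraded to the unitary setting, in three stages: first establish the half-braiding on $M_B := \underline{\End}(\un_\cB)$, then verify commutativity, and finally prove the equivalence $\cB \simeq \Bim_\cC(M_B)$.

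\textbf{Step 1: The half-braiding.} Since $F$ endows $\cB$ with both a left and a right $\cC$-module structure, and since $F$ is a tensor functor, for each $X \in \cC$ there are natural isomorphisms $F(X) \otimes (-) \simeq (-) \otimes F(X)$ at the level of the module actions coming from the tensor structure of $\cB$; concretely $\un_\cB \triangleleft X = \un_\cB \otimes F(X) \simeq F(X) \simeq F(X) \otimes \un_\cB = X \triangleright \un_\cB$ using unitality of $\cB$. I would use these canonical unitaries to produce, via the defining universal property of $\underline{\End}(\un_\cB)$ as the object of $\Vect(\cC)$ representing $\Hom_\cB(\un_\cB \triangleleft (-), \un_\cB)$, a half-braiding $e_{M_B,X}: M_B \otimes X \simeq X \otimes M_B$ in $\Vect(\cC)$. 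The fact that the module actions on both sides agree (both are computed from $F$ and the tensor product of $\cB$) is what makes this a genuine half-braiding rather than just a natural isomorphism; the hexagon axioms for $e$ reduce to the pentagon/coherence for the tensor structure of $F$ together with the coherence of the $\cC$-module structure on $\cB$. One must check that $e$ is unitary --- this follows because all the structural isomorphisms in $\cB$ and in $F$ are unitary by hypothesis, and because $\underline{\End}(\un_\cB)$ inherits a $*$-structure from the $\cC$-module C*-category structure on $\cB$ (this is exactly the content of ``C*-algebra object'' in Definition \ref{defi:Cstaralgebraobjects}). So $M_B$ lands in $\cZ(\Vect(\cC))$ and is in fact a C*-algebra object there, i.e. a Yetter--Drinfeld C*-algebra object in $\cC$ in the sense of Appendix \ref{app:YetterDrinfeldC*algebras}.

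\textbf{Step 2: Commutativity.} For commutativity of $M_B$ as an algebra in $\cZ(\Vect(\cC))$, I would trace through the multiplication $M_B \otimes M_B \to M_B$, which is induced from composition $\Hom_\cB(\un_\cB \triangleleft X, \un_\cB) \times \Hom_\cB(\un_\cB \triangleleft Y, \un_\cB) \to \Hom_\cB(\un_\cB \triangleleft (X \otimes Y), \un_\cB)$ (precompose one map with the other amplified by the module action), and show that composing with the half-braiding swap $e$ leaves it invariant. This is where the specific geometry of $\un_\cB$ matters: morphisms $\un_\cB \triangleleft X \to \un_\cB$ are the same as endomorphism-type data $F(X) \to \un_\cB$, and since $\un_\cB$ is the tensor unit, ``stacking on the left'' and ``stacking on the right'' give the same composite up to the unitors, which is precisely the half-braiding. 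Equivalently: the internal endomorphism algebra of the unit object under a tensor functor is always commutative-in-the-center, a standard fact whose unitary version needs only that the relevant isomorphisms be unitary, which they are.

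\textbf{Step 3: The equivalence $\cB \simeq \Bim_\cC(M_B)$.} Here I would mimic the module-category reconstruction already established in this section. By Lemma \ref{reconstructionforrelativetensorproduct} (or the proof technique behind it --- Barr--Beck monadicity) applied to $\cM = \cB$ with generator $m = \un_\cB$, one gets $\cB \simeq M_B \hyphen \Mod_\cC$ as a $\cC$-module C*-category, via $A \triangleright n \mapsto \dots$; the point is to promote this to an equivalence of \emph{tensor} categories by identifying the tensor product on $\cB$ with the relative tensor product $\otimes_{M_B}$ of bimodules. The half-braiding is exactly the extra datum that turns a one-sided $M_B$-module (coming from $\un_\cB \triangleleft (-)$) into an $M_B$-$M_B$-bimodule, compatibly with $\otimes_\cB$; so the functor is $U \mapsto M_B \otimes U \otimes M_B$ (a free bimodule), and it is essentially surjective onto $\Bim_\cC(M_B)$ by construction (we take the Karoubi completion of precisely these objects) and fully faithful by the universal property of $\underline{\End}$ together with the computation of bimodule hom-spaces. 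One checks the functor is a $*$-functor and the structural isomorphisms are unitary, as in Lemma \ref{reconstructionforrelativetensorproduct}'s closing remark. I would invoke Theorem \ref{prop:adjointofastarfunctor} and its Corollary wherever adjoints of $*$-functors need to be recognized as $*$-functors (e.g. in the monadicity step), which is the technical lubricant that makes the Bruguières--Natale argument go through unitarily.

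\textbf{Main obstacle.} The hard part will be Step 1, specifically verifying that the candidate $e$ genuinely satisfies the half-braiding hexagons and is unitary, because $M_B$ lives in the free cocompletion $\Vect(\cC)$ rather than in $\cC$ itself, so one cannot manipulate it with ordinary string diagrams; everything must be phrased through the representing property and natural transformations, and the $*$-structure on $\Vect(\cC)$-objects is defined only indirectly (via the module C*-category). Keeping careful track of which isomorphisms are unitary --- and checking that the $*$-operation on $M_B$ intertwines $e$ with its inverse-adjoint correctly --- is where the ``extra care'' the introduction warns about is really needed. Once the half-braiding and its unitarity are in hand, Steps 2 and 3 are close to a transcription of the non-unitary arguments with Theorem \ref{prop:adjointofastarfunctor} supplying the missing compatibilities.
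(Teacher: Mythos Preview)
Your proposal is correct and follows essentially the same route as the paper: Brugui\`eres--Natale unitarized, with Theorem~\ref{prop:adjointofastarfunctor} supplying the $*$-compatibility of adjoints. Two tactical differences are worth noting. First, rather than extracting the half-braiding from the representing property of $\underline{\End}(\un_\cB)$ as you propose, the paper works directly with the right adjoint $act_{\un_\cB}^R \simeq F^R$: since $M_B = act_{\un_\cB}^R(\un_\cB)$ and $act_{\un_\cB}^R$ carries a canonical $\cC$-\emph{bilinear} module structure (with unitary coherence data, by the corollary to Theorem~\ref{prop:adjointofastarfunctor}), the half-braiding $\beta_X: M_B \otimes X \simeq X \otimes M_B$ drops out as a chain of canonical unitaries $act_{\un_\cB}^R(\un_\cB) \otimes X \simeq act_{\un_\cB}^R(\un_\cB \triangleleft X) \simeq act_{\un_\cB}^R(X \triangleright \un_\cB) \simeq X \otimes act_{\un_\cB}^R(\un_\cB)$, and the hexagon is the bimodule coherence of $F^R$; the paper also cites \cite{HataishiYamashita} here. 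This largely dissolves the obstacle you flag in Step~1, since one never has to manipulate $M_B$ through its universal property alone. Second, for commutativity the paper does not trace through the multiplication but observes that in a strict $\cB$ one has $act_{\un_\cB}^R(\un_\cB \triangleleft act_{\un_\cB}^R(\un_\cB)) = act_{\un_\cB}^R(act_{\un_\cB}^R(\un_\cB) \triangleright \un_\cB)$ on the nose, so $m = m \circ \beta_{M_B}$ tautologically, and then invokes strictification. Step~3 is handled exactly as you outline: $\cB \simeq M_B\text{-}\Mod_\cC$, then the half-braiding upgrades left modules to bimodules and $M_B\text{-}\Mod_\cC \simeq M_B\text{-}\Bim_\cC$ as C*-tensor categories.
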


\begin{proof}
Since $F$ is an unitary tensor functor, 
$$X \triangleright \un_\cB = F(X) \otimes \un_\cB \simeq F(X) \simeq \un_\cB \otimes F(X) = \un_\cB \triangleleft F(X) \ .$$
By \cite{HataishiYamashita}, $M_B = \underline{\End}(\un_B)$ is a C*-algebra object in $\Vect(\cC)$ with a unitary half-braiding $\beta = \{ \beta_X: M_B \otimes X \simeq X \otimes M_B \}_{X \in \cC}$ (see Definition \ref{def:unitaryhalfbraiding}) which we now describe. $F^R$ has a canonical structure of a $\cC$-bilinear module functor. Note that $F$ is unitarily equivalent to the functor $act_{\un_B}: X \mapsto \un_\cB \triangleleft X = \un_\cB \otimes F(X)$, and thus $F^R \simeq act_{\un_\cB}^R$. Taking all this into account, $\beta$ is computed as follows:
\begin{align*}
M_B \otimes X &= act_{\un_\cB}^R(\un_\cB) \otimes X \\
& \simeq act_{\un_\cB}^R( \un_\cB \triangleleft X) \\
& = act_{\un_\cB} \otimes F(X) \\
& \simeq act_{\un_\cB}^R(F(X) \otimes \un_\cB) \\
& = act_{\un_\cB}^R(X \triangleright \un_\cB) \\
& \simeq X \otimes act_{\un_\cB} (\un_\cB) \\
& = X \otimes M_B \ \
\end{align*}
where all the isomorphisms are canonical, unitary and natural in $X$.

The multiplication  $m: M_B \otimes M_B \to M_B$ is provided by the adjunction data between $act_{\un_\cB}$ and $act_{\un_\cB}^R$:
\begin{align*}
M_B \otimes M_B &= act_{\un_\cB}^R(\un_\cB) \otimes act_{\un_\cB}^R(\un_\cB) \\
& \simeq act_{\un_\cB} (\un_\cB \triangleleft act_{\un_\cB}^R(\un_\cB)) \\
& = act_{\un_\cB}(act_{\un_\cB} act_{\un_\cB}^R (\un_\cB)) \overset{act_{\un_\cB}^R(\epsilon_{\un_\cB})}{\longrightarrow} act_{\un_\cB}^R(\un_\cB) = M_B \ ,
\end{align*}
where $\epsilon: act_{\un_\cB} \circ act_{\un_\cB}^R \to \id_\cB$ is the counit of the adjunction. 

To show that $M_B$ is commutative is to show that $m = m \circ \beta_{M_B}$. If $\cB$ was strict, the equality would be tautological since we would have
$$act_{\un_\cB}^R(\un_\cB) \otimes act_{\un_\cB}^R(\un_\cB) \simeq act_{\un_\cB}^R(\un_\cB \triangleleft act_{\un_\cB}^R(\un_\cB)) = act_{\un_\cB}^R( act_{\un_\cB}^R(\un_\cB) \triangleright \un_\cB) \ . $$
Since every monoidal category is equivalent to a strict one, the claim about the commutativity of $M_B$ follows.

If $X$ is a left $M_B$-module in $\Vect(\cC)$, define $X \otimes M_B \to X$ as the composition
$$X \otimes M_B \overset{\beta_X^{-1}}{\to} M_B \otimes X \to X \ . $$
It defines a right $M_B$-module structure on $X$. The commutativity of $M_B$ implies that the left and right $M_B$ actions commute, so that $X$ becomes an $M_B$-bimodule. One has $M_B \- \Mod_\cC \simeq M_B \- \Bim_\cC$, and the composition
$$\cB \simeq M_B\- \Mod_\cC \simeq M_B \- \Bim_\cC$$
becomes an equivalence of  C*-tensor categories, where in $M_B \- \Bim_\cC$ the monoidal structure is given by
$$ X \boxtimes Y \to X \underset{M_B}{\otimes} Y \ ,$$
and the unit is $M_B$. 
\end{proof}

\begin{remark}
When $\cC = \Rep_{\fd} (\bG)$ for a compact quantum group $G$, the authors in \cite{NeshveyevYamashitaYetterDrinfeld} identified the category unitary tensor functors $F: \Rep_{\fd}(G) \to \cB$ having $\un_\cB$ as a generator with the category of unital {\em braided-commutative} Yetter-Drinfeld $G$-C*-algebras. As a corollary of Theorem \ref{thm:reconstructionfortensorcategories} and its proof, we conclude that in this case the category of unital braided commutative $G$-C*-algebras is equivalent to the category of C*-algebras in $\Vect(\cC)$ equipped with a half-braiding making it a commutative C*-algebra in $\cZ(\Vect(\cC))$.
\end{remark}

\begin{definition}
A Yetter-Drinfeld C*-algebra $A \in \Vect(\cC)$ is braided-commutative if it is commutative in $\cZ(\Vect(\cC))$.
\label{defi:braidedcommutalgebras}
\end{definition}

 The following proposition and also its proof are unitary versions of Proposition 3.7 in \cite{SafronovMomentMaps}.

\begin{proposition}

Let $\cC, \cB, M_B = \underline{\End}_\cC(\un_\cB)$ and $\beta$ be as in Theorem \ref{thm:reconstructionfortensorcategories}. There is an equivalence between the category of C*-algebras in $\Vect(\cB)$ and the category of C*-algebras $A$ in $\Vect(\cC)$ equipped with a $*$-algebra homomorphism $\mu: M_B \to A$ such that
\begin{center}
\begin{tikzcd}
M_B \otimes A \arrow[r, "\mu \otimes \id"] \arrow[dd, "\beta_A"'] & A \otimes A \arrow[rd, "m_A"]  &   \\
                                                                  &                                & A \\
A \otimes M_B \arrow[r, "\id \otimes \mu"']                       & A \otimes A \arrow[ru, "m_A"'] &  
\end{tikzcd}
\end{center}
commutes.
\label{prop:monadicreconstructionthroughtensorfunctor}
\end{proposition}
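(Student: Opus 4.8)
The plan is to transport the statement across the equivalence $\cB \simeq \Bim_\cC(M_B)$ of Theorem~\ref{thm:reconstructionfortensorcategories} and then, following \cite{SafronovMomentMaps}, to identify C$^*$-algebra objects in the category of $M_B$-bimodules in $\Vect(\cC)$, equipped with the relative tensor product $\underset{M_B}{\otimes}$, with the pairs $(A,\mu)$ described in the statement.

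First I would record what Theorem~\ref{thm:reconstructionfortensorcategories} yields after free cocompletion. Since $\cB \simeq \Bim_\cC(M_B)$ is an equivalence of C$^*$-tensor categories, it induces an equivalence $\Vect(\cB) \simeq \Vect(\Bim_\cC(M_B))$ carrying the Day-convolution monoidal structure on the source to the one induced by $\underset{M_B}{\otimes}$ on the target; because $\cC$, and hence $\Vect(\cC)$, is semisimple (Remark~\ref{rem:UTCaresemisimple}), the latter is in turn monoidally equivalent to the category of $M_B$-bimodule objects internal to $\Vect(\cC)$ with the relative tensor product over $M_B$. Throughout, the half-braiding $\beta$ of $M_B$ is understood to be extended from $\cC$ to $\Vect(\cC)$ by cocontinuity. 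Since every equivalence in this chain is unitary, the $*$-structures and the notion of $*$-algebra morphism are preserved, so by Definition~\ref{defi:Cstaralgebraobjects} and \cite{JonesPenneys1} a C$^*$-algebra object in $\Vect(\cB)$ is the same datum as a C$^*$-algebra object in $(M_B\-\Bim(\Vect(\cC)), \underset{M_B}{\otimes})$.

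It remains to unwind the latter. An algebra structure for $\underset{M_B}{\otimes}$ on an $M_B$-bimodule with underlying object $A \in \Vect(\cC)$ amounts to a unit map $\mu \colon M_B \to A$ --- whose composite with the unit of $M_B$ is the ordinary unit $\un_\cC \to A$ --- together with a balanced multiplication, which pulls back along the canonical epimorphism $A \otimes A \to A \underset{M_B}{\otimes} A$ to an ordinary multiplication $m_A \colon A \otimes A \to A$ making $A$ an algebra object in $\Vect(\cC)$; the left and right $M_B$-actions on $A$ are then necessarily $m_A \circ (\mu \otimes \id)$ and $m_A \circ (\id \otimes \mu)$. The only genuine point is that these two actions assemble into an object of $\Bim_\cC(M_B)$ in the sense of the proof of Theorem~\ref{thm:reconstructionfortensorcategories} --- where the right action of a left $M_B$-module $X$ is, by definition, its left action precomposed with $\beta_X^{-1}$ --- precisely when the square displayed in the statement commutes; braided-commutativity of $M_B$ (Definition~\ref{defi:braidedcommutalgebras}) is what makes $\underset{M_B}{\otimes}$ monoidal at all and makes the two module structures on $A$ induced by $\mu$ mutually compatible. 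Conversely, given a C$^*$-algebra object $A \in \Vect(\cC)$ and a $*$-homomorphism $\mu \colon M_B \to A$ for which the square commutes, the square equips $A$ with a well-defined $M_B$-bimodule structure relative to which $m_A$ is balanced and hence descends to an $\underset{M_B}{\otimes}$-algebra structure; that $\mu$ is a $*$-homomorphism guarantees the resulting object is a C$^*$-algebra object. That these two assignments are mutually quasi-inverse and carry $*$-algebra morphisms to $*$-algebra morphisms intertwining the structure maps is then a diagram chase. (Equivalently, in the cyclic-module-category picture, $\mu$ is the $*$-homomorphism produced by Lemma~\ref{lemma:functorialityformonads} from the $\cC$-module functor $\cB \to \cM_A$ sending $\un_\cB$ to $A$; I may present it this way to keep the $*$-structures manifestly under control.)

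The main obstacle is the $*$-bookkeeping rather than the abstract algebra: one must verify that the equivalence $\Vect(\cB) \simeq M_B\-\Bim(\Vect(\cC))$ respects $*$-structures tightly enough that ``C$^*$-algebra object'' and ``$*$-algebra morphism'' correspond on both sides, and that the half-braiding appearing in the square is the \emph{unitary} one furnished by Theorem~\ref{thm:reconstructionfortensorcategories}. Granting the results of Section~\ref{sec:categoricalframework} --- in particular that adjoints of $*$-functors are $*$-functors (Theorem~\ref{prop:adjointofastarfunctor}), which underlies both the construction of $\beta$ and the $*$-structure carried by $\underset{M_B}{\otimes}$ --- this is routine, but must be carried out with care.
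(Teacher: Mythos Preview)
Your proposal is correct and follows essentially the same route as the paper's proof: both transport the statement across the equivalence $\cB \simeq \Bim_\cC(M_B)$ of Theorem~\ref{thm:reconstructionfortensorcategories}, then identify C$^*$-algebras in $\Vect(M_B\-\Mod_\cC)$ (with the $\underset{M_B}{\otimes}$ monoidal structure) with C$^*$-algebras $A$ in $\Vect(\cC)$ equipped with a unit $*$-homomorphism $\mu\colon M_B \to A$, the commuting square being exactly the statement that the right $M_B$-action on $A$ is the left one twisted by $\beta$. Your write-up is more explicit about the $*$-bookkeeping and about passing to free cocompletions, but the underlying argument is the unitary specialization of \cite{SafronovMomentMaps}, Proposition~3.7, which is precisely what the paper invokes.
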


\begin{proof}
The category of C*-algebras in $\Vect (\cB)$ is equivalent to the category of C*-algebras in $\Vect(M_\cB \- \Bim_\cC$), which is in turn equivalent to the category of C*-algebras in  $\Vect( M_\cB \- \Mod_\cC)$. In this latter category the monoidal structure is the one introduced in the proof of Theorem \ref{thm:reconstructionfortensorcategories}. A C*-algebra $A$ in $\Vect( M_\cB \- \Mod_\cC )$ is in particular a C*-algebra in $\cC$. Since the unit object in $M_\cB \- \Mod_\cC$ is $M_\cB$, the unit map of $A$ is a $*$-homomorphism $\mu: M_\cB \to A$. Now, the functor $M_\cB \- \Mod_\cC \into M_\cB \- \Bim_\cC$ implementing the equivalence is the one which endows every left $M_\cB$-module with the right $M_\cB$-module structure using $\beta$. This means exactly the commutativity of the above diagram.
\end{proof}

Using again that the adjoint of a $*$-functor is itself a $*$-functor, we obtain the C*-version of Theorem 4.3 in \cite{BenZviBrochierJordan2}. Let $\cM$ be a $\cB$-module C*-category. If $m \in \cM$ is a generator for the induced $\cC$ -action, so that $\cM \simeq \underline{\End}(m) \- \Mod_\cC$, by Proposition \ref{prop:monadicreconstructionthroughtensorfunctor} there is a $*$-algebra homomorphism $\rho: \underline{\End}(\un_\cB) \to \underline{\End}_\cB(m)$.

\begin{theorem}
The $\cB$ action on $\cM$, under the above identifications, is given by
$$\cM \underset{\max}{\boxtimes} \cB \ni n \boxtimes b \mapsto n \underset{\underline{\End}(\un_\cB)}{\otimes} b \ .$$
The generator $m$ under the $\cC$ -action is also a generator under the $\cB$-action, and the functor $\cM \simeq \underline{\End}_\cB (m) \- \Mod_\cB \to \underline{\End}_\cC(m) \- \Mod_\cC$ is an equivalence of $\cB$-module C*-categories. 
\label{thm:canonicalformofmodulesovertheannuluscategory}
\end{theorem}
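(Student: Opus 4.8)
The plan is to transport the whole situation across the equivalence $\cB \simeq \Bim_\cC(M_B) \simeq M_B \- \Mod_\cC$ of Theorem \ref{thm:reconstructionfortensorcategories} — where $M_B := \underline{\End}(\un_\cB)$ and the monoidal product of $M_B \- \Mod_\cC$ is the relative tensor product $\otimes_{M_B}$ — together with the dictionary of Proposition \ref{prop:monadicreconstructionthroughtensorfunctor} identifying C*-algebras in $\Vect(\cB)$ with pairs $(A,\mu\colon M_B \to A)$. Throughout I would lean on Theorem \ref{prop:adjointofastarfunctor} so that all adjoint functors that occur (internal-Hom functors, Barr--Beck type comparison functors) stay $*$-functors and all structure isomorphisms stay unitary, making every equivalence below automatically an equivalence of C*-categories. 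The first observation is that $m$ is automatically a generator for $\cM$ as a $\cB$-module C*-category: the induced $\cC$-action is the restriction of the $\cB$-action along $F$, i.e. $n \triangleleft_\cC X = n \triangleleft_\cB F(X)$, so every $\cB$-submodule C*-category of $\cM$ is in particular a $\cC$-submodule C*-category, and since $m$ already generates over $\cC$ it generates over $\cB$. The reconstruction theorem for cyclic module C*-categories (Proposition \ref{prop:equivalencebetweenalgebrasandmodules} applied over $\cB$) then yields an equivalence of $\cB$-module C*-categories $\cM \simeq \underline{\End}_\cB(m) \- \Mod_\cB$ carrying $m$ to the free rank-one module.

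The core step is to identify $\underline{\End}_\cB(m)$. Viewing it as a C*-algebra object of $\Vect(\cB)$, Proposition \ref{prop:monadicreconstructionthroughtensorfunctor} presents it as a pair $(A_0,\mu_0)$ with $A_0$ a C*-algebra in $\Vect(\cC)$ and $\mu_0 \colon M_B \to A_0$ the unit map read in $\Vect(\cC)$. I would argue that restriction of presheaves along $F^{\op}\colon \cC^{\op}\to\cB^{\op}$ sends the presheaf $\Hom_\cM(m\triangleleft_\cB(-),m)$ represented by $\underline{\End}_\cB(m)$ to $\Hom_\cM(m\triangleleft_\cB F(-),m)=\Hom_\cM(m\triangleleft_\cC(-),m)$, which is represented by $\underline{\End}_\cC(m)$; since under $\Vect(\cB)\simeq M_B\-\Mod(\Vect(\cC))$ this restriction functor is the forgetful functor (right adjoint to the free-module functor $M_B\otimes(-)$, which is $F$ on cocompletions), we get $A_0 \simeq \underline{\End}_\cC(m)$, and by the uniqueness in the correspondence between cyclic modules with a chosen generator and algebra objects, $\mu_0$ must be the homomorphism $\rho$ of the statement. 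Hence a $\underline{\End}_\cB(m)$-module in $\cB$, read inside $M_B\-\Mod_\cC$, is exactly an $\underline{\End}_\cC(m)$-module in $\Vect(\cC)$, the $M_B$-bimodule structure being forced through $\rho$, $\beta$ and the compatibility square of Proposition \ref{prop:monadicreconstructionthroughtensorfunctor}. The functor of the statement
\[
\cM \simeq \underline{\End}_\cB(m) \- \Mod_\cB \;\longrightarrow\; \underline{\End}_\cC(m) \- \Mod_\cC
\]
is then the evident lift of this forgetful functor; under both identifications with $\cM$ it is the identity of $\cM$, hence an equivalence.

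For the action formula, on $\underline{\End}_\cB(m)\-\Mod_\cB$ the right $\cB$-action is, on free modules, $\underline{\End}_\cB(m)\otimes b' \mapsto \underline{\End}_\cB(m)\otimes(b'\otimes_\cB b)$, extended by Karoubi completion; since $\otimes_\cB$ is $\otimes_{M_B}$ (Theorem \ref{thm:reconstructionfortensorcategories}), transporting across the previous paragraph gives exactly $n \boxtimes b \mapsto n\otimes_{\underline{\End}(\un_\cB)} b$, with $n$ carrying the $M_B$-bimodule structure induced by $\rho$ and $\beta$. Restricting this $\cB$-action along $F$ recovers the original $\cC$-action, because $F(X)\simeq M_B\otimes X$ is the free $M_B$-module and $n\otimes_{M_B}(M_B\otimes X)\simeq n\otimes X$; this both checks that the $\cC$- and $\cB$-descriptions of $\cM$ are compatible and that the displayed action is the given $\cB$-action. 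Finally, since the comparison functor of the core step is the identity on underlying objects and both of its sides compute the $\cB$-action through $\otimes_{M_B}$, it is an equivalence of $\cB$-module C*-categories, and $m$ is a generator for the $\cB$-action by the first paragraph.

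I expect the delicate point to be the core step: making rigorous, under the equivalence $\cB \simeq M_B\-\Mod_\cC$ and the dictionary of Proposition \ref{prop:monadicreconstructionthroughtensorfunctor}, that restriction of presheaves along $F$ is precisely the forgetful functor $M_B\-\Mod(\Vect(\cC))\to\Vect(\cC)$ — so that $\underline{\End}_\cB(m)$ genuinely has $\underline{\End}_\cC(m)$ as its underlying $\cC$-algebra with $\rho$ as structure map — and that the comparison functor built by hand is the one induced by $F$; all of this while keeping track that every adjoint in sight remains a $*$-functor via Theorem \ref{prop:adjointofastarfunctor}, so that the equivalences are unitary rather than merely linear.
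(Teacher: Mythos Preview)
Your proposal is correct and follows essentially the same route as the paper. The paper's own proof is a single sentence: it invokes Theorem~4.3 of \cite{BenZviBrochierJordan2} and notes that Theorem~\ref{prop:adjointofastarfunctor} upgrades the linear equivalence there to a C*-equivalence. What you have done is unpack that citation using the paper's own reconstruction machinery (Theorem~\ref{thm:reconstructionfortensorcategories} and Proposition~\ref{prop:monadicreconstructionthroughtensorfunctor}) --- which is exactly how the argument in \cite{BenZviBrochierJordan2} proceeds --- while keeping track of $*$-structures via Theorem~\ref{prop:adjointofastarfunctor} at each step, so there is no genuine difference in strategy.
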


Applying  Theorem \ref{thm:canonicalformofmodulesovertheannuluscategory} and Lemma \ref{lemma:functorialityformonads}, we obtain the C*-version of Corollary 4.4 in \cite{BenZviBrochierJordan2}:

\begin{corollary}
If $\cM$ is a right $\cB$-module C*-category with a generator $m$ and $\cN$ is a left $\cB$-module C*-category with a generator $n$, wrt. the $\cC$-actions, then there is an equivalence of C*-categories
$$\cM \underset{\cB}{\boxtimes} \cN \simeq \left( \underline{\End}(m) \- \underline{\End}(n) \right) \- \Bim_{\underline{\End}(\un_\cB)} \ . $$
\label{cor:balancedtensorproductovertheannuluscategory}
\end{corollary}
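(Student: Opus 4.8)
The plan is to reduce the relative tensor product over $\cB$ to a relative tensor product over $\cC$, using the identifications of Theorem~\ref{thm:canonicalformofmodulesovertheannuluscategory}, and then invoke the monadic reconstruction of Lemma~\ref{reconstructionforrelativetensorproduct}.

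First I would record, via Theorem~\ref{thm:canonicalformofmodulesovertheannuluscategory}, that the generator $m$ of $\cM$ for the $\cC$-action is also a generator for the $\cB$-action, so that $\cM \simeq \underline{\End}_\cB(m)\-\Mod_\cB$ as right $\cB$-module C*-categories, and dually $\cN \simeq \underline{\End}_\cB(n)\-\Mod_\cB$ as left $\cB$-module C*-categories, where $\underline{\End}_\cB(-)$ denotes the internal endomorphism C*-algebra computed in $\Vect(\cB)$. Setting $A := \underline{\End}_\cB(m)$, Lemma~\ref{dualizabilityofmodulecategories} together with Lemma~\ref{reconstructionforrelativetensorproduct} (applied with $\cB$ playing the role of the tensor category and $\cN$ the left module) then give
$$\cM \underset{\cB}{\boxtimes} \cN \simeq A\-\Mod_\cN \simeq \bigl(\underline{\End}_\cB(m)\-\underline{\End}_\cB(n)\bigr)\-\Bim_\cB \ ,$$
since a left $A$-module in $\cN \simeq \underline{\End}_\cB(n)\-\Mod_\cB$ is exactly an $(\underline{\End}_\cB(m),\underline{\End}_\cB(n))$-bimodule in $\cB$ (Karoubi completions understood, and the relevant bimodules being those built from free modules over each algebra). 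Both $A\-\Mod_\cN \to \cM \underset{\cB}{\boxtimes}\cN$ and its quasi-inverse are $*$-functors, so this is an equivalence of C*-categories.

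Next I would transport this description from $\cB$ to $\cC$ using Theorem~\ref{thm:reconstructionfortensorcategories} and Proposition~\ref{prop:monadicreconstructionthroughtensorfunctor}. Write $M_\cB := \underline{\End}(\un_\cB)$, the braided-commutative C*-algebra object in $\cZ(\Vect(\cC))$ supplied by Theorem~\ref{thm:reconstructionfortensorcategories}, which identifies $\cB \simeq M_\cB\-\Bim_\cC$ as C*-tensor categories with monoidal product the relative tensor product $\otimes_{M_\cB}$; hence $\Vect(\cB)$ is identified with $\Vect(M_\cB\-\Mod_\cC)$, and by Proposition~\ref{prop:monadicreconstructionthroughtensorfunctor} a C*-algebra object in $\Vect(\cB)$ is the same datum as a C*-algebra object in $\Vect(\cC)$ together with a $*$-homomorphism from $M_\cB$ compatible with the multiplication via the half-braiding $\beta$. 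Under this dictionary $\underline{\End}_\cB(m)$ corresponds to $\underline{\End}_\cC(m)$ equipped with the $*$-homomorphism $\rho\colon M_\cB \to \underline{\End}_\cC(m)$ from the discussion preceding Theorem~\ref{thm:canonicalformofmodulesovertheannuluscategory} (and likewise for $n$), while an $(\underline{\End}_\cB(m),\underline{\End}_\cB(n))$-bimodule in $\cB$ becomes an $M_\cB$-relative bimodule in $\cC$ over the pair $(\underline{\End}_\cC(m),\underline{\End}_\cC(n))$; Lemma~\ref{lemma:functorialityformonads} guarantees that these structure $*$-homomorphisms are the canonical ones and that the passage is compatible with functoriality. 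Combining the two steps yields
$$\cM \underset{\cB}{\boxtimes} \cN \simeq \bigl(\underline{\End}(m)\-\underline{\End}(n)\bigr)\-\Bim_{\underline{\End}(\un_\cB)} \ .$$

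The main obstacle is the bookkeeping in the second step: one must check carefully that the C*-tensor equivalence $\cB \simeq M_\cB\-\Bim_\cC$ intertwines the two notions of internal endomorphism algebra, carrying along the $M_\cB$-module structures and the $*$-structures, and that ``$A$-modules inside a $\cB$-module C*-category'' matches ``$M_\cB$-relative bimodules of the prescribed free form'' after Karoubi completion. Once the intermediate identifications are stated precisely, each is a routine consequence of the earlier results (Theorems~\ref{thm:reconstructionfortensorcategories} and \ref{thm:canonicalformofmodulesovertheannuluscategory}, Lemmas~\ref{reconstructionforrelativetensorproduct} and \ref{lemma:functorialityformonads}, and Theorem~\ref{prop:adjointofastarfunctor} for unitarity of adjoints), so the substance of the argument is organizational rather than computational.
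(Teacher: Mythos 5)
Your route is essentially the paper's own: the corollary is obtained there by exactly the combination you spell out --- Theorem \ref{thm:canonicalformofmodulesovertheannuluscategory} to present $\cM$ and $\cN$ as module categories over $\underline{\End}_\cB(m)$ and $\underline{\End}_\cB(n)$ with the $\cB$-action given by $\otimes_{M_\cB}$, a monadic reconstruction of the balanced tensor product, and Lemma \ref{lemma:functorialityformonads} together with Proposition \ref{prop:monadicreconstructionthroughtensorfunctor} to translate everything back into algebras in $\Vect(\cC)$ equipped with their structure maps from $M_\cB = \underline{\End}(\un_\cB)$ --- so the identification you arrive at is the intended one, and your second step is the bookkeeping the paper leaves implicit.

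One point deserves more care than you give it: in your first step you invoke Lemma \ref{dualizabilityofmodulecategories} and Lemma \ref{reconstructionforrelativetensorproduct} ``with $\cB$ playing the role of the tensor category''. Those statements are formulated for the unitary tensor category $\cC$, whereas $\cB$ is only a C*-tensor category; in the relevant application $\cB \simeq \cF\-\Mod_\cC$ is in general neither rigid nor semisimple, so the lemmas do not apply verbatim (in particular the Ostrik-type existence of internal endomorphism algebras over $\cB$ is not automatic --- here it is supplied only because Theorem \ref{thm:canonicalformofmodulesovertheannuluscategory} already exhibits $\cM \simeq \underline{\End}_\cB(m)\-\Mod_\cB$). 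The repair is either to note that the proofs of those two lemmas (duality of free-module categories plus Barr--Beck) never use rigidity or semisimplicity of the base once the internal ends are known to exist, or --- closer to the paper's intention --- to use the explicit description of the $\cB$-action as $\otimes_{M_\cB}$ from Theorem \ref{thm:canonicalformofmodulesovertheannuluscategory} to rewrite the $\cB$-balanced tensor product entirely in terms of $\cC$-data, where the UTC hypotheses do hold. With that adjustment your argument is complete and matches the paper's.
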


Suppose now that $\cC$ has an unitary braiding $\sigma$. We shall apply the previous results later in the case $\cB = \int_{Ann} \cC =: U(\cC)$. Here, the unitary tensor functor $\cC \to U(\cC)$ will be reminiscent of a stronger structure, which allows a strengthening of the above constructions.

\section{Computations in Factorization Homology}
\label{sec:computations}

In this section we streamline the computations in \cite{BenZviBrochierJordan1, BenZviBrochierJordan2}, pointing out the compatibility with the unitary structures.

\begin{remark}
In the purely algebraic setting of linear categories, one has to distinguish between factorization homology for framed surfaces and factorization homology for oriented surfaces. This is so because for the latter the $\bE_2$-algebra needs an extra structure, that of a ribbon/balancing structure.
Every braided unitary tensor category has a canonical ribbon/balancing structure (\cite{NeshveyevTuset}), so a framed theory canonically descends to an oriented theory in our framework.
\end{remark}

\subsection{Punctured surfaces}

Punctured surfaces are obtained by collar gluing of handles into a disk along marked intervals on the boundary of the the disk. The gluing is described by a combinatorial data called gluing pattern, a bijection $P: \{1,1',...,n,n'\} \to \{1,2,...,2n\}$, from which the surface is obtained as follows. Take n handles with ends indexed as $\{1,1',...,n,n'\}$ such that $(i,i')$ are the ends of the $i$-th handle. On the boundary of a disk, mark 2n intervals, indexed by $\{1,...,2n\}$. To obtain the associated surface $\Sigma_P$, glue the ends $(i,i')$ of the $i$-th handle to the disk along the respective intervals $(P(i),P(i'))$.

{\bf Examples:}
\begin{itemize}
\item[(1)] The gluing pattern $P:\{1,1'\} \to \{1,2\}$ associated to the annulus is given by $P(1) = 1$ and $P(1') = 2$.

\begin{figure}[h]
\centering
\label{fig:gluingpatternannulus}
\includegraphics[scale=0.3]{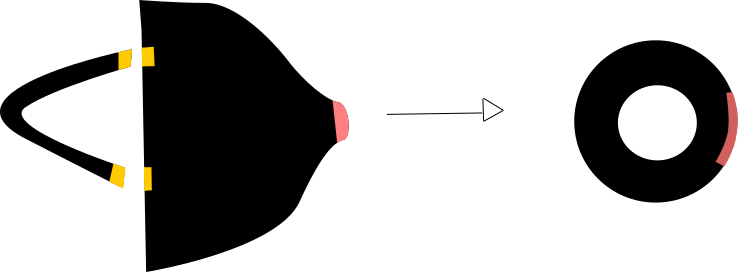}
\caption{Gluing pattern associated to the annulus}
\end{figure}

\item[(2)] The gluing pattern $P:\{1,1',2,2'\} \to \{1,2,3,4\}$ associated to the punctured torus is given by $P(1) = 1$, $P(1') = 3$, $P(2) = 2$ and $P(2') = 4$.
\begin{figure}[h]
\centering
\label{fig:gluingpatterntorus}
\includegraphics[scale=0.2]{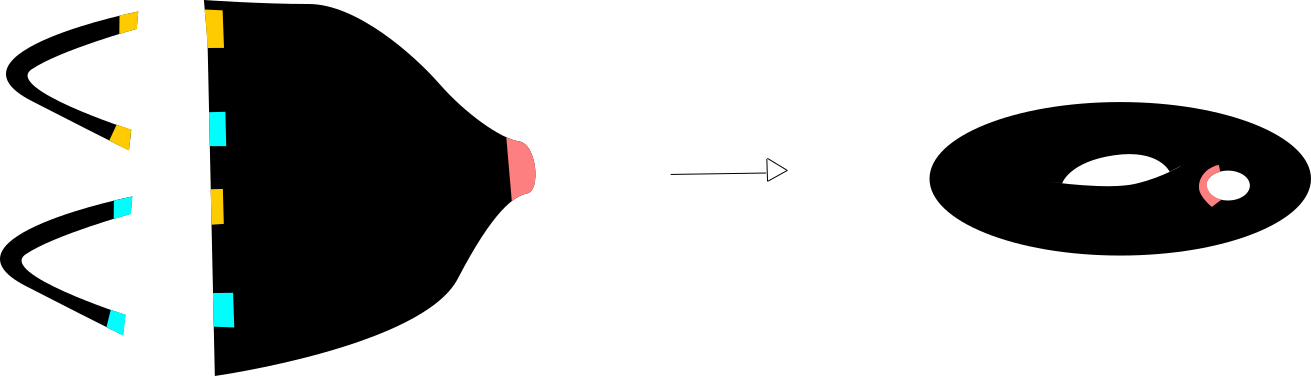} 
\caption{Gluing pattern associated to the punctured torus}
\end{figure}
\end{itemize}

We will assume from now on that $\cC \in \Clin^K$ is a unitary tensor category, therefore rigid and semisimple. Remember the pointed structure of factorization homology: given an oriented surface $\Sigma$, the unique oriented embedding $\emptyset \into M$ induces a functor $\Hilb_{\fd} \to \int_M \cC$. The quantum structure sheaf $\cO_\Sigma$ of $\Sigma$ is the image of $\C$ under this functor.

If $P$ is a gluing pattern, the presence of a boundary component in $\Sigma_P$ gives to $\int_{\Sigma_P} \cC$ a $\cC$-module structure. It is induced by a choice of a collar in the boundary of $\Sigma_P$ and an embedding $\Sigma_P \sqcup D \to \Sigma_P$ where $\Sigma_P$ is embedded outside the collar and $D$ is embedded in the interior of the collar. In figures 1 and 2 we illustrate the choice of such a collar. 

Let $A_P = \underline{\End}_\cC (\cO_{\Sigma_P})$, the internal endomorphism of the quantum structure sheaf wrt. the action of $\cC$. We shall prove, in analogy to \cite{BenZviBrochierJordan1}, that there is a canonical equivalence of $\cC$-module C*-categories
$$\int_{\Sigma_P} \cC \simeq A_P \- \Mod_\cC \ .$$

We start by computing factorization homology for the annulus. An unitary braiding on $\cC$ gives to it  left and right $\cC^{\boxtimes n}$ actions, which we will call regular actions. The braiding being unitary implies that the tensor functors  $\cC^{\boxtimes n} \to \End(\cC)$ are unitary tensor functors. The monoidal unit $\un$ is a generator for the regular actions. A related observation is that, if $T: \cC \boxtimes \cC \to \cC$ is the monoidal structure on $\cC$, an unitary braiding on $\cC$ gives to $T$ an unitary tensor structure. Notice that $\cC \boxtimes \cC$ is also an UTC, and as a $\cC \boxtimes \cC$-module C$^*$-category, $(\cC, \un)$ is cyclic. Hence the internal endomorphism $\underline{\End}_{\cC \boxtimes \cC}(\un)$ exists, and $\cC \simeq \underline{\End}_{\cC \boxtimes \cC}(\un) \- \Mod_{\cC \boxtimes \cC}$ as a module.

Consider the gluing pattern of the annulus. The handle $H$ is isomorphic to the disk, but the marking of the interval at the ends of it gives to $\int_H \cC \simeq \cC$ a right $\cC \boxtimes \cC$-module structure. Similarly, the marking of the two intervals on the disk along which the collar gluing is performed (Figure 1) gives to $\int_{D} \cC \simeq \cC$ a left $\cC \boxtimes \cC$-module structure. Both of them are induced by $T$, and from the gluing pattern one concludes
$$\int_{Ann}\cC \simeq \cC \underset{\cC \boxtimes \cC}{\boxtimes} \cC \ . $$

\begin{definition}
The reflection equation algebra of $\cC$ is defined to be
$$\cF := T( \underline{\End}_{\cC^{\boxtimes 2}}(\un)) \ . $$
\label{reflectionequationalgebra}
\end{definition}

A simple computation, using the cocontinuity of the functor $X \mapsto \Vect(\cC) (X, \cF)$ shows that 
$$\underline{\End}_{\cC^{\boxtimes 2}}(\un) \simeq \bigoplus_{X \in \Irr(\cC)} \bar{X} \boxtimes X \ , $$ 
Notice that, even though the direct sum above is an algebraic direct sum over a possibly infinite set, the resulting $*$-algebra is indeed a $C^*$-algebra object: for each $U \boxtimes V \in \cC \boxtimes \cC$, there are finitely many $X \in \Irr(\cC)$ for which $\cC \boxtimes \cC (U \boxtimes V, \bar{X} \boxtimes X)$ is non-zero.

The monoidal unit $\un_\cC$ is a generator for the $\cC^{\boxtimes 2}$-action,
$$\cC_{\cC \boxtimes \cC} \simeq \underline{\End}_{\cC^{\boxtimes 2}}(\un) \hyphen \Mod _{\cC^{\boxtimes 2}} \ . $$
Since $T$ is a dominant tensor functor, there is an equivalence
$$\cC \underset{\cC \boxtimes \cC}{\boxtimes} \cC \simeq T(\underline{\End}_{\cC^{\boxtimes 2}}(\un)) \hyphen \Mod_\cC = \cF \hyphen \Mod_\cC $$
of $\cC$-module C*-categories.

\begin{proposition}
Let $\cC$ be an UTC equipped with an unitary braiding. Then
$$ \cF = \bigoplus_{X \in \Irr(\cC)}^{c_0} \bar{X} \otimes X \ , $$
i.e., as a functor $\cF: \cC^{\op} \to \Vect(\cC)$, we have
$$\cF(U) = \bigoplus_{X \in \Irr(\cC)}^{c_0} \cC(U, \bar{X} \otimes X) \ . $$
\end{proposition}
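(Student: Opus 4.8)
The plan is to read $\cF$ straight off its definition $\cF = T(\underline{\End}_{\cC^{\boxtimes 2}}(\un))$ (Definition \ref{reflectionequationalgebra}) together with the description of $\underline{\End}_{\cC^{\boxtimes 2}}(\un)$ obtained just above the statement. First I would fix the meaning of $T$ on free cocompletions: since $T\colon \cC^{\boxtimes 2}\to\cC$, $U\boxtimes V\mapsto U\otimes V$, is a linear $*$-functor between (essentially small) C*-categories, it admits a unique cocontinuous extension $\Vect(\cC^{\boxtimes 2})\to\Vect(\cC)$, namely the left Kan extension along the Yoneda embedding; I still call it $T$, and on a representable it is given by $\cC^{\boxtimes 2}(-,U\boxtimes V)\mapsto\cC(-,T(U\boxtimes V))=\cC(-,U\otimes V)$. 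This is the $T$ occurring in Definition \ref{reflectionequationalgebra} and in Lemma \ref{lemma:monadicityforbasechange}.

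Next I would feed in the identification $\underline{\End}_{\cC^{\boxtimes 2}}(\un)\simeq\bigoplus_{X\in\Irr(\cC)}\bar X\boxtimes X$ established above, which is legitimate as an object of $\Vect(\cC^{\boxtimes 2})$ because coproducts there are computed objectwise and, for each $U\boxtimes V$, only finitely many $X$ give $\cC(U,\bar X)\otimes\cC(V,X)\neq 0$. Since $T$ is cocontinuous it preserves this coproduct, so
$$\cF=T\!\left(\bigoplus_{X\in\Irr(\cC)}\bar X\boxtimes X\right)\simeq\bigoplus_{X\in\Irr(\cC)}T(\bar X\boxtimes X)=\bigoplus_{X\in\Irr(\cC)}\cC(-,\bar X\otimes X).$$
Evaluating at $U\in\cC$ and using once more that colimits in $\Vect(\cC)=\Fun(\cC^{\op},\Vect)$ are objectwise gives $\cF(U)\simeq\bigoplus_{X\in\Irr(\cC)}\cC(U,\bar X\otimes X)$, the asserted formula.

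What remains, and where the only real care is needed, is the $c_0$-decoration and the point that the result is again a C*-algebra \emph{object}. Unlike one level down, for a fixed $U$ the set of $X\in\Irr(\cC)$ with $\cC(U,\bar X\otimes X)\neq0$ need not be finite: by Frobenius reciprocity this space has dimension equal to the multiplicity of $X$ inside $X\otimes U$, which can be nonzero for infinitely many $X$ when $\cC$ has infinitely many simples. The superscript $c_0$ records that the $*$-algebra structure carried by this (pointwise algebraic) direct sum is the restricted one transported from $\underline{\End}_{\cC^{\boxtimes 2}}(\un)$ along the dominant unitary tensor functor $T$; concretely, at $U=\un_\cC$ one recovers the C*-algebra $\cF(\un_\cC)\cong c_0(\Irr(\cC))$, in accordance with the general fact that $A(\un_\cC)$ is a C*-algebra for every C*-algebra object $A$. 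So the main (and mild) obstacle is purely bookkeeping: verifying that the cocontinuous extension of $T$ sends the C*-algebra object $\underline{\End}_{\cC^{\boxtimes 2}}(\un)$ to a C*-algebra object and commutes with the relevant infinite coproduct of representables in the completed sense. Once the functoriality of algebra objects along tensor functors (already used in Lemma \ref{lemma:monadicityforbasechange}) is granted, the identification of the underlying functor is exactly the formal colimit computation above.
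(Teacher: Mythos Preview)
Your formal computation of the underlying object is fine and matches what the paper already records just before the statement: applying the cocontinuous extension of $T$ to $\bigoplus_{X}\bar X\boxtimes X$ yields $\bigoplus_X\cC(-,\bar X\otimes X)$ as an object of $\Vect(\cC)$. Where your argument thins out is precisely the point you flag yourself, the $c_0$-decoration, and here the paper proceeds quite differently.

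The paper does \emph{not} try to extract the $c_0$ from abstract functoriality of C*-algebra objects along $T$. Instead it introduces an auxiliary algebraic comparison: write $\boxdot$ for the Deligne--Kelly product of dagger categories and let $\cF_{\mathrm{alg}}$ be the $*$-algebra object representing $\cC\underset{\cC\boxdot\cC}{\boxdot}\cC$. Then $\cF_{\mathrm{alg}}(\un)$ is the algebraic span of the $R_X$, carrying the canonical C*-norm $\|R_X\|=d_X^{1/2}$. The point of the proof is that replacing $\boxdot$ by $\underset{\max}{\boxtimes}$ amounts exactly to completing $\cF_{\mathrm{alg}}(\un)$ in this norm and completing each fibre $\cF_{\mathrm{alg}}(U)$ to a Hilbert C*-module over $\cF(\un)$; and these completions are identified with the $c_0$-direct sums. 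In other words, the $c_0$ is not a byproduct of cocontinuity but of the specific analytic comparison between the algebraic and maximal relative tensor products.

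Your last paragraph gestures at this (``the restricted $*$-algebra structure transported along $T$'', ``$\cF(\un_\cC)\cong c_0(\Irr(\cC))$'') but does not supply the mechanism: nothing in ``$T$ is cocontinuous on $\Vect$'' forces a particular Banach-space completion, and invoking Lemma \ref{lemma:monadicityforbasechange} only tells you that $T(A)$ is again a C*-algebra object, not which completion of the algebraic direct sum it is. So as written your proposal establishes the shape of $\cF$ but leaves the genuinely analytic content---why $c_0$ rather than some other completion---unargued. To close the gap you would need exactly the $\boxdot$-versus-$\underset{\max}{\boxtimes}$ comparison the paper carries out.
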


\begin{proof}
Let us denote by $\boxdot$ the Deligne-Kelly tensor product of linear dagger categories, and let us write $\cF_{alg}$ for the algebraic version of the reflection equation algebra; it is the $*$-algebra object in $\Vect(\cC)$ representing
$$\cC \underset{\cC \boxdot \cC}{\boxdot} \cC$$
as a cyclic $\cC$-module dagger category. The ground $*$-algebra $\cF_{alg}(\un)$ has a canonical C$^*$-norm given by
$$\|R_X\| = d_X^{1/2} \ ,$$
Passing from $\boxdot$ to $\boxtimes = \underset{\max}{\boxtimes}$ corresponds to taking the completion $\cF(\un)$ of $\cF_{alg}(\un)$ wrt. to this norm, and completing each fiber $\cF_{alg}(X)$ to a Hilbert C$^*$-module over $\cF(\un)$. Such completions are given exactly by the $c_0$-direct sums.
\end{proof}

We shall use the reflection equation algebra to compute factorization homology for general punctured surfaces. We say that a gluing pattern $P:\{1,1',...,n,n'\} \to \{1,...,2n\}$ has rank $n$.

\begin{definition}
Let $P$ be a gluing pattern of rank $n$. The $i$-th and $j$-th handles, $H_i$ and $H_j$ respectively, $1 \leq i < j \leq n$, are
\begin{itemize}
\item positively linked $\iff P(i) < P(j) < P(i') < P(j')$;
\item negatively linked $\iff P(j) < P(i) < P(j') < P(i')$;
\item positively nested $\iff P(i) < P(j) < P(j') < P(i')$;
\item negatively nested $\iff P(j) < P(i) < P(i') < P(j')$;
\item unlinked $\iff$ $H_i$ and $H_j$ are neither linked nor nested.
\end{itemize}
\end{definition}

Consider the following automorphisms of $\cF \otimes \cF$: for $U$ and $V$ in $\Irr(\cC)$, let $L_{U,V}, N_{U,V}, U_{U,V}: \bar{U} \otimes U \otimes \bar{V} \otimes V$ be given by
\begin{align*}
L_{U,V} &:= (\iota \otimes \sigma_{V,\bar{U}}^* \otimes \iota ) \circ ( \sigma_{\bar{U},\bar{V}} \otimes \sigma_{U,V}) \circ (\iota \otimes \sigma_{U,\bar{V}}\otimes \iota) \\
N_{U,V} &:= (\iota \otimes \sigma_{V, \bar{U}}^* \otimes \iota) \circ (\sigma_{\bar{V},\bar{U}}^* \otimes \sigma_{U,V}) \circ (\iota \otimes \sigma_{U,\bar{V}} \otimes \iota) \\
U_{U,V} &:= \sigma_{\bar{U} \otimes U, \bar{V} \otimes V} 
\end{align*}
They induce, respectively, automorphisms
$$L = \bigoplus_{U,V} L_{U,V} \ , \ N = \bigoplus_{U,V} N_{U,V} \ \text{and} \ U = \bigoplus_{U,V} U_{U,V} $$
of $\cF \otimes \cF$. They will be associated with linked, nested and unlinked handles of the gluing patterns, respectively.

Let $P$ be a gluing pattern of rank $n$, and let $a_P := \cF^{\otimes n}$. Let $\cF^{(i)}$ be the image of $\cF$ under the embedding $\cF \hookrightarrow a_P$ into the i-th factor. Consider the function $C:\N_n \times \N_n \to \{L^{\pm 1}, N^{\pm 1}, U\}$ given by $C_{i,j} = L^{\pm1 }$ if $H_i$ and $H_j$ are positively / negatively linked, $C_{i,j} = N^{\pm 1}$ if they are positively / negatively nested and $C_{i,j} = U$ if they are unlinked. Give to $a_P$ the algebra structure such that the embeddings $\cF \into \cF^{(i)}$ are algebra maps and, when restricted to $\cF_\cC^{(i)} \otimes \cF_\cC^{(j)}$, it restricts to
\begin{center}
\begin{tikzcd}
\cF_\cC^{(i)} \otimes \cF_\cC^{(j)} \otimes \cF_\cC^{(i)} \otimes \cF_\cC^{(j)}  \arrow[rr, "\iota \otimes C_{ij} \otimes \iota"] &  & \cF_\cC^{(i)} \otimes \cF_\cC^{(i)} \otimes \cF_\cC^{(j)} \otimes \cF_\cC^{(j)}  \arrow[rr, "m \otimes m"] &  & \cF_\cC^{(i)} \otimes \cF_\cC^{(j)} 
\end{tikzcd}
\end{center}

By construction, if $P_1$ and $P_2$ are two gluing patterns and $P_1 \sqcup P_2$ is their disjoint union, then
$$a_{P_1 \sqcup P_2} \simeq a_{P_1} \boxtimes a_{P_2} \ . $$
is the braided tensor product of $a_{P_1}$ and $a_{P_2}$ (see Appendix \ref{app:Braidedtensorproduct}).
\begin{theorem}
For a gluing pattern $P$ and its associated punctured surface $\Sigma_P$,
\begin{align*}
\int_{\Sigma_P} \cC \simeq a_P \- \Mod_\cC \ .
\end{align*}
\label{thm:factorizationhomologofpuncturedsurfaces}
\end{theorem}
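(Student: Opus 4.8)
The plan is to adapt to the C$^*$-setting the proof of the corresponding statement in \cite{BenZviBrochierJordan1}, using excision together with the monadic reconstruction results of the previous section and verifying that every equivalence produced lives in $\Clin^K$. First I would present $\Sigma_P$ as obtained from the disk $D$, carrying its $2n$ marked boundary intervals, by successively attaching the handles $H_1,\dots,H_n$, with $H_i$ glued along the intervals $P(i)$ and $P(i')$. Each attachment is a collar gluing along two intervals, so applying Theorem \ref{thm:excision} once per handle yields
$$\int_{\Sigma_P}\cC\;\simeq\;\int_{H_1\sqcup\cdots\sqcup H_n}\cC\ \underset{\cC^{\boxtimes 2n}}{\boxtimes}\ \int_{D}\cC,$$
the gluing category being $\int_{\bigsqcup^{2n}(I\times I)}\cC\simeq\cC^{\boxtimes 2n}$, one tensor factor per marked interval. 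Here $\int_{H_i}\cC\simeq\cC$ is, through the markings at its two ends, a cyclic right $\cC^{\boxtimes 2}$-module C$^*$-category with generator $\un$ and internal endomorphism algebra $\underline{\End}_{\cC^{\boxtimes 2}}(\un)$, whose image under the monoidal functor $T$ is $\cF$ (Definition \ref{reflectionequationalgebra}); hence $\int_{H_1\sqcup\cdots\sqcup H_n}\cC\simeq\cC^{\boxtimes n}$ is a cyclic right $\cC^{\boxtimes 2n}$-module with generator $\un^{\boxtimes n}$ and internal endomorphism algebra the external tensor product of $n$ copies of $\underline{\End}_{\cC^{\boxtimes 2}}(\un)$, the $i$-th copy placed over the tensor factors labelled $P(i)$ and $P(i')$. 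On the other side, $\int_D\cC\simeq\cC$ is a left $\cC^{\boxtimes 2n}$-module whose action is induced by a dominant unitary tensor functor $T_P\colon\cC^{\boxtimes 2n}\to\cC$: reading the boundary circle of $D$, $T_P$ tensors its $2n$ arguments in cyclic order, dualizing those at the oppositely oriented intervals, its tensor structure being the one supplied by the unitary braiding $\sigma$ of $\cC$ exactly as for $T$ itself; and $\un$ generates this module.

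Next I would invoke reconstruction: as $T_P$ is dominant, Lemma \ref{lemma:monadicityforbasechange} (with Corollary \ref{monadicityforbasechange}) identifies
$$\int_{\Sigma_P}\cC\;\simeq\;\cC^{\boxtimes n}\underset{\cC^{\boxtimes 2n}}{\boxtimes}\cC\;\simeq\;T_P\!\big(\underline{\End}_{\cC^{\boxtimes 2}}(\un)^{\boxtimes n}\big)\-\Mod_\cC$$
as $\cC$-module C$^*$-categories, the generator on the right being the image of $\un^{\boxtimes n}$, that is the quantum structure sheaf $\cO_{\Sigma_P}$, so that the pushed-forward algebra is $\underline{\End}_\cC(\cO_{\Sigma_P})=A_P$; Lemma \ref{lemma:functorialityformonads} makes this compatible with the $\cC$-module structure. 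Every functor appearing here, and every adjoint invoked in the monadicity arguments, is a $*$-functor --- the adjoints by Theorem \ref{prop:adjointofastarfunctor} and its $\cC$-module refinement --- so the whole argument takes place in $\Clin^K$.

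The remaining and main task is to exhibit a $*$-algebra isomorphism $T_P\big(\underline{\End}_{\cC^{\boxtimes 2}}(\un)^{\boxtimes n}\big)\simeq a_P$. On underlying objects both sides are $\cF^{\otimes n}$: pushing a single factor $\underline{\End}_{\cC^{\boxtimes 2}}(\un)$ forward along $T_P$ gives $\cF$ by Definition \ref{reflectionequationalgebra} and the orientation conventions for the gluing, and the restriction of the multiplication to each factor $\cF^{(i)}$ is the multiplication of $\cF$ because $T_P$ restricted to the two tensor factors labelled $P(i),P(i')$ reproduces $T$; thus the embeddings $\cF\into\cF^{(i)}$ are algebra maps. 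The content is the cross term for $i<j$: since $T_P$ is only braided-monoidal, transporting the external-tensor-product algebra structure of $\underline{\End}_{\cC^{\boxtimes 2}}(\un)^{\boxtimes n}$ through $T_P$ forces the product of $\cF^{(i)}$ with $\cF^{(j)}$ to be formed only after the $j$-th pair of factors has been commuted past the $i$-th pair, and the braid morphism effecting this commutation depends only on the cyclic position of the four intervals $P(i),P(i'),P(j),P(j')$ on $\partial D$. Matching it against the definitions of $L_{U,V}$, $N_{U,V}$ and $U_{U,V}$, I expect to read off precisely $C_{ij}=L^{\pm1}$ in the linked cases, $C_{ij}=N^{\pm1}$ in the nested cases and $C_{ij}=U$ in the unlinked case, which is the defining relation of $a_P$. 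Since this cross term involves only the pair $(H_i,H_j)$, the verification should reduce --- via the disjoint-union relation $a_{P_1\sqcup P_2}\simeq a_{P_1}\boxtimes a_{P_2}$ (Appendix \ref{app:Braidedtensorproduct}) --- to the trivial rank-one case and to each of the five rank-two gluing patterns, where it becomes a direct diagrammatic calculation with the unitary braiding of $\cC$. I expect this diagrammatic identification of the cross term to be the main obstacle; everything else is formal, and the compatibility with $*$-structures throughout is automatic from the results quoted.
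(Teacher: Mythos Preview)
Your proposal is correct and follows essentially the same route as the paper: excision gives $\int_{\Sigma_P}\cC\simeq\cC^{\boxtimes n}\underset{\cC^{\boxtimes 2n}}{\boxtimes}\cC$, monadic reconstruction identifies this with modules over the pushforward of $\underline{\End}_{\cC^{\boxtimes 2}}(\un)^{\boxtimes n}$, and the cross terms are matched with the $C_{ij}$. The only cosmetic difference is bookkeeping: the paper encodes the gluing pattern as a permutation $\tau_P\in S_{2n}$ acting on the algebra side (so that $\cC^P\simeq\tau_P(\underline{\End}_{\cC^{\boxtimes 2}}(\un)^{\boxtimes n})\hyphen\Mod_{\cC^{\boxtimes 2n}}$) and then applies the standard iterated tensor functor $T^{2n}$, invoking Lemma~\ref{reconstructionforrelativetensorproduct} rather than Lemma~\ref{lemma:monadicityforbasechange}; you instead absorb the pattern into a modified functor $T_P$ on the disk side. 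Your remark about ``dualizing those at the oppositely oriented intervals'' is not needed in the paper's conventions --- the disk action is plain left tensoring and no duals enter --- but this does not affect the argument.
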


\begin{proof}
Consider the disk $D$ marked with 2n intervals on the left and one marked interval on the right. These markings induce the $\cC^{\boxtimes 2n} - \cC$ bimodule structure on $\cC = \int_D \cC$ given by tensoring on the corresponding sides. On each handle $H_i$, the factorization homology gives $\cC$ with the right $\cC^{\boxtimes 2}$-module structure also giving by tensoring on the right. All actions are implemented by unitary tensor functors. Thus,
$$\cC^P := \int_{\bigsqcup_{i=1}^n H_i} \cC $$
is the $C^*$-category $\cC^{\boxtimes n}$ with a $\cC^{\boxtimes 2n}$-module structure given by
$$(a_1 \boxtimes ... \boxtimes a_n) \triangleleft (b_1 \boxtimes ... \boxtimes b_{2n}) = (a_1 \otimes b_{P(1)} \otimes b_{P(1')}) \boxtimes ... \boxtimes (a_n \otimes b_{P(n)} \otimes b_{P(n')}) \ . $$
By the excision property of factorization homology,
$$\int_{\Sigma_P} \cC \simeq \cC^P \underset{{\cC^{\boxtimes 2n}}}{\boxtimes} \cC \ . $$

There is a permutation $\tau_P \in S_{2n}$ such that the $\cC^{\boxtimes 2n}$-module structure of $\cC^P$ is given by
\begin{center}
\begin{tikzcd}
\cC^P \boxtimes \cC^{\boxtimes 2n} = \cC^{\boxtimes n} \boxtimes \cC^{\boxtimes 2n} \arrow[rr, "\iota \boxtimes \tau_P"] \arrow[dd, "\triangleleft"'] &                                                & \cC^{\boxtimes n} \boxtimes \cC^{\boxtimes 2n} \arrow[dd, "\iota \boxtimes T_\cC^{\boxtimes n}"] \\
                                                                                                     & {} \arrow[loop, distance=2em, in=305, out=235] &                                                                                                  \\
 \cC^P = \cC^{\boxtimes n}                                                                                    &                                                & \cC^{\boxtimes n} \boxtimes \cC^{\boxtimes n} \arrow[ll, "T_{\cC^{\boxtimes n}} "]               
\end{tikzcd},
\end{center} 
and in terms of this permutation, 
$$\cC^P \simeq \tau_P \left( \underline{\End}_{\cC \boxtimes \cC}(\un)^{\boxtimes n} \right) \- \Mod_{\cC^{\boxtimes 2n}} \ . $$
Let $\tilde{a}_P = \tau_P \left( \underline{\End}_{\cC \boxtimes \cC}(\un)^{\boxtimes n} \right)$. By Lemma \ref{reconstructionforrelativetensorproduct}, 
$$\int_{\Sigma_P} \cC \simeq \cC^P \underset{\cC^{\boxtimes 2n}}{\boxtimes} \cC \simeq T^{2n}(\tilde{a}_P) \- \Mod_\cC \ .$$

$$T^{2n} (\tilde{a}_P) \simeq \bigoplus_{X_1,...,X_n \in \Irr(\cC)}^{c_0} \bar{X}_{\tau_P(1)} \otimes X_{\tau_P(1')} \otimes ... \otimes \bar{X}_{\tau_P(n)} \otimes X_{\tau_P(n')} \ . $$

Let $\cF^{(i,i')}$ denote the image of  $\underline{\End}_{\cC^2} (\un_\cC)$ inside $T^{2n}(\tilde{a}_P)$ under the map which places $\bar{X}_i$ and $X_i$ in the factors $\tau_P^{-1}(i)$ and $\tau_P^{-1}(i')$, respectively. Then $\cF^{(i,i')}$, for $1 \leq i \leq n$, are subobjects of $\tilde{a}_P$, and to determine the algebra structure $m$ of $T^{2n}(\tilde{a}_P)$ is to determine the multiplication between the different factors $T^{2n}(\cF^{(i,i')})$ and $T^{2n}(\cF^{(j,j')})$. From the unitary tensor structure of $T: \cC \boxtimes \cC \to \cC$ and from the multiplication law of $\cF^{\otimes n}$, it follows that
\begin{center}
\begin{tikzcd}
                                                                  & {T^{2n}(\cF^{(i,i')} \otimes \cF^{(j,j')}) = T^{2n}( \cF^{(j,j')} \otimes \cF^{(i,i')})} \arrow[dd, "m"] &                                                                   \\
\cF^{(i)} \otimes \cF^{(j)} \arrow[ru, "C_{ij}"] \arrow[rd, "m"'] &                                                                                                         & \cF^{(j)} \otimes \cF^{(i)} \arrow[lu, "C_{ji}"'] \arrow[ld, "m"] \\
                                                                  & T^{2n}(\tilde{a}_P)                                                                                     &                                                                  
\end{tikzcd}
\end{center}
commutes, proving that $a_P \simeq T^{2n} (\tilde{a}_P)$. 

We now argue that $A_P \simeq a_P$. Observe first that $\underline{\End}_\cC(\cO_{D}) = \cO_{D} = \un_\cC$, with no extra identifications. Now, for a gluing pattern $P$ of rank $n$, 
\begin{align*}
\underline{\End}(\cO_{\Sigma_P}) & \simeq \underline{\End}\left( \cO_{\bigsqcup_{i=1}^n H_i} \right) \underset{\cC^{ \boxtimes 2n}}{\boxtimes} \cC  \\
& \simeq \left( \underline{\End}(\cO_H) \right)^{\boxtimes n} \underset{\cC^{\boxtimes 2n}}{\boxtimes} \cC \\
& \simeq \left( \underline{\End}_{\cC \boxtimes \cC} (\un_\cC) \right)^{\boxtimes n} \underset{\cC^{\boxtimes 2n}}{\boxtimes} \cC
\end{align*}
\end{proof}

\subsection{Closed surfaces}

In view of the previous subsection, to compute factorization homology of a closed surface we could first puncture it, compute the factorization of the new punctured surface and then use excision to obtain the computation for the original surface. The gluing happens then along a circle boundary component, i.e., along $S^1 \times I \simeq Ann$. We explain now how this computation is done. For this we need to present the annulus category $U(\cC):= \int_{Ann}\cC$ as a C*-tensor category. 

Denote by $\cC^{\sigma \op}$ the category $\cC$ equipped with the braiding $\sigma^{-1} = \sigma^*$. There are functors $\cC \to \cZ(\cC)$ and $\cC^{\sigma \op} \to \cZ(\cC)$ given by $X \mapsto (X, \sigma_{X,-})$ and $X \mapsto (X, \sigma_{-,X}^*)$, respectively. Combining them we obtain a functor $\cC \boxtimes \cC^{\sigma \op} \to \cZ(\cC)$ such that its composition with the forgetful functor $\cZ(\cC) \to \cC$ is the tensor product functor $T: \cC \boxtimes \cC \to \cC$. Actually, the data of $\cC \boxtimes \cC^{\sigma \op} \to \cZ(\cC)$ is equal to the data $(\cC \boxtimes \cC \to \cC, \tau)$, where $\tau_{X_1 \boxtimes X_2,Y}$ is the unitary natural isomorphism
$$(\sigma_{X_1,Y} \otimes \id_{X_2}) \circ (\id_{X_1} \otimes \sigma_{Y,X_2}^*) : T(X_1 \boxtimes X_2) \otimes Y = X_1 \otimes X_2 \otimes Y \to Y \otimes X_1 \otimes X_2 = Y \otimes T(X_1 \boxtimes X_2) \ . $$

Observe that $\tau$ has a unique extension to the algebraic ind-completion of $\cC \boxtimes \cC$.

The structure coming from the functor $\cC \boxtimes \cC^{\sigma \op} \to \cZ(\cC)$ is a particular example of a {\em quantum Manin pair} studied in \cite{SafronovMomentMaps}.
He proves, in the algebraic setting that $\underline{\End}(\un_\cC)$ is a commutative algebra in $\Vect(\cC \boxtimes \cC^{\sigma \op})$, and  since $T^R$, the right adjoint to the unitary tensor product functor $T: \cC \boxtimes \cC \to \cC$, is a $\cC \boxtimes \cC$-bimodule functor, the canonical equivalence $\cC \boxtimes \cC^{\sigma \op} \simeq \underline{\End}(\un_\cC) \- \Mod_{\cC \boxtimes \cC}$ is indeed an equivalence of monoidal categories (\cite{SafronovMomentMaps}, Proposition 2.16). Since $T^R$ is also an unitary functor, this is an equivalence of unitary tensor categories in our case. Using monadicity for base change (Lemma \ref{lemma:monadicityforbasechange}), we obtain

\begin{proposition}
There is an unitary monoidal equivalence
$$\cC \underset{\cC \boxtimes \cC^{\sigma \op}}{\boxtimes} \cC \simeq \cF \- \Mod_\cC \ ,$$
where in the right-hand side the monoidal structure comes from the half-braiding $\tau_{\underline{\End}_{\cC^{\boxtimes 2},-}}$ of $\cF$.
\end{proposition}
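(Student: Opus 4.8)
The plan is to deduce the statement from the base change Lemma~\ref{lemma:monadicityforbasechange} applied to $T\colon\cC\boxtimes\cC^{\sigma\op}\to\cC$, and then to lift the resulting equivalence of C*-categories to a monoidal one using the unitary adaptation of (\cite{SafronovMomentMaps}, Proposition 2.16) recalled above. First I would check that $T\colon\cC\boxtimes\cC^{\sigma\op}\to\cC$ is a dominant unitary tensor functor: its tensor structure consists of the natural isomorphisms $T(A)\otimes T(B)\to T(A\otimes B)$ obtained by inserting $\sigma$ and $\sigma^{*}$ in the two middle tensor factors (this is exactly the data packaged by the central functor $\cC\boxtimes\cC^{\sigma\op}\to\cZ(\cC)$ whose composite with the forgetful functor is $T$), and these are unitary because $\sigma$ is; dominance is immediate from $Z=T(Z\boxtimes\un_\cC)$ for every $Z\in\cC$. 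Regarding $\cC$ as the regular right $\cC\boxtimes\cC^{\sigma\op}$-module C*-category, cyclic with generator $\un_\cC$, and the target of $T$ as the left $\cC\boxtimes\cC^{\sigma\op}$-module obtained via $T$, Lemma~\ref{lemma:monadicityforbasechange} produces a $\cC$-module equivalence
\[
\cC\underset{\cC\boxtimes\cC^{\sigma\op}}{\boxtimes}\cC\ \simeq\ T\bigl(\underline{\End}_{\cC\boxtimes\cC^{\sigma\op}}(\un_\cC)\bigr)\-\Mod_\cC ,
\]
with the image of $\un_\cC$ as cyclic generator. Since $\cC\boxtimes\cC^{\sigma\op}$ and $\cC\boxtimes\cC$ have the same underlying monoidal category, the algebra object $\underline{\End}_{\cC\boxtimes\cC^{\sigma\op}}(\un_\cC)$ coincides with $\underline{\End}_{\cC^{\boxtimes 2}}(\un)\simeq\bigoplus_{X\in\Irr(\cC)}\bar X\boxtimes X$, so applying $T$ gives $\cF\simeq\bigoplus^{c_0}_{X\in\Irr(\cC)}\bar X\otimes X$ as in the preceding Proposition. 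As the equivalence furnished by Lemma~\ref{reconstructionforrelativetensorproduct}, hence by Lemma~\ref{lemma:monadicityforbasechange}, is unitary, this already gives an equivalence of C*-categories $\cC\underset{\cC\boxtimes\cC^{\sigma\op}}{\boxtimes}\cC\simeq\cF\-\Mod_\cC$.

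It then remains to match the monoidal structures. By the discussion preceding the statement, $\underline{\End}_{\cC^{\boxtimes 2}}(\un)$ is a commutative algebra object in $\Vect(\cC\boxtimes\cC^{\sigma\op})$ — equivalently a commutative C*-algebra object in $\cZ(\Vect(\cC))$ for the central structure extending $\cC\boxtimes\cC^{\sigma\op}\to\cZ(\cC)$ — and $\cC\boxtimes\cC^{\sigma\op}\simeq\underline{\End}_{\cC^{\boxtimes 2}}(\un)\-\Mod_{\cC\boxtimes\cC}$ is an equivalence of unitary tensor categories, the unitarity resting on the fact that $T^{R}$ is a unitary $\cC\boxtimes\cC$-bimodule functor (Theorem~\ref{prop:adjointofastarfunctor}). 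Transporting along $T$, the object $\cF=T(\underline{\End}_{\cC^{\boxtimes 2}}(\un))$ acquires the half-braiding $\tau_{\underline{\End}_{\cC^{\boxtimes 2},-}}$, which makes it a braided-commutative C*-algebra object in $\cZ(\Vect(\cC))$ (Definition~\ref{defi:braidedcommutalgebras}); hence $\cF\-\Mod_\cC$ carries the monoidal structure $X\boxtimes Y\mapsto X\underset{\cF}{\otimes}Y$, exactly as in the proof of Theorem~\ref{thm:reconstructionfortensorcategories}. I would then check that the base change equivalence intertwines this with the monoidal structure on $\cC\underset{\cC\boxtimes\cC^{\sigma\op}}{\boxtimes}\cC$ coming from the quantum Manin pair (equivalently, from the identification with the annulus category $U(\cC)$): this is forced because the central functor $\cC\boxtimes\cC^{\sigma\op}\to\cZ(\cC)$ covers $T$ by construction, so the half-braiding of $\underline{\End}_{\cC^{\boxtimes 2}}(\un)$ is sent precisely to $\tau_{\underline{\End}_{\cC^{\boxtimes 2},-}}$.

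The main obstacle is precisely this last verification, i.e. making the monoidal transport honest. One must keep careful track of where $\sigma$ versus $\sigma^{*}$ enters each associativity and half-braiding constraint — that is, keep the $\sigma\op$ twist of the second tensor factor under control — so that the structure transported onto $\cF\-\Mod_\cC$ is genuinely $\underset{\cF}{\otimes}$ with half-braiding $\tau_{\underline{\End}_{\cC^{\boxtimes 2},-}}$ and not some variant obtained by confusing $\sigma$ with $\sigma^{-1}$. One must also confirm that every structural natural isomorphism arising in the transport (the tensor constraints of $T$, the bimodule-functor constraints of $T^{R}$, the balancing isomorphisms defining the relative tensor product) is unitary; in each instance this reduces to the unitarity of adjoints of $*$-functors and of canonical module structures, which is available by Theorem~\ref{prop:adjointofastarfunctor} and the Corollary following it.
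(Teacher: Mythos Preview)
Your proposal is correct and follows essentially the same approach as the paper: the paper's argument is precisely the paragraph preceding the proposition (commutativity of $\underline{\End}(\un_\cC)$ in $\Vect(\cC\boxtimes\cC^{\sigma\op})$ via \cite{SafronovMomentMaps}, monoidality of the reconstruction via the $\cC\boxtimes\cC$-bimodule structure on $T^R$, and unitarity from Theorem~\ref{prop:adjointofastarfunctor}) followed by a single invocation of Lemma~\ref{lemma:monadicityforbasechange}. Your write-up is simply a more explicit unpacking of these same steps; the only minor anachronism is the parenthetical appeal to the annulus category $U(\cC)$, whose identification with the relative tensor product is established only afterwards in Theorem~\ref{thm:facthomannulus}.
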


We shall write $\cC^{\op}$ to denote the category $\cC$ with the opposite tensor product. The canonical braided structure of $\cC^{\op}$ is given by
$$\sigma_{Y,X}^* : X \underset{\cC^\op}{\otimes} Y \to Y \underset{\cC^\op}{\otimes} X \ . $$
Then the identity functor on $\cC$ becomes a braided monoidal equivalence $\cC^{\sigma \op} \simeq \cC^{\op}$ when endowed with the monoidal structure
$$J_{X,Y} := \sigma_{Y,X}^* : X \underset{\cC^{\sigma \op}}{\otimes} Y \to X \underset{\cC^{\op}}{\otimes} Y \ .$$
Note that this monoidal structure is unitary, so that we have an unitary equivalence of braided C*-tensor categories.

\begin{corollary}
There is an equivalence of C*-tensor categories
$$\cC \underset{\cC \boxtimes \cC^{\op}}{\boxtimes} \cC \simeq \cF \- \Mod_\cC \ . $$
\end{corollary}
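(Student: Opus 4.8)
The strategy is to obtain this corollary directly from the preceding Proposition by transporting it along the unitary braided monoidal equivalence $\cC^{\sigma \op} \simeq \cC^{\op}$ exhibited just above, whose underlying functor is the identity on $\cC$ equipped with the monoidal structure $J_{X,Y} = \sigma_{Y,X}^*$. The first step is to apply $\id_\cC \boxtimes (-)$ and conclude that
$$\Phi \colon \cC \boxtimes \cC^{\sigma \op} \;\longrightarrow\; \cC \boxtimes \cC^{\op}$$
is a unitary monoidal equivalence of C*-tensor categories; here one uses that $\underset{\max}{\boxtimes}$ is a bifunctor on $\Clin^K$ sending (unitary) monoidal equivalences in each variable to (unitary) monoidal equivalences, which is immediate from its construction.

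The second step is to observe that the balanced tensor product $\cM \underset{\cB}{\boxtimes} \cN$ of a right $\cB$-module C*-category $\cM$ and a left $\cB$-module C*-category $\cN$ is $2$-functorial in the base $\cB$: a unitary monoidal equivalence $\Phi \colon \cB \to \cB'$, together with the induced transport of the $\cB$-module structures on $\cM$ and $\cN$ to $\cB'$-module structures, yields a unitary equivalence $\cM \underset{\cB}{\boxtimes} \cN \simeq \cM \underset{\cB'}{\boxtimes} \cN$. Concretely, $\underset{\cB}{\boxtimes}$ is computed as a colimit built from the module actions, this colimit exists in $\Clin^K$ and is preserved by $\underset{\max}{\boxtimes}$ by Theorem \ref{thm:cocontinuitymaxproduct}, and $\Phi$ induces an equivalence between the two diagrams computing these colimits. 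Taking $\cB = \cC \boxtimes \cC^{\sigma \op}$, $\cB' = \cC \boxtimes \cC^{\op}$ and $\cM = \cN = \cC$ with the module structures used in the Proposition, this produces a unitary equivalence
$$\cC \underset{\cC \boxtimes \cC^{\sigma \op}}{\boxtimes} \cC \;\simeq\; \cC \underset{\cC \boxtimes \cC^{\op}}{\boxtimes} \cC,$$
which composed with the equivalence of the Proposition yields $\cC \underset{\cC \boxtimes \cC^{\op}}{\boxtimes} \cC \simeq \cF \- \Mod_\cC$ as C*-categories.

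The last step is to upgrade this to an equivalence of C*-tensor categories. The monoidal structure on $\cF \- \Mod_\cC$ in the Proposition is the one induced by the half-braiding of $\cF = T(\underline{\End}_{\cC^{\boxtimes 2}}(\un))$ coming from the quantum-Manin-pair lift $\cC \boxtimes \cC^{\sigma \op} \to \cZ(\cC)$; since $J$, hence $\Phi$, is a \emph{braided} monoidal equivalence, this lift is identified with the corresponding lift through $\cC \boxtimes \cC^{\op}$, carrying $\cF$ to itself with matching half-braiding, so the induced monoidal structures on $\cF \- \Mod_\cC$ agree and all identifications are implemented by $*$-functors and unitary natural isomorphisms. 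The only point that needs genuine care is the compatibility of the $2$-functoriality of $\underset{\cB}{\boxtimes}$ with $*$-structures — i.e.\ that transporting module structures along a unitary monoidal equivalence of the base produces a \emph{unitary} equivalence of balanced tensor products. Granting Theorem \ref{thm:cocontinuitymaxproduct} together with the fact, used repeatedly above, that adjoints and colimits in $\Clin^K$ respect $*$-structures, this is bookkeeping rather than a new obstacle.
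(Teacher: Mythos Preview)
Your proposal is correct and follows exactly the route the paper intends: the Corollary is stated without proof precisely because it is meant to follow from the preceding Proposition by transporting along the unitary braided monoidal equivalence $\cC^{\sigma\op}\simeq\cC^{\op}$ (with monoidal structure $J_{X,Y}=\sigma_{Y,X}^*$) introduced immediately before it. You have simply spelled out the base-change and monoidality bookkeeping that the paper leaves implicit.
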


The factorization homology over the annulus is, in fact, a C*-tensor category, where the monoidal structure is the one induced by embedding two annuli inside a bigger one (Figure 3).

\begin{figure}[h]
\centering
\label{fig:tensorstructureoftheannulus}
\includegraphics[scale=.2]{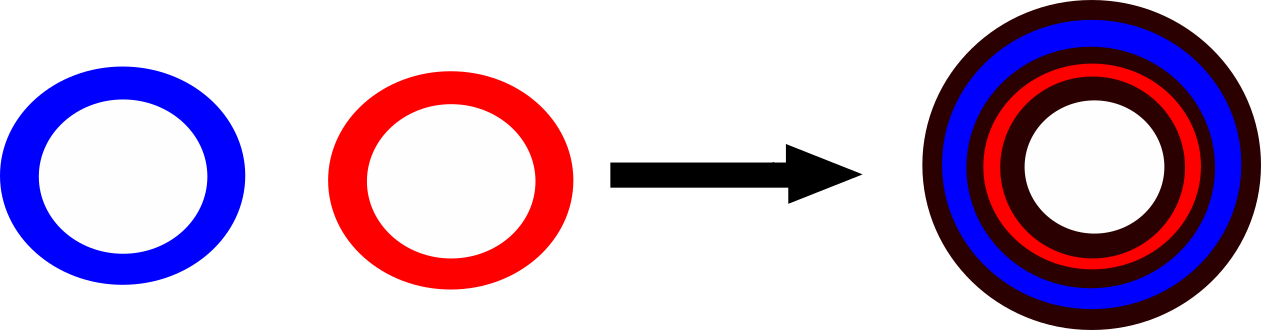} 
\caption{Tensor structure of the annulus category}
\end{figure}

\begin{theorem}
There is an unitary monoidal equivalence
$$\int_{Ann}\cC \simeq \cF \- \Mod_\cC \ . $$
\label{thm:facthomannulus}
\end{theorem}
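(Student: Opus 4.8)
The plan is to combine the excision computation of $\int_{Ann}\cC$ as a relative tensor product with the two previous results identifying that relative tensor product with $\cF\-\Mod_\cC$, and then to check that the monoidal structure arising from the pair-of-annuli embedding matches the braided-tensor-product monoidal structure on $\cF\-\Mod_\cC$. First I would recall from the discussion preceding Definition \ref{reflectionequationalgebra} that, reading off the gluing pattern of the annulus (the handle $H\cong D$ contributing a right $\cC\boxtimes\cC$-action and the disk $D$ contributing a left $\cC\boxtimes\cC$-action, both via the tensor functor $T$ equipped now with the half-braiding $\tau$ coming from $\cC\boxtimes\cC^{\sigma\op}\to\cZ(\cC)$), excision gives $\int_{Ann}\cC\simeq\cC\underset{\cC\boxtimes\cC^{\sigma\op}}{\boxtimes}\cC$ \emph{as a monoidal category}, not merely as a $\cC$-module category: the monoidal structure on the left, induced by stacking two annuli, corresponds under excision exactly to the monoidal structure on the relative tensor product coming from the quantum-Manin-pair structure, since both are implemented by the same stacking-of-handles embedding $H\sqcup H\into H$.

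Next I would invoke the Corollary just above the theorem, which already gives an equivalence of C*-tensor categories $\cC\underset{\cC\boxtimes\cC^{\op}}{\boxtimes}\cC\simeq\cF\-\Mod_\cC$, together with the unitary braided equivalence $\cC^{\sigma\op}\simeq\cC^{\op}$ via $J_{X,Y}=\sigma^*_{Y,X}$ noted immediately before it; composing these identifies $\cC\underset{\cC\boxtimes\cC^{\sigma\op}}{\boxtimes}\cC$ with $\cF\-\Mod_\cC$ as C*-tensor categories, where on the right the monoidal product is $X\boxtimes Y\mapsto X\underset{\cF}{\otimes}Y$ with the balanced tensor product taken using the half-braiding $\tau_{\underline{\End}_{\cC^{\boxtimes 2}},-}$ on $\cF$ (this is the content of the Proposition preceding the Corollary, where that monoidal structure was already identified). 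Stringing the two equivalences together yields the desired unitary monoidal equivalence $\int_{Ann}\cC\simeq\cF\-\Mod_\cC$.

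The main point that needs genuine care — and the step I expect to be the chief obstacle — is the \emph{compatibility of monoidal structures}, i.e.\ verifying that the tensor product on $\int_{Ann}\cC$ induced by the Figure-3 embedding $Ann\sqcup Ann\into Ann$ is carried, under the excision equivalence, precisely to the quantum-Manin-pair monoidal structure on $\cC\underset{\cC\boxtimes\cC^{\sigma\op}}{\boxtimes}\cC$ that Safronov's Proposition 2.16 (in its unitary incarnation) matches with $\cF\-\Mod_\cC$. Concretely one must trace through how the stacking embedding on annuli restricts, after the excision decomposition of each $Ann$ into handle-plus-disk, to the stacking embedding $M_0\times I\sqcup M_0\times I\into M_0\times I$ on the gluing cylinders, and confirm that the resulting functor $\int_{Ann}\cC\ \boxtimes\int_{Ann}\cC\to\int_{Ann}\cC$ coincides with the one defining the monoidal structure on the relative tensor product. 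This is bookkeeping about which collars and markings are glued in which order, together with an invocation that $\tau$ extends uniquely to the ind-completion (as remarked after the definition of $\tau$), so that the braidings $L,N,U$ used to build $\cF$-algebra structures are consistent with the $\cF$-bimodule structure governing $\underset{\cF}{\otimes}$. Once this identification of monoidal structures is in hand, the unitarity of the equivalence is automatic: every functor in the chain ($T^R$, the base-change equivalence of Lemma \ref{lemma:monadicityforbasechange}, the Barr--Beck equivalence of Lemma \ref{reconstructionforrelativetensorproduct}, and $J$) has already been checked to be a $*$-functor, and all structure isomorphisms are unitary.
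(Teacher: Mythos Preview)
Your overall strategy---excision plus the identification of the relative tensor product with $\cF\-\Mod_\cC$---is correct, and you rightly single out the compatibility of monoidal structures as the crux. However, the specific decomposition you invoke and your justification of that compatibility diverge from the paper in a way worth flagging.

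You work with the gluing-pattern decomposition (Figure~1: handle $H$ glued to a disk along two marked intervals), which yields $\int_{Ann}\cC\simeq\cC\underset{\cC\boxtimes\cC}{\boxtimes}\cC$ as a $\cC$-module C*-category. You then promote this to a monoidal equivalence by appealing to the quantum-Manin-pair structure and claiming the stacking embedding $Ann\sqcup Ann\into Ann$ corresponds to a ``stacking-of-handles embedding $H\sqcup H\into H$.'' This is the weak point: $H$ is a disk, and embedding two disks in a disk encodes the $\bE_2$-structure on $\cC$, not the monoidal product on $\int_{Ann}\cC$. The gluing-pattern cut (along intervals hitting a single boundary circle) is not in any obvious way compatible with the radial stacking of annuli, so tracing the Figure~3 embedding through this decomposition requires real work that you have not supplied.

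The paper instead uses a \emph{different} decomposition (Figure~4): cut the annulus by two arcs running from the inner boundary to the outer boundary, producing two disks each carrying a $\cC\boxtimes\cC^{\op}$-action (the ${}^{\op}$ appearing because the two marked intervals on each piece inherit opposite orientations). Excision then gives $\int_{Ann}\cC\simeq\cC\underset{\cC\boxtimes\cC^{\op}}{\boxtimes}\cC$, and the point of this particular cut is that it is \emph{manifestly} compatible with the Figure~3 stacking: decomposing two annuli via Figure~4 and then stacking is isotopic to stacking first and then decomposing. This geometric observation replaces your Safronov-style bookkeeping and delivers the monoidal equivalence directly; one then applies the Corollary immediately preceding the theorem. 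Your route via $\cC\boxtimes\cC^{\sigma\op}$ and the Proposition can be made to work, but you would still need an honest geometric argument linking the annulus stacking to that algebraic monoidal structure---and the cleanest such argument is precisely the Figure~4 decomposition the paper uses.
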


\begin{proof}
Consider the following decomposition of the annulus (Figure 4):
\begin{figure}[h]
\centering
\label{fig:decompositionannulus}
\includegraphics[scale=0.2]{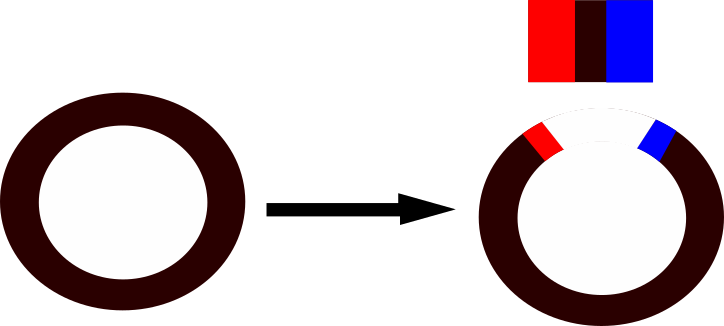} 
\caption{Decomposition of the annulus}
\end{figure}

Let $Y$ be the bottom surface at the right-hand-side of Figure 4. As an oriented surface, $Y$ is equivalent to the disk together with a prescribed embedding of $D \sqcup D$ into it. This corresponds via factorization homology to $\cC$ with a right action of $\cC \boxtimes \cC^{\op}$. Similarly, 

\begin{figure}[h]
\centering
\includegraphics[scale=.4]{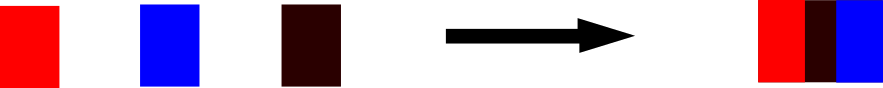} 
\label{fig:bimodulestructuredisk}
\caption{$\cC \boxtimes \cC^{\sigma \op}$-module structure of $\cC$}
\end{figure}
corresponds via factorization homology to a left action of $\cC \boxtimes \cC^{\op}$ on $\cC$. By excision,
$$\int_{Ann} \cC \simeq \cC \underset{\cC \boxtimes \cC^{ \op}}{\boxtimes} \cC \ ,$$
as C*-categories. The decomposition Figure 4 is compatible with the embedding in Figure 3: decomposing two annuli as in Figure 4 and then embedding as in Figure 3 is equivalent to embeddind first and then decompose. Therefore the above equivalence is also monoidal.

\end{proof}

Starting with a closed surface $\Sigma$, fix an embedding $D \to S$. Then $\Sigma^\circ := \Sigma \setminus D$ is a punctured surface, with respect to which factorization homology can be computed as in the previous subsection. The factorization homology of the punctured surface has, along with a $\cC$-module structure, an $U(\cC)$-module structure, induced by an embedding of the annulus along a collar of the boundary of $D \subset \Sigma$. 

To recover $\Sigma$ from $\Sigma^\circ$ is to perform the collar gluing
$$\Sigma \simeq \Sigma^0 \underset{Ann}{\sqcup} D \ , $$
and by excision
$$ \int_\Sigma \cC \simeq \int_{\Sigma^\circ} \cC \underset{U(\cC)}{\boxtimes} \cC \int_{D} \cC \simeq  \int_{\Sigma^\circ} \cC \underset{U(\cC)}{\boxtimes} \cC  \ . $$

\begin{theorem}
There is an equivalence of $U(\cC)$-module C*-categories:
$$\int_\Sigma \cC \simeq (A_{\Sigma^\circ} \- \un_\cC) \- \Mod_{\cF_\cC \- \Mod_\cC} \ . $$
\label{thm:facthomclosedsurfaces}
\end{theorem}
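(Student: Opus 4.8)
The plan is to combine excision with the reconstruction results of the previous subsections, reducing everything to a relative tensor product over the annulus category $U(\cC)$ and then unpacking it. First I would apply Theorem~\ref{thm:excision} to the collar gluing $\Sigma \simeq \Sigma^\circ \underset{Ann}{\sqcup} D$, obtaining
\[
\int_\Sigma \cC \;\simeq\; \int_{\Sigma^\circ}\cC \underset{U(\cC)}{\boxtimes} \int_D \cC \;\simeq\; \int_{\Sigma^\circ}\cC \underset{U(\cC)}{\boxtimes} \cC ,
\]
where $\int_{\Sigma^\circ}\cC$ carries its right $U(\cC)$-module structure coming from the boundary collar and $\int_D\cC \simeq \cC$ the left $U(\cC)$-module structure coming from the embedding $Ann \sqcup D \into D$, which on internal algebras is the counit $\epsilon\colon \cF \to \un_\cC$. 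Along the way I would record the three relevant internal endomorphism algebras with respect to the ambient $\cC$-actions: $\underline{\End}_\cC(\cO_{\Sigma^\circ}) \simeq A_{\Sigma^\circ}$ with $\cO_{\Sigma^\circ}$ a generator of $\int_{\Sigma^\circ}\cC$ (Theorem~\ref{thm:factorizationhomologofpuncturedsurfaces} and its proof); $\underline{\End}_\cC(\cO_D) = \underline{\End}_\cC(\un_\cC) \simeq \un_\cC$, with $\un_\cC$ trivially generating $\int_D\cC = \cC$; and $\underline{\End}_\cC(\un_{U(\cC)}) \simeq \cF$, by Theorem~\ref{thm:facthomannulus} together with the identification $M_B = \cF$ from Theorem~\ref{thm:reconstructionfortensorcategories}. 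I would also check that the $\cC$-actions involved are the restrictions of the corresponding $U(\cC)$-actions along $\cC \to U(\cC)$, so that Corollary~\ref{cor:balancedtensorproductovertheannuluscategory} applies.

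With these generators in place I would then apply Corollary~\ref{cor:balancedtensorproductovertheannuluscategory} with $\cB = U(\cC)$, $\cM = \int_{\Sigma^\circ}\cC$, $\cN = \cC = \int_D\cC$, yielding an equivalence of C*-categories
\[
\int_\Sigma \cC \;\simeq\; \bigl( A_{\Sigma^\circ} \- \un_\cC \bigr) \- \Bim_{\cF},
\]
the C*-category of $A_{\Sigma^\circ}$-$\un_\cC$-bimodules in $\Vect(\cC)$ balanced over $\cF$, where $A_{\Sigma^\circ}$ and $\un_\cC$ are viewed as $\cF$-algebras via the $*$-homomorphisms of Proposition~\ref{prop:monadicreconstructionthroughtensorfunctor} (namely $\rho\colon \cF \to A_{\Sigma^\circ}$ and $\epsilon\colon \cF \to \un_\cC$). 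By Theorem~\ref{thm:canonicalformofmodulesovertheannuluscategory}, the $U(\cC)$-action that survives on this relative tensor product is precisely relative tensor product over $\underline{\End}_\cC(\un_{U(\cC)}) \simeq \cF$.

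Finally I would rephrase $\Bim_\cF$ intrinsically in terms of the annulus category. Using the unitary monoidal equivalence $\cF\-\Mod_\cC \simeq \cF\-\Bim_\cC = U(\cC)$ (the identification $M_B\-\Mod_\cC \simeq M_B\-\Bim_\cC$ via the half-braiding $\tau$, as in the proof of Theorem~\ref{thm:reconstructionfortensorcategories}), an $(A_{\Sigma^\circ},\un_\cC)$-bimodule balanced over $\cF$ is the same datum as a module, in the tensor category $U(\cC) = \cF\-\Mod_\cC$, over the algebra $A_{\Sigma^\circ}\underset{\cF}{\otimes}\un_\cC$ — the quantum Hamiltonian reduction of $A_{\Sigma^\circ}$ along $\rho$ and $\epsilon$, which is the object denoted $A_{\Sigma^\circ}\-\un_\cC$ in the statement. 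Transporting the $\cF$-relative-tensor $U(\cC)$-module structure through this equivalence turns it into the tautological $\cF\-\Mod_\cC$-module structure on a category of modules over an algebra in $\cF\-\Mod_\cC$, so the equivalence upgrades to one of $U(\cC)$-module C*-categories, as claimed.

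The step I expect to be the main obstacle is this last one: carefully matching the ``$\Bim$ over the balancing algebra $\cF$'' description produced by Corollary~\ref{cor:balancedtensorproductovertheannuluscategory} with the ``$\Mod$ over the tensor category $U(\cC)$'' description in the statement, and simultaneously verifying that the two $U(\cC)$-module structures coincide. One must check that $\un_\cC$ genuinely occurs as the $\epsilon$-image of $\cF$, that the half-braiding $\tau$ used in $\cF\-\Mod_\cC \simeq \cF\-\Bim_\cC$ is compatible with the balancing appearing in Corollary~\ref{cor:balancedtensorproductovertheannuluscategory}, and that collapsing the pair $(A_{\Sigma^\circ},\un_\cC)$ together with the balancing object $\cF$ into the single reduced algebra $A_{\Sigma^\circ}\otimes_\cF\un_\cC$ introduces no spurious half-braiding twist. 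Everything else is a direct application of excision and of the reconstruction statements already established.
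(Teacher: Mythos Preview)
Your proposal is correct and follows the same route as the paper: apply excision to the collar gluing $\Sigma \simeq \Sigma^\circ \sqcup_{Ann} D$ and then invoke Corollary~\ref{cor:balancedtensorproductovertheannuluscategory} with $\cB = U(\cC)$, $m = \cO_{\Sigma^\circ}$, $n = \un_\cC$. The paper's own proof is a single sentence citing that corollary; your elaboration of the inputs (identifying $A_{\Sigma^\circ}$, $\un_\cC$, and $\cF$ as the relevant internal endomorphism algebras) and of the final translation from $\Bim_{\cF}$ to $\Mod_{\cF\text{-}\Mod_\cC}$ is more detailed than what the paper writes, but it is exactly the content being invoked. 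The notational step you flag as the main obstacle is treated by the paper as implicit in the identification $U(\cC) \simeq \cF\text{-}\Bim_\cC$ from Theorem~\ref{thm:reconstructionfortensorcategories} and Theorem~\ref{thm:facthomannulus}, so you are not missing anything --- you are just being more careful than the paper is.
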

\begin{proof}
Follows from Corollary \ref{cor:balancedtensorproductovertheannuluscategory}.
\end{proof}

Using again Theorem \ref{prop:adjointofastarfunctor}, we can conclude
\begin{proposition}[\cite{BenZviBrochierJordan2}]
Let $\cO_\Sigma$ be the quantum structure sheaf of $\int_\Sigma \cC$. Then the endomorphism C*-algebra of $\cO_\Sigma$ can be described by a quantum Hamiltonian reduction
$$\End_{\int_\Sigma \cC}(\cO_\Sigma) \simeq \Hom_\cC(\un_\cC, A_{\Sigma^\circ} \underset{\cF_\cC}{\otimes} \un_\cC) \ , $$
where $A_{\Sigma^\circ} = \underline{\End}_\cC (\cO_{\Sigma^\circ})$. 
\label{prop:hamiltonianreduction}
\end{proposition}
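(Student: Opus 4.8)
The plan is to read the endomorphism algebra of the quantum structure sheaf directly off the module-categorical description of $\int_\Sigma\cC$ supplied by Theorem~\ref{thm:facthomclosedsurfaces}, and then extract the $\Hom$-formula from the universal property of internal endomorphism algebras. Concretely: first locate $\cO_\Sigma$ as the distinguished generator of $\int_\Sigma\cC$, then compute its internal endomorphism algebra with respect to the residual $\cC$-action and recognise it as the relative tensor product $A_{\Sigma^\circ}\underset{\cF_\cC}{\otimes}\un_\cC$, and finally apply Definition~\ref{defi:internalend} with $X=\un_\cC$.

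For the first step, the unique oriented embedding $\emptyset\into\Sigma$ factors through the open inclusion $\Sigma^\circ\into\Sigma$, so by functoriality of the structure sheaf $\cO_\Sigma$ is the image of $\cO_{\Sigma^\circ}$ under the functor $\int_{\Sigma^\circ}\cC\to\int_\Sigma\cC$ induced by $\Sigma^\circ\into\Sigma$; under the excision equivalence $\int_\Sigma\cC\simeq\int_{\Sigma^\circ}\cC\underset{U(\cC)}{\boxtimes}\cC$ this functor is $x\mapsto x\boxtimes\cO_D$ with $\cO_D=\un_\cC$ the structure sheaf of the glued-in disk. Since $\cO_{\Sigma^\circ}$ generates $\int_{\Sigma^\circ}\cC$ as a $\cC$-module (hence, by Theorem~\ref{thm:canonicalformofmodulesovertheannuluscategory}, also as a $U(\cC)$-module) and $\cO_D=\un_\cC$ generates $\cC=\int_D\cC$, Corollary~\ref{cor:balancedtensorproductovertheannuluscategory} identifies $\cO_\Sigma=\cO_{\Sigma^\circ}\boxtimes\cO_D$ with the generating object of $(A_{\Sigma^\circ}\-\un_\cC)\-\Bim_{\cF_\cC}\simeq\int_\Sigma\cC$, where $\cF_\cC=\underline{\End}_\cC(\un_{U(\cC)})$ by Theorem~\ref{thm:facthomannulus}, the $\cF_\cC$-algebra structure on $A_{\Sigma^\circ}=\underline{\End}_\cC(\cO_{\Sigma^\circ})$ is the quantum moment map of Proposition~\ref{prop:monadicreconstructionthroughtensorfunctor}, and the residual $\cF_\cC$-action on $\un_\cC$ is via the counit $\epsilon\colon\cF_\cC\to\un_\cC$ induced by $Ann\into D$.

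For the second step, restricting the internal endomorphism algebra $A_{\Sigma^\circ}\-\un_\cC\in\Vect(U(\cC))$ along the tensor functor $\cC\to U(\cC)\simeq\cF_\cC\-\Mod_\cC$, $X\mapsto\cF_\cC\otimes X$, and using that the forgetful functor $\cF_\cC\-\Mod_\cC\to\cC$ carries $A_{\Sigma^\circ}\-\un_\cC$ to the relative tensor product, one obtains $\underline{\End}_\cC(\cO_\Sigma)\simeq A_{\Sigma^\circ}\underset{\cF_\cC}{\otimes}\un_\cC$ as an object of $\Vect(\cC)$, the relative tensor product being precisely the quantum Hamiltonian reduction of $A_{\Sigma^\circ}$ along the moment map and $\epsilon$. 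Then the defining property of internal endomorphism algebras (Definition~\ref{defi:internalend}), applied with $X=\un_\cC$ and using $\cO_\Sigma\triangleleft\un_\cC\simeq\cO_\Sigma$, gives
\[
\End_{\int_\Sigma\cC}(\cO_\Sigma)=\Hom_{\int_\Sigma\cC}\bigl(\cO_\Sigma\triangleleft\un_\cC,\cO_\Sigma\bigr)\simeq\Hom_\cC\bigl(\un_\cC,\underline{\End}_\cC(\cO_\Sigma)\bigr)\simeq\Hom_\cC\bigl(\un_\cC,A_{\Sigma^\circ}\underset{\cF_\cC}{\otimes}\un_\cC\bigr).
\]
To upgrade these to isomorphisms of C*-algebras rather than of vector spaces I would invoke Theorem~\ref{prop:adjointofastarfunctor} once more: the adjoint $T^R$ underlying the annulus category, the base-change functor, and all the equivalences entering Corollary~\ref{cor:balancedtensorproductovertheannuluscategory} are $*$-functors, so every identification above is unitary and hence $*$-preserving, matching the C*-structure on $\Hom_\cC(\un_\cC,A_{\Sigma^\circ}\underset{\cF_\cC}{\otimes}\un_\cC)$ with that on $\End_{\int_\Sigma\cC}(\cO_\Sigma)$.

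The step I expect to be the main obstacle is the second one: carrying the $\cF_\cC$-algebra structure on $A_{\Sigma^\circ}$ and the $\cF_\cC$-module structure on $\un_\cC$ through the equivalences simultaneously and verifying that restricting $A_{\Sigma^\circ}\-\un_\cC$ along $\cC\to U(\cC)$ really produces the relative tensor product $A_{\Sigma^\circ}\underset{\cF_\cC}{\otimes}\un_\cC$ in $\Vect(\cC)$ as an algebra object, i.e. that the notation $A_{\Sigma^\circ}\-\un_\cC$ appearing in Theorem~\ref{thm:facthomclosedsurfaces} is compatible with the quantum Hamiltonian reduction in the statement. Once this bookkeeping is in place, what remains is the formal universal-property computation together with the $*$-functoriality already established.
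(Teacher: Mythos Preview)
Your argument has a genuine gap at its core: you repeatedly invoke a ``residual $\cC$-action'' on $\int_\Sigma\cC$, write $\cO_\Sigma\triangleleft\un_\cC$, and define $\underline{\End}_\cC(\cO_\Sigma)$ via Definition~\ref{defi:internalend}. But for a \emph{closed} surface $\Sigma$ there is no embedding $\Sigma\sqcup D\hookrightarrow\Sigma$, hence no $\cC$-module structure on $\int_\Sigma\cC$; the paper states this explicitly in the remark immediately following the proposition. (Nor is there a $U(\cC)$-module structure, since there is likewise no embedding $\Sigma\sqcup Ann\hookrightarrow\Sigma$; so restricting along $\cC\to U(\cC)$ does not help.) Consequently the object $\underline{\End}_\cC(\cO_\Sigma)$ on which your final displayed chain of isomorphisms rests is not defined, and the appeal to Definition~\ref{defi:internalend} with $X=\un_\cC$ is vacuous.

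The paper's route, following \cite{BenZviBrochierJordan2}, avoids this by never leaving the bimodule description of Corollary~\ref{cor:balancedtensorproductovertheannuluscategory}. One identifies $\cO_\Sigma$ with the free $(A_{\Sigma^\circ},\un_\cC)$-bimodule over $\cF_\cC$ generated by $\un_\cC$, and then computes its endomorphisms directly via the free--forgetful adjunction for $A_{\Sigma^\circ}$-modules: $\Hom_{A_{\Sigma^\circ}}(A_{\Sigma^\circ}\otimes_{\cF_\cC}\un_\cC,\,A_{\Sigma^\circ}\otimes_{\cF_\cC}\un_\cC)\simeq\Hom_\cC(\un_\cC,\,A_{\Sigma^\circ}\otimes_{\cF_\cC}\un_\cC)$. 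This is a plain Hom computation inside the bimodule category, not an internal-End statement requiring an ambient $\cC$-action. Your identification of $\cO_\Sigma$ with the distinguished generator (first step) and your invocation of Theorem~\ref{prop:adjointofastarfunctor} to carry the $*$-structure through are both fine and match the paper; it is only the middle step, recasting the computation as one about $\underline{\End}_\cC(\cO_\Sigma)$, that fails.
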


Notice that, if $\int_\Sigma \cC$ had a $\cC$-module structure, Proposition \ref{prop:hamiltonianreduction} would mean that the internal endomorphism algebra of the quantum structure sheaf were $A_{\Sigma^\circ} \underset{\cF}{\otimes} \un_\cC$. That is not the case, since generally there is no way of embedding $\Sigma \sqcup D$ into $\Sigma$.

\section{Realization of cyclic representations}
\label{sec:realization}

Let $\cC$ be an UTC and let $A \in \Vect(\cC)$ be a C*-algebra object. We show here that, assuming $\cC$ acts on a $\II_1$ factor $N$, there is a way of realizing the cyclic representations of $A$ in terms of $N$. More precisely, we construct a functor from a category of cyclic representations of C*-algebra objects to a category of extensions of $N$. At the end of this section, assuming in addition that $\cC$ has an unitary braiding, we derive consequences of this realization functor in the context of factorization homology.

Suppose that $\cC$ acts on a $\II_1$-factor $N$. Denote by $H: \cC \to \Bim(N)$ the action functor, i.e., $H$ is a fully faithful unitary tensor functor from $\cC$ to the category of finite index Hilbert bimodules over $N$. Let $H^\circ: \cC \to \Vect$ be the space of bounded vectors functor:
$$H^\circ(X) := \{ \xi \in H(X) \ | \ \xi \ \text{is $N$-bounded} \} \ . $$

\begin{proposition}[\cite{JonesPenneys2}, Proposition 2.18]
$H^\circ$ is a W*-algebra object in $\Vect(\cC^{\op})$.
\end{proposition}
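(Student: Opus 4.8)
**Proof proposal for the statement that $H^\circ$ is a W*-algebra object in $\Vect(\cC^{\op})$.**

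The plan is to verify directly that $H^\circ$ carries the structure data of a (W*-)algebra object internal to $\Vect(\cC^{\op})$, namely a multiplication, a unit, a $*$-involution, and the appropriate completeness/normality, and that all of these are compatible with the functoriality in $\cC^{\op}$. The key point is that the functor $H\colon\cC\to\Bim(N)$ is a unitary tensor functor, so it comes with natural unitaries $H(X)\otimes_N H(Y)\simeq H(X\otimes Y)$ and $L^2(N)\simeq H(\un_\cC)$, and that $N$-bounded vectors behave well under Connes fusion. Concretely, I would first recall that for finite-index bimodules the subspace $H^\circ(X)$ of $N$-bounded vectors is dense, is stable under the left and right $N$-actions, and that for bounded vectors $\xi\in H^\circ(X)$, $\eta\in H^\circ(Y)$ the elementary tensor $\xi\otimes_N\eta$ is again $N$-bounded in $H(X\otimes_N Y)$; under the tensor structure of $H$ this gives a bilinear map $H^\circ(X)\times H^\circ(Y)\to H^\circ(X\otimes Y)$, i.e.\ the multiplication $m$ of the internal algebra. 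The unit is the inclusion $\C\to H^\circ(\un_\cC)$ sending $1$ to (the image under $H(\un_\cC)\simeq L^2(N)$ of) the canonical cyclic vector $\hat 1\in L^2(N)$, which is $N$-bounded.

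Next I would produce the $*$-structure. For $\xi\in H^\circ(X)$ one uses the rigidity of $\cC$ (equivalently, the conjugate bimodule $H(\bar X)\simeq \overline{H(X)}$ furnished by the fact that $H$ is a unitary tensor functor between rigid C*-tensor categories) to define $\xi^*\in H^\circ(\bar X)$; the $N$-bimodularity of the conjugation and the fact that $H$ respects duals show $\xi\mapsto\xi^*$ is the conjugate-linear involution making $H^\circ$ a $*$-algebra object in the sense of \cite{JonesPenneys1}. Then I would check the axioms: associativity of $m$ follows from associativity of Connes fusion together with the coherence of the tensorator of $H$; unitality follows from the fact that $L^2(N)$ is the unit for Connes fusion and $\hat1$ is the unit vector; the $*$-identities follow from the corresponding identities for bounded vectors and the compatibility of conjugation with fusion. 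Functoriality in $\cC^{\op}$ is automatic because a morphism $f\colon X\to Y$ in $\cC$ gives $H(f)\colon H(X)\to H(Y)$, which restricts to bounded vectors (an $N$-bimodular bounded operator sends bounded vectors to bounded vectors), and these restrictions intertwine $m$ and the $*$ because $H$ is a $*$-functor; the contravariance is exactly the statement that $H^\circ\colon\cC^{\op}\to\Vect$.

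Finally, for the ``W*'' enhancement I would equip each $H^\circ(X)$ with the natural operator-space/Hilbert-module structure: $H^\circ(X)$ is a bimodule over the W*-algebra $H^\circ(\un_\cC)\cong N$ (acting by the left/right $N$-actions on bounded vectors), in fact a right W*-Hilbert $N$-module with $N$-valued inner product $\langle\xi,\eta\rangle := \xi^*\cdot\eta\in H^\circ(\un_\cC)\cong N$ coming from the composite $H^\circ(\bar X)\times H^\circ(X)\to H^\circ(\bar X\otimes X)\to H^\circ(\un_\cC)$ (the last map induced by the evaluation $\bar X\otimes X\to\un_\cC$), and one checks this inner product is complete and normal because $H(X)$ is a genuine bifinite correspondence, so $H^\circ(X)$ is precisely the space of bounded vectors of an $N$-$N$-W*-correspondence. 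These are exactly the properties that make $H^\circ$ a W*-algebra object in $\Vect(\cC^{\op})$ in the sense of \cite{JonesPenneys1,HataishiYamashita}. The main obstacle I expect is not any single algebraic identity but the bookkeeping needed to pin down the $*$-structure and the $N$-valued inner product coherently with the tensorator of $H$ and the chosen solutions of the conjugate equations in $\cC$; once the identification $H\simeq$ ``Connes fusion with bounded vectors'' is set up carefully, every axiom reduces to a standard fact about bifinite bimodules over a $\II_1$-factor, and the argument is essentially that of \cite[Prop.\ 2.18]{JonesPenneys2} transcribed into the present notation.
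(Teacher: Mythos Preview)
The paper does not supply its own proof of this proposition: it is stated with a citation to \cite[Proposition~2.18]{JonesPenneys2} and used as a black box. Your proposal is a reasonable reconstruction of the argument in that reference---building the multiplication from Connes fusion of bounded vectors, the unit from $\hat 1\in L^2(N)$, the involution from conjugate bimodules, and the W*-structure from the right Hilbert $N$-module structure on bounded vectors---so there is nothing to compare against in the present paper beyond noting that your outline matches the cited source.

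One small correction: you write that ``the contravariance is exactly the statement that $H^\circ\colon\cC^{\op}\to\Vect$.'' This is backwards. In the paper's conventions $\Vect(\cA)$ denotes functors $\cA^{\op}\to\Vect$, so an object of $\Vect(\cC^{\op})$ is a \emph{covariant} functor $\cC\to\Vect$; and indeed $H^\circ$ is covariant, since a morphism $f\colon X\to Y$ in $\cC$ induces $H^\circ(f)\colon H^\circ(X)\to H^\circ(Y)$. The ``$\op$'' in $\Vect(\cC^{\op})$ records that the lax monoidal structure goes the opposite way from algebra objects in $\Vect(\cC)$ (maps $H^\circ(X)\otimes H^\circ(Y)\to H^\circ(X\otimes Y)$ rather than $A(X\otimes Y)\to A(X)\otimes A(Y)$), not that $H^\circ$ is contravariant. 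This does not affect the substance of your argument, only the sentence explaining the variance.
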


\begin{definition}
An algebra object $A \in \Vect(\cC)$ is called compact if $A \in \cC$, that is, if $A$ is an algebra in $\cC$. An algebra $A \in \Vect(\cC)$ is called connected iff $A(\un_\cC)$ is one dimensional.
\end{definition}

\begin{proposition}[\cite{JonesPenneys2}, Corollary 5.14]
If $A$ is a compact connected C*-algebra object in $\Vect(\cC)$, then $H^\circ(A)$ is a $\II_1$-factor having $N$ as a subfactor $N \overset{E_A}{\subset} H^\circ(A)$.
\end{proposition}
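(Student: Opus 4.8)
The plan is to realise $H^\circ(A)$ concretely as a von Neumann algebra attached to the bimodule $H(A)$ and then to read its type off from the hypotheses on $A$. Since $A$ is compact it is an honest algebra object in $\cC$, and since $\cC$ is semisimple with finite-dimensional hom-spaces (Remark \ref{rem:UTCaresemisimple}) it is a \emph{finite} orthogonal direct sum of irreducibles. Applying the fully faithful unitary tensor functor $H$ yields an algebra object $\cK := H(A)$ in $\Bim(N)$: a finite-index $N$-$N$ bimodule with a bimodular multiplication $m \colon \cK \otimes_N \cK \to \cK$ and a unit $u \colon L^2(N) \to \cK$. I would then work with $H^\circ(A)$, the space of (left-)$N$-bounded vectors in $\cK$.

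First I would record that $H^\circ(A)$ is a unital $*$-algebra: the $N$-relative tensor product of two bounded vectors is bounded, so $m$ restricts to a multiplication on $H^\circ(A)$; the unit is $H^\circ(u)(\widehat 1)$; and the involution is transported from the $*$-structure of the C*-algebra object $A$, using that $H$ intertwines conjugation in $\cC$ with conjugation of bimodules. After normalising so that $u^* u = \id_{\un_\cC}$, the left $N$-action embeds $N$ into $H^\circ(A)$ as a unital $*$-subalgebra, injectively because $H$ is faithful and $A$ unital. That $H^\circ(A)$ is in fact already weakly closed --- hence a von Neumann algebra, not merely a $*$-algebra --- is exactly what the preceding statement (``$H^\circ$ is a W*-algebra object'') delivers.

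Next I would build the conditional expectation from the connectedness hypothesis. Since $\Hom_\cC(\un_\cC, A) = A(\un_\cC)$ is one-dimensional, spanned by $u$, and $\un_\cC$ is simple, $\un_\cC$ occurs in the semisimple decomposition of $A$ with multiplicity one; thus $p := u u^* \in \End_\cC(A)$ is the orthogonal projection onto this unique copy of $\un_\cC$. Transporting $p$ through $H$ and restricting to bounded vectors gives an $N$-$N$-bimodular map $E_A \colon H^\circ(A) \to N$, identifying $N$ with the bounded vectors of $H(\un_\cC) = L^2(N)$. I would then verify the two defining properties of a faithful conditional expectation: $E_A|_N = \id$, which is precisely $u^* u = \id_{\un_\cC}$; and $E_A(\xi^* \xi) = \langle \xi, \xi \rangle_N$, the square-length of $\xi$ in the canonical $N$-valued inner product making $H^\circ(A)$ a pre-Hilbert $N$-module, which is positive and vanishes only at $\xi = 0$. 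Normality is automatic since $p$ is a bounded projection.

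Set $\tau := \tr_N \circ E_A$, with $\tr_N$ the trace of the $\II_1$ factor $N$; this is a faithful normal positive functional on $H^\circ(A)$. The step that is not formal --- and which I expect to be the main obstacle --- is showing that $\tau$ is a \emph{trace}, $\tau(\xi \eta) = \tau(\eta \xi)$. I would prove it as in \cite{JonesPenneys2}: writing $\tau(\xi\eta)$ via the graphical calculus for the algebra object $A$ as a closed diagram built from $\xi$, $\eta$, $u$ and $u^*$, and using \emph{sphericality} of $\cC$ (every unitary tensor category is spherical) together with the Frobenius-reciprocity isomorphisms to match it with $\tau(\eta\xi)$. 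Granting this, the GNS construction for $(H^\circ(A),\tau)$ --- equivalently the left multiplication action on $\cK$ --- exhibits $H^\circ(A)$ as a finite von Neumann algebra $M$ carrying a faithful normal trace. Finally, $N \subset M$ has finite index because $[\cK : L^2(N)] = \dim_\cC A < \infty$, and the relative commutant is $N' \cap M = \Hom_{\Bim(N)}(L^2(N), \cK) = \Hom_\cC(\un_\cC, A) = \C$ by fullness of $H$ and connectedness. Hence the inclusion is irreducible, so $Z(M) \subseteq N' \cap M = \C$ and $M$ is a factor; being a finite factor containing the $\II_1$ factor $N$ it is itself of type $\II_1$, which gives $N \overset{E_A}{\subset} H^\circ(A)$ as asserted.
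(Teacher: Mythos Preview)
The paper does not supply its own proof of this statement; it is quoted verbatim as Corollary~5.14 of \cite{JonesPenneys2} and used as a black box. So there is nothing in the present paper to compare against beyond the citation.

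Your outline is essentially the argument of \cite{JonesPenneys2}: realise $H^\circ(A)$ as the $*$-algebra of bounded vectors acting on the finite-index bimodule $H(A)$, use connectedness to get the conditional expectation $E_A$ via the unit projection $uu^*$, use sphericality of $\cC$ to show $\tr_N\circ E_A$ is tracial, and use $N'\cap H^\circ(A)\cong\Hom_\cC(\un_\cC,A)=\C$ to obtain factoriality. One small caveat: the sentence ``that $H^\circ(A)$ is already weakly closed is exactly what the preceding statement delivers'' overstates what Proposition~2.18 of \cite{JonesPenneys2} says. That proposition asserts $H^\circ$ is a W*-algebra object in $\Vect(\cC^{\op})$, a categorical statement about the functor $H^\circ$ itself; it does not literally say that $H^\circ(A)$ is a von Neumann algebra for every algebra object $A$. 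In the compact case the identification $H^\circ(A)\cong\bigoplus_{X\in\Irr(\cC)}H^\circ(X)\otimes\cC(X,A)$ is a finite sum of finitely generated projective $N$-modules, and one checks directly (or via the left action on $H(A)$ and the bicommutant) that it is weakly closed. The rest of your sketch is correct.
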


\begin{definition}
A subfactor $N \overset{E}{\subset} M$ with a faithful conditional expectation $E: M \to N$ is said to be irreducible iff $N' \cap M \simeq \C$, and it is said to be discrete iff, for $\phi:= \tau \circ E$, 
$$\tensor[_N]{\cL^2_\phi(M)}{_N} \simeq \bigoplus_{X \in \Bim_{sp}(N)} n_X X \ ,$$
with $n_X \in \N$ for all $X$ and $n_{\cL^2(N)} \neq 0$.
\end{definition}

The last proposition is a piece of an equivalence, the main result of \cite{JonesPenneys2}, between the category of connected C*-algebra objects in $\Vect(\cC)$, with ucp maps as morphisms, and the category $\DisInc_{\Irr}(N)$ of irreducible discrete subfactors $\{N \subset M\}$, where the morphisms between $N \subset M$ and $N \subset M'$ are ucp maps $M \to M'$ preserving the corresponding conditional expectations. This section provides a generalization of this result in one direction. Lets first recall how they construct the functors witnessing the equivalence.

Given a C*-algebra object $A \in \Vect(\cC)$, its algebraic realization relative to $H^\circ$ is given by 
$$|A| := |H^\circ \otimes A| = \bigoplus_{X \in \Irr(\cC)} H^\circ(X) \otimes A(X) \ , $$
which has a canonical $*$-algebra structure. Similarly, if $K \in \Hilb(\cC)$, then its Hilbert space realization relative to $H$ is given by
$$|K|:= |H \otimes K| = \bigoplus_{X \in \Irr(\cC)}^{\ell^2} H(X) \otimes K(X)  \ ,$$
where in the summand corresponding to $X \in \Irr(\cC)$ the inner product is normalized by a factor of $d_X^{-1}$.

\begin{theorem}[\cite{JonesPenneys2}, Theorem 4.11]
Let $A \in \Vect(\cC)$ be a C*-algebra object, and let $K \in \Hilb(\cC)$. If $\pi: A \to \bB(K)$ is a representation of $A$, then there is a unital $*$-algebra homomorphism $|\pi|: |A| \to \bB(|K|)$.
\label{thm:JonesPenneys2thm4.11}
\end{theorem}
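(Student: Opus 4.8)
The plan is to follow the construction of \cite{JonesPenneys2} in the present notation. First I would unwind the hypothesis: a representation $\pi\colon A\to\bB(K)$ of the C*-algebra object $A$ on $K\in\Hilb(\cC)$ is a morphism of C*-algebra objects in $\Vect(\cC)$, which by adjunction is the same datum as a natural family of structure maps exhibiting $K$ as a $*$-module over $A$ inside $\Vect(\cC)$; in particular each component $\pi(f)$ is fibrewise adjointable on $K$ with adjoint $\pi(f^{*})$, and $\pi$ intertwines the multiplication $m_{A}$ with composition of operators. On the analytic side, since $H$ is a unitary tensor functor we have canonical unitaries $H(X\otimes Y)\simeq H(X)\otimes_{N}H(Y)$, and every bounded vector $\xi\in H^{\circ}(X)$ yields a bounded $N$-linear map $L_{\xi}\colon H(Y)\to H(X\otimes Y)$, $\eta\mapsto\xi\otimes_{N}\eta$, whose operator norm equals $\|\xi\|_{H^{\circ}(X)}$.

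Next I would define $|\pi|$ on algebraic tensors. For $\xi\otimes f\in H^{\circ}(X)\otimes A(X)$ and $\eta\otimes k\in H(Y)\otimes K(Y)$ (with $X,Y\in\Irr(\cC)$), one forms $L_{\xi}\eta\in H(X\otimes Y)$ together with the vector in $K(X\otimes Y)$ obtained by applying $\pi$ to $f$ and $k$, then re-expands $X\otimes Y$ into irreducibles using the canonical natural isomorphism $H(X\otimes Y)\simeq\bigoplus_{Z}\Hom_{\cC}(Z,X\otimes Y)\otimes H(Z)$ and the matching decomposition on the $K$-side, pairing the multiplicity spaces $\Hom_{\cC}(Z,X\otimes Y)$ against one another via their inner products. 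Up to the normalization factors $d_{Z}^{\pm1/2}$ dictated by the $d_{Z}^{-1}$-rescaling of the inner product on the $Z$-summand of $|K|$, this produces a vector in $\bigoplus_{Z}H(Z)\otimes K(Z)$, i.e.\ an element of the algebraic core of $|K|$; extending bilinearly defines $|\pi|(a)$ as a densely defined operator for each $a\in|A|$. Independence of the auxiliary choices (orthonormal bases of $\Hom_{\cC}(Z,X\otimes Y)$, equivalently the fusion isometries of $H$) follows from naturality of $\pi$ and of the fusion isomorphisms of $H$.

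I would then verify the algebraic identities. Multiplicativity $|\pi|(ab)=|\pi|(a)|\pi|(b)$ reduces, after expanding both sides, to associativity and coherence of Connes fusion $\otimes_{N}$ together with the tensor structures of $H$ and $H^{\circ}$ and the identity $\pi(m_{A}(f\otimes g))=\pi(f)\circ\pi(g)$. The $*$-relation $\langle|\pi|(a)v,w\rangle=\langle v,|\pi|(a^{*})w\rangle$ is a string-diagram computation: the involution on $|A|$ combines the $*$-structure of $A$ with the ``adjoint'' of bounded vectors, the latter being encoded by a standard solution of the conjugate equations for $X$ in the rigid category $\cC$; dragging this solution across the diagram, using that $\pi$ commutes with $*$ and that $H$ is a $*$-functor, lands precisely on $|\pi|(a^{*})$, the $d_{Z}$-normalizations again being exactly what makes the two sides match. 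Unitality is immediate, since the unit of $|A|$ is $\Omega_{\un}\otimes\un_{A}\in H^{\circ}(\un)\otimes A(\un)$, with $\Omega_{\un}$ corresponding to $1\in N\subset\cL^{2}(N)=H(\un)$, and $\pi(\un_{A})=\id_{K}$.

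The main obstacle is boundedness: extending $|\pi|(a)$ from the algebraic direct sum to a bounded operator on the completion $|K|$. For a homogeneous $a=\xi\otimes f\in H^{\circ}(X)\otimes A(X)$ I would combine $\|L_{\xi}\|=\|\xi\|_{H^{\circ}(X)}$ with the fibrewise contractivity of $\pi$ (which holds because $\pi$ is a morphism of C*-algebra objects), and use that for fixed $X,Y$ only finitely many $Z\in\Irr(\cC)$ occur in $X\otimes Y$ while the $Z$-summands of $|K|$ are mutually orthogonal; a Bessel-type estimate then gives $\||\pi|(a)(\eta\otimes k)\|\le\|\xi\|_{H^{\circ}(X)}\|f\|_{A(X)}\,\|\eta\otimes k\|$ on homogeneous vectors, and a second, again orthogonal, summation extends the bound to all of $|K|$. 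For a general $a\in|A|$ one passes to $a^{*}a$: one checks that $|\pi|$ preserves positivity, so $\||\pi|(a)\|^{2}=\||\pi|(a^{*}a)\|$, and bounds the right-hand side by expanding $a^{*}a$ and invoking the homogeneous estimates together with the C*-identity on $A(\un)$. This produces the desired unital $*$-homomorphism $|\pi|\colon|A|\to\bB(|K|)$; the only other point demanding real care is the consistent tracking of the $d_{Z}^{\pm1/2}$ normalizations throughout, which I would organize by working invariantly with the decomposition $H(X\otimes Y)\simeq\bigoplus_{Z}\Hom_{\cC}(Z,X\otimes Y)\otimes H(Z)$ rather than with explicit bases.
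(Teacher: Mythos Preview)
The paper does not actually prove this theorem: it is quoted verbatim from \cite{JonesPenneys2} (their Theorem~4.11), and the only content the paper adds is the one-line description immediately following the statement, namely that $|\pi|$ is ``induced by the map $H^\circ(X)\otimes A(X)\ni f\otimes g\mapsto f\otimes\pi_X(g)\in\bB(|K|)$''. Your sketch is a faithful and essentially correct expansion of exactly this construction and of the argument in the cited reference, so there is nothing to compare beyond noting that the paper defers the proof entirely to \cite{JonesPenneys2}.

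One small point of presentation: in your boundedness paragraph you argue first for homogeneous elements and then for general $a\in|A|$ by passing to $a^*a$ and invoking positivity. This is more elaborate than needed; since $|A|$ is by definition the \emph{algebraic} direct sum $\bigoplus_X H^\circ(X)\otimes A(X)$, every element is a finite sum of homogeneous ones, and boundedness follows immediately from the homogeneous case by the triangle inequality. Your positivity detour is not wrong, but it is circular as stated (you use that $|\pi|$ is a $*$-homomorphism into $\bB(|K|)$ to get positivity, which presupposes boundedness).
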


Let $1_A$ be the unit of the ground C*-algebra $A(\un_\cC)$ of $A$. Note that there is always an inclusion
$$N \simeq H^\circ(\un_\cC) \otimes \C 1_{A} \subset |A|_{H^\circ} \ .$$
The homomorphism $|\pi|: |A| \to \bB(|K|)$ is induced by the map
$$H^\circ(X) \otimes A(X) \ni f \otimes g \mapsto f \otimes \pi_X(g) \in \bB(|K|) \ . $$
and there its restriction to $N$ is always faithful. Thus, if $M_{A,\pi}:= \left( |\pi|(|A|) \right)''$, there is a von Neumann algebraic inclusion $N \subset M_{A,\pi}$. Summarizing,

\begin{proposition}
There is a correspondence which associates to each representation $\pi$ of a C*-algebra object $A \in \Vect(\cC)$ a von Neumann algebraic inclusion $N \subset M_{A,\pi}$, where $M_{A,\pi}$ acts on the Hilbert space realization of the representation $\pi$.
\label{prop:realizationofrepresentations}
\end{proposition}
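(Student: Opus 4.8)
The proposition packages the discussion preceding it, so the plan is simply to assemble the pieces and flesh out its one genuine point. Given a representation $\pi\colon A \to \bB(K)$ with $K \in \Hilb(\cC)$, Theorem~\ref{thm:JonesPenneys2thm4.11} furnishes a unital $*$-homomorphism $|\pi|\colon |A| \to \bB(|K|)$, where $|K| = \bigoplus_{X\in\Irr(\cC)}^{\ell^2} H(X)\otimes K(X)$ is the Hilbert space realization of $\pi$. Composing $|\pi|$ with the canonical unital inclusion $N \simeq H^\circ(\un_\cC)\otimes \C 1_A \hookrightarrow |A|$ gives a unital $*$-homomorphism $\phi\colon N \to \bB(|K|)$, and one puts $M_{A,\pi} := \bigl(|\pi|(|A|)\bigr)'' \subseteq \bB(|K|)$, so that $\phi(N) \subseteq |\pi|(|A|) \subseteq M_{A,\pi}$ tautologically.

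What remains is to see that $\phi$ realizes $N$ as a \emph{von Neumann} subalgebra of $M_{A,\pi}$, i.e. that $\phi$ is faithful and normal, so that $\phi(N)$ is already weakly closed and $\phi\colon N \to \phi(N)$ is a normal $*$-isomorphism. Faithfulness has already been noted and is visible on the $\un_\cC$-summand, on which $N$ acts on $H(\un_\cC)\otimes K(\un_\cC) = \cL^2(N)\otimes K(\un_\cC)$ by amplifying the standard representation. For normality one unwinds the action: the element $\xi\otimes 1_A \in H^\circ(\un_\cC)\otimes \C 1_A$ acts on the summand $H(X)\otimes K(X)$ of $|K|$ by $\rho_X(\xi)\otimes \id_{K(X)}$, where, under the identification $H^\circ(\un_\cC)\simeq N$, $\rho_X$ is the left $N$-action on the $N\-N$ Hilbert bimodule $H(X)$; since each $\rho_X$ is normal by definition of a Hilbert bimodule, $\phi$ is an $\ell^2$-direct sum of normal $*$-representations of $N$, hence normal. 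Therefore $\phi(N)$ is a von Neumann algebra isomorphic to $N$, and $N \simeq \phi(N) \subseteq M_{A,\pi}$ is the asserted inclusion, acting on the Hilbert space realization of $\pi$.

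Finally I would record that the assignment $\pi \mapsto (N \subseteq M_{A,\pi})$ is natural: a morphism of representations $K \to K'$ realizes through $|\cdot|$ to a bounded $N\-N$-bimodular intertwiner $|K|\to|K'|$ carrying $|\pi|$ to $|\pi'|$ and restricting to the identity on the two copies of $N$; in particular a unitary equivalence $\pi\simeq\pi'$ yields a spatial isomorphism $M_{A,\pi}\simeq M_{A,\pi'}$ fixing $N$. The only step beyond bookkeeping is the normality claim, and that is precisely where the spatial, bimodular nature of the realization functor $|\cdot|$ — as opposed to an abstract GNS construction — is used in an essential way; everything else is downstream of Theorem~\ref{thm:JonesPenneys2thm4.11}.
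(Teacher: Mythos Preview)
Your argument is correct and follows essentially the same route as the paper: the paper's proof is the paragraph immediately preceding the proposition, which invokes Theorem~\ref{thm:JonesPenneys2thm4.11}, notes the inclusion $N \simeq H^\circ(\un_\cC)\otimes \C 1_A \subset |A|$, observes that $|\pi|$ restricted to $N$ is faithful, and defines $M_{A,\pi} := (|\pi|(|A|))''$. You supply strictly more detail than the paper does: the paper records only faithfulness of $\phi$ and leaves normality implicit, whereas you make the normality argument explicit via the decomposition of $\phi$ as an $\ell^2$-sum of the normal bimodule actions $\rho_X$, and you add the naturality remark, neither of which appears in the paper.
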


For a C*-algebra $B$, we shall write $\cS(B)$ for its state space. We state now the main theorem of this section.

\begin{theorem}
Let $\CAlg_\Omega(\cC)$ be the category whose objects are pairs $(A,\omega)$ consisting of a C*-algebra object $A \in \Vect(\cC)$ and a state $\omega \in \cS(A) := \cS(A(\un_\cC))$, and the morphisms between two objects $(A,\omega_A)$ and $(B,\omega_B)$ are ucp maps $\phi: A \to B$ mapping the state $\omega_A$ to the state $\omega_B$, i.e., such that $\omega_A = \omega_B \circ \phi_{\un_\cC}$. Let $\DisInc^\infty(N)$ be the category of subfactors $N \overset{E}{\subset} M$ such that $\tensor[_N]{\cL^2(M)}{_N}$ lies in the unitary ind-completion of $\Bim(N)$, morphisms being ucp maps preserving the embeddings of $N$. There exists a functor
$$\Xi: \CAlg_\Omega(\cC) \to \DisInc^\infty(N) \ . $$
If $\Xi(A,\omega) = N \overset{E}{\subset} M_{A,\omega}$, $E$ is faithful if and only if $\omega$ is faithful.
\label{thm:realizationofcyclicrepresentations}
\end{theorem}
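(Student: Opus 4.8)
The plan is to factor the functor $\Xi$ as a categorical Gelfand--Naimark--Segal construction internal to $\cC$, followed by the realization functors of \cite{JonesPenneys2} recalled above, and then to equip the resulting inclusion with a conditional expectation.

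First I would build, from a pair $(A,\omega)$, the GNS data of $\omega$ internally to $\cC$. The state $\omega$ on the C*-algebra $A(\un_\cC)$, together with the multiplication $m_A\colon A\otimes A\to A$, the unit, the $*$-structure of $A$ and the duality of $\cC$, should equip each vector space $A(X)$ with a positive semidefinite sesquilinear form: for $a,b\in A(X)$ one forms $m_A(a^*\otimes b)\in A(\bar X\otimes X)$, pulls back along the coevaluation $\mathrm{coev}_X\colon\un_\cC\to\bar X\otimes X$ to land in $A(\un_\cC)$, and applies $\omega$; positivity here is precisely the statement that $A$ is a C*-algebra object and $\omega$ a state. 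Separating and completing fibrewise, with the customary $d_X$-normalization, should assemble into a Hilbert space object $K_\omega\in\Hilb(\cC)$ on which left multiplication by $A$ is fibrewise bounded, defining a $*$-representation $\pi_\omega\colon A\to\bB(K_\omega)$ with a cyclic vector $\xi_\omega\in K_\omega(\un_\cC)$, the class of the unit of $A(\un_\cC)$.

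Next I would feed $\pi_\omega$ into Theorem~\ref{thm:JonesPenneys2thm4.11}, obtaining a unital $*$-homomorphism $|\pi_\omega|\colon|A|\to\bB(|K_\omega|)$, and set $\Xi(A,\omega):=(N\subset M_{A,\omega})$ with $M_{A,\omega}:=\big(|\pi_\omega|(|A|)\big)''$ acting on $|K_\omega|=\bigoplus^{\ell^2}_{X\in\Irr(\cC)}H(X)\otimes K_\omega(X)$; Proposition~\ref{prop:realizationofrepresentations} gives that the restriction of $|\pi_\omega|$ to $N\simeq H^\circ(\un_\cC)\otimes\C 1_A$ is faithful, so $N\subset M_{A,\omega}$ makes sense. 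As an $N$--$N$ bimodule, $|K_\omega|$ is $\bigoplus_X H(X)\otimes K_\omega(X)$ with each $H(X)$ of finite index, so the $\cL^2$-space of $M_{A,\omega}$ with respect to $\phi:=\tau\circ E$ (with $E$ the expectation below) embeds unitarily as a sub-$N$--$N$-bimodule of $|K_\omega|$ and hence lies in the unitary ind-completion of $\Bim(N)$, placing $N\subset M_{A,\omega}$ in $\DisInc^\infty(N)$. For the conditional expectation I would use that the realized cyclic vector spans a copy $\cL^2(N)\simeq H(\un_\cC)\otimes\C\xi_\omega\subset|K_\omega|$ on which $N$ acts standardly, with orthogonal projection $e_N$; on the dense subalgebra $|A|$ the formula $E_0:=(\id\otimes\,\omega)\circ p_{\un_\cC}\colon|A|\to N$, with $p_{\un_\cC}$ the projection onto the $X=\un_\cC$ summand, is unital, $N$-bimodular and completely positive, and a direct computation should identify $E_0(x)$ with the compression $e_Nxe_N$ under $N\simeq Ne_N$, so $E_0$ is normal and extends to a conditional expectation $E\colon M_{A,\omega}\to N$. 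Since $E\big(|\pi_\omega|(a)^*|\pi_\omega|(a)\big)=\omega(a^*a)\,1_N$ for $a\in A(\un_\cC)$, faithfulness of $E$ forces faithfulness of $\omega$; conversely, if $\omega$ is faithful then $\xi_\omega$ is separating for $\pi_\omega(A(\un_\cC))''$, and propagating this through the $\ell^2$-grading shows $\cL^2(N)$ is cyclic for $M_{A,\omega}$, whence $E$ is faithful. Functoriality: a morphism $\phi\colon(A,\omega_A)\to(B,\omega_B)$ is a ucp map with $\omega_A=\omega_B\circ\phi_{\un_\cC}$, so $|\phi|\colon|A|\to|B|$ is ucp and intertwines the vector states of $\xi_{\omega_A}$ and $\xi_{\omega_B}$; a Stinespring/KSGNS dilation carried out internally to $\cC$ then produces a normal ucp map $\Xi(\phi)\colon M_{A,\omega_A}\to M_{B,\omega_B}$ restricting to the identity on $N$, with compatibility with composition and identities inherited from the GNS construction and from $|\cdot|$.

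The step I expect to be the main obstacle is the internal GNS construction together with the faithfulness transfer: one must verify that the fibrewise forms $\langle-,-\rangle_X$ are genuinely compatible with the $\cC$-module structure so that $K_\omega$ is an honest object of $\Hilb(\cC)$ and $\pi_\omega$ a bona fide bounded representation, and then upgrade faithfulness of a state on the comparatively small C*-algebra $A(\un_\cC)$ to faithfulness of $E$ on the realized von Neumann algebra $M_{A,\omega}$. The ucp functoriality is the other delicate point, since it cannot proceed by $*$-homomorphism arguments and rests instead on a categorical dilation theorem, which absorbs most of the remaining bookkeeping.
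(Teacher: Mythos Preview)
Your proposal is essentially the same strategy the paper follows: internal GNS in $\cC$ (which the paper imports from \cite{JonesPenneys1}), realization via Theorem~\ref{thm:JonesPenneys2thm4.11} and Proposition~\ref{prop:realizationofrepresentations}, the conditional expectation via the Jones projection onto $\cL^2_\tau(N)\otimes\C\xi_\omega$, and functoriality via the categorical Stinespring dilation of \cite{JonesPenneys1} together with Kadison's inequality. The paper's mechanism for the ``faithfulness transfer'' you flag as the main obstacle is to prove the identity $\tau\circ E=\tau_\omega$ as vector states and the unitary comparison $|\cL^2_\omega A|\simeq \cL^2_{\tau_\omega}|A|$ (their Propositions~\ref{prop:generalizationofCor5.4inJonesPenneys2} and~\ref{prop:comparisonofGNSconstructions}); this is exactly the bookkeeping your sketch leaves implicit, and your phrase ``$\cL^2(N)$ is cyclic for $M_{A,\omega}$'' should read separating (equivalently, $\Omega\otimes\xi_\omega$ is separating for $M_{A,\omega}$), which is what the faithfulness of the vector state $\tau_\omega$ delivers.
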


We divide the proof into several lemmas and propositions. Let $(A,\omega) \in \CAlg_\Omega(\cC)$. Associated to $\omega$ there is Hilbert space object $\cL_{\omega}^2(A)$ and a representation $\pi_{\omega}: A \to \bB(\cL_{\omega}^2(A))$ (see \cite{JonesPenneys1}). Then Proposition \ref{prop:realizationofrepresentations} gives a von Neumann algebraic inclusion $N \subset M_{A,\pi_{\omega}} =: \Xi(A,\omega) \subset \bB(|\cL^2_\omega(A))$.

On $|A|$ there is a canonical state $\tau_\omega := |\tau| \otimes |\omega|$: given $a = \sum_{X \in \Irr(\cC)} a^X_{(1)} \otimes a^X_{(2)} \in |A|$, 
$$\tau_\omega(a) := \tau(a^{\un}_{(1)}) \cdot \omega(a^{\un}_{(2)}) \ . $$
There is also the GNS-map $A \to \cL^2_\omega(A)$, which induces a linear map $\Lambda_{\omega}: |A| \to |\cL^2_\omega(A)|$.

\begin{proposition}
For $a,b \in |A|$, $\tau_\omega(b^*a) = \left\langle \Lambda_\omega(b), \Lambda_\omega(a) \right\rangle$.
\label{prop:generalizationofCor5.4inJonesPenneys2}
\end{proposition}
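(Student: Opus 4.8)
The statement to prove is that for $a,b \in |A|$, the canonical state $\tau_\omega$ on $|A|$ satisfies $\tau_\omega(b^*a) = \langle \Lambda_\omega(b), \Lambda_\omega(a)\rangle$, where $\Lambda_\omega \colon |A| \to |\cL^2_\omega(A)|$ is the map induced by the GNS map $A \to \cL^2_\omega(A)$. This is a direct generalization of the Jones--Penneys computation (their Corollary 5.4), so the strategy is to reduce to summands indexed by irreducibles and then to the defining property of the GNS construction for C*-algebra objects. The main work is bookkeeping with the realization formulas and the normalization factors $d_X^{-1}$ appearing in the inner product on $|\cL^2_\omega(A)|$.

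**Step 1: reduce to elementary tensors.** Both sides are sesquilinear in $(b,a)$ — $\tau_\omega$ is linear, $(b,a)\mapsto b^*a$ is conjugate-linear/linear, and $\langle -,-\rangle$ is sesquilinear — and both sides are continuous, so it suffices to verify the identity on elementary tensors $a = f \otimes g \in H^\circ(X)\otimes A(X)$ and $b = f' \otimes g' \in H^\circ(Y)\otimes A(Y)$ for $X,Y\in\Irr(\cC)$. Here one uses that $|A| = \bigoplus_{X\in\Irr(\cC)} H^\circ(X)\otimes A(X)$ as a $*$-algebra and that $|\cL^2_\omega(A)| = \bigoplus_{X\in\Irr(\cC)}^{\ell^2} H(X)\otimes \cL^2_\omega(A)(X)$ with the $X$-summand inner product normalized by $d_X^{-1}$.

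**Step 2: compute the right-hand side.** By definition $\Lambda_\omega$ is induced by the GNS map $A \to \cL^2_\omega(A)$ in the $A(X)$-slot, i.e.\ $\Lambda_\omega(f\otimes g) = f \otimes \Lambda^{GNS}_X(g)$ where $\Lambda^{GNS}$ denotes the internal GNS map. Since $\Irr(\cC)$ consists of pairwise non-isomorphic irreducibles, the inner product $\langle f'\otimes \Lambda^{GNS}_Y(g'), f\otimes \Lambda^{GNS}_X(g)\rangle$ vanishes unless $X = Y$, in which case it factors (up to the normalization $d_X^{-1}$) as a product of the $N$-valued, resp.\ $\C$-valued, pairings coming from $H(X)$ and from $\cL^2_\omega(A)(X)$; the pairing on the $\cL^2_\omega(A)$-side is exactly $\langle \Lambda^{GNS}_X(g'), \Lambda^{GNS}_X(g)\rangle_{\cL^2_\omega} = \omega$-evaluation of $(g')^* g$ in the internal sense. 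I would invoke here the characterizing property of $\cL^2_\omega(A)$ from \cite{JonesPenneys1}.

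**Step 3: compute the left-hand side and match.** On the other side, $b^* a = (f'\otimes g')^*(f\otimes g)$; the product in $|A|$ involves composing $H^\circ$ and $A$ and then projecting onto the $\un_\cC$-isotypic component (since $\tau_\omega$ only sees the $a^{\un}_{(1)}\otimes a^{\un}_{(2)}$ part), which again forces $X = Y$ and produces the same contraction of an $H^\circ(X)$-pairing against an $A(X)$-pairing, with the trace $\tau$ on $N \simeq H^\circ(\un_\cC)$ applied to the first factor and $\omega$ applied to the second. Comparing with Step 2, the $N$-valued pairing from $H(X)$ composed with $\tau$ matches the $H^\circ(X)$ contribution precisely because $H^\circ(X)$ is the space of $N$-bounded vectors in $H(X)$ and $\tau\circ\langle-,-\rangle_N$ is the vector-state pairing; the $d_X^{-1}$ normalization is absorbed consistently on both sides. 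The identity then follows by assembling the two slot-wise computations.

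**Main obstacle.** The conceptual content is routine once one is careful, but the genuine point requiring attention is keeping the $d_X$-normalizations and the distinction between $H$ and $H^\circ$ (and between the internal GNS pairing on $\cL^2_\omega(A)$ and $\omega$ itself) coherent across the two computations; this is exactly where the Jones--Penneys argument \cite{JonesPenneys2} had to be careful, and I expect the proof here to amount to transcribing their bookkeeping into the present notation while checking that nothing depends on $A$ being compact or connected.
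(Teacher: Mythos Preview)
Your proposal is correct and follows essentially the same route as the paper: reduce to elementary tensors $a\in H^\circ(X)\otimes A(X)$, $b\in H^\circ(Y)\otimes A(Y)$, extract the $\un_\cC$-component of $b^*a$ (which forces $X\simeq Y$ via the isometry $d_X^{-1/2}R_X$), and then identify the resulting expression $d_X^{-1}\langle b_{(1)},a_{(1)}\rangle_N\otimes\langle b_{(2)},a_{(2)}\rangle_{A(X)}$ with the summand of the inner product on $|H\otimes\cL^2_\omega A|$ after applying $\tau\otimes\omega$. The paper simply writes this computation out explicitly rather than describing it, but the argument is the same.
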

\begin{proof}
Let $(\cdot)^{\natural}: A(X) \to A(\bar{X})$ be the involution on $A$. Then, denoting by $I(Z,X \otimes Y)$ a choice of maximal set of mutually orthogonal isometries $Z \to X \otimes Y$,
$$b^*a = \sum_{X,Y,Z \in \Irr(\cC)} \sum_{\gamma \in I(Z,X \otimes Y)} H^\circ (\gamma) \mu^{H^\circ}_{X,Y}(\overline{b^X_{(1)}} \otimes a^Y_{(1)} ) \otimes A(\gamma^*) \mu^A_{X,Y}((b^X_{(2)})^{\natural} \otimes a^Y_{(2)}) \ . $$
We assume that $I(\un,X \otimes Y)\simeq \delta_{X,\bar{Y}} \left( d_X^{-1/2} R_X \right)$. One then has
\begin{align*}
(b^*a)^\un & = \sum_{X \in \Irr(\cC)} d_X^{-1} H^\circ(R_X) \mu^{H^\circ}_{\bar{X},X}(\overline{b^X_{(1)}} \otimes aX_{^(1)})  \otimes A(R_X^*) \mu^A_{\bar{X},X}( (b^X_{(2)})^{\natural} \otimes a^X_{(2)}) \\
& = \sum_{X \in \Irr(\cC)} d_X^{-1} \left\langle b^X_{(1)}, a^X_{(1)} \right\rangle^{H^\circ}_N \otimes \left\langle b^X_{(2)},a^X_{(2)} \right\rangle_{A(X)} \ . 
\end{align*}
Therefore,
\begin{align*}
\tau_\omega(b^*a) = (\tau \otimes \omega)((b^*a)^\un) = \sum_{X \in \Irr(\cC)} \left\langle b^X_{(1)},a^X_{(1)} \right\rangle_{H(X)} \cdot \left\langle b^X_{(2)},a^X_{(2)} \right\rangle_{\cL_\omega^2 A(X)}  = \left\langle \Lambda_\omega(b), \Lambda_\omega(a) \right\rangle_{|H \otimes \cL_\omega^2 A |}\ . 
\end{align*}
\end{proof}

{\em Construction of the conditional expectation:} The Hilbert space $\cL_{\omega}^2(A)(\un_\cC)$ has a cyclic  vector $\Delta_\omega$ for the $A(\un_\cC)$-action. Consider the orthogonal projection
$$e: |H \otimes \cL_\omega^2(A)| \to \cL_\tau^2(N) \otimes \C \Delta_\omega \simeq \cL_\tau^2(N) $$
given by projection onto the direct summand indexed by $\un$. Under the identification $\cL^2_\tau(N) \simeq \cL^2_\tau(N) \otimes \C \Delta_\omega \subset |H \otimes \cL^2_\omega(A)|$, given $x \in M_{A,\pi_\omega}$, $e \circ (x|_{\cL_\tau^2(N)})$ is bounded and commutes with the right $N$-action, i.e., it lies in $N'' = N$. Therefore, 
$$\exists! \  E(x) \in N \ ; \ exe = E(x) e \ . $$

\begin{proposition}
Let $\cL_{\tau_\omega}|A|$ be the GNS-construction w.r.t. the state $\tau_\omega$ on $A$. There is an unitary isomorphism $U: \cL_{\tau_\omega}^2|A| \overset{\simeq}{\to} |\cL_{\omega}^2A|$.
\label{prop:comparisonofGNSconstructions}
\end{proposition}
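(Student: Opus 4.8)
The plan is to obtain $U$ directly from the norm identity of Proposition~\ref{prop:generalizationofCor5.4inJonesPenneys2}. Recall that $\cL^2_{\tau_\omega}|A|$ is, by definition, the Hilbert space completion of $|A|/\cN_{\tau_\omega}$ with respect to the sesquilinear form $\langle b,a\rangle := \tau_\omega(b^*a)$, where $\cN_{\tau_\omega} := \{a\in|A| : \tau_\omega(a^*a)=0\}$; positivity of this form is automatic once one knows $\tau_\omega(a^*a)=\|\Lambda_\omega(a)\|^2$. Since Proposition~\ref{prop:generalizationofCor5.4inJonesPenneys2} gives $\tau_\omega(b^*a)=\langle\Lambda_\omega(b),\Lambda_\omega(a)\rangle$ for all $a,b\in|A|$, we get $\ker\Lambda_\omega=\cN_{\tau_\omega}$, so $\Lambda_\omega$ descends to a well-defined \emph{isometric} linear map $|A|/\cN_{\tau_\omega}\to|\cL^2_\omega A|$. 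I would then let $U$ be the unique continuous extension of this map to the completion $\cL^2_{\tau_\omega}|A|$; by construction $U$ is an isometric linear embedding into $|\cL^2_\omega A|$.

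It then remains to prove that $U$ is surjective, i.e.\ that $\Lambda_\omega(|A|)$ is dense in $|\cL^2_\omega A|=\bigoplus_{X\in\Irr(\cC)}^{\ell^2}H(X)\otimes\cL^2_\omega A(X)$. Because the algebraic direct sum is dense in the $\ell^2$-direct sum, it suffices to approximate elements of a single summand $H(X)\otimes\cL^2_\omega A(X)$. On that summand $\Lambda_\omega$ is $\iota_{H^\circ(X)}\otimes\Lambda_\omega^X$, where $\iota_{H^\circ(X)}\colon H^\circ(X)\hookrightarrow H(X)$ is the inclusion of $N$-bounded vectors and $\Lambda_\omega^X\colon A(X)\to\cL^2_\omega A(X)$ is the $X$-component of the GNS morphism $A\to\cL^2_\omega A$. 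The $N$-bounded vectors are dense in the finite-index bimodule $H(X)$ since $N$ is a finite von Neumann algebra, and $\Lambda_\omega^X(A(X))$ is dense in $\cL^2_\omega A(X)$ by construction of the GNS object $\cL^2_\omega A$ in \cite{JonesPenneys1}; hence $H^\circ(X)\otimes\Lambda_\omega^X(A(X))$ is dense in the Hilbert space tensor product $H(X)\otimes\cL^2_\omega A(X)$. Summing over $X$ shows $\Lambda_\omega(|A|)$ is dense, so $U$ is a surjective isometry, hence unitary.

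The only non-formal ingredient is this surjectivity step, and the two points I would be careful about are: (i) the density of $N$-bounded vectors in each $H(X)$, which is where finiteness of $N$ and finiteness of the Jones index of $H(X)$ enter; and (ii) the $d_X^{-1}$ normalisation of the inner product on the $X$-summand of $|\cL^2_\omega A|$, which is already absorbed into the identity of Proposition~\ref{prop:generalizationofCor5.4inJonesPenneys2} invoked verbatim above, so no further bookkeeping with quantum dimensions is needed. Finally, it is worth recording here (and immediate from the fibrewise GNS identity) that $U$ intertwines left multiplication by $|A|$ on $\cL^2_{\tau_\omega}|A|$ with the representation $|\pi_\omega|$ on $|\cL^2_\omega A|$; this is the form in which the identification is used in the construction of the conditional expectation $E$ and of $\Xi(A,\omega)$.
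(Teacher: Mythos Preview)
Your argument is correct and follows the same route as the paper: define $U$ on the GNS image of $|A|$ via $\Lambda_\omega$, use Proposition~\ref{prop:generalizationofCor5.4inJonesPenneys2} to see it is isometric, and then invoke density on both sides to conclude unitarity. The paper's proof is terser---it simply asserts that $\tilde{\Lambda}(|A|)$ and $\Lambda(|A|)$ are dense in $\cL^2_{\tau_\omega}|A|$ and $|\cL^2_\omega A|$ respectively---whereas you spell out the fibrewise density argument for the second of these; your extra care there and your remark on intertwining are both welcome additions but do not constitute a different method.
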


\begin{proof}
Let $\tilde{\Lambda}:|A| \to \cL_{\tau_\omega}^2|A|$ and $\Lambda: |A| \to |\cL_\omega^2 A|$ be the GNS-maps. Define, for $a \in |A|$, $U(\tilde{\Lambda}(a)) := \Lambda(a)$. Then
\begin{align*}
\left\langle U \tilde{\Lambda}(a),U \tilde{\Lambda}(b) \right\rangle = \left\langle \Lambda(a), \Lambda(b) \right\rangle = \tau_\omega(a^*b) = \left\langle \tilde{\Lambda}(a),\tilde{\Lambda}(b) \right\rangle \ .
\end{align*}
The second equality follows from Proposition \ref{prop:generalizationofCor5.4inJonesPenneys2}. Since $\tilde{\Lambda}(|A|)$ and $\Lambda(|A|)$ are dense in $\cL_{\tau_\omega}|A|$ and $|\cL_\omega A |$, respectively, one concludes that $U$ extends to an unitary operator.
\end{proof}

Suppose now that $\omega$ is faithful. It follows then from Proposition \ref{prop:generalizationofCor5.4inJonesPenneys2} and Proposition \ref{prop:comparisonofGNSconstructions} that, for $a \in |A|$, $\tau_\omega(a^*a) = 0 \implies a = 0$, since the GNS-map $\Lambda_\omega: |A| \to |\cL_\omega^2(A)|$ is injective.

To prove faithfulness of $E$, we verify that
$$\tau \circ E = \langle (\cdot) \Omega \otimes \Delta_\omega, \Omega \otimes \Delta_\omega \rangle \ . $$
In other words, we verify that $\tau \circ E = \tau_\omega$.

Observe first that
$$ E( \cdot) e = e (\cdot) e \implies E(\cdot) = e ( \cdot) e^* \ , $$
where $e^*: \cL_\tau^2(N) \hookrightarrow | H \otimes \cL_\omega^2(A)|$ is the embedding $e^*(n \Omega) = n\Omega \otimes \Delta_\omega$. Therefore,
\begin{align*}
\tau(E(x))& = \langle E(x) \Omega, \Omega \rangle \\
& = \langle e x e^* \Omega, \Omega \rangle \\
&= \langle x e^* \Omega, e^* \Omega \rangle \\
&= \langle x( \Omega \otimes \Delta_\omega), \Omega \otimes \Delta_\omega \rangle \ ,
\end{align*}
for all $x \in M_{A,\omega}$. 

\begin{lemma}[\cite{JonesPenneys2}, Lemma 5.17]
Let $K_1,K_2 \in \Hilb(\cC)$, and suppose that $v: K_1 \to K_2$ is an isometry. Then $|v|: H(X) \otimes K_1(X) \ni \xi \otimes \eta \mapsto \xi \otimes (v \circ \eta )$ defines an isometry $|K_1| \to |K_2|$ such that $|v|^* = |v^*|$, where $|v^*|$ is defined similarly.
\end{lemma}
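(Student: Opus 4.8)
The plan is to notice that $|v|$ is block-diagonal for the $\Irr(\cC)$-grading and then reduce everything to elementary Hilbert-space computations on each summand. Writing $v = \{v_X \colon K_1(X) \to K_2(X)\}_{X \in \cC}$ for the components of the morphism $v$ in $\Hilb(\cC)$, the prescription $\xi \otimes \eta \mapsto \xi \otimes (v \circ \eta)$ is, on the $X$-th summand $H(X) \otimes K_1(X)$, precisely the operator $\id_{H(X)} \otimes v_X$, and it lands in the $X$-th summand $H(X) \otimes K_2(X)$ of $|K_2|$. Thus on the algebraic direct sum $\bigoplus_{X \in \Irr(\cC)}^{\mathrm{alg}} H(X) \otimes K_1(X)$, which is dense in $|K_1|$, one has $|v| = \bigoplus_{X \in \Irr(\cC)} \left( \id_{H(X)} \otimes v_X \right)$, and likewise $|v^*| = \bigoplus_{X \in \Irr(\cC)} \left( \id_{H(X)} \otimes (v^*)_X \right)$. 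First I would record the two structural facts that make the argument go: since composition and the $*$-operation in $\Hilb(\cC)$ are computed componentwise, a morphism $v$ is an isometry (i.e. $v^* \circ v = \id_{K_1}$) if and only if each $v_X$ is an isometry of Hilbert spaces, and moreover $(v^*)_X = (v_X)^*$, so that $|v^*| = \bigoplus_X \left( \id_{H(X)} \otimes (v_X)^* \right)$.

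Granting this, the isometry claim follows because tensoring a Hilbert-space isometry $v_X$ with $\id_{H(X)}$ gives an isometry $\id_{H(X)} \otimes v_X \colon H(X) \otimes K_1(X) \to H(X) \otimes K_2(X)$, and the two summands here carry the \emph{same} normalization factor $d_X^{-1}$ on their inner products, so isometry is preserved for the normalized inner products; a block-diagonal operator with isometric blocks is isometric on the algebraic direct sum, hence $|v|$ is isometric there and extends uniquely to an isometry $|K_1| \to |K_2|$ of completions. For the adjoint claim, note $(\id_{H(X)} \otimes v_X)^* = \id_{H(X)} \otimes (v_X)^*$, that rescaling the inner products of source and target by the \emph{same} positive scalar $d_X^{-1}$ leaves the adjoint operator unchanged, and that the adjoint of a block-diagonal operator is block-diagonal with the blockwise adjoints; assembling, $|v|^* = \bigoplus_X \left( \id_{H(X)} \otimes (v_X)^* \right) = |v^*|$. (As a bonus, since $|v|$ acts only on the $K_i$-tensorands and leaves the $H(X)$-factors untouched, it is manifestly $N$-$N$ bimodular, though that is not needed here.)

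The only point that needs care — the "hard part", such as it is — is entirely bookkeeping: confirming that the notions of isometry and adjoint in $\Hilb(\cC)$ genuinely reduce, under the realization functor $|-|$, to the componentwise Hilbert-space notions, and checking that the $d_X^{-1}$-normalization, which appears identically on matched summands of $|K_1|$ and $|K_2|$, is irrelevant for both the isometry and the adjoint computations. There is no analytic obstacle beyond the routine extension-by-density step.
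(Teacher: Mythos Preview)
Your proof is correct. It differs slightly in presentation from the paper's: the paper computes the norm of a general finite sum $\sum_i \xi_i \otimes \eta_i$ directly via the categorical trace formula $\langle \xi' \otimes \eta', \xi \otimes \eta \rangle = \tr_\cC\bigl((\eta'^* \circ \eta) \circ (\xi'^* \circ \xi)^{\op}\bigr)$, inserting $v^* v = \id$ in the middle and reading off equality, whereas you recognise $|v|$ as the block-diagonal operator $\bigoplus_X (\id_{H(X)} \otimes v_X)$ and reduce to the elementary fact that tensoring with an identity preserves isometries. The two arguments are equivalent in content---the trace formula is exactly what encodes the $d_X^{-1}$-weighted tensor inner product you invoke---but your phrasing is arguably more transparent, since it isolates the purely Hilbert-space-theoretic core and makes the irrelevance of the normalization explicit. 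You also spell out the adjoint identity $|v|^* = |v^*|$ via the same block-diagonal reasoning, which the paper's proof leaves implicit.
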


\begin{proof}
Given any Hilbert space object $K$, if $\xi \otimes \eta \in H(X) \otimes K(X)$ and $\xi' \otimes \eta' \in H(Y) \otimes K(Y)$, then $((\xi')^* \circ \xi) \circ ((\eta')^* \circ \eta)^{\op}) \in \cC(X)$, and $\left\langle \xi' \otimes \eta', \xi \otimes \eta \right\rangle = \tr_\cC((\eta')^* \circ \eta) \circ ((\xi')^* \circ \xi)^{\op})$. Thus, for $K_1,K_2$ and $v$ as in the Lemma, $\xi_i \otimes \eta_i \in H(X_i) \otimes K_1(X_i)$,
\begin{align*}
\left\| |v| \left( \sum_i \xi_i \otimes \eta_i \right) \right\|^2_{|K_2|} & = \left\| \sum_i \xi_i \otimes (v \circ \eta_i) \right\|^2_{|K_2|} \\
& = \sum_{i,j} \tr_\cC \left( (\eta_j^* v^* v \eta_i) \circ (\xi_j^* \xi_i)^{\op} \right) \\
&= \sum_{i,j} \left( (\eta_j^*  \eta_i) \circ (\xi_j^* \xi_i)^{\op} \right) \\
& = \left\| \sum_i \xi_i \otimes \eta_i \right\|^2_{|K_1|} \ .
\end{align*}
\end{proof}

\begin{proof}[Proof of Theorem \ref{thm:realizationofcyclicrepresentations}]
It remains to prove functoriality. Let $\theta: (A, \omega_A) \to (B,\omega_B)$ be a morphism in $\CAlg_\Omega(\cC)$. From $\omega_B$ we construct the Hilbert space object $\cL^2_{\omega_B}(B)$ and a $*$-representation $\pi_{\omega_B}: B \to \bB(\cL_{\omega_B}^2(B))$. Then the composite $\pi_{\omega_B} \circ \theta: A \to \bB(\cL_{\omega_B}^2(B))$ is a ucp map. By the Stinespring dilation Theorem in $\cC$ (\cite{JonesPenneys1}, Theorem. 4.18), there is a Hilbert space object $K \in \Hilb(\cC)$, a representation $\pi^A: A \to \bB(K)$ and an isometry $v: \cL_{\omega_B}^2(B) \to K$ such that $\theta^{\omega_B}:= \pi_{\omega_B} \circ \theta = \Ad_v \circ \pi^A$.

Define also $|\theta|: |A| \to |B|$ and $|\theta^{\omega_B}|: |A| \to \bB(|\cL_{\omega_B}^2|)$ by
$$f \otimes g \in H^\circ(X) \otimes A(X) \implies |\theta|(f \otimes g) = f \otimes \theta_X(g) \ \text{and} \ |\theta^{\omega_B}| (f \otimes g) = f \otimes \theta^{\omega_B}_X(g) \ . $$
By construction $|\theta^{\omega_B}| = |\pi_{\omega_B}| \circ |\theta|$. Then, with the isometry $|v|: |\cL_{\omega_B}^2| \to |K|$ of the previous lemma, $|\theta^{\omega_B}| = \Ad_{|v|} \circ |\pi^A|: |A| \to \bB(|\cL_{\omega_B}|)$.

Applying Corollary 4.13 in \cite{JonesPenneys2}, one concludes that $|\theta^{\omega_B}|$ satisfies Kadison's inequality: for all $x \in |A|$,
$$\left(|\theta|(x^*) \right) \left(|\theta|(x)\right) \leq |\theta|(x^*x) \ . $$

Denote by $\tau_{\omega_A}$ and $\tau_{\omega_B}$ the canonical states on $|A|$ and $|B|$ respectively. Being a vector state, $\tau_{\omega_B}$ extends  to a normal state on $\bB(|\cL^2_{\omega_B}|)$ (more precisely, first descends to the quotient $|B| / \ker(|\pi_{\omega_B}|)$ and then extends to $\bB(|\cL_{\omega_B}^2|)$). We claim that $|\theta|$ and $|\theta^{\omega_B}|$ map the state $\tau_{\omega_A}$ to $\tau_{\omega_B}$. Indeed, for any $a = \sum_{X} a^X_{(1)} \otimes a^X_{(2)} \in |A|$,
\begin{align*}
\tau_{\omega_B} \left(   |\theta| \left( \sum_{X} a^X_{(1)} \otimes a^X_{(2)} \right)  \right) &= (\tau \otimes \omega_B) \left( a^{\un_\cC}_{(1)} \otimes \theta_{\un_\cC}(a^{\un_\cC}_{(2)}) \right) \\
&= (\tau \otimes \omega_A) \left( a^{\un_\cC}_{(1)} \otimes a^{\un_\cC}_{(2)} \right) \ ,
\end{align*}
by the assumption that $\theta$ maps $\omega_A$ to $\omega_B$. The last expression is exactly $\tau_{\omega_A}(a)$, and one concludes then  that $\tau_{\omega_B} \circ |\theta| = \tau_{\omega_A}$. Similarly, one has $\tau_{\omega_B} \circ |\theta^{\omega_B}| = \tau_{\omega_A}$.

As in Corollary 5.23 in \cite{JonesPenneys2}, we can show now that $|\theta|$ induces a contraction $\cL^2|\theta|: |\cL_{\omega_A}^2| \to |\cL_{\omega_B}^2|$. Indeed, it follows from Proposition \ref{prop:comparisonofGNSconstructions} that these Hilbert space realizations have cyclic vectors $\Omega_A$ and $\Omega_B$, respectively. Define, for $x \in |A|$, 
$$\cL^2 |\theta| (x \Omega_A) := |\theta|(x) \Omega_B \ . $$ 
Then, for all $x \in \pi^A(|A|)$,
\begin{align*}
\| \cL^2 |\theta| (x \Omega_A) \|^2 &= \| |\theta|(x) \Omega_B \|^2 \\
& = \tau_{\omega_B} \left( \left(|\theta|(x^*) \right) \left(|\theta|(x) \right) \right)\\
& \leq \tau_{\omega_B} \left( |\theta| (x^*x) \right) \\
& = \tau_{\omega_A} (x^*x) \\
& = \| x \Omega_A \|^2 \ . 
\end{align*}

Let $a \in |A|$, $b,b' \in M_{B,\omega_B}'$. It follows that
\begin{align*}
\omega_{b\Omega_B,b'\Omega_B} (|\theta^{\omega_B}|(x)) & = \left\langle |\theta^{\omega_B}|(x) b\Omega_B, b'\Omega_B \right\rangle \\
& = \left\langle b |\theta^{\omega_B}|(x) \Omega_B,b'\Omega_B \right\rangle \\
&= \left\langle \cL^2 |\theta^{\omega_B}| (\pi^A(x) \Omega_A), b^*b' \Omega_B \right\rangle \\
&= \left\langle \pi^A(x) \Omega_A, \cL^2 |\theta^{\omega_B}|^* (b^* b' \Omega_B) \right\rangle
\end{align*}

Now, $(M_{B,\omega_B})'\Omega_B$ is dense in $|\cL_{\omega_B}| \simeq \cL_{\tau_{\omega_B}}|B|$.
It follows (\cite{JonesPenneys2}, Proposition 4.15) that $|\theta|$ and $|\theta^{\omega_B}|$ extend to normal ucp maps which can be put in a commutative diagram as follows:
\begin{center}
\begin{tikzcd}
{M_{A,\omega_A}} \arrow[rr, "|\theta|"] \arrow[rd, "|\theta^{\omega_B}|"'] & {} \arrow[loop, distance=2em, in=305, out=235] & {M_{B,\omega_B}} \arrow[ld, "|\pi_{\omega_B}|"] \\
                                                                           & \bB(\cL_{\omega_B}^2)                          &                                                
\end{tikzcd}
\end{center}

\end{proof}

\begin{remark}
Given a C*-algebra object $A$, its state space $\cS(A) = \cS(A(\un_\cC))$ is in bijection with the space of ucp multipliers $\cM_A \to \cC$, in the sense of \cite{HataishiYamashita}.
\end{remark}

\begin{example}
Consider the reflection equation algebra $\cF$ of $\cC$. It has a canonical state associated with the counit $\epsilon: \cF \to \cC$, which is in particular a ucp. multiplier. More concretely, the state we are considering is $\epsilon_{\un_\cC}: \cF(\un_\cC) \to \cC(\un_\cC) \simeq \C$. This state is in fact a character of $\cF(\un_\cC)$, i.e., a 1-dim. $*$-representation. Therefore, to every action $H: \cC \to \Bim(N)$ of $\cC$ on a $\II_1$-factor $N$, there is a canonical {\em reflection algebra extension} $N \overset{E}{\subset} M_{\cF,\epsilon_{\un_\cC}}$.
\end{example}

\subsection{Actions of mapping class groups on the realizations}

Let now $\cC$ be an unitarily braided unitary tensor category. Let $\Sigma$ be an oriented surface of genus $g$ and with $n+1$ boundary circle components. Each boundary component gives to $\int_\Sigma \cC$ the structure of a $\cC$-module C*-category. Fixing one of the boundary components $B$, there is a C*-algebra object $A_\Sigma$ in $\Vect(\cC)$ such that, w.r.t. the corresponding module structure,
$$\int_\Sigma \cC \simeq A_\Sigma \- \Mod_\cC \ . $$

Let $\Gamma^g_{n}$ be the group of orientation preserving diffeomorphisms of $\Sigma$ fixing $B$ pointwise, module isotopies. Automatically, we have the C*-algebraic version of Proposition 5.19 in \cite{BenZviBrochierJordan1}:

\begin{proposition}
There is a canonical action of $\Gamma_{n}^g$ on $A_\Sigma$ by $*$-algebra automorphisms.
\label{prop:actionofmappingclassgrouponalgebraobjects}
\end{proposition}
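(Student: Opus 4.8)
The plan is to recognize that the mapping class group acts on the surface $\Sigma$ by orientation-preserving diffeomorphisms fixing the chosen boundary component $B$ pointwise, hence acts on $\Sigma$ through automorphisms of the object $\Sigma$ in $\Mfld^2_{\Or}$, and that this action is compatible with the module structure obtained from embedding the collar near $B$. First I would observe that, since factorization homology is a monoidal functor $\int_{(-)}\cC: \Mfld^2_{\Or} \to \Clin^K$, any diffeomorphism $\phi: \Sigma \to \Sigma$ fixing $B$ pointwise induces an equivalence $\int_\phi \cC: \int_\Sigma \cC \to \int_\Sigma \cC$ of C*-categories. Because $\phi$ fixes $B$ pointwise, it is the identity on the collar neighborhood of $B$ used to define the $\cC$-module structure, so the embedding $\Sigma \sqcup D \into \Sigma$ inducing the $\cC$-action is taken to an isotopic embedding by $\phi$; by functoriality and the (2,1)-categorical structure, $\int_\phi\cC$ is canonically a $\cC$-module $*$-functor (equipped with a unitary module structure coming from the isotopy). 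Moreover the quantum structure sheaf $\cO_\Sigma$, being the image of $\C$ under $\int_\emptyset \cC \to \int_\Sigma \cC$, is fixed (up to canonical unitary isomorphism) by $\int_\phi\cC$, since $\phi$ restricts to the identity $\emptyset \to \emptyset$.

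Next I would apply Lemma \ref{lemma:functorialityformonads}: a $\cC$-module $*$-functor $F: \cM \to \cM$ together with a choice of generator $m$ induces a $*$-homomorphism $\rho_F: \underline{\End}_\cC(m) \to \underline{\End}_\cC(F(m))$ of C*-algebra objects. Taking $\cM = \int_\Sigma\cC$, $m = \cO_\Sigma$ (a generator under the $B$-module structure, by the discussion preceding Theorem \ref{thm:factorizationhomologofpuncturedsurfaces}, which identifies $A_\Sigma = \underline{\End}_\cC(\cO_\Sigma)$ and $\int_\Sigma\cC \simeq A_\Sigma\-\Mod_\cC$), and $F = \int_\phi\cC$, and using that $F(\cO_\Sigma) \simeq \cO_\Sigma$ canonically and unitarily, we get a $*$-automorphism $\rho_\phi := \rho_{\int_\phi\cC}: A_\Sigma \to A_\Sigma$. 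Since $\int_\phi\cC$ is an equivalence with inverse $\int_{\phi^{-1}}\cC$, the functoriality of the assignment $F \mapsto \rho_F$ (composition of module functors goes to composition of the induced $*$-homomorphisms, and the identity functor to the identity) shows $\rho_\phi$ is invertible with $\rho_\phi^{-1} = \rho_{\phi^{-1}}$ and that $\phi \mapsto \rho_\phi$ is a homomorphism from the group of such diffeomorphisms to $\Aut(A_\Sigma)$.

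Finally I would check that this homomorphism descends to the mapping class group $\Gamma^g_n$, i.e.\ that isotopic diffeomorphisms induce the same $*$-automorphism of $A_\Sigma$. This follows because an isotopy between $\phi_0$ and $\phi_1$ (through diffeomorphisms fixing $B$ pointwise) is precisely a $2$-morphism in $\Mfld^2_{\Or}$ between the corresponding $1$-morphisms, hence is sent by the functor $\int_{(-)}\cC$ to a unitary natural isomorphism $\int_{\phi_0}\cC \simeq \int_{\phi_1}\cC$ of $\cC$-module $*$-functors; since $\rho_F$ depends on $F$ only through its natural isomorphism class (as an explicit inspection of the construction in Lemma \ref{lemma:functorialityformonads} via the maps $F_{X,\un}\otimes\id_X$ shows), we get $\rho_{\phi_0} = \rho_{\phi_1}$. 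Thus $\phi \mapsto \rho_\phi$ factors through $\Gamma^g_n$ and gives the desired action by $*$-algebra automorphisms.

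The main obstacle I expect is the bookkeeping of canonical unitary isomorphisms: verifying carefully that $\int_\phi\cC$ genuinely carries a $\cC$-module $*$-functor structure (not merely a plain $*$-functor) that is compatible with the module structure used to identify $\int_\Sigma\cC$ with $A_\Sigma\-\Mod_\cC$, and that the chosen isomorphism $\int_\phi\cC(\cO_\Sigma) \simeq \cO_\Sigma$ is natural enough that the induced $\rho_\phi$ is strictly multiplicative in $\phi$ rather than multiplicative up to inner automorphisms. This is where the pointwise-fixing hypothesis on $B$ is essential, and where one must invoke the coherence of the symmetric monoidal $(2,1)$-functor $\int_{(-)}\cC$ together with Theorem \ref{prop:adjointofastarfunctor} to keep everything within the unitary/$*$-world.
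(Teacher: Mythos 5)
Your argument is correct and is essentially the one the paper has in mind: the paper offers no proof of this proposition, merely asserting that Proposition 5.19 of Ben-Zvi--Brochier--Jordan carries over ``automatically'', and your expansion --- functoriality of $\int_{(-)}\cC$ on diffeomorphisms and isotopies, the $\cC$-module $*$-functor structure coming from the collar near $B$, and Lemma \ref{lemma:functorialityformonads} applied to the generator $\cO_\Sigma$ --- is exactly that transfer, kept unitary because $2$-morphisms in $\Clin$ are unitary natural isomorphisms. The concern you flag about strict (rather than merely inner) multiplicativity is settled by the observation you already make: the identification $\int_\phi\cC(\cO_\Sigma)\simeq\cO_\Sigma$ is induced by the unique embedding $\emptyset\into\Sigma$, which every $\phi$ and every isotopy preserves strictly, so the identifications of the images of the generator are compatible and the induced automorphisms compose on the nose.
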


Assume now that $\cC$ acts on $\Bim(N)$ as in the previous section. The action $\Gamma_{n,1}^g \curvearrowright A_\Sigma$ induces an action $\Gamma_{n,1}^g \curvearrowright \cS(A_M)$. Applying Theorem \ref{thm:realizationofcyclicrepresentations} gives then the following corollary:

\begin{corollary}
Given $\omega \in \cS(A_\Sigma)^{\Gamma_{n}^g}$, the action of $\Gamma_{n}^g$ on $A_\Sigma$ induces an action $\Gamma_{n}^g \curvearrowright M_{A_\Sigma,\omega}$ by normal $*$-homomorphisms such that $N \subset (M_{A_\sigma,\omega})^{\Gamma_{n}^g}$.
\label{cor:actionofmappingclassgrouponextensions}
\end{corollary}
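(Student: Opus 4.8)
The plan is to feed the group action of Proposition~\ref{prop:actionofmappingclassgrouponalgebraobjects} into the functor $\Xi$ of Theorem~\ref{thm:realizationofcyclicrepresentations}. Write $\alpha_\gamma \colon A_\Sigma \to A_\Sigma$ for the $*$-algebra automorphism attached to $\gamma \in \Gamma_n^g$, so that $\gamma \mapsto \alpha_\gamma$ is a homomorphism $\Gamma_n^g \to \Aut(A_\Sigma)$ by $*$-algebra automorphisms. The hypothesis $\omega \in \cS(A_\Sigma)^{\Gamma_n^g}$ says precisely that $\omega \circ (\alpha_\gamma)_{\un_\cC} = \omega$ for every $\gamma$; hence each $\alpha_\gamma$ is a morphism $(A_\Sigma,\omega) \to (A_\Sigma,\omega)$ in $\CAlg_\Omega(\cC)$, and it is an isomorphism there, with inverse $\alpha_{\gamma^{-1}}$. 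Applying $\Xi$ and using functoriality, $\Xi(\alpha_\gamma)$ is a normal map $M_{A_\Sigma,\omega} \to M_{A_\Sigma,\omega}$ preserving the embedded copy of $N$, with $\Xi(\alpha_\gamma)\,\Xi(\alpha_{\gamma'}) = \Xi(\alpha_{\gamma\gamma'})$ and $\Xi(\id) = \id$; in particular each $\Xi(\alpha_\gamma)$ is invertible, so $\gamma \mapsto \Xi(\alpha_\gamma)$ is an action of $\Gamma_n^g$ on $M_{A_\Sigma,\omega}$.

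It remains to upgrade these maps to normal $*$-automorphisms and to check that they fix $N$ pointwise. For this I would revisit the construction of $\Xi$ on morphisms in the proof of Theorem~\ref{thm:realizationofcyclicrepresentations}: when $\theta$ is not merely a ucp map but an honest unital $*$-algebra homomorphism $A_\Sigma \to A_\Sigma$, the map $\pi_\omega \circ \theta$ is already a representation of $A_\Sigma$, so its Stinespring dilation is trivial and $\Xi(\theta)$ is simply the normal extension of the induced map $|\theta|$ on algebraic realizations, $f\otimes g \mapsto f \otimes \theta_X(g)$ for $f \otimes g \in H^\circ(X) \otimes A_\Sigma(X)$. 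Now $|\theta|$ is visibly a unital $*$-algebra homomorphism of $|A_\Sigma|$, and it preserves the canonical state $\tau_\omega = |\tau|\otimes|\omega|$ again by invariance of $\omega$, so it extends to a normal unital $*$-endomorphism of $M_{A_\Sigma,\omega} = (|\pi_\omega|(|A_\Sigma|))''$; when $\theta = \alpha_\gamma$ the relation $\Xi(\alpha_\gamma)\Xi(\alpha_{\gamma^{-1}}) = \id$ from functoriality makes it a $*$-automorphism. Finally, under the inclusion $N \simeq H^\circ(\un_\cC) \otimes \C\, 1_{A_\Sigma} \subset |A_\Sigma|$ one has $|\alpha_\gamma|(f \otimes 1_{A_\Sigma}) = f \otimes (\alpha_\gamma)_{\un_\cC}(1_{A_\Sigma}) = f \otimes 1_{A_\Sigma}$ because $(\alpha_\gamma)_{\un_\cC}$ is unital; hence $\Xi(\alpha_\gamma)|_N = \id_N$ for all $\gamma$, which is exactly the assertion $N \subset (M_{A_\Sigma,\omega})^{\Gamma_n^g}$.

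The only genuine subtlety, and the step I expect to require care, is the passage from ``morphism in $\CAlg_\Omega(\cC)$'' (a priori only a ucp map) to ``honest $*$-automorphism of a von Neumann algebra'': one must confirm that $\Xi$, restricted to the sub-group(oid) of $*$-isomorphisms fixing $\omega$, lands in $*$-isomorphisms, and that it does so strictly --- on the nose, not merely up to spatial or inner conjugacy --- so that $\gamma \mapsto \Xi(\alpha_\gamma)$ is literally a group homomorphism rather than a projective one. Both points follow from inspecting the formula defining $\Xi$ on morphisms: $|\theta|$ is manifestly multiplicative and $*$-preserving when $\theta$ is, and manifestly strictly functorial, $|\theta_1\circ\theta_2| = |\theta_1|\circ|\theta_2|$, so no new idea is needed beyond this bookkeeping together with the fact (already part of Theorem~\ref{thm:realizationofcyclicrepresentations}) that $\Xi$ is a functor into $\DisInc^\infty(N)$.
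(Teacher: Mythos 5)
Your proposal is correct and is exactly the argument the paper intends: the corollary is stated without proof as a direct application of Proposition \ref{prop:actionofmappingclassgrouponalgebraobjects} together with the functoriality of $\Xi$ from Theorem \ref{thm:realizationofcyclicrepresentations}, which is precisely the route you take. Your additional bookkeeping — that for a genuine $*$-automorphism $\alpha_\gamma$ preserving $\omega$ the Stinespring dilation is trivial, $|\alpha_\gamma|$ is a multiplicative $*$-map extending normally, and $|\alpha_\gamma|$ fixes $N \simeq H^\circ(\un_\cC)\otimes\C 1_{A_\Sigma}$ pointwise by unitality — correctly fills in the details the paper leaves implicit.
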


The following theorem points out to the possibility of existence of canonical invariant states for the actions of mapping class groups.

\begin{theorem}
Let $\cC$ be an unitary tensor category equipped with an unitary braiding. Let $\cF$ be the corresponding reflection equation algebra. Then the counit $\epsilon: \cF \to \un$ induces a $\Gamma_{2}^0$-invariant state on $\cF$.
\label{thm:invarianceofthecounitundermappingclassgroup}
\end{theorem}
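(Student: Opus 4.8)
The plan is to read both $\epsilon$ and the $\Gamma_2^0$-action on $\cF$ off from factorization homology of embeddings, and then reduce the desired invariance to an isotopy statement for embeddings $Ann\hookrightarrow D$. Recall that here $\Gamma_2^0$ is the mapping class group $\Gamma(Ann)\cong\Z$, generated by the Dehn twist $\tau$ about the core circle (fixing pointwise the boundary circle $B$ that induces the $\cC$-module structure on $\int_{Ann}\cC$), that $\cF=A_{Ann}=\underline{\End}_\cC(\cO_{Ann})$ (Theorem~\ref{thm:facthomannulus} and Theorem~\ref{thm:factorizationhomologofpuncturedsurfaces}), and that the counit $\epsilon\colon\cF\to\un_\cC$ is, by construction, the morphism of $C^*$-algebra objects induced by the embedding $\iota\colon Ann\hookrightarrow D$ via factorization homology and Lemma~\ref{lemma:functorialityformonads}: $\iota$ is taken compatibly with the module structures, carrying $B$ onto $\partial D$ and the other boundary circle of $Ann$ into the interior of $D$, so the induced module functor $\int_{Ann}\cC\to\int_D\cC\simeq\cC$ sends $\cO_{Ann}$ to $\cO_D=\un_\cC$. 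By Proposition~\ref{prop:actionofmappingclassgrouponalgebraobjects} the $\Gamma(Ann)$-action on $\cF$ is, likewise, by the $*$-algebra automorphisms produced through Lemma~\ref{lemma:functorialityformonads} from diffeomorphisms fixing $B$; write $\tau_*$ for the automorphism of $\cF$ induced by $\tau$. Since $\Gamma(Ann)$ is cyclic, it suffices to show $\epsilon\circ\tau_*=\epsilon$.

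Next I would use that both $\tau_*$ and $\epsilon$ are the images of $\tau\colon Ann\to Ann$ and $\iota\colon Ann\hookrightarrow D$ under the composite of factorization homology with the internal-endomorphism construction of Lemma~\ref{lemma:functorialityformonads}, and that this composite is functorial in embeddings and, being built from the $(2,1)$-functor $\int_{(-)}\cC$, sends isotopic embeddings to the same morphism of algebra objects. Hence
\[
\epsilon\circ\tau_*=(\iota\circ\tau)_* ,
\]
and the theorem follows once we show that $\iota\circ\tau$ is isotopic to $\iota$ as oriented embeddings $Ann\hookrightarrow D$ carrying $B$ onto $\partial D$, since then $(\iota\circ\tau)_*=\iota_*=\epsilon$. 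Invariance under all of $\Gamma_2^0$ is then immediate from invariance under the generator $\tau$, and in particular the induced character $\epsilon_{\un_\cC}$ is a $\Gamma_2^0$-invariant state on $\cF(\un_\cC)$.

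For the isotopy, observe that $\tau$ is supported in a sub-annulus $A_0\subset Ann$ disjoint from $B$, whose image $\iota(A_0)$ is a collar of a simple closed curve $\gamma$ lying in the interior of $D$ and bounding an embedded disk there. Extending $\iota\circ\tau\circ\iota^{-1}$, defined on $\iota(Ann)$, by the identity on the complementary small disk produces a diffeomorphism $\widehat\tau$ of $D$ fixing $\partial D$ pointwise, namely the Dehn twist of $D$ along $\gamma$, and by construction $\iota\circ\tau=\widehat\tau\circ\iota$. Because $\gamma$ bounds a disk in $D$, the twist $\widehat\tau$ is isotopic to $\id_D$ relative to $\partial D$ (the mapping class group of the disk rel boundary is trivial, by the Alexander trick); fixing such an isotopy $\{\widehat\tau_s\}_{s\in[0,1]}$ with $\widehat\tau_0=\widehat\tau$, $\widehat\tau_1=\id_D$ and each $\widehat\tau_s$ fixing $\partial D$, the path $s\mapsto\widehat\tau_s\circ\iota$ is an isotopy from $\iota\circ\tau$ to $\iota$ through embeddings that agree with $\iota$ on $B$, hence through marked embeddings of the required type.

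The step I expect to cost the most care is not conceptual but the bookkeeping around the marked-boundary data: one must fix precisely the collar/disk data relative to which $\int_{(-)}\cC$ produces $\cC$-module functors, check that $\iota$ really does carry the $\cC$-module-defining boundary circle $B$ of $Ann$ onto $\partial D$ (so that $\iota$ and $\iota\circ\tau$ are comparable ``marked'' embeddings and the isotopy above stays within that class), and spell out the compatibility of the internal-endomorphism assignment with composition and with isotopies in the present $(2,1)$-categorical, $*$-structured setting — each routine, but worth stating explicitly.
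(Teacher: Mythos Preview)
Your proposal is correct and follows essentially the same approach as the paper: both identify $\Gamma_2^0\cong\Z$, interpret $\epsilon$ as the morphism induced by the embedding $Ann\hookrightarrow D$, and reduce the invariance to the fact that the Dehn twist becomes isotopically trivial when pushed into the disk (the paper phrases this as the induced homomorphism $\Gamma_2^0\to\Gamma_1^0$ landing in the trivial group, while you spell out the extension $\widehat\tau$ and invoke the Alexander trick explicitly). Your version is more detailed, but the underlying idea is the same.
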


\begin{proof}
The mapping class group $\Gamma_{2}^0$ of the annulus is isomorphic to $\Z$; a generator is given by $\delta$
\begin{figure}[h]
  \centering
  \includegraphics[width=.25\linewidth]{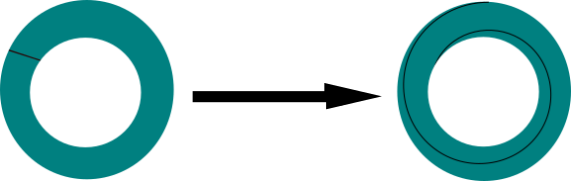}
\caption{Generator of $\Gamma_{2}^0$}
\label{fig:generatorofGamma0,2}
\end{figure}. 
The counit $\epsilon$ corresponds under factorization homology to the embedding of the annulus into the disk. This embedding induces a group homomorphism $\Gamma_{2}^0 \to \Gamma_{1}^0$ from the mapping class group of the annulus to the mapping class group of the disk. The latter is however trivial. Hence there is an isotopy of the embedding $Ann \into D$ completing the diagram in Figure \ref{fig:invariancecounti} to a homotopy commutative diagram, showing that $\epsilon$ must be $\Gamma_{2}^0$-invariant.
\begin{figure}[h]
\centering
\includegraphics[width=.25\linewidth]{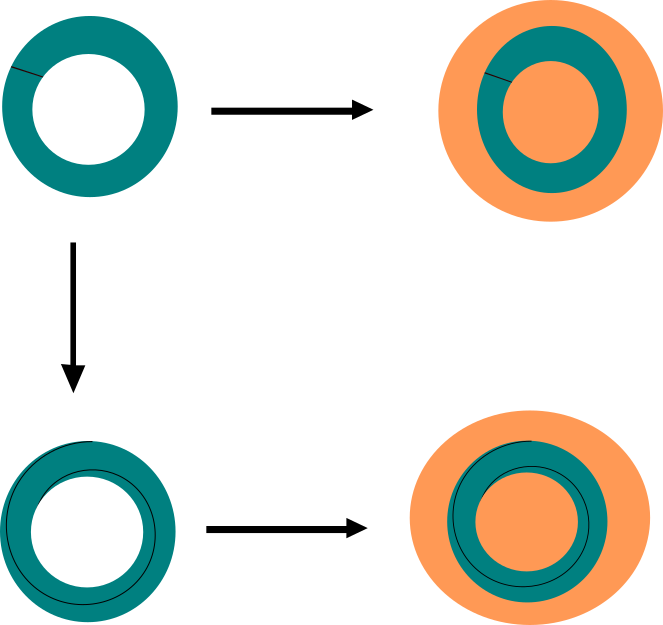}
\caption{Proof of invariance of the counit}
\label{fig:invariancecounti}
\end{figure}. 
\end{proof}

A given unitary tensor category $\cC$ may have several unitary braidings.

\begin{corollary}
Let $N$ be a $\II_1$-factor and let $\cC \subset \Bim(N)$ be a unitary tensor subcategory of finite index bimodules over $N$. To every choice of a unitary braiding $\beta$ on $\cC$, there is an associated extension
$$N \overset{E}{\subset} \Xi(\cF_\beta,\epsilon) \ , $$
where $\cF_\beta$ is the reflection equation algebra of $\cC$ associated to the unitary braiding $\beta$. Moreover, the conditional expectation $E$ is faithful.
\label{cor:extensionsbyreflectioneqalgebra}
\end{corollary}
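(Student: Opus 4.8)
The plan is to reduce the statement to the machinery of Theorem~\ref{thm:realizationofcyclicrepresentations}. Since $\cC\subset\Bim(N)$ is a unitary tensor subcategory of finite index bimodules, the inclusion is a fully faithful unitary tensor functor $H\colon\cC\to\Bim(N)$, so $\cC$ acts on the $\II_1$-factor $N$ in precisely the sense required in Section~\ref{sec:realization}, and all of its constructions are available. Given the unitary braiding $\beta$, I would first form the reflection equation algebra $\cF_\beta=T(\underline{\End}_{\cC^{\boxtimes 2}}(\un))$ of Definition~\ref{reflectionequationalgebra}; recall that it is a genuine C$^*$-algebra object in $\Vect(\cC)$, with ground algebra the completion of $\bigoplus_{X\in\Irr(\cC)}\C R_X$ in the norm $\|R_X\|=d_X^{1/2}$. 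By Theorem~\ref{thm:facthomannulus} there is an identification $\cF_\beta\simeq A_{Ann}$, and the orientation preserving embedding $Ann\hookrightarrow D$ induces, via factorization homology together with Lemma~\ref{lemma:functorialityformonads}, a morphism of C$^*$-algebra objects $\epsilon\colon\cF_\beta\to\un_\cC$; its component at $\un_\cC$ is a $*$-homomorphism $\epsilon_{\un_\cC}\colon\cF_\beta(\un_\cC)\to\C$, in particular a state. Hence $(\cF_\beta,\epsilon_{\un_\cC})\in\CAlg_\Omega(\cC)$.

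The next step is simply to evaluate the functor $\Xi$ of Theorem~\ref{thm:realizationofcyclicrepresentations} on the pair $(\cF_\beta,\epsilon_{\un_\cC})$. This yields the asserted von Neumann algebraic extension $N\overset{E}{\subset}\Xi(\cF_\beta,\epsilon):=M_{\cF_\beta,\epsilon_{\un_\cC}}$, where $M_{\cF_\beta,\epsilon_{\un_\cC}}=(|\pi_{\epsilon_{\un_\cC}}|(|\cF_\beta|))''$ acts on the Hilbert space realization $|\cL^2_{\epsilon_{\un_\cC}}(\cF_\beta)|$ and $E$ is the conditional expectation characterized by $e x e = E(x) e$ for the projection $e$ onto the summand indexed by $\un_\cC$. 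Since the assignment $\beta\mapsto(\cF_\beta,\epsilon_{\un_\cC})$ is canonical and $\Xi$ is a functor, the extension depends only on the chosen unitary braiding $\beta$, which settles the existence claim. (If in addition one wants the $\Z$-equivariant strengthening, one combines this with Theorem~\ref{thm:invarianceofthecounitundermappingclassgroup}, which says $\epsilon_{\un_\cC}$ is $\Gamma_{2}^{0}\cong\Z$-invariant, and Corollary~\ref{cor:actionofmappingclassgrouponextensions}.)

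The remaining point, and the one I expect to be the real obstacle, is the faithfulness of $E$. By the last clause of Theorem~\ref{thm:realizationofcyclicrepresentations}, $E$ is faithful if and only if the state $\epsilon_{\un_\cC}$ is faithful on the C$^*$-algebra $\cF_\beta(\un_\cC)$, so the assertion reduces entirely to proving that $\epsilon_{\un_\cC}$ is a faithful state. To carry this out I would make both ingredients completely explicit: the multiplication on $\cF_\beta(\un_\cC)$ inherited from the reflection equation algebra structure (this is precisely where the braiding $\beta$ enters), and the character $\epsilon_{\un_\cC}$ read off from the counit of $\underline{\End}_{\cC^{\boxtimes 2}}(\un)$ transported along $T$, that is, from the geometry of the filling $Ann\hookrightarrow D$. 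One then has to verify the strict positivity $\epsilon_{\un_\cC}(a^{*}a)>0$ for all $a\neq 0$; by Proposition~\ref{prop:generalizationofCor5.4inJonesPenneys2} together with Proposition~\ref{prop:comparisonofGNSconstructions} this is equivalent to the injectivity of the GNS map $\Lambda_{\epsilon_{\un_\cC}}\colon|\cF_\beta|\to|\cL^2_{\epsilon_{\un_\cC}}(\cF_\beta)|$, that is, to the statement that no fibre $\cF_\beta(X)$ degenerates in the GNS object $\cL^2_{\epsilon_{\un_\cC}}(\cF_\beta)$. Controlling this interaction between the twisted product on $\cF_\beta(\un_\cC)$ and the counit is where the actual work lies; once it is established, Theorem~\ref{thm:realizationofcyclicrepresentations} immediately upgrades it to faithfulness of $E$.
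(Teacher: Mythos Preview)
Your setup and the existence part of the argument align exactly with the paper's: form $(\cF_\beta,\epsilon)$ as an object of $\CAlg_\Omega(\cC)$ and apply the realization functor $\Xi$ of Theorem~\ref{thm:realizationofcyclicrepresentations}. The paper likewise reduces faithfulness of $E$ to faithfulness of the state $\epsilon_{\un_\cC}$ on $\cF_\beta(\un_\cC)$ via the last clause of that theorem, just as you do.

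Where you stop short is precisely the step the paper handles concretely. Rather than invoking the GNS map or Propositions~\ref{prop:generalizationofCor5.4inJonesPenneys2} and~\ref{prop:comparisonofGNSconstructions} (which only rephrase the problem), the paper writes the ground algebra and the counit explicitly: $\cF_\beta(\un)\simeq\bigoplus^{c_0}_{i\in\Irr(\cC)}\C R_i$ and $\epsilon=\sum_i d_i^{-1}R_i^{*}$, and then uses that $\epsilon$ is a \emph{character} to compute
\[
\epsilon\Bigl(\bigl(\textstyle\sum_i\lambda_iR_i\bigr)^{*}\bigl(\textstyle\sum_i\lambda_iR_i\bigr)\Bigr)
=\Bigl|\textstyle\sum_i\lambda_i\Bigr|^{2}.
\]
So the ``actual work'' you anticipated---unwinding the twisted multiplication on $\cF_\beta(\un)$---is bypassed entirely: multiplicativity of $\epsilon$ reduces everything to the single evaluation $\epsilon(R_i)=d_i^{-1}R_i^{*}R_i=1$.

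That said, you should look critically at the final inference in the paper. The implication ``$\bigl|\sum_i\lambda_i\bigr|^{2}=0\Rightarrow\lambda_i=0$ for all $i$'' fails whenever $|\Irr(\cC)|>1$ (e.g.\ $\lambda_{\un}=1$, $\lambda_X=-1$ for some nontrivial $X$). More structurally: a character on a C$^{*}$-algebra is a faithful state only when the algebra is one-dimensional, so the very fact that $\epsilon_{\un_\cC}$ is a character on the $|\Irr(\cC)|$-dimensional space $\cF_\beta(\un)$ precludes its faithfulness in all nontrivial cases. Thus the paper's computation, as written, does not establish faithfulness of $E$, and your instinct that something substantive remains to be checked here is well founded.
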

\begin{proof}
The existence of the extension follows from Proposition \ref{prop:actionofmappingclassgrouponalgebraobjects}, Corollary \ref{cor:actionofmappingclassgrouponextensions}, Theorem \ref{thm:invarianceofthecounitundermappingclassgroup} and Corollary \ref{cor:extensionsbyreflectioneqalgebra}. Regarding faithfulness, recall that
$$\cF_\beta(\un) \simeq \bigoplus_{i \in \Irr(\cC)}^{c_0} \C R_i \ . $$
Notice that, since $\epsilon$ is a character of $\cF_\beta$, the induced state on $\cF_\beta(\un)$ is also a character, which we still denote by $\epsilon$. Explicitly,
$$\epsilon = \sum_{i \in \Irr(\cC))} d_i^{-1} R_i^* \ .  $$
Hence,
$$\epsilon \left( \left( \sum_{i \in \Irr(\cC)} \lambda_i R_i \right)^* \left( \sum_{i \in \Irr(\cC)} \lambda_i R_i \right) \right) = \left( \sum_{i \in \Irr(\cC)} \lambda_i  \right)^* \left( \sum_{i \in \Irr(\cC)} \lambda_i \right) \geq 0 \ , $$
the equality holding only if all coefficients $\lambda_i$ are zero. By Theorem \ref{thm:realizationofcyclicrepresentations}, the conditional expectation associated to $\epsilon$ via the realization functor is faithful.
\end{proof}

Let $G$ be a second countable compact group. In \cite{FalguieresVaes}, it was proven that there exists a $\II_1$-factor $M$ and a minimal action $G \curvearrowright M$ such that the category $\Bim(M^G)$ of finite index bimodules of the fixed point $\II_1$-factor $M^G$ is unitarily monoidally equivalent to $\Rep G$. It is known that $\Rep G$ has unitary braidings coming from non-degenerate abelian subgroups of $G$. Through the unitary monoidal equivalence $\Bim(M^G) \simeq \Rep G$, we can transport unitary braidings on the latter to unitary braidings on the former.

\section{Hamiltonian reduction in the fusion case}
\label{sec:quantumhamiltoninareduction}
We now interpret quantum Hamiltonian reduction in terms of von Neumann algebras. Let $\cC$ be an unitary braided fusion full subcategory of $\Bim_0(N)$, $N$ being either a type $\II_1$ or a type $\III$ factor. Denote by $H: \cC \into \Bim(N)$ the inclusion. Given a C*-algebra $A \in \cC$, the space of bounded vectors $H^\circ(A)$ in the bimodule $H(A)$ has a canonical structure of a von Neumann algebra: for the type $\II_1$ case see the previous session  for further discussion and references, and for the type $\III$ case note that, for every $X \in \cC$, $H(X) \simeq \cL^2(N)$ by means of an endomorphism $\rho_X$ of $N$, $\cL^2(N)$ being constructed using a faithful normal semi-finite weight. Therefore, $H^\circ(X) \simeq N$ as Banach spaces. We shall denote $H^\circ(X)$ identified with $N$ as $N_X$. Hence, there is a linear isomorphism
$$H^\circ(A) \simeq \bigoplus_{X \in \Irr(\cC)} H^\circ(X) \otimes \cC(X,A) \simeq \bigoplus_{X \in \Irr(\cC)} N _X\otimes \cC(X,A) \ , $$
each $\cC(X,A)$ being finite dimensional and $\Irr(\cC)$ being a finite set. Define a map $N_X \otimes N_Y \to N_{X \otimes Y}$ by the formula
$$n \otimes m \mapsto n \rho_X(m) \ . $$
Then the algebra structure of $H^\circ(A)$ is determined by the maps 
\begin{align*}
N_X \otimes \cC(X,A) \otimes N_Y \otimes \cC(Y,A) &\to N_{X \otimes Y} \otimes \cC(X \otimes A) \\
n \otimes f \otimes m \otimes g &\mapsto n \rho_X(m) \otimes m_A \circ (f \otimes g) \ ,
\end{align*}
together with decomposition of tensor products into irreducible objects. Here, $m_A: A \otimes A \to A$ is the multiplication morphism of $A$.

\begin{remark}
The passage from a C*-algebra $A \in \cC$ to the inclusion $N \subset H^\circ(A)$ is essentially the Q-system reconstruction theorem, phrased in terms of bimodules instead of sectors (\cite{BischoffKawahigashiLongoRehrenbook}).
\end{remark}

We shall however use the description
$$H^\circ(A) \simeq H^\circ(X) \otimes \cC(X,A) \ ,$$
under which the multiplication becomes
$$(\xi \otimes f) \otimes (\eta \otimes g) \mapsto
\mu^H(\xi \otimes \eta) \otimes m_A (f \otimes g) \ $$
for $\xi \otimes f \in H^\circ(A) \otimes \cC(X,A)$ and $\eta \otimes g \in H^\circ(Y) \otimes \cC(Y,A)$, $\mu^H$ being the monoidal structure of $H: \cC \into \Bim_0(N)$. Writing $H(\bar{X}) = \{ \bar{\xi} \ | \ \xi \in H(X) \}$, the $*$-structure of $H^\circ(A)$ is given by
$$ \xi \otimes f \mapsto \bar{\xi} \otimes f^\sharp \ ,$$
for $\xi \otimes f \in \cC(X,A)$, $( \cdot)^\sharp: \cC(X,A) \to \cC(\bar{X},A)$ being the *-structure of $A$.

\begin{proposition}
Let $A$ be a C*-algebra object in $\cC$, and suppose that  $\epsilon: A \to \un_\cC$ is a $*$-homomorphism such that $H^\circ(\epsilon): H^\circ(A) \to N$ is surjective. Suppose that $X$ is a right $A$-module in $\cC$. Given another object $Y$, consider the left $A$-action on $Y$ induced by $\epsilon$: $A \otimes Y \mapsto \un_\cC \otimes Y \simeq Y$. Then
$$H(X \underset{A}{\otimes} Y) \simeq H(X) \underset{H^\circ(A)}{\otimes}H(Y) $$
as  $N\-N$-bimodules. 
\label{prop:relativetensorproductHilbertbimodules}
\end{proposition}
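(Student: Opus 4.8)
The plan is to realise both sides of the claimed isomorphism as the image of one and the same self-adjoint idempotent acting on the $N$-$N$-bimodule $H(X)\underset{N}{\otimes}H(Y)\simeq H(X\otimes Y)$, where the identification comes from the monoidal structure of the unitary tensor functor $H\colon\cC\hookrightarrow\Bim_0(N)$; the transfer between the categorical and the von Neumann algebraic side is governed by Q-system reconstruction, as in the Remark above.

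First I would recall the description of the relative tensor product inside $\cC$. As $A$ is a C*-algebra object of the fusion category $\cC$, it is a Q-system: equipping it with the comultiplication $w:=m_A^*\in\Hom_\cC(A,A\otimes A)$, the adjoint of the multiplication for the C*-inner products, makes $A$ a special symmetric Frobenius algebra up to normalisation. Consequently the relative tensor product $X\underset{A}{\otimes}Y$ — the coequaliser of $r_X\otimes\id_Y$ and $\id_X\otimes l_Y$ on $X\otimes A\otimes Y\rightrightarrows X\otimes Y$, with $l_Y=(\epsilon\otimes\id_Y)$ the left $A$-action pulled back along $\epsilon$ — is the image of the canonical self-adjoint idempotent
$$e_{X,Y}\ :=\ (r_X\otimes l_Y)\circ\bigl(\id_X\otimes(w\circ\eta_A)\otimes\id_Y\bigr)\ \in\ \End_\cC(X\otimes Y),$$
built, up to a normalisation constant, from the module actions and the Frobenius structure; concretely there is a splitting $\pi\colon X\otimes Y\to X\underset{A}{\otimes}Y$ and $\iota\colon X\underset{A}{\otimes}Y\to X\otimes Y$ with $\pi\iota=\id$ and $\iota\pi=e_{X,Y}$. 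Since $H$ is a $*$-functor, $H(e_{X,Y})$ is a self-adjoint idempotent split by $(H(\iota),H(\pi))$, so composing with $H(X\otimes Y)\simeq H(X)\underset{N}{\otimes}H(Y)$ yields an $N$-$N$-bimodule isomorphism $H\bigl(X\underset{A}{\otimes}Y\bigr)\simeq H(e_{X,Y})\bigl(H(X)\underset{N}{\otimes}H(Y)\bigr)$.

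Next I would unwind the right-hand side using Q-system reconstruction (cf. the Remark above and \cite{BischoffKawahigashiLongoRehrenbook}): $H(A)\simeq{}_N\cL^2(M_A)_N$ with $M_A:=H^\circ(A)$, and under this identification $H(m_A)$, $H(w)=H(m_A)^*$ and $H(\eta_A)$ are the multiplication of $M_A$, its adjoint on $\cL^2(M_A)$, and the inclusion $N\hookrightarrow M_A$; moreover $r_X$ makes $H(X)$ a right $M_A$-module extending its right $N$-action, while $l_Y$ makes $H(Y)$ a left $M_A$-module through $\theta:=H^\circ(\epsilon)\colon M_A\to N$. The surjectivity of $\theta$ — part of the hypothesis, and equivalent to $\epsilon$ being counital, in which case $H^\circ(\eta_A)$ splits $\theta$ — ensures that this left $M_A$-action induces the full left $N$-action on $H(Y)$, so that the Connes relative tensor product $H(X)\underset{M_A}{\otimes}H(Y)$ is again an $N$-$N$-bimodule and, realised via $M_A$-bounded vectors, is the image of a self-adjoint idempotent $f$ on $H(X)\underset{N}{\otimes}H(Y)$ assembled from the multiplication of $M_A$, its adjoint and the two module actions in the same pattern that defines $e_{X,Y}$.

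The final and main step is to identify $H(e_{X,Y})$ with $f$ under $H(X\otimes Y)\simeq H(X)\underset{N}{\otimes}H(Y)$: substituting the identifications of the previous paragraph — $H(m_A)$ with the multiplication of $M_A$, $H(w)$ with its adjoint, $H(r_X)$ and $H(l_Y)$ with the right and left $M_A$-actions — into the defining formula for $e_{X,Y}$ turns it into the defining formula for $f$, possibly after rewriting the latter using the Frobenius relations for $M_A$. Then both sides of the Proposition are the image of the same projection on $H(X)\underset{N}{\otimes}H(Y)$, which gives the claim. I expect this last step — presenting the Connes relative tensor product over $M_A$ as the image of an explicit idempotent on the $N$-relative tensor product and matching it, with all coherence isomorphisms and Q-system normalisations accounted for, to $H(e_{X,Y})$ — to be the only genuinely technical point; everything else is bookkeeping with standard facts about Q-systems and their module categories.
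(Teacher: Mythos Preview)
Your approach is correct but differs from the paper's. The paper argues via the universal property of the coequaliser: it observes that $X\underset{A}{\otimes}Y$ is the coequaliser of $X\otimes A\otimes Y\rightrightarrows X\otimes Y$, that $H$ (being fully faithful on a semisimple category) carries this to the corresponding coequaliser in $\Bim(N)$, and then checks directly that $H(X)\underset{H^\circ(A)}{\otimes}H(Y)$ satisfies the same universal property --- the key point being that surjectivity of $H^\circ(\epsilon)$ forces the $H^\circ(A)$-bounded vectors in $H(Y)$ to coincide with the $N$-bounded ones, so the quotient map $P\colon H(X)\underset{N}{\otimes}H(Y)\to H(X)\underset{H^\circ(A)}{\otimes}H(Y)$ exists and one verifies the factorisation by hand.

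You instead exploit the Frobenius/Q-system structure to replace the coequaliser by the image of an explicit self-adjoint idempotent $e_{X,Y}$, transport it through $H$, and match it against the analogous idempotent $f$ presenting the Connes relative tensor product. This is more computational but also more concrete: it makes the isomorphism explicit rather than merely abstractly unique, and it ties the result directly to the Q-system machinery you invoke. The paper's argument is shorter and avoids having to verify that $A$ really is a Q-system (with correct normalisations) and that the von Neumann algebraic idempotent $f$ has the claimed form; your approach trades that brevity for an explicit formula. Both routes use the surjectivity hypothesis at the same place --- to ensure the $M_A$-module structure on $H(Y)$ is compatible with its $N$-bimodule structure --- so the genuine analytic content is identical.
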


\begin{proof}
The object $X \underset{A}{\otimes} Y$ is defined as the colimit of the diagram
\begin{center}
\begin{tikzcd}
X \otimes A \otimes Y \arrow[r, shift right] \arrow[r, shift left] & X \otimes Y \ ,
\end{tikzcd}
\end{center}
where one of the arrows is the action of $A$ on $X$ and the other the action of $A$ on $Y$. Since $H$ is full, $H(X \underset{A}{\otimes} Y )$ is the colimit in $\Bim(N)$ of the diagram
\begin{center}
\begin{tikzcd}
H(X) \otimes H(A) \otimes H(Y) \arrow[r, shift right] \arrow[r, shift left] & H(X) \otimes H(Y) \ .
\end{tikzcd}
\end{center}
The action of $H^\circ(A)$ on $H(Y)$ is defined through the $*$-homomorphism $H(\epsilon): H^\circ(Y) \to N$, which is surjective. Therefore, the space of $H^\circ(A)$-bounded vectors in $H(Y)$ coincides with the space of $N$-bounded vectors. Thus, there is an idempotent $N$-$N$-bimodule intertwiner $P: H(X) \underset{N}{\otimes} H(Y) \to H(X) \underset{H^\circ(A)}{\otimes} H(Y)$. If $\cK \in \Bim(N)$ is such that the diagram
\begin{center}
\begin{tikzcd}
H(X) \underset{N}{\otimes} H(A) \underset{N}{\otimes} H(Y) \arrow[rr] \arrow[rr, shift left] \arrow[rdd, bend right, shift left] \arrow[rdd, bend right] & {} \arrow[r] & H(X) \underset{N}{\otimes} H(Y) \arrow[ldd, "T", bend left] \\
                                                                                                                                                                     &              &                                                                       \\
                                                                                                                                                                     & \cK          &                                                                      
\end{tikzcd}
\end{center}
commutes, define $T'(\xi \underset{H^\circ(A)}{\otimes} \eta) := T(\xi \underset{N}{\otimes} \eta)$. Then $T'$ has a unique extension to a $N$-$N$-bimodule intertwiner $H(X)\underset{H^\circ(A)}{\otimes} H(Y) \to \cK$ such that
\begin{center}
\begin{tikzcd}
H(X) \underset{N}{\otimes} H(A) \underset{N}{\otimes}H(Y) \arrow[rr, shift left] \arrow[rr, shift right] \arrow[rd] \arrow[rd, shift left] \arrow[rdd, bend right] &                                                                  & H(X) \underset{N}{\otimes} H(Y) \arrow[ld, "P"] \arrow[ldd, "T", bend left] \\
                                                                                                                                                                   & H(X) \underset{H^\circ(A)}{\otimes}H(Y) \arrow[d, "T'"', dashed] &                                                                             \\
                                                                                                                                                                   & \cK                                                              &                                                                            
\end{tikzcd}
\end{center}
commutes.
\end{proof}

Recall the reflection equation algebra $\cF$. Our goal now is to understand better the realization $H^\circ(\epsilon)$ of the counit $\epsilon: \cF \to \un$ in order to prove a statement about quantum Hamiltonian reduction.

Recall that the counit $\epsilon$ is explicitly given by $\sum_{X \in \Irr(\cC)} d_X^{-1} R_X^*: \bigoplus_{X} \bar{X} \otimes X \to \un_\cC$.

Given $a \in H^\circ(\cF)$, write
$$a = \sum_{X \in \Irr(\cC)} a^X_{(1)} \otimes a^X_{(2)} \  . $$

Note that the vector space $\cF(\un_\cC) = \cC(\un_\cC, \cF)$ is the linear span of $\{ R_X \ | \ X \in \Irr(\cC)\}$. Thus, for $a \in H^\circ(\cF)$ as above,
$$a^\un_{(1)} \otimes a^{\un}_{(2)} = \sum_{Y \in \Irr(\cC)} a^{\un}_Y \otimes R_Y \ , \ a^{\un}_Y \in N \ . $$

Therefore
$$H^\circ(\epsilon) (a) = H^\circ(\epsilon) \left( \sum_X a^X_{(1)} \otimes a^X_{(2)} \right) = \sum_X a^{\un}_X \ . $$
\begin{proposition}
Let $\epsilon: \cF \to \un_\cC$ be the counit homomorphism. Then $H^\circ: H^\circ(\cF) \to N$ is a normal $*$-homomorphism.
\label{prop:normalityofthecounit}
\end{proposition}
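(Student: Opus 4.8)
The plan is as follows. Since the counit $\epsilon\colon\cF\to\un_\cC$ is a $*$-homomorphism of C$^*$-algebra objects in the fusion category $\cC$, and the realization functor $H^\circ$ carries C$^*$-algebra objects in $\cC$ to von Neumann algebras and $*$-homomorphisms to $*$-homomorphisms (this is immediate from the explicit description of $H^\circ$ recalled above; cf.\ \cite{JonesPenneys1}), the map $H^\circ(\epsilon)\colon H^\circ(\cF)\to H^\circ(\un_\cC)=N$ is automatically a unital $*$-homomorphism. Hence the entire content of the statement is \emph{normality}, and the finiteness encoded in the fusion hypothesis — $\Irr(\cC)$ is finite and each $\cC(X,\cF)$ is finite-dimensional — will be essential.

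The idea is to exhibit $H^\circ(\epsilon)$ as a spatial compression. Represent $H^\circ(\cF)$ faithfully and normally by left multiplication on the Hilbert $N\-N$-bimodule
$$H(\cF)=\bigoplus_{X\in\Irr(\cC)}H(X)\otimes\cC(X,\cF) \ ,$$
i.e.\ in its standard representation, and observe that the $\un_\cC$-summand $H(\un_\cC)=\cL^2(N)$ carries the standard $N\-N$-bimodule. Let $u_\cF\in\cC(\un_\cC,\cF)$ be the unit of $\cF$, and define $V\colon\cL^2(N)\to H(\cF)$ by $\zeta\mapsto\zeta\otimes u_\cF$ and $W\colon H(\cF)\to\cL^2(N)$ to be zero on the summands $H(X)\otimes\cC(X,\cF)$ with $X\neq\un_\cC$ and $\eta\otimes g\mapsto(\epsilon\circ g)\,\eta$ on $H(\un_\cC)\otimes\cC(\un_\cC,\cF)$; both $V$ and $W$ are bounded since $\cC(\un_\cC,\cF)$ is finite-dimensional. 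Using the explicit multiplication law of $H^\circ(\cF)$ and writing $a^{\un}_{(1)}\otimes a^{\un}_{(2)}=\sum_Y a^{\un}_Y\otimes R_Y$ for $a\in H^\circ(\cF)$, one checks that the $\un_\cC$-isotypic component of $a\cdot(\zeta\otimes u_\cF)$ equals $\sum_Y(a^{\un}_Y\zeta)\otimes R_Y$; applying $W$ and using $\epsilon\circ R_Y=1$ — which follows from $\epsilon=\sum_X d_X^{-1}R_X^*$ together with $R_Y^*R_Y=d_Y\,\id_{\un_\cC}$ — yields $WaV\zeta=\big(\sum_Y a^{\un}_Y\big)\zeta=H^\circ(\epsilon)(a)\,\zeta$. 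Hence $H^\circ(\epsilon)(a)=WaV$ as operators on $\cL^2(N)$.

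Normality is then immediate: the map $T\mapsto WTV$ is $\sigma$-weakly continuous on $\bB(H(\cF))$, so its restriction to $H^\circ(\cF)$ is $\sigma$-weakly continuous, and since $H^\circ(\epsilon)$ takes values in $N$ it is a normal $*$-homomorphism $H^\circ(\cF)\to N$. I expect the one delicate step to be the identification of the $\un_\cC$-isotypic component of $a\cdot(\zeta\otimes u_\cF)$ — that is, verifying that projecting left multiplication by $a$ onto the $\un_\cC$-summand sees only the component $a^{\un}$ of $a$; this is a finite bookkeeping computation with the monoidal structure of $H$ and the Frobenius structure of $\cF$, and it is finite precisely because $\cC$ is fusion. (One can also phrase the argument without choosing a representation: $H^\circ(\epsilon)$ annihilates the summands $H^\circ(X)\otimes\cC(X,\cF)$ with $X\neq\un_\cC$ and restricts on the remaining one — the von Neumann subalgebra $N\vnotimes\cF(\un_\cC)$ — to $\id_N\vnotimes(\epsilon|_{\cF(\un_\cC)})$, with $\epsilon|_{\cF(\un_\cC)}$ a character of the finite-dimensional C$^*$-algebra $\cF(\un_\cC)$; this would need in addition that the isotypic grading of $H^\circ(\cF)$ be implemented by normal projections, which the spatial argument sidesteps.)
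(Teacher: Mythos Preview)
Your proof is correct. Both your argument and the paper's rest on the same mechanism: represent $H^\circ(\cF)$ faithfully and normally on $H(\cF)$, and use the embedding $\cL^2(N)\hookrightarrow H(\cF)$ via $\zeta\mapsto\zeta\otimes u_\cF$ to read off $H^\circ(\epsilon)(a)$ from the action of $a$. The paper carries this out as a direct weak-continuity check in coordinates (its Steps~1--4 specialize the test vectors in $H(\cF)$ down to $\cL^2(N)$ and verify that the resulting matrix coefficients are exactly $\langle H^\circ(\epsilon)(a)\xi,\eta\rangle$), whereas you package the same computation as the identity $H^\circ(\epsilon)(a)=WaV$ and invoke the automatic $\sigma$-weak continuity of $T\mapsto WTV$. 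Your formulation is more concise and makes the reason for normality transparent; the paper's version has the virtue of making the isotypic bookkeeping fully explicit. The ``delicate step'' you flag is indeed trivial here: since $u_\cF\in\cC(\un_\cC,\cF)$ and $X\otimes\un_\cC=X$, no fusion decomposition occurs when computing $a\cdot(\zeta\otimes u_\cF)$, so its $\un_\cC$-component is manifestly $\sum_Y(a^{\un}_Y\zeta)\otimes R_Y$.
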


\begin{proof}
We have to show that, with the notation used above, if $a_i \to a$ weakly in the unit ball of $H^\circ(\cF)$, then $\sum_X (a_i)^{\un}_X \to \sum_X a^{\un}_X$ in $N$.

{\em Step 1:} A net $(a_i)_i \subset H^\circ(\cF)$, $a_i = \sum_X a^X_{i,(1)} \otimes a^X_{i,(2)}$ converges weakly to  $a = \sum_X a^X_{(1)} \otimes a^X_{(2)}$ if and only if
$$\sum_{X,Y \in \Irr(\cC)} \sum_{Z \leq X \otimes Y} \left\langle \mu^H((a^X_{i,(1)} \otimes \xi^Y_{(1)}) \otimes \mu^\cF(a^X_{i,(2)} \otimes \xi^Y_{(2)}) ) , \eta^Z_{(1)} \otimes \eta^Z_{(2)} \right\rangle $$
converges to
$$\sum_{X,Y \in \Irr(\cC)} \sum_{Z \leq X \otimes Y} \left\langle \mu^H((a^X_{(1)} \otimes \xi^Y_{(1)}) \otimes \mu^\cF(a^X_{(2)} \otimes \xi^Y_{(2)}) ) , \eta^Z_{(1)} \otimes \eta^Z_{(2)} \right\rangle $$

{\em Step 2:} In Step 1, taking $Y = Z = \un_\cC$, the only non trivial summand is $X = \un_\cC$, and then it reads
$$ \left\langle \mu^H((a^{\un_\cC}_{i,(1)} \otimes \xi^{\un_\cC}_{(1)}) \otimes \mu^\cF(a^{\un_\cC}_{i,(2)} \otimes \xi^{\un_\cC}_{(2)}) ) , \eta^{\un_\cC}_{(1)} \otimes \eta^{\un_\cC}_{(2)} \right\rangle \to \left\langle \mu^H((a^{\un_\cC}_{(1)} \otimes \xi^{\un_\cC}_{(1)}) \otimes \mu^\cF(a^{\un_\cC}_{(2)} \otimes \xi^{\un_\cC}_{(2)}) ) , \eta^{\un_\cC}_{(1)} \otimes \eta^{\un_\cC}_{(2)} \right\rangle \ . $$

{\em Step 3:} One has
\begin{align*}
\xi^{\un}_{(1)} \otimes \xi^\un_{(2)} &= \sum_Y \xi^{\un}_Y \otimes R_Y  \\
\eta^{\un}_{(1)} \otimes \eta^{\un}_{(2)} &= \sum_Z \eta^{\un}_Z \otimes R_Z \ .
\end{align*}
Thus the convergence in Step 2 becomes
$$\sum_{X,Y} \sum_{Z \leq X \otimes Y} \left\langle \mu^H \left( (a_i)^{\un}_X \otimes \xi^{\un}_Y \right), \eta^{\un}_Z \right\rangle \to \sum_{X,Y} \sum_{Z \leq X \otimes Y} \left\langle \mu^H \left(a^{\un}_X \otimes \xi^{\un}_Y \right), \eta^{\un}_Z \right\rangle \ , $$
i.e.,
$$\sum_{X,Y} \sum_{Z \leq X \otimes Y} \left\langle (a_i)^{\un}_X \triangleright \xi^{\un}_Y , \eta^{\un}_Z \right\rangle \to \sum_{X,Y} \sum_{Z \leq X \otimes Y} \left\langle a^{\un}_X \triangleright \xi^{\un}_Y , \eta^{\un}_Z \right\rangle \ , $$

{\em Step 4:} There are inclusions $\cL^2(N) \into \cL^2(N) \otimes \cF^(\un) \into H(\cF)$, where a vector $\xi \in \cL^2(N)$ corresponds to the vector
$$\sum_X \xi^X_{(1)} \otimes \xi^X_{(2)} = \xi^{\un}_{(1)} \otimes \xi^{\un}_{(2)} = \sum_X \xi^{\un}_X \otimes R_X \ , $$
and where in the last expression the only non-zero factor is when $X = \un$, in which case $\xi^{\un}_{\un}  =\xi$. Then Step 3 reduces to
$$\sum_{X}  \left\langle (a_i)^{\un}_X \triangleright \xi , \eta \right\rangle \to \sum_{X} \left\langle a^{\un}_X \triangleright \xi , \eta \right\rangle \ .$$

\end{proof}

Given a closed surface $\Sigma$, we were able to describe  factorization homology over $\Sigma$ in terms of $A_{\Sigma^\circ}$, the C*-algebra in $\Vect(\cC)$ representing factorization homology of $\Sigma^\circ : \Sigma \setminus D$, and the reflection equation algebra $\cF$. Observe that, since here  $\cC$ is a fusion category, both $A_{\Sigma^{\circ}}$ and $\cF$ are compact, i.e., they are C*-algebras in $\cC$. Using $H$ we can also realize the quantum Hamiltonian reduction of Proposition \ref{prop:hamiltonianreduction} using Propostion \ref{prop:relativetensorproductHilbertbimodules}:

\begin{theorem}
There is an isomorphism
$$\End_{\int_\Sigma \cC} (\cO_S) \simeq \Hom_{\Bim(N)} (\cL^2(N), H(A_{\Sigma^0}) \underset{M_{\cF}}{\otimes} \cL^2(N)) \  $$
between the endomorphisms of the quantum structure sheaf on $\Sigma$ and the central vectors in $H(A_{\Sigma^\circ}) \underset{H^\circ(\cF)}{\otimes} \cL^2(N)$.
\label{thm:vonNeumannalgebraicquantumHamiltonianreduction}
\end{theorem}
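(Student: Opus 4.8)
The plan is to combine the algebraic quantum Hamiltonian reduction of Proposition \ref{prop:hamiltonianreduction} with the compatibility of $H$ with module-relative tensor products from Proposition \ref{prop:relativetensorproductHilbertbimodules}. By Proposition \ref{prop:hamiltonianreduction} one already has
$$\End_{\int_\Sigma \cC}(\cO_\Sigma) \simeq \Hom_\cC\bigl(\un_\cC,\, A_{\Sigma^\circ} \underset{\cF}{\otimes} \un_\cC\bigr),$$
where $A_{\Sigma^\circ}$ carries the right $\cF$-module structure coming from the $U(\cC)$-module structure on $\int_{\Sigma^\circ}\cC$ through $U(\cC) \simeq \cF \- \Mod_\cC$, and $\un_\cC = \cO_D$ carries the left $\cF$-module structure induced by the counit $\epsilon\colon\cF\to\un_\cC$. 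So the whole task is to transport the right-hand side across $H$.

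First I would observe that in the fusion setting every object in sight is genuinely an object of $\cC$: $\cF$ and $A_{\Sigma^\circ}$ are compact, and the coequalizer computing $A_{\Sigma^\circ}\underset{\cF}{\otimes}\un_\cC$ --- a priori formed in $\Vect(\cC)$ --- is a sub-object of $A_{\Sigma^\circ}\otimes\un_\cC$, hence lies in $\cC$ by semisimplicity and Karoubi completeness. Then, since $H\colon\cC\into\Bim(N)$ is fully faithful and unitary with $H(\un_\cC)\simeq\cL^2(N)$,
$$\Hom_\cC\bigl(\un_\cC,\, A_{\Sigma^\circ}\underset{\cF}{\otimes}\un_\cC\bigr)\simeq\Hom_{\Bim(N)}\bigl(\cL^2(N),\, H(A_{\Sigma^\circ}\underset{\cF}{\otimes}\un_\cC)\bigr).$$
Next I would verify the hypotheses of Proposition \ref{prop:relativetensorproductHilbertbimodules} with $A=\cF$, $X=A_{\Sigma^\circ}$, $Y=\un_\cC$: the counit $\epsilon$ is a unital $*$-homomorphism admitting the unit $\un_\cC\to\cF$ as a section, so $H^\circ(\epsilon)\colon M_\cF=H^\circ(\cF)\to N$ is surjective (and normal, by Proposition \ref{prop:normalityofthecounit}), and the left $\cF$-action on $\un_\cC$ is precisely the one induced by $\epsilon$. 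Proposition \ref{prop:relativetensorproductHilbertbimodules} then yields an $N\-N$-bimodule isomorphism
$$H\bigl(A_{\Sigma^\circ}\underset{\cF}{\otimes}\un_\cC\bigr)\simeq H(A_{\Sigma^\circ})\underset{M_\cF}{\otimes}H(\un_\cC)\simeq H(A_{\Sigma^\circ})\underset{M_\cF}{\otimes}\cL^2(N).$$
Chaining the three isomorphisms gives the claim, and the identification of $\Hom_{\Bim(N)}(\cL^2(N),-)$ with the space of $N$-central (tracial) vectors is just the evaluation-at-$\hat 1$ description of bimodule maps out of the trivial bimodule.

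The hard part will be the bookkeeping of the first paragraph: confirming that the right $\cF$-action on $A_{\Sigma^\circ}$ and the left $\cF$-action on $\un_\cC$ appearing in Proposition \ref{prop:hamiltonianreduction} are exactly the module structures for which Proposition \ref{prop:relativetensorproductHilbertbimodules} is formulated, and in particular that the left action on $\un_\cC$ really factors through the counit $\epsilon$ rather than through some other $*$-homomorphism $\cF\to\un_\cC$. This amounts to unwinding Theorem \ref{thm:facthomclosedsurfaces} via Corollary \ref{cor:balancedtensorproductovertheannuluscategory} and Theorem \ref{thm:canonicalformofmodulesovertheannuluscategory}, and checking that the embedding $Ann\into D$ induces precisely $\epsilon$ on internal endomorphism algebras; once this is settled, the rest is a formal concatenation of results already proved. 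A secondary, milder point is to note that the relative tensor product in Proposition \ref{prop:hamiltonianreduction} is the ordinary module coequalizer inside $\cC$ used in Proposition \ref{prop:relativetensorproductHilbertbimodules}, which is automatic in the fusion case.
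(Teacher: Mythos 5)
Your proposal follows exactly the paper's argument: apply Proposition \ref{prop:hamiltonianreduction}, transport across the fully faithful $*$-functor $H$, and then invoke Proposition \ref{prop:relativetensorproductHilbertbimodules} to identify $H(A_{\Sigma^\circ} \underset{\cF}{\otimes} \un_\cC)$ with $H(A_{\Sigma^\circ}) \underset{M_\cF}{\otimes} \cL^2(N)$. The additional bookkeeping you flag (surjectivity of $H^\circ(\epsilon)$ via the unit section, and matching the module structures coming from Theorem \ref{thm:facthomclosedsurfaces} with those in Proposition \ref{prop:relativetensorproductHilbertbimodules}) is left implicit in the paper's two-line proof, so your write-up is if anything more careful.
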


\begin{proof}
By Proposition \ref{prop:hamiltonianreduction}, and since $H$ is a fully faithful $*$-functor, there is an isomorphism 
$$\End_{\int_\Sigma \cC}(\cO_\Sigma) \simeq \Hom_{\Bim(N)}(\cL^2(N), H(A_{\Sigma^0} \underset{M_{\cF}}{\otimes} \cL^2(N))) \ . $$
The result follows now from Proposition \ref{prop:relativetensorproductHilbertbimodules}.
\end{proof}

\appendix

\section{Yetter-Drinfeld C$^*$-algebras and centrally cyclic bimodule C$*$-categories}
\label{app:YetterDrinfeldC*algebras}

This Appendix gives a characterization of certain bimodule C$^*$-categories over a given UTC $\cC$ in terms of internal C$^*$-algebra objects. Among the examples are C$^*$-tensor categories $\cB$ for which the bimodule structure is induced by a unitary tensor functor $F:\cC \to \cB$.

\begin{definition}
Let $\cC$ be an UTC. A centrally pointed $\cC$-bimodule C$^*$-category $(\cM,m,\alpha)$ is a $\cC$-bimodule C$^*$-category $\cM$ together with a distinguished object $m \in \cM$ and an unitary natural isomorphism
$$\alpha = \{\alpha_U : m \triangleleft U \simeq U \triangleright m\}_{U \in \cC} $$
which is coherent wrt. the tensor structure of $\cC$. If $(\cM,m,\alpha)$ is a centrally pointed $\cC$-bimodule C$^*$-category such that $m$ is cyclic, we say that $(\cM,m,\alpha)$ is a centrally cyclic $\cC$-bimodule C$^*$-category. 
\label{def:centrallycyclicbimodules}
\end{definition}

When $\cC = \Rep \bG$ is the representation category of a compact quantum group $\bG$, it is shown in \cite{HataishiYamashita} that the category of centrally pointed bimodule C$^*$-categories is equivalent to Yetter-Drinfeld $\bG$-C$^*$-algebras, i.e., the category of continuous actions of the Drinfeld double $\cD\bG$ of $\bG$ on unital C$^*$-algebras. This will justify Definition \ref{def:Yetter-DrinfeldC*algebra} below.

Up to unitary equivalence of $\cC$-module C$^*$-categories, we can assume that $(\cM,m)$ is, as a cyclic right $\cC$-module C$^*$-category, of the form $(\cM_A,A)$, where $A = (A,\mu)$ is a C$^*$-algebra object in $\Vect(\cC)$ and $\cM_A$ is the Karoubi completion of the full subcategory of free left $A$-modules of the form $A \otimes U$, with $U \in \cC$.

\begin{lemma}
Let $(\cM_A,A)$ be a cyclic right $\cC$-module C$^*$-category as above and let $\cC \times \cM_A \to \cM_A$ be a left $\cC$-module C$^*$-structure on $\cM_A$. Suppose that there is an unitary natural isomorphism 
$$\alpha: \cC \triangleright A \simeq A \triangleleft \cC$$
which is coherent wrt. the tensor structure of $\cC$. Then there are unitary isomorphisms
$$U \triangleright A \simeq U \otimes A \ , $$
natural in $U$.
\end{lemma}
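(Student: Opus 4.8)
The plan is to identify the underlying object of $U \triangleright A$ in $\Vect(\cC)$ with the left tensor product $U \otimes A$ by a Hom-space computation, and then transport structure along the resulting isomorphism. Two adjunctions will be needed. First, since $\cM_A$ is a left $\cC$-module C*-category over the rigid (indeed unitary) category $\cC$, the functor $U \triangleright (-)\colon \cM_A \to \cM_A$ is biadjoint to $\bar U \triangleright (-)$ with unitary adjunction isomorphisms; that the adjoint of the $*$-functor $U \triangleright (-)$ is again a $*$-functor is Theorem \ref{prop:adjointofastarfunctor}, and the unitary form of Frobenius reciprocity for module C*-categories over a UTC is standard (\cite{NeshveyevTuset}). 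Second, the free-module functor $X \mapsto A \triangleleft X$ from $\cC$ to $\cM_A$ has a right adjoint $G\colon \cM_A \to \Vect(\cC)$, $G(M)(X) = \Hom_{\cM_A}(A \triangleleft X, M)$, and $G(A) = A$: under the equivalence $\cM_A \simeq A\hyphen\Mod_\cC$ of Proposition \ref{prop:equivalencebetweenalgebrasandmodules} the object $A \triangleleft X$ is the free $A$-module $A \otimes X$, so $\Hom_{\cM_A}(A \triangleleft X, A) = \Hom_{\Vect(\cC)}(X, A) = A(X)$ by Yoneda.

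The key step is then the chain of isomorphisms, natural in $X \in \cC$:
\begin{align*}
G(U \triangleright A)(X) &= \Hom_{\cM_A}(A \triangleleft X,\ U \triangleright A) \\
&\cong \Hom_{\cM_A}\!\big(\bar U \triangleright (A \triangleleft X),\ A\big) \\
&\cong \Hom_{\cM_A}\!\big((\bar U \triangleright A) \triangleleft X,\ A\big) \\
&\cong \Hom_{\cM_A}\!\big((A \triangleleft \bar U) \triangleleft X,\ A\big) \\
&= \Hom_{\cM_A}\!\big(A \triangleleft (\bar U \otimes X),\ A\big) = A(\bar U \otimes X),
\end{align*}
using, in order, the $U \triangleright(-) \dashv \bar U \triangleright(-)$ adjunction, the associativity constraint of the $\cC$-bimodule structure on $\cM_A$, the inverse central isomorphism $\alpha_{\bar U}^{-1}\colon \bar U \triangleright A \to A \triangleleft \bar U$, associativity of the right $\cC$-action, and $G(A) = A$. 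Since $(U \otimes A)(X) = A(\bar U \otimes X)$ by the very definition of the left $\cC$-action on $\Vect(\cC)$ (check on representables and use cocontinuity of evaluation), this gives $G(U \triangleright A) \cong U \otimes A$ in $\Vect(\cC)$. Transporting the left $A$-module structure of $U \triangleright A$ along this isomorphism makes $U \otimes A$ an object of $\cM_A$ and yields the asserted $U \triangleright A \simeq U \otimes A$; note that, combined with $\alpha$, this is exactly the half-braiding datum $A \otimes U = A \triangleleft U \simeq U \triangleright A \simeq U \otimes A$ needed in the sequel.

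What remains is to check unitarity and naturality in $U$, and this is where I expect the only genuine difficulty. Unitarity is cheap: $\alpha$ is unitary by hypothesis, the bimodule and associativity constraints are unitary by definition of a C*-module category, and the adjunction isomorphisms were chosen unitary, so the composite is a unitary natural isomorphism. For naturality, a morphism $g\colon U \to V$ acts on the source by $g \triangleright \id_A$ and corresponds, through the chain, to $\bar g \triangleright \id$ followed by $\alpha$ evaluated at $\bar g\colon \bar V \to \bar U$; the contravariance introduced by $U \mapsto \bar U$ is cancelled by the contravariance of $\Hom_{\cM_A}(-,A)$ in its first slot, so naturality reduces to that of $\alpha$, of the two adjunctions, and of the constraints, and running the coherence datum of $\alpha$ through the same diagram gives compatibility with the tensor structure of $\cC$. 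The main obstacle is precisely this bookkeeping — tracking the duality $U \mapsto \bar U$ (which forces one to invoke the pivotal structure of $\cC$ to identify $\overline{\bar U} \cong U$ unitarily) and matching the coherence of $\alpha$ against the triangle-type identities for the two adjunctions and the bimodule constraint, all while preserving unitarity at each step. The conceptual move itself — pushing $U \triangleright(-)$ across the Hom onto the right, then converting it to $A \triangleleft \bar U$ via $\alpha$ — is short.
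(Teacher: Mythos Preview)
Your argument is correct, but it takes a different route from the paper. The paper's proof is a one-line computation of the \emph{internal endomorphism object} of $A$ for the left action: using $\alpha_U$ directly (no Frobenius reciprocity needed), it identifies
\[
\cM_A(U \triangleright A, A)\ \xrightarrow{\ (-)\circ\alpha_U\ }\ \cM_A(A \triangleleft U, A)\ \simeq\ \Vect(\cC)(U, A),
\]
so $\underline{\End}^{\,L}(A)=A$; the equivalence between cyclic module C*-categories and internal C*-algebras (Proposition~\ref{prop:equivalencebetweenalgebrasandmodules}) then forces the left action to be the standard one on free right $A$-modules, whence $U \triangleright A \simeq U \otimes A$. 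Your approach instead computes the \emph{underlying $\Vect(\cC)$-object} of $U \triangleright A$ via the forgetful functor $G$: you trade the direct use of $\alpha_U$ for the biadjunction $\bar U \triangleright(-) \dashv U \triangleright(-)$ and then apply $\alpha_{\bar U}$ on the other side of the Hom. The paper's argument is shorter and sidesteps entirely the duality bookkeeping you flag at the end --- no $U\mapsto \bar U$ occurs, so no pivotal identification $\overline{\bar U}\simeq U$ is needed, and naturality in $U$ is immediate from naturality of $\alpha$. Your computation, by contrast, does not rely on the black-box structure theorem and makes the transport-of-structure step (hence the emergence of the half-braiding $\sigma_U\colon A\otimes U \simeq U\otimes A$) more explicit; it also makes clear that the isomorphism lives already at the level of $\Vect(\cC)$ before any $A$-module structure is imposed.
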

\begin{proof}
Since the right $\cC$-module structure is given by $A \triangleleft U = A \otimes U$, we have
\begin{align*}
\cM_A(U \triangleright A,A)  \overset{\alpha_U \circ (-)}{\simeq} \cM_A(A \otimes U, A) \simeq \Vect(\cC)(U,A) \ .
\end{align*}
Hence, the internal endomorphism of $A$ wrt. the left $\cC$ action is $A$ itself.
\end{proof}

Since every object of $\cM_A$ is a subobject of one of the form $A \otimes U$, the Lemma above says that the left $\cC$-action on $\cM_A$ is unitarily equivalent to $V \triangleright (A \otimes U) = V \otimes A \otimes U$. Under this identification, the unitary isomorphism $\alpha_U:A \otimes U \simeq U \triangleright A $ becomes an isomorphism $\sigma_U: A \otimes U \simeq U \otimes A$, coherent wrt. the the tensor structure of $\cC$. This means exactly that $\sigma = \{\sigma_U\}_{U \in \cC}$ is a half-braiding on the object $A \in \Vect(\cC)$. Moreover, each component $\sigma_U$ is a morphism of left $A$-modules (i.e., a morphism in $\cM_A$), where the object $U \otimes A$ has the left $A$-module structure given by
$$A \otimes U \otimes A \overset{\id_A \otimes \sigma_U^{-1}}{\simeq} A \otimes A \otimes U \overset{\mu \otimes \id_U}{\longrightarrow} A \otimes U \overset{\sigma_U}{\simeq} U \otimes A \ . $$
Equivalently, 
$$\sigma_U (\mu \otimes \id_U) = (\id_U \otimes \mu)(\sigma_U \otimes \id_A)(\id_A \otimes \sigma_U) \ . $$
This equation characterizes the multiplication $\mu: A \otimes A \to A$ as a morphism $(A,\sigma) \otimes (A,\sigma) \to (A,\sigma)$ of half-braidings in $\Vect(\cC)$.

\begin{remark}
In the above, objects of the form $U \otimes A$, endowed with the left $A$-module structure induced by the half-braiding $\sigma$, are strictly speaking not objects in the category $\cM_A$, but isomorphic as left $A$-modules to objects of that category. We shall however abuse notation and write for example $\cM_A(U\otimes A, A \otimes V)$. This will not cause any trouble since we are always considering full subcategories of the category of left $A$-modules.
\end{remark}

Observe that, starting with a centrally cyclic $\cC$-bimodule C$^*$-category of the form $(\cM_A,A,\alpha)$ and considering the induced half-braiding $(A,\sigma)$, the unitarity of $\alpha_U \in \cM_A(A \otimes U,U \triangleright A)$ corresponds to the unitarity of $\sigma_U$ as an element of $\cM_A(A \otimes U, U \otimes A)$.

\begin{definition}
Let $\cC$ be an UTC. A C$^*$-algebra object in $\cZ(\Vect(\cC))$ is an algebra $(A,\mu,\sigma) \in \cZ(\Vect(\cC))$ together with a $*$-algebra structure $j$ such that, wrt. the corresponding dagger structure on $(\cM_A,A)$, the components $\sigma_U \in \cM_A(A \otimes U, U \otimes A)$ of $\sigma$ are unitary and such that the $*$-algebra $(A,\mu,j)$ in $\Vect(\cC)$ is a C$^*$-algebra. A Yetter-Drinfeld C$^*$-algebra in $\Vect(\cC)$ is a C$^*$-algebra object in $\cZ(\Vect(\cC))$.
\label{def:Yetter-DrinfeldC*algebra}
\end{definition}

The discussion preceding Definition \ref{def:Yetter-DrinfeldC*algebra} proves

\begin{theorem}
Let $\cC$ be an UTC. Then the category of centrally cyclic $\cC$-bimodule C$^*$categories is equivalent to the category of Yetter-Drinfeld C$^*$-algebras in $\cC$. 
\end{theorem}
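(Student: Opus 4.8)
The plan is to assemble the observations made in the discussion preceding Definition \ref{def:Yetter-DrinfeldC*algebra} into a pair of mutually quasi-inverse functors. First I would recall that, by Proposition \ref{prop:equivalencebetweenalgebrasandmodules} together with Definition \ref{defi:Cstaralgebraobjects}, the assignment $A \mapsto (\cM_A, A)$ is an equivalence between $\CAlg(\Vect(\cC))$ and the category of cyclic right $\cC$-module C*-categories with a distinguished generator. Hence, forgetting the bimodule structure, every centrally cyclic $\cC$-bimodule C*-category $(\cM, m, \alpha)$ is, up to unitary equivalence of right $\cC$-module C*-categories, of the form $(\cM_A, A, \alpha)$ for a C*-algebra object $A \in \Vect(\cC)$ unique up to isomorphism.

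Next I would extract the half-braiding. Given $(\cM_A, A, \alpha)$, the left $\cC$-module C*-structure on $\cM_A$ together with the unitary natural isomorphism $\alpha_U : A \triangleleft U \simeq U \triangleright A$ and the Lemma above (whose proof uses only the Yoneda-type identification $\cM_A(A \otimes U, A) \simeq \Vect(\cC)(U, A)$) yields unitary natural isomorphisms $U \triangleright A \simeq U \otimes A$. Transporting $\alpha$ through these produces a natural isomorphism $\sigma_U : A \otimes U \simeq U \otimes A$ coherent with respect to the tensor structure of $\cC$, i.e. a half-braiding on $A$; and, because each $\alpha_U$ lives in $\cM_A$, each $\sigma_U$ is a morphism of left $A$-modules, which is precisely the displayed identity expressing that $\mu$ is a morphism of half-braidings, so $(A, \mu, \sigma)$ is an algebra object in $\cZ(\Vect(\cC))$. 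Unitarity of $\alpha_U$ in $\cM_A(A \otimes U, U \triangleright A)$ translates into unitarity of $\sigma_U$ in $\cM_A(A \otimes U, U \otimes A)$, and $A$ already carries the C*-structure $j$ inherited from being a C*-algebra object, so $(A, \mu, \sigma, j)$ is exactly a Yetter-Drinfeld C*-algebra in the sense of Definition \ref{def:Yetter-DrinfeldC*algebra}. On morphisms, a $\cC$-bimodule $*$-functor preserving the pointed objects and commuting with the $\alpha$'s corresponds, via Lemma \ref{lemma:functorialityformonads} applied to both the left and the right action, to a $*$-algebra map $A \to A'$ intertwining the half-braidings, that is, a morphism of Yetter-Drinfeld C*-algebras.

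For the reverse direction, starting from a Yetter-Drinfeld C*-algebra $(A, \mu, \sigma, j)$, I would equip $\cM_A$ with the left $\cC$-action $V \triangleright (A \otimes U) := V \otimes A \otimes U$, where $V \otimes A \otimes U$ is viewed as a left $A$-module through $\id \otimes \sigma^{-1}$, $\mu$ and $\sigma$ as in the displayed formula above, extended to the Karoubi completion; coherence of $\sigma$ guarantees that this is a genuine left $\cC$-module C*-structure commuting as a bimodule with the right one, and setting $\alpha_U := \sigma_U : A \otimes U \simeq U \otimes A = U \triangleright A$ makes $(\cM_A, A, \alpha)$ a centrally cyclic $\cC$-bimodule C*-category, cyclicity being automatic since $A$ generates $\cM_A$ as a right module. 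One then checks that the two constructions are mutually quasi-inverse: composing them sends $(A,\mu,\sigma,j)$ to itself by construction, and composing in the other order recovers $(\cM, m, \alpha)$ because the chosen unitary equivalence $\cM \simeq \cM_A$ is one of $\cC$-bimodule C*-categories intertwining the central structures, by naturality of all isomorphisms involved.

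The main obstacle I anticipate is bookkeeping rather than conceptual: one must check carefully that the unitary equivalence $\cM \simeq \cM_A$ of right module C*-categories supplied by Proposition \ref{prop:equivalencebetweenalgebrasandmodules} upgrades to one of bimodule C*-categories compatible with $\alpha$, so that no data is lost, and that the left $\cC$-action reconstructed from $\sigma$ on $\cM_A$ is genuinely associative and unital up to coherent unitaries. This amounts to chasing the coherence (hexagon) axioms for the half-braiding through the Karoubi completion, while tracking the mild abuse of notation flagged in the Remark, namely that objects $U \otimes A$ are only isomorphic as left $A$-modules to objects literally in $\cM_A$. Once this is in place, functoriality and the bijection on morphism spaces reduce to the corresponding statements for ordinary cyclic module categories.
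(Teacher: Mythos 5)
Your proposal is correct and follows essentially the same route as the paper: the paper's proof consists precisely of the discussion preceding Definition \ref{def:Yetter-DrinfeldC*algebra}, namely reducing $(\cM,m)$ to $(\cM_A,A)$ via Proposition \ref{prop:equivalencebetweenalgebrasandmodules}, using the Yoneda-type lemma to identify $U \triangleright A \simeq U \otimes A$, and transporting $\alpha$ to a unitary half-braiding $\sigma$ for which $\mu$ is a morphism in $\cZ(\Vect(\cC))$. Your explicit construction of the quasi-inverse (the $\sigma$-twisted left action on $\cM_A$) and the attention to morphisms via Lemma \ref{lemma:functorialityformonads} fill in steps the paper leaves implicit, but do not constitute a different argument.
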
 

\begin{definition}
Let $\cC$ be an UTC and $A=(A,\mu,j)$ be a C$^*$-algebra object in $\Vect(\cC)$. A unitary half-braiding $\sigma$ on $A$ is a half-braiding on $A$ such that $(A,\mu,\sigma,j)$ is a Yetter-Drinfeld C$^*$-algebra in $\Vect(\cC)$.
\label{def:unitaryhalfbraiding}
\end{definition}

\section{Maximal braided tensor product}
\label{app:Braidedtensorproduct}

A prominent construction in factorization homology is the braided tensor product of module categories. In this section we provide a concrete description of maximal braided tensor products between cyclic $\cC$-module C*-categories. The discussion however makes sense in a more general context; we don't need to assume that $\cC$ has a braiding, and the braided tensor product will still make sense as long as the algebras representing the module categories possess an additional structure, that of Yetter-Drinfled algebras. We shall discuss such structures shortly.

Let $\cC$ be an unitary tensor category, $\cM_A$ a cyclic $\cC$-bimodule C*-category and $\cM_B$ be cyclic left $\cC$-module $C^*$-category, corresponding to the C*-algebras $A$ and $B$, respectively. Write $\cM_A \underset{\cC}{\boxtimes} \cM_B$ for their maximal, relative tensor product (see \cite{AntounVoigt}, Section 5). To define a left action of $\cC$ on $\cM_A \underset{\cC}{\boxtimes} \cM_B$ is the same as defining a tensor functor $\cC \to \End( \cM_A \underset{\cC}{\boxtimes} \cM_B)$. This is the same as defining a $*$-functor $\cC \to \Bal_\cC(\cM_A \underset{\max}{\boxtimes} \cM_B , \cM_A \underset{\cC}{\boxtimes} \cM_B)$.

Denote by $\gamma: \cM_A \underset{\max}{\boxtimes} \cM_B \to \cM_A \underset{\cC}{\boxtimes} \cM_B$ the canonical $\cC$-balanced $*$-functor. For $U \in \cC$, let
$$U \blacktriangleright (-) := \gamma \circ (U \triangleright (-) \boxtimes \id_{\cM_B} ) : \cM_A \underset{\max}{\boxtimes} \cM_B \to \cM_A \underset{\cC}{\boxtimes} \cM_B \ . $$
If $\beta = \{ \beta_{X_A,Y_B;V}: \gamma(X_A \triangleleft V \boxtimes Y_B) \simeq \gamma(X_A \boxtimes V \triangleright Y_B) \}_{X_A,Y_B,V}$ is the  balancing structure of $\gamma$, then 
$$ \beta^U := \left\lbrace \beta_{U \triangleright X_A,Y_B;V}: U \blacktriangleright (X_A \triangleleft V \boxtimes Y_B) \simeq U \blacktriangleright (X_A \boxtimes V \triangleright Y_B) \right\rbrace_{X_A,Y_B,V} $$
is a balancing structure on $U \blacktriangleright (-)$. If $T \in \Hom_\cC(U,U')$, a natural transformation from $U \blacktriangleright (-)$ to $U' \blacktriangleright (-)$ is given by $\gamma( T \triangleright (-) \boxtimes \id_{\cM_B})$. We obtain thus an action $\cC \curvearrowright \cM_A \underset{\cC}{\boxtimes} \cM_B$, and the same reasoning, exchanging left and right actions, gives a left action $\cM_A \underset{\cC}{\boxtimes} \cM_B \curvearrowleft \cC$. We will denoted these action by $\triangleright$ and $\triangleleft$, respectively.

We claim that, if $A$ and $B$ are Yetter-Drinfeld C*-algebras in $\Vect(\cC)$, the C*-algebra object representing $\cM_A \underset{\cC}{\boxtimes} \cM_B$ is also a Yetter-Drinfeld C*-algebra.

Suppose now that $\cM$ and $\cN$ are cyclic bimodule C$^*$-categories, with central generators $m$ and $n$, respectively. That is to say, $\underline{\End}(m)$ and $\underline{\End}(n)$ are Yetter-Drinfeld C*-algebras in $\Vect(\cC)$. As a direct consequence of the definitions and the above constructions, we see that there is a canonical unitary natural isomorphism
$$U \triangleright \gamma(m \boxtimes n) \simeq \gamma(m \boxtimes n) \triangleleft U \ , $$
with respect to the variable $U \in \cC$. That is, the object $\gamma(m \boxtimes m) \in \cM \boxtimes_\cC \cN$ is central with respect to the bimodule structure. Any functor $\cM \underset{\max}{\boxtimes} \cN \to \cD$, $\cD$ being another $C^*$-category, is completely determined by its restriction to objects of the form $U \triangleright m \boxtimes n \triangleleft V$, for $U,V \in \cC$, and the morphisms between them. This holds in particular for the canonical balanced functor $\gamma: \cM \underset{\max}{\boxtimes} \cN \to \cM \underset{\cC}{\boxtimes} \cN$, which corresponds to the identity endofunctor of $\cM \underset{\cC}{\boxtimes} \cN$. Together with the centrality of $\gamma(m \boxtimes n)$, this implies that this element is a generating object for the relative tensor product. The following Theorem summarizes the discussion.

\begin{theorem}
Let $(\cM,m)$ and $(\cN,n)$ be as above, corresponding to Yetter-Drinfeld $C^*$-algebras $A := \underline{\End}(m)$ and $B := \underline{\End}(n)$ respectively. For the universal relative tensor product $\cM \underset{\cC}{\boxtimes} \cN$, one has canonical left and right $\cC$-actions and a canonical object $m_{A \boxtimes B}$ such that, $(\cM \underset{\cC}{\boxtimes} \cN, m_{A \boxtimes B})$ is a  cyclic $\cC$-bimodule $C^*$-category with central generator. It is defined to be the maximal braided tensor product of $(\cM,m)$ and $(\cN,n)$. 
\label{maxbraidedtensorproducttheo}
\end{theorem}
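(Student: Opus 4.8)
The plan is to assemble, in the order in which they were constructed above, the pieces that make up the statement, and then to invoke the dictionary of Appendix \ref{app:YetterDrinfeldC*algebras}. First I would reduce to the concrete model: using the equivalence between centrally cyclic $\cC$-bimodule C*-categories and Yetter-Drinfeld C*-algebras, it is enough to take $(\cM,m)=(\cM_A,A)$ and $(\cN,n)=(\cM_B,B)$ for Yetter-Drinfeld C*-algebras $A,B$ in $\Vect(\cC)$, with $\gamma:\cM_A\underset{\max}{\boxtimes}\cM_B\to\cM_A\underset{\cC}{\boxtimes}\cM_B$ the universal $\cC$-balanced $*$-functor and $\beta$ its balancing structure. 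Every object of $\cM_A\underset{\cC}{\boxtimes}\cM_B$ is then a retract of one of the form $\gamma(X_A\boxtimes Y_B)$.

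Second, I would check that $U\mapsto U\blacktriangleright(-)$ and its right-handed analogue define a $\cC$-bimodule C*-structure. Each $U\blacktriangleright(-):=\gamma\circ(U\triangleright(-)\boxtimes\id_{\cM_B})$ is $\cC$-balanced via $\beta^U$, hence by the universal property of $\gamma$ descends to an endofunctor of $\cM_A\underset{\cC}{\boxtimes}\cM_B$; a morphism $T$ of $\cC$ induces a natural transformation through $\gamma(T\triangleright(-)\boxtimes\id_{\cM_B})$. The tensor coherences $U\blacktriangleright(V\blacktriangleright(-))\simeq(U\otimes V)\blacktriangleright(-)$, the unit constraint, and the pentagon and triangle axioms are inherited from the left $\cC$-module structure of $\cM_A$ together with functoriality of $\gamma$, and unitarity of the structure morphisms holds because they are built from unitaries in $\cM_A$ and from the $*$-functor $\gamma$. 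The same construction with the roles of left and right interchanged produces the right $\cC$-action, and the two commute up to coherent unitary isomorphism because the corresponding structures on $\cM_A$ and $\cM_B$ do.

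Third, I would produce the central generator and verify cyclicity. Put $m_{A\boxtimes B}:=\gamma(m\boxtimes n)$. Splicing the central half-braidings of $m$ and $n$ with $\beta$ gives unitary natural isomorphisms $U\triangleright\gamma(m\boxtimes n)\simeq\gamma(U\triangleright m\boxtimes n)\simeq\gamma(m\boxtimes n\triangleleft U)\simeq\gamma(m\boxtimes n)\triangleleft U$, and I would check that this composite is coherent with respect to $\otimes$ on $\cC$ — this hexagon-type bookkeeping is the one computation I expect to require genuine care, the remainder being formal. Given cyclicity of $m$ in $\cM_A$ and of $n$ in $\cM_B$, each $X_A$ is a retract of some $U\triangleright m\triangleleft W$ and each $Y_B$ of some $V\triangleright n\triangleleft Z$; pushing these through $\gamma$ and using the balancing together with the centrality isomorphism just constructed, every $\gamma(X_A\boxtimes Y_B)$ becomes a retract of some $U'\triangleright\gamma(m\boxtimes n)\triangleleft W'\simeq\gamma(m\boxtimes n)\triangleleft(U'\otimes W')$. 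Hence $m_{A\boxtimes B}$ is a cyclic generator, so $(\cM_A\underset{\cC}{\boxtimes}\cM_B,m_{A\boxtimes B})$ is a centrally cyclic $\cC$-bimodule C*-category; by the equivalence of Appendix \ref{app:YetterDrinfeldC*algebras} its internal endomorphism algebra $\underline{\End}(m_{A\boxtimes B})$ is a Yetter-Drinfeld C*-algebra, and this is the object we take as the maximal braided tensor product $A\boxtimes B$.
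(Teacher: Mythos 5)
Your proposal is correct and follows essentially the same route as the paper: descend the left and right $\cC$-actions through the universal balanced functor $\gamma$ using the balancing $\beta$, take $m_{A\boxtimes B}:=\gamma(m\boxtimes n)$, obtain its centrality by splicing the central structures of $m$ and $n$ with $\beta$, and conclude via the Appendix \ref{app:YetterDrinfeldC*algebras} dictionary that $\underline{\End}(m_{A\boxtimes B})$ is a Yetter-Drinfeld C*-algebra. The only (cosmetic) difference is in the cyclicity step, where you argue directly that every $\gamma(X_A\boxtimes Y_B)$ is a retract of some $\gamma(m\boxtimes n)\triangleleft(U'\otimes W')$, whereas the paper phrases the same fact through the observation that $\gamma$ is determined by its values on objects $U\triangleright m\boxtimes n\triangleleft V$; both come to the same thing.
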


The $C^*$-algebra object in $\Vect(\cC)$ representing the braided tensor products of the module categories is denoted by $A \boxtimes B$, and it is called the braided tensor product of the Yetter-Drinfeld $C^*$-algebra objects $A$ and $B$.

\raggedright
\begin{bibdiv}
\begin{biblist}

\bib{AnantharamanPopa}{book}{
  title={An introduction to $\II_1$ factors},
  author={Anantharaman, Claire}
  author={Popa, Sorin},
  journal={preprint},
  volume={8},
  year={2017}
}

\bib{AntounVoigt}{misc}{
      author={Antoun, Jamie},
      author={Voigt, Christian},
       title={On bicolimits of {C$^*$}-categories},
         how={preprint},
        date={2020},
      eprint={\href{http://arxiv.org/abs/2006.06232}{\texttt{arXiv:2006.06232
  [math.OA]}}},
}

\bib{AyalaFrancis}{article}{
   author={Ayala, David},
   author={Francis, John},
   title={Factorization homology of topological manifolds},
   journal={J. Topol.},
   volume={8},
   date={2015},
   number={4},
   pages={1045--1084},
   issn={1753-8416},
   review={\MR{3431668}},
   doi={10.1112/jtopol/jtv028},
}
\bib{AyalaFrancisTanaka}{article}{
  title={Factorization homology of stratified spaces},
  author={Ayala, David},
  author={Francis, John},
  author={Tanaka, Hiro},
  journal={Selecta Mathematica},
  year={2014},
  volume={23},
  pages={293-362}
}

\bib{BeilinsonDrinfeldChiralAlgebras}{book}{
  title={Chiral algebras},
  author={Beilinson, Alexander} 
  author={Drinfeld, Vladimir},
  volume={51},
  year={2004},
  publisher={American Mathematical Soc.}
}
\bib{BenZviBrochierJordan1}{article}{
   author={Ben-Zvi, David},
   author={Brochier, Adrien},
   author={Jordan, David},
   title={Integrating quantum groups over surfaces},
   journal={J. Topol.},
   volume={11},
   date={2018},
   number={4},
   pages={874--917},
   issn={1753-8416},
   review={\MR{3847209}},
   doi={10.1112/topo.12072},
}

\bib{BenZviBrochierJordan2}{article}{
   author={Ben-Zvi, David},
   author={Brochier, Adrien},
   author={Jordan, David},
   title={Quantum character varieties and braided module categories},
   journal={Selecta Math. (N.S.)},
   volume={24},
   date={2018},
   number={5},
   pages={4711--4748},
   issn={1022-1824},
   review={\MR{3874702}},
   doi={10.1007/s00029-018-0426-y},
}

\bib{BischoffKawahigashiLongoRehrenbook}{book}{
   author={Bischoff, Marcel},
   author={Kawahigashi, Yasuyuki},
   author={Longo, Roberto},
   author={Rehren, Karl-Henning},
   title={Tensor categories and endomorphisms of von Neumann algebras---with
   applications to quantum field theory},
   series={SpringerBriefs in Mathematical Physics},
   volume={3},
   publisher={Springer, Cham},
   date={2015},
   pages={x+94},
   isbn={978-3-319-14300-2},
   isbn={978-3-319-14301-9},
   review={\MR{3308880}},
   doi={10.1007/978-3-319-14301-9},
}

\bib{BrochierJordanFourier}{article}{
   author={Brochier, Adrien},
   author={Jordan, David},
   title={Fourier transform for quantum $D$-modules via the punctured torus
   mapping class group},
   journal={Quantum Topol.},
   volume={8},
   date={2017},
   number={2},
   pages={361--379},
   issn={1663-487X},
   review={\MR{3659493}},
   doi={10.4171/QT/92},
}

\bib{BruguieresNataleExactsequences}{article}{
  title={Exact sequences of tensor categories},
  author={Bruguieres, Alain} 
  author={Natale, Sonia},
  journal={International Mathematics Research Notices},
  volume={2011},
  number={24},
  pages={5644--5705},
  year={2011},
  publisher={OUP}
}

\bib{CalaqueScheimbauerCobordism}{article}{
   author={Calaque, Damien},
   author={Scheimbauer, Claudia},
   title={A note on the $(\infty,n)$-category of cobordisms},
   journal={Algebr. Geom. Topol.},
   volume={19},
   date={2019},
   number={2},
   pages={533--655},
   issn={1472-2747},
   review={\MR{3924174}},
   doi={10.2140/agt.2019.19.533},
}

\bib{CostelloOwen1}{book}{
   author={Costello, Kevin},
   author={Gwilliam, Owen},
   title={Factorization algebras in quantum field theory. Vol. 1},
   series={New Mathematical Monographs},
   volume={31},
   publisher={Cambridge University Press, Cambridge},
   date={2017},
   pages={ix+387},
   isbn={978-1-107-16310-2},
   review={\MR{3586504}},
   doi={10.1017/9781316678626},
}
\bib{CostelloOwen2}{book}{
   author={Costello, Kevin},
   author={Gwilliam, Owen},
   title={Factorization algebras in quantum field theory. Vol. 2},
   series={New Mathematical Monographs},
   volume={41},
   publisher={Cambridge University Press, Cambridge},
   date={2021},
   pages={xiii+402},
   isbn={978-1-107-16315-7},
   isbn={978-1-009-00616-3},
   review={\MR{4300181}},
   doi={10.1017/9781316678664},
}

\bib{FalguieresVaes}{article}{
   author={Falgui\`eres, S\'{e}bastien},
   author={Vaes, Stefaan},
   title={The representation category of any compact group is the bimodule
   category of a ${\rm II}_1$ factor},
   journal={J. Reine Angew. Math.},
   volume={643},
   date={2010},
   pages={171--199},
   issn={0075-4102},
   review={\MR{2658193}},
   doi={10.1515/CRELLE.2010.048},
}

\bib{FreedQuinn}{article}{
   author={Freed, Daniel S.},
   author={Quinn, Frank},
   title={Chern-Simons theory with finite gauge group},
   journal={Comm. Math. Phys.},
   volume={156},
   date={1993},
   number={3},
   pages={435--472},
   issn={0010-3616},
   review={\MR{1240583}},
}

\bib{FreedTeleman}{article}{
  title={Topological dualities in the Ising model},
  author={Freed, Daniel S.}
  author={Teleman, Constantin},
  journal={arXiv preprint arXiv:1806.00008},
  year={2018}
}

\bib{GwilliamRejzner}{article}{
  title={Relating nets and factorization algebras of observables: free field theories},
  author={Gwilliam, Owen},  
  author={Rejzner, Kasia},
  journal={Communications in Mathematical Physics},
  volume={373},
  pages={107--174},
  year={2020},
  publisher={Springer}
}

\bib{Haag}{book}{
   author={Haag, Rudolf},
   title={Local quantum physics},
   series={Texts and Monographs in Physics},
   edition={2},
   note={Fields, particles, algebras},
   publisher={Springer-Verlag, Berlin},
   date={1996},
   pages={xvi+390},
   isbn={3-540-61451-6},
   isbn={3-540-61049-9},
   review={\MR{1405610}},
   doi={10.1007/978-3-642-61458-3},
}

\bib{HalvorsonMuger}{book}{
  title={Algebraic quantum field theory},
  author={Halvorson, Hans}
  author= {M{\"u}ger, Michael},
  journal={arXiv preprint math-ph/0602036},
  year={2006}
}

\bib{HataishiYamashita}{article}{,
  title={Injectivity for algebras and categories with quantum symmetry},
  author={Hataishi, Lucas} 
  author={Yamashita, Makoto},
  journal={arXiv preprint arXiv:2205.06663},
  year={2022}
}

\bib{HenriquesConformalNets}{article}{,
  title={Conformal nets are factorization algebras},
  author={Henriques, Andre},
  journal={String-Math 2016},
  volume={98},
  pages={229},
  year={2018},
  publisher={American Mathematical Soc.}
}

\bib{JonesPenneys1}{article}{
  title={Operator algebras in rigid C*-tensor categories},
  author={Jones, Corey},
  author={Penneys, David},
  journal={Communications in Mathematical Physics},
  volume={355},
  number={3},
  pages={1121--1188},
  year={2017},
  publisher={Springer}
}

\bib{JonesPenneys2}{article}{
  title={Realizations of algebra objects and discrete subfactors},
  author={Jones, Corey}
  author={Penneys, David},
  journal={Advances in Mathematics},
  volume={350},
  pages={588--661},
  year={2019},
  publisher={Elsevier}
}

\bib{LongoII}{article}{
  title={Index of subfactors and statistics of quantum fields: II. Correspondences, braid group statistics and Jones polynomial},
  author={Longo, Roberto},
  journal={Communications in mathematical physics},
  volume={130},
  number={2},
  pages={285--309},
  year={1990},
  publisher={Springer}
}

\bib{LurieClassification}{article}{
  title={On the classification of topological field theories},
  author={Lurie, Jacob},
  journal={Current developments in mathematics},
  volume={2008},
  number={1},
  pages={129--280},
  year={2008},
  publisher={International Press of Boston}
}

\bib{Neshveyev}{article}{
      author={Neshveyev, Sergey},
       title={Duality theory for nonergodic actions},
        date={2014},
        ISSN={1867-5778},
     journal={M{\"u}nster J. Math.},
      volume={7},
      number={2},
       pages={413\ndash 437},
      eprint={\href{http://arxiv.org/abs/1303.6207}{\texttt{arXiv:1303.6207
  [math.OA]}}},
      review={\MR{3426224}},
}

\bib{NeshveyevTuset}{book}{
      author={Neshveyev, Sergey},
      author={Tuset, Lars},
       title={Compact quantum groups and their representation categories},
      series={Cours Sp{\'e}cialis{\'e}s [Specialized Courses]},
   publisher={Soci{\'e}t{\'e} Math{\'e}matique de France, Paris},
        date={2013},
      volume={20},
        ISBN={978-2-85629-777-3},
      review={\MR{3204665}},
}

\bib{NeshveyevYamashitaYetterDrinfeld}{article}{
      author={Neshveyev, Sergey},
      author={Yamashita, Makoto},
       title={Categorical duality for {Y}etter-{D}rinfeld algebras},
        date={2014},
        ISSN={1431-0635},
     journal={Doc. Math.},
      volume={19},
       pages={1105\ndash 1139},
      eprint={\href{http://arxiv.org/abs/1310.4407}{\texttt{arXiv:1310.4407
  [math.OA]}}},
      review={\MR{3291643}},
}

\bib{NeshveyevYamashitaDrinfeldCenter}{article}{
  author={Neshveyev, Sergey},
   author={Yamashita, Makoto},
  title={Drinfeld center and representation theory for monoidal categories},
  journal={Communications in Mathematical Physics},
  volume={345},
  number={1},
  pages={385--434},
  year={2016},
  publisher={Springer}
}

\bib{Ostrik}{article}{
  title={Module categories, weak Hopf algebras and modular invariants},
  author={Ostrik, Victor},
  journal={Transformation groups},
  volume={8},
  number={2},
  pages={177--206},
  year={2003},
  publisher={Springer}
}

\bib{PennigGauge}{article}{
author={Penning, Ulrich},
title={Gauge Theory with Finite Gauge Group},
series={Lecture notes},
publisher={https://upennig.weebly.com/topologische-quantenfeldtheorien-german.html}
}

\bib{SafronovMomentMaps}{article}{
   author={Safronov, Pavel},
   title={A categorical approach to quantum moment maps},
   journal={Theory Appl. Categ.},
   volume={37},
   date={2021},
   pages={Paper No. 24, 818--862},
   review={\MR{4276764}},
}

\bib{ScheimbauerThesis}{article}{
title = {Factorization homology as a fully extended topological field theory},
  author={Scheimbauer, Claudia Isabella},
  year={2014},
  school={ETH Zurich}
}

\end{biblist}
\end{bibdiv}

\end{document}